\documentclass[a4paper,reqno]{amsart}

\usepackage{enumitem}
\usepackage{dsfont}
\usepackage{chemarrow}
\usepackage{comment}
\usepackage{subfiles}
\usepackage{upgreek}
\usepackage{amsfonts}
\usepackage{amsmath,amsthm,amssymb,colonequals}
\usepackage{indentfirst}
\usepackage{amscd}
\usepackage{setspace}
\usepackage{mathrsfs}
\usepackage{float}
\usepackage{graphicx, subcaption}
\usepackage{wrapfig}
\usepackage{enumitem}
\usepackage{euscript}
\usepackage{stackrel}


\setlist[enumerate]{format=\normalfont}

\usepackage{array,multirow,booktabs,longtable}
\setlength{\heavyrulewidth}{1.2pt}
\setlength{\abovetopsep}{4pt}

\usepackage{centernot}
\usepackage{mathtools}
\usepackage{stmaryrd}

\makeatletter
\newcommand{\xMapsto}[2][]{\ext@arrow 0599{\Mapstofill@}{#1}{#2}}
\def\Mapstofill@{\arrowfill@{\Mapstochar\Relbar}\Relbar\Rightarrow}
\makeatother
%

\addtolength{\hoffset}{-0.5cm} \addtolength{\textwidth}{1cm}
\addtolength{\voffset}{-1.5cm} \addtolength{\textheight}{2cm}

\usepackage{tikz,mathrsfs}
\usetikzlibrary{arrows,decorations.pathmorphing,decorations.pathreplacing,positioning,shapes.geometric,shapes.misc,decorations.markings,decorations.fractals,calc,patterns}

\tikzset{>=stealth',
        cvertex/.style={circle,draw=black,inner sep=1pt,outer sep=3pt},
        vertex/.style={circle,fill=black,inner sep=1pt,outer sep=3pt},
        star/.style={circle,fill=yellow,inner sep=0.75pt,outer sep=0.75pt},
        tvertex/.style={inner sep=1pt,font=\scriptsize},
        gap/.style={inner sep=0.5pt,fill=white}}

\newtheorem{theorem}{Theorem}[section]
\newtheorem{prop}[theorem]{Proposition}
\newtheorem{lemma}[theorem]{Lemma}
\newtheorem{definition}[theorem]{Definition}
\newtheorem{cor}[theorem]{Corollary}

\theoremstyle{definition}

\newtheorem{example}[theorem]{Example}

\newtheorem{remark}[theorem]{Remark}

\newtheorem{notation}[theorem]{Notation}

\numberwithin{equation}{section}

\usepackage[colorlinks]{hyperref}

\newcommand{\Jac}{\scrJ\mathrm{ac}}

\newcommand\N{\mathbb{N}}
\newcommand\Z{\mathbb{Z}}
\newcommand\Q{\mathbb{Q}}

\newcommand\C{\mathbb{C}}
\newcommand{\GV}{\textnormal{GV}}


\DeclareMathOperator{\Supp}{\mathrm{Supp}}
\renewcommand{\mod}{\mathop{\mathrm{mod}}}
\newcommand{\Spec}{\mathop{\mathrm{Spec}}}
\DeclareMathOperator{\Ext}{\mathrm{Ext}}

\renewcommand{\Im}{\mathop{\mathrm{Im}}}
\DeclareMathOperator{\Hom}{\mathrm{Hom}}
\DeclareMathOperator{\End}{\mathrm{End}}
\DeclareMathOperator{\Sing}{\mathrm{Sing}}
\DeclareMathOperator{\add}{\mathrm{add}}

\DeclareMathOperator{\refl}{\mathrm{ref}}

\renewcommand{\dim}{\mathop{\mathrm{dim}}}
\DeclareMathOperator{\depth}{\mathrm{depth}}

\renewcommand{\min}{\mathop{\mathrm{min}}}

\newcommand{\Curve}{\mathrm{C}}

\newcommand{\scrA}{\EuScript{A}}

\newcommand{\scrJ}{\EuScript{J}}

\newcommand{\scrN}{\EuScript{N}}
\newcommand{\scrO}{\EuScript{O}}

\newcommand{\scrR}{\EuScript{R}}
\newcommand{\scrS}{\EuScript{S}}
\newcommand{\scrT}{\EuScript{T}}
\newcommand{\scrU}{\EuScript{U}}

\newcommand{\scrX}{\EuScript{X}}
\newcommand{\scrY}{\EuScript{Y}}

\newcommand{\lcl}{(\kern -2.5pt(}
\newcommand{\rcl}{)\kern -2.5pt)}
\newcommand{\lal}{[\kern -1pt[}
\newcommand{\ral}{ ] \kern -1pt ]}
\newcommand{\lbl}{\langle \kern -2.5pt \langle}
\newcommand{\rbl}{\rangle \kern -2.5pt \rangle}
\renewcommand*\partial{\textnormal{\reflectbox{6}}}

\newcommand{\x}{\mathsf{x}}

\newcommand{\q}{\mathbf{q}}
\newcommand{\CM}{\mathrm{CM}}


\newcommand{\llcurve}{\{\kern -3pt\{}
\newcommand{\rrcurve}{\}\kern -3pt\}}
\newcommand{\m}{\mathfrak{m}}
\newcommand{\F}{\mathcal{F}}
\renewcommand{\c}[1]{\mathcal{#1}}

\newcommand{\M}{\mathsf{M}}
\newcommand{\n}{\mathsf{N}}
\newcommand{\MA}{\mathsf{MA}}
\newcommand{\p}{\mathbf{p}}
\newcommand{\uk}{\upkappa}
\newcommand{\con}{\mathrm{con}}

\begin{document}

\title{Gopakumar--Vafa invariants associated to $cA_n$ singularities}
\author{Hao Zhang}
\address{Hao Zhang, The Mathematics and Statistics Building, University of Glasgow, University Place, Glasgow, G12 8QQ, UK.}
\email{h.zhang.4@research.gla.ac.uk}
\begin{abstract}
This paper describes Gopakumar--Vafa (GV) invariants associated to $cA_n$ singularities. We (1) generalise GV invariants to crepant partial resolutions of $cA_n$ singularities, (2) show that generalised GV invariants also satisfy Toda's formula and are determined by their associated contraction algebra, (3) give filtration structures on the parameter space of contraction algebras associated to $cA_n$ crepant resolutions with respect to generalised GV invariants, and (4) numerically constrain the possible tuples of GV invariants that can arise. We further give all the tuples that arise from GV invariants of $cA_2$ crepant resolutions.
\end{abstract}
\maketitle

\maketitle
\setcounter{tocdepth}{1}
\tableofcontents

\parindent 0pt
\parskip 5pt
\section{Introduction}

Gopakumar-Vafa (GV) invariants are designed to count the number of pseudo-holomorphic curves and represent the number of BPS states on a Calabi-Yau 3-fold; it has been conjectured that this is equivalent to other curve counting Gromov-Witten invariants and Pandharipande--Thomas invariants \cite{MT}. A standard approach to computing GV invariants is via moduli spaces of one-dimensional stable sheaves subject to suitable numerical constraints \cite{K}. In practice, these moduli spaces are typically complicated, and explicit calculations are often difficult.

In the more restrictive setting of crepant (partial) resolutions of $cA_n$ singularities, two additional tools become available. First, Toda's formula \cite{T2} (see also \cite{HT,BW2}) relates GV invariants to the dimensions of the associated contraction algebra. Second, Iyama and Wemyss \cite{IW1} provide a concrete algebraic description of crepant partial resolutions of $cA_n$ singularities together with their contraction algebras. The smooth case is now particularly well understood: Zhang \cite{Z} gives an intrinsic algebraic definition of Type~$A$ potentials and proves that (monomialised) Type~$A$ potentials correspond precisely to crepant resolutions of $cA_n$ singularities.
 
The aim of this paper is to develop the curve-counting consequences of these results within algebraic geometry. Along the way, we generalise GV invariants in two directions: first to crepant \emph{partial} resolutions, and second to \emph{non-isolated} $cA_n$ singularities.

It is also worth emphasising that related curve-counting invariants have appeared in the physics literature. In particular, computations in \cite{CSV,C,DSV} evaluate M2-brane BPS state counts (corresponding to five-dimensional hypermultiplets) for various classes of cDV singularities. These include crepant resolutions with a single exceptional curve and crepant partial resolutions of quasi-homogeneous isolated cDV singularities. Although physically motivated, these calculations lead to precise mathematical predictions that, in the $cA_n$ setting, coincide with our generalised GV invariants. The framework developed here therefore provides a natural mathematical setting for these predictions: it extends Toda's formula to crepant partial resolutions and to non-isolated $cA_n$ singularities, while simultaneously giving an algebraic description in terms of contraction algebras. In this way, the results here not only recover predictions from physics in special cases but also place them within a broader, more systematic mathematical framework.

\subsection{Singular Invariants}\label{mainresult}
Throughout, let $\uppi\colon \scrX \to \Spec \scrR$ be a crepant \emph{partial} resolution, where $\scrR$ is a (not necessarily isolated) $cA_n$ singularity; see \ref{def:crepant}. When $\scrX$ is smooth (equivalently, when $\uppi$ is a crepant resolution), our constructions recover the classical invariants and results.

We first introduce our invariants $N_{\upbeta}(\uppi)$, defined without assuming smoothness of $\scrX$ or isolatedness of $\scrR$. Write $\Curve_1,\Curve_2,\dots,\Curve_m$ for the exceptional curves of $\uppi$. For any curve class $\upbeta\in \bigoplus_{i=1}^m \Z\langle \Curve_i\rangle$, set
\[
N_{\upbeta}(\uppi)\colonequals
\begin{cases}
\dim_\C \dfrac{\C\lal x,y\ral}{I_{\upbeta}} & \textnormal{if } \upbeta=\Curve_i+\Curve_{i+1}+\cdots+\Curve_j,\\[0.9em]
0 & \textnormal{otherwise},
\end{cases}
\]
where $I_{\upbeta}\subseteq (x,y)$ is an ideal determined by $\upbeta$ and $\uppi$ (see \ref{def:Nij}).

These invariants parallel the classical GV invariants: when $\uppi$ is a crepant resolution, the curve classes
\[
\{\Curve_i+\Curve_{i+1}+\cdots+\Curve_j \mid 1\leq i\leq j\leq m\}
\]
are precisely those with non-zero GV invariants \cite{NW1,VG}. Moreover, we prove in \ref{intro: 37} that in the smooth case the invariants $N_{\upbeta}$ encode the same information as the integer-valued GV invariants $\GV_{\upbeta}$ (see \ref{GV}): this justifies the terminology \emph{generalised GV invariants}.

The invariant $N_{\upbeta}$ also admits an interpretation as a local intersection number of plane curves in the complete local plane $\Spec \C\lal x,y\ral$, via the ideal $I_{\upbeta}$ (see \ref{rmk:intersect} and \ref{example:gv}). From this perspective, it is striking that an invariant designed to count curves virtually on a threefold can be expressed in terms of plane curve intersections.

Associated to $\uppi \colon \scrX \to \Spec\scrR$ is a noncommutative algebra $\Lambda_{\mathrm{con}}(\uppi)$, called the contraction algebra \cite{DW2}, which encodes the local geometry of the contraction. The following result shows that Toda’s formula \ref{34} extends to this more general, possibly singular, setting.

\begin{prop}[\ref{thm:Toda},  \ref{rmk:ijtobeta}]\label{intro:Toda}
Let $\uppi$ be a crepant partial resolution of a $cA_n$ singularity with $m$ exceptional curves.
For any $1 \leq s  \leq t \leq m$, the following equality holds.
\begin{equation*}
    \operatorname{dim}_{\C}e_s\Lambda_{\mathrm{con}}(\uppi)e_t= 
   \sum_{\upbeta=(\upbeta_1,\dots ,\upbeta_m)} \upbeta_s \cdot \upbeta_t \cdot N_{\upbeta}(\uppi)=\operatorname{dim}_{\C}e_t\Lambda_{\mathrm{con}}(\uppi)e_s.
\end{equation*}
In particular, $ \operatorname{dim}_{\C}\Lambda_{\mathrm{con}}(\uppi)=\sum_{\upbeta} |\upbeta|^2 N_{\upbeta}(\uppi)$ where $|\upbeta| = \upbeta_1 + \dots + \upbeta_m$.
\end{prop}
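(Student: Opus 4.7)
The plan is to prove the more refined equality
$$
\dim_\C e_s \Lambda_{\con}(\uppi) e_t \;=\; \sum_{\substack{1 \leq i \leq s \\ t \leq j \leq m}} N_{\upbeta^{(i,j)}}(\uppi),
$$
where $\upbeta^{(i,j)} \colonequals \Curve_i + \Curve_{i+1} + \cdots + \Curve_j$, and then to deduce the proposition by reindexing the sum. Because $N_\upbeta$ vanishes unless $\upbeta = \upbeta^{(i,j)}$ for some $i \leq j$, and since $\upbeta^{(i,j)}_s \cdot \upbeta^{(i,j)}_t = 1$ precisely when $i \leq \min(s,t)$ and $\max(s,t) \leq j$, the indexing set above matches exactly $\{\upbeta : \upbeta_s \upbeta_t \neq 0\}$. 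That indexing condition is symmetric in $s$ and $t$, so both $\dim e_s \Lambda_{\con} e_t$ and $\dim e_t \Lambda_{\con} e_s$ will be given by the same sum, yielding the claimed symmetry ``for free''. The ``in particular'' consequence then follows by summing the main identity over all $s,t \in \{1, \ldots, m\}$ and recognising $(\textstyle\sum_s \upbeta_s)(\textstyle\sum_t \upbeta_t) = |\upbeta|^2$.

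To establish the key dimension equality, I would use the explicit presentation of $\Lambda_{\con}(\uppi)$ from \cite{IW1} augmented by the type $A$ potential framework of \cite{Z}, which realises $\Lambda_{\con}(\uppi)$ as (a quotient of) the Jacobi algebra of a quiver with monomialised potential on a cyclic type $A$ quiver with $m+1$ vertices. The space $e_s \Lambda_{\con}(\uppi) e_t$ is then spanned by paths from $s$ to $t$ modulo the Jacobi relations. Using the loop generators $\x, \y$ at each vertex, any such path can be normalised, modulo the relations, into the canonical directed path $a_s a_{s+1} \cdots a_{t-1}$ multiplied by a polynomial in $\x$ and $\y$. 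This identifies $e_s \Lambda_{\con}(\uppi) e_t$ with a quotient $\C\lal x,y \ral / J_{s,t}$ of the completed polynomial ring by some explicit ideal $J_{s,t}$ cut out by the potential.

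The heart of the argument is then to establish a Chinese Remainder style decomposition
$$
\C\lal x,y \ral / J_{s,t} \;\cong\; \bigoplus_{\substack{1 \leq i \leq s \\ t \leq j \leq m}} \C\lal x,y\ral / I_{\upbeta^{(i,j)}},
$$
which completes the proof since each summand has dimension $N_{\upbeta^{(i,j)}}(\uppi)$ by Definition \ref{def:Nij}. This will reduce to checking that the ideals $I_{\upbeta^{(i,j)}}$ correspond precisely to the distinct irreducible factors of the polynomial data used to monomialise the type $A$ potential, and that the pairwise sums of these ideals cover the maximal ideal $(x,y)$.

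The principal obstacle I anticipate is this last identification. Even after the type $A$ normalisation reduces paths in $e_s \Lambda_{\con} e_t$ to commutative data in $\C\lal x,y \ral$, one must carefully match the generators of $J_{s,t}$ (coming from the partial derivatives of the potential, and sensitive to the partial-resolution data) with the combinatorially defined $I_{\upbeta^{(i,j)}}$, then verify coprimality in the complete local ring. This is delicate in the partial-resolution, non-isolated setting, since the relevant factorisations take place in $\C\lal x,y \ral$ rather than in a polynomial ring, and the allowable pairs $(i,j)$ are constrained by the shape of $\uppi$ through the structure of the monomialised potential. Once the coprime factorisation of $J_{s,t}$ is in hand, the proposition follows immediately.
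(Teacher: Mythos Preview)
Your overall plan---prove $\dim_\C e_s\Lambda_{\con}(\uppi)e_t = \sum_{i\leq s,\, j\geq t} N_{ij}(\uppi)$ and then reindex---matches the paper exactly, and your reindexing and symmetry arguments are correct. However, the mechanism you propose for the key dimension computation has two genuine gaps.

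First, the monomialized Type $A$ potential framework of \cite{Z} that you invoke is established only for \emph{crepant resolutions} (smooth $\scrX$), not for crepant partial resolutions; see Theorems \ref{11} and \ref{514}. Since the proposition concerns partial resolutions, you cannot appeal to that normalisation of paths to reduce $e_s\Lambda_{\con}e_t$ to a quotient of $\C\lal x,y\ral$. The paper instead works directly with the stable Hom description: by \ref{35}, $e_s\Lambda_{\con}(\uppi)e_t \cong \underline{\Hom}_\scrR(M_{I_s},M_{I_t})$, and the identification with $\C\lal x,y\ral/(g_0\cdots g_{s-1},\, g_t\cdots g_m)$ is obtained in Proposition \ref{lemma:con_iso} via a syzygy/Ext computation in $\CM\,\scrR$ (compute $\Omega(u,g_0\cdots g_{m-1})$, apply $\Hom$ to the resulting short exact sequence, and identify the cokernel). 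No quiver path combinatorics are needed.

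Second, your Chinese Remainder decomposition cannot hold as stated: in the non-isolated case some $N_{ij}=\infty$, meaning the ideals $I_{\upbeta^{(i,j)}}$ share common factors and are \emph{not} pairwise comaximal, so there is no direct sum. The paper sidesteps this by proving only \emph{dimension} additivity (Lemma \ref{lemma:Toda}): $\dim_\C \C\lal x,y\ral/(\prod p_i,\prod q_j)=\sum_{i,j}\dim_\C \C\lal x,y\ral/(p_i,q_j)$, with a separate easy argument (Lemma \ref{lemma:Toda2}) that both sides are $\infty$ whenever any $\gcd(p_i,q_j)\neq 1$. This is strictly weaker than a module decomposition but is all that is required.
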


Hua--Toda \cite{HT,T2} show that, when $\scrX$ is smooth and $\scrR$ is isolated, the GV invariants are determined by the isomorphism class of the contraction algebra. The next result extends this to crepant partial resolutions of (not necessarily isolated) $cA_n$ singularities.

For ease of notation, given a curve class $\upbeta=(\upbeta_1,\dots,\upbeta_m)$, define its \emph{reflected class} by
$\Bar{\upbeta}\colonequals (\upbeta_m,\dots,\upbeta_1)$.
This symmetry arises from the involution of the doubled $A_n$ quiver reversing the orientation of the chain; since the contraction algebra is isomorphic to a quiver algebra on the doubled $A_n$ quiver, the reflected class corresponds to this involution.

\begin{theorem}[\ref{371}, \ref{rmk:ijtobeta}]\label{intro:deter}
Let $\uppi_k \colon \scrX_k \rightarrow \Spec \scrR_k$ be two crepant partial resolutions of $cA_{n_k}$ singularities $\scrR_k$ with $m_k$ exceptional curves for $k=1,2$. If $\Lambda_{\mathrm{con}}(\uppi_1) \cong \Lambda_{\mathrm{con}}(\uppi_2)$, then $m_1=m_2$ and one of the following cases holds:
\begin{enumerate}
     \item $N_{\upbeta}(\uppi_1)=N_{\upbeta}(\uppi_2)$ for every curve class $\upbeta$,
    \item $N_{\upbeta}(\uppi_1)=N_{\Bar{\upbeta}}(\uppi_2)$ for every curve class $\upbeta$.
\end{enumerate}
\end{theorem}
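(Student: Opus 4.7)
The plan is to translate the algebra isomorphism into a permutation of primitive idempotents, then extract a matrix identity from Proposition \ref{intro:Toda} and invert it to recover the generalized GV invariants. First, since $\Lambda_{\mathrm{con}}(\uppi_k)$ is a basic finite-dimensional $\C$-algebra (it is the stable endomorphism ring of a basic modifying module, with distinguished primitive idempotents $e_1^{(k)}, \dots, e_{m_k}^{(k)}$ indexed by the exceptional curves), any isomorphism $\varphi \colon \Lambda_{\mathrm{con}}(\uppi_1) \to \Lambda_{\mathrm{con}}(\uppi_2)$ sends a complete system of primitive orthogonal idempotents to another. Conjugating by a unit of $\Lambda_{\mathrm{con}}(\uppi_2)$ if necessary, I may assume $\varphi(e_s^{(1)}) = e_{\sigma(s)}^{(2)}$ for some bijection $\sigma$ of the indexing sets; this immediately forces $m_1 = m_2 =: m$ and shows that $\sigma$ induces an automorphism of the Gabriel quiver.

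Second, I identify the Gabriel quiver. For a crepant partial resolution of a $cA_n$ singularity with $m$ exceptional curves, the explicit descriptions of $\Lambda_{\mathrm{con}}(\uppi)$ in \cite{IW1} (together with the type-$A$ potential of \cite{Z}) show that the underlying graph of this quiver is the Dynkin diagram $A_m$. The only graph automorphisms of $A_m$ are the identity and the reflection $s \mapsto m+1-s$, so $\sigma$ must be one of these two.

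Third, set $D^{(k)}_{s,t} := \dim_{\C} e_s^{(k)} \Lambda_{\mathrm{con}}(\uppi_k) e_t^{(k)}$. The isomorphism $\varphi$ restricts to a linear bijection from $e_s^{(1)} \Lambda_{\mathrm{con}}(\uppi_1) e_t^{(1)}$ onto $e_{\sigma(s)}^{(2)} \Lambda_{\mathrm{con}}(\uppi_2) e_{\sigma(t)}^{(2)}$, so $D^{(1)}_{s,t} = D^{(2)}_{\sigma(s),\sigma(t)}$. Writing $N_{i,j}(\uppi)$ for $N_{\upbeta}(\uppi)$ with $\upbeta = \Curve_i + \cdots + \Curve_j$, Proposition \ref{intro:Toda} combined with \ref{rmk:ijtobeta} gives, for $s \leq t$,
\[
D^{(k)}_{s,t} \;=\; \sum_{1 \leq i \leq s}\;\sum_{t \leq j \leq m} N_{i,j}(\uppi_k),
\]
which inverts via a two-dimensional inclusion-exclusion to
\[
N_{i,j}(\uppi) \;=\; D_{i,j}(\uppi) - D_{i-1,j}(\uppi) - D_{i,j+1}(\uppi) + D_{i-1,j+1}(\uppi),
\]
with the convention that out-of-range $D$-values are zero. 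Hence the family $(D^{(k)}_{s,t})$ determines the family $(N_{i,j}(\uppi_k))$, and vice versa.

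Finally, I branch on $\sigma$. If $\sigma = \mathrm{id}$, the identity $D^{(1)}_{s,t} = D^{(2)}_{s,t}$ inverts to $N_{i,j}(\uppi_1) = N_{i,j}(\uppi_2)$, giving case (1). If $\sigma$ is the reflection, then using the symmetry $D_{a,b} = D_{b,a}$ from Proposition \ref{intro:Toda}, the equation becomes $D^{(1)}_{s,t} = D^{(2)}_{m+1-t,\,m+1-s}$, and inverting yields $N_{i,j}(\uppi_1) = N_{m+1-j,\,m+1-i}(\uppi_2)$, which is precisely $N_{\upbeta}(\uppi_1) = N_{\overline{\upbeta}}(\uppi_2)$ and gives case (2). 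The principal obstacle is the second step: rigorously confirming that the Gabriel quiver of $\Lambda_{\mathrm{con}}(\uppi)$ has underlying graph exactly $A_m$ with no additional vertex-permuting symmetries, especially in the non-isolated or partial-resolution setting where the quiver descriptions of \cite{IW1, Z} may require a mild extension to apply directly.
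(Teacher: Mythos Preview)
Your overall architecture matches the paper's: first pin down the permutation $\sigma$ of idempotents as either the identity or the reflection, then transport the generalized GV invariants through. The paper carries out the first step just as you suggest, via the Gabriel quiver (Proposition~\ref{372}): $\Ext^1$ between simples together with the intersection theory of \cite[2.15]{W1} shows the underlying graph is a path, forcing $\sigma \in \{\mathrm{id},\, s \mapsto m+1-s\}$. So what you flag as the ``principal obstacle'' is in fact already handled and is not where the difficulty lies.

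The genuine gap is in your third step. You assert that $\Lambda_{\mathrm{con}}(\uppi_k)$ is a finite-dimensional $\C$-algebra, but by the Contraction Theorem~\ref{352} this holds exactly when $\scrR_k$ is isolated; the statement is claimed for arbitrary (possibly non-isolated) $cA_n$ singularities and crepant \emph{partial} resolutions. In the non-isolated case some $N_{i,j}$ equal $\infty$, hence the relevant $D_{s,t}$ equal $\infty$, and your inclusion--exclusion identity $N_{i,j} = D_{i,j} - D_{i-1,j} - D_{i,j+1} + D_{i-1,j+1}$ degenerates to $\infty - \infty$. Worse, the family $(D_{s,t})$ genuinely fails to determine $(N_{i,j})$: already for $m=2$, if $N_{1,2} = \infty$ then every $D_{s,t} = \infty$ regardless of the (possibly finite) values of $N_{1,1}$ and $N_{2,2}$.

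The paper sidesteps this by replacing your additive inversion with a direct intrinsic formula (Lemma~\ref{cor: Nij}):
\[
N_{i,j}(\uppi) \;=\; \dim_{\C}\, e_i\!\left(\frac{\Lambda_{\mathrm{con}}(\uppi)}{\langle e_1,\dots,e_{i-1},e_{j+1},\dots,e_m\rangle}\right)\! e_j,
\]
so each $N_{i,j}$ is the dimension of a single idempotent corner of a quotient of $\Lambda_{\mathrm{con}}$, with no subtraction required. Since $\varphi$ maps idempotents to idempotents it descends to an isomorphism of these quotients, and equality of dimensions (finite or infinite) follows immediately. Your argument is correct and is a clean alternative in the isolated setting, but to cover the full statement you need this quotient description in place of the cumulative Toda formula.
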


The papers \cite{NW1,VG} give a combinatorial description of the matrix governing the transformation of non-zero GV invariants under a flop (see \S\ref{sec: reduction} for the $cA_n$ case). We show in \ref{example:permutation} that the generalised GV invariants satisfy the same transformation rule.

\subsection{Restriction to the Smooth Case}
We now restrict to crepant resolutions of (not necessarily isolated) $cA_n$ singularities. Although the generalised GV invariants need not coincide with the classical GV invariants in every case, we prove that they carry equivalent information.
\begin{theorem}[\ref{37}, \ref{rmk:ijtobeta}]\label{intro: 37}
Let $\uppi$ be a crepant resolution of a $cA_n$ singularity. The following holds for any curve class $\upbeta$.
\begin{enumerate}
\item  $N_{\upbeta}(\uppi) = \infty \iff \GV_{\upbeta}(\uppi)=-1$.
\item  $N_{\upbeta}(\uppi) < \infty \iff \GV_{\upbeta}(\uppi)=N_{\upbeta}(\uppi)$.
\end{enumerate}
\end{theorem}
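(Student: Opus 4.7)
The plan is to combine Toda's formula with Proposition~\ref{intro:Toda} to invert a linear system relating both $N_\upbeta$ and $\GV_\upbeta$ to dimensions of components of $\Lambda_{\con}(\uppi)$.

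First, suppose $\scrR$ is isolated. Toda's formula \ref{34} gives
\[ \dim_{\C}e_s\Lambda_{\mathrm{con}}(\uppi)e_t = \sum_{\upbeta} \upbeta_s\upbeta_t\,\GV_\upbeta(\uppi), \]
while Proposition~\ref{intro:Toda} gives the same identity with $N$ in place of $\GV$. Only curve classes of the form $\upbeta = \Curve_i + \dots + \Curve_j$ contribute, for which $\upbeta_s\upbeta_t = 1$ iff $i \leq \min(s,t) \leq \max(s,t) \leq j$. The resulting linear system, indexed by pairs $(i,j)$, is upper triangular with $1$'s on the diagonal in a suitable order, so Möbius inversion expresses each $N_\upbeta$ (resp.\ $\GV_\upbeta$) as the \emph{same} explicit alternating sum of the dimensions $\dim_\C e_s \Lambda_{\con}(\uppi) e_t$. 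Hence $N_\upbeta = \GV_\upbeta$ for every $\upbeta$, and since all invariants are finite non-negative integers in the isolated case, both (1) and (2) follow.

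For general (possibly non-isolated) $\scrR$, the plan is a deformation argument. Using the algebraic classification of $cA_n$ crepant resolutions from \cite{IW1,Z}, construct a small flat deformation $\uppi_\epsilon\colon\scrX_\epsilon \to \Spec\scrR_\epsilon$ of $\uppi$ with $\scrR_\epsilon$ isolated for generic $\epsilon$. The ideals $I_\upbeta$ (see~\ref{def:Nij}) vary flatly in $\epsilon$, so $N_\upbeta$ is upper semi-continuous in $\epsilon$ and constant wherever it is finite. For curve classes $\upbeta$ with $N_\upbeta(\uppi) < \infty$, the isolated case together with deformation invariance of $\GV$ gives $\GV_\upbeta(\uppi) = N_\upbeta(\uppi)$. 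For $\upbeta$ with $N_\upbeta(\uppi) = \infty$, the non-isolated locus contributes a positive-dimensional family of rational curves in class $\upbeta$; a direct Behrend-weighted Euler-characteristic computation on the moduli of one-dimensional stable sheaves, following Katz \cite{K}, then yields $\GV_\upbeta(\uppi) = -1$.

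The main obstacle is the non-isolated case, and in particular pinning the value at exactly $-1$. This will require either a local-model computation (e.g.\ on a standard non-isolated $cA_n$ model such as $xy = f(z,w)$ with $f$ non-reduced along the singular axis) or precise control over how curves in class $\upbeta$ acquire moduli as $\epsilon \to 0$. A secondary technical point is that Katz's definition of $\GV$ usually assumes proper moduli, so one may need a suitable compact substack of sheaves supported on the exceptional locus of $\uppi$. Once the value $-1$ is established in a single local case, the matrix inversion used in the isolated setting propagates the conclusion to all occurrences and simultaneously shows that no other negative value can arise.
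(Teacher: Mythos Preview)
Your isolated case is correct and gives a genuinely different route from the paper. The paper never inverts the Toda system; instead it reduces every curve class via iterated flops (\ref{lemma:permutate}) to $\Curve_1$ on some $\uppi^{\mathbf r}$, then uses locality of GV invariants (\ref{lemma: GV_local}) to pass to a single-curve crepant resolution of a local $cA_1$, where both $N_{11}$ and $\GV_{11}$ are computed directly from Reid's normal forms \cite{R1} together with \cite{DW2,VG}. Your M\"obius inversion is cleaner in the isolated setting, and has the virtue of extracting $N_\upbeta$ and $\GV_\upbeta$ as the \emph{same} alternating sum of the numbers $\dim_\C e_s\Lambda_{\con}(\uppi)e_t$; this is essentially the content of \ref{cor: Nij} for $N$, applied symmetrically to $\GV$.

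The non-isolated case, however, has a real gap. You need two facts that you do not prove: that $\GV_\upbeta$ is deformation-invariant in this non-proper local setting, and that the limit value is exactly $-1$ whenever $N_\upbeta=\infty$. The first is delicate because the relevant moduli of sheaves need not be proper; the second you explicitly flag as the main obstacle. The paper sidesteps both issues entirely: the flop reduction \ref{lemma:permutate}\eqref{lemma:permutate3} and locality \ref{lemma: GV_local} work uniformly in the isolated and non-isolated cases, so one lands on the local model $\scrS=\C\lal u,v,x,y\ral/(uv-f_{i-1}f_j)$ with a single exceptional curve. Reid's classification then forces either $\varphi(f_{i-1})=\varphi(f_j)=x$ (divisor-to-curve, where $N=\infty$ and $\GV=-1$ by \cite{VG}) or $\varphi(f_{i-1})=x+y^m$, $\varphi(f_j)=x-y^m$ (flop, where $N=m=\GV$ by \cite{DW2} and \ref{34}). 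Note that even if you carried out your proposed local Behrend computation, you would essentially be reproving this $cA_1$ case, so the deformation detour buys nothing; replacing it with the flop-and-localise reduction gives a complete proof with no case split on isolatedness.
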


Together with \ref{intro:deter}, the following shows that the contraction algebra determines the associated GV invariants. This extends the results in \cite{HT, T2} to non-isolated $cA_n$ singularities.
\begin{cor}[\ref{371}, \ref{rmk:ijtobeta}]\label{intro:determin}
Let $\uppi_k \colon \scrX_k \rightarrow \Spec \scrR_k$ be two crepant resolutions of $cA_n$ singularities $\scrR_k$ for $k=1,2$. If $\Lambda_{\mathrm{con}}(\uppi_1) \cong \Lambda_{\mathrm{con}}(\uppi_2)$, then one of the following holds:
\begin{enumerate}
     \item $\GV_{\upbeta}(\uppi_1)=\GV_{\upbeta}(\uppi_2)$ for every curve class $\upbeta$,
    \item $\GV_{\upbeta}(\uppi_1)=\GV_{\Bar{\upbeta}}(\uppi_2)$ for every  curve class $\upbeta$.
\end{enumerate}
\end{cor}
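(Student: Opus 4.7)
The plan is to derive the Corollary directly by combining Theorem \ref{intro:deter} with Theorem \ref{intro: 37}; no fundamentally new ideas are required. Since crepant resolutions are in particular crepant partial resolutions, Theorem \ref{intro:deter} applies and yields $m_1 = m_2$ together with one of two alternatives: either $N_{\upbeta}(\uppi_1) = N_{\upbeta}(\uppi_2)$ for every curve class $\upbeta$, or $N_{\upbeta}(\uppi_1) = N_{\overline{\upbeta}}(\uppi_2)$ for every curve class $\upbeta$. Because the two resolutions have the same number of exceptional curves, the identification of curve classes, and of their reflections, is unambiguous.

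Next I would invoke Theorem \ref{intro: 37} in the smooth setting to convert equalities of the generalized invariants $N_{\upbeta}$ into equalities of the classical $\GV_{\upbeta}$. That theorem supplies a complete dictionary in both the finite and infinite regimes: $N_{\upbeta}(\uppi) = \infty \iff \GV_{\upbeta}(\uppi) = -1$, and otherwise $\GV_{\upbeta}(\uppi) = N_{\upbeta}(\uppi)$. Applying this dictionary on both sides of the equality produced by \ref{intro:deter}, the argument splits according to whether the common value of $N_{\upbeta}$ is finite or infinite; in each regime the corresponding GV invariants agree. Case (1) and Case (2) of the Corollary follow, respectively, from the two alternatives delivered by Theorem \ref{intro:deter}.

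The only point that requires any care is that the dictionary must be applied consistently to both sides simultaneously. Concretely, in the reflected alternative, if $N_{\upbeta}(\uppi_1) = N_{\overline{\upbeta}}(\uppi_2)$ is infinite then $\GV_{\upbeta}(\uppi_1) = -1 = \GV_{\overline{\upbeta}}(\uppi_2)$, and if this common value is finite then both invariants equal that finite integer. Curve classes not of the form $\Curve_i + \dots + \Curve_j$ carry $N_{\upbeta} = 0$ on both sides, and by \cite{NW1, VG} their GV invariants also vanish, so they need no separate treatment. I therefore do not anticipate any real obstacle beyond the (already nontrivial) earlier theorems \ref{intro:deter} and \ref{intro: 37}; the Corollary itself is a short translation step.
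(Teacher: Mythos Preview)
Your proposal is correct and matches the paper's approach exactly: the paper proves this (as Corollary~\ref{thm: determine}) in one line, stating it is immediate from \ref{371} and \ref{37}, which are precisely the $ij$-indexed versions of the two theorems you invoke. Your additional care about applying the finite/infinite dictionary consistently and handling curve classes with vanishing invariants is sound but more detailed than the paper bothers to spell out.
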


\subsection{Filtration}
Continuing the assumption that $\scrX$ is smooth (equivalently, $\uppi$ is a crepant resolution), the contraction algebra $\Lambda_{\mathrm{con}}(\uppi)$ is isomorphic to the complete Jacobi algebra of a quiver with potential \cite{VM}. The possible potentials were explicitly described in \cite{Z}. This extra data motivates us to study the filtration structure of the parameter space of such potentials with respect to generalised GV invariants.




We briefly recall the relevant definitions from \cite{Z}.
Fix $n\ge 1$ and consider the quiver $Q_n$, the double of the usual $A_n$ quiver with a single loop at each vertex. Label the arrows of $Q_n$ from left to right as follows:
\[
\begin{array}{c}
\begin{tikzpicture}[bend angle=15, looseness=1.2]
\node (a) at (-1.5,0) [vertex] {};
\node (b) at (0,0) [vertex] {};
\node (c) at (1.5,0) [vertex] {};
\node (c2) at (2,0) {$\hdots$};
\node (d) at (2.5,0) [vertex] {};
\node (e) at (4,0) [vertex] {};
\node (a1) at (-1.5,-0.2) {$\scriptstyle 1$};
\node (a2) at (0,-0.2) {$\scriptstyle 2$};
\node (a3) at (1.5,-0.2) {$\scriptstyle 3$};
\node (a4) at (2.5,-0.25) {$\scriptstyle n-1$};
\node (a5) at (4,-0.25) {$\scriptstyle n$};
\draw[->,bend left] (a) to node[above] {$\scriptstyle a_{2}$} (b);
\draw[<-,bend right] (a) to node[below] {$\scriptstyle b_{2}$} (b);
\draw[->,bend left] (b) to node[above] {$\scriptstyle a_{4}$} (c);
\draw[<-,bend right] (b) to node[below] {$\scriptstyle b_{4}$} (c);
\draw[->,bend left] (d) to node[above] {$\scriptstyle a_{2n-2}$} (e);
\draw[<-,bend right] (d) to node[below] {$\scriptstyle b_{2n-2}$} (e);
\draw[<-]  (a) edge [in=120,out=55,loop,looseness=10] node[above] {$\scriptstyle a_{1}$} (a);
\draw[<-]  (b) edge [in=120,out=55,loop,looseness=11] node[above] {$\scriptstyle a_{3}$} (b);
\draw[<-]  (c) edge [in=120,out=55,loop,looseness=11] node[above] {$\scriptstyle a_{5}$} (c);
\draw[<-]  (d) edge [in=120,out=55,loop,looseness=11] node[above] {$\scriptstyle a_{2n-3}$} (d);
\draw[<-]  (e) edge [in=120,out=55,loop,looseness=11] node[above] {$\scriptstyle a_{2n-1}$} (e);
\end{tikzpicture}
\end{array}
\]

From this, define paths $\x_i$ and $\x_i'$ as follows. First, for each $1\leq i\leq n$, let $b_{2i-1}$ denote the trivial path $e_i$ at vertex $i$. Then, for $1\leq i\leq 2n-1$, set $\x_i \colonequals a_ib_i$ and $\x_i' \colonequals b_ia_i$.
For example, in the case $n=3$, 
 \[
 \begin{array}{cl}
\begin{array}{c}
\begin{tikzpicture}[bend angle=15, looseness=1.2]
\node (a) at (-1,0) [vertex] {};
\node (b) at (0,0) [vertex] {};
\node (c) at (1,0) [vertex] {};

\node (a1) at (-1,-0.2) {$\scriptstyle 1$};
\node (a2) at (0,-0.2) {$\scriptstyle 2$};
\node (a3) at (1,-0.2) {$\scriptstyle 3$};

\draw[->,bend left] (a) to node[above] {$\scriptstyle a_{2}$} (b);
\draw[<-,bend right] (a) to node[below] {$\scriptstyle b_{2}$} (b);
\draw[->,bend left] (b) to node[above] {$\scriptstyle a_{4}$} (c);
\draw[<-,bend right] (b) to node[below] {$\scriptstyle b_{4}$} (c);
\draw[<-]  (a) edge [in=120,out=55,loop,looseness=11] node[above] {$\scriptstyle a_{1}$} (a);
\draw[<-]  (b) edge [in=120,out=55,loop,looseness=11] node[above] {$\scriptstyle a_{3}$} (b);
\draw[<-]  (c) edge [in=120,out=55,loop,looseness=11] node[above] {$\scriptstyle a_{5}$} (c);
\end{tikzpicture}
\end{array}
&
\begin{array}{l}
\x_1=\x_1'=a_1\\
\x_3=\x_3'=a_3\\
\x_5=\x_5'=a_5
\end{array}
\end{array}
\]
whereas $\x_2=a_2b_2$, $\x_2'=b_2a_2$, and $\x_4=a_4b_4$, $\x_4'=b_4a_4$. 

Given the above $\x_i$ and $\x_i'$, a \emph{monomialised Type $A$ potential} on $Q_{n}$ is a potential of the form
\begin{equation}\label{def:TypeA}
    \sum_{i=1}^{2n-2}\x_i^{\prime}\x_{i+1} + \sum_{i=1}^{2n-1}\sum_{j=2}^{\infty} k_{ij} \x_i^{j},
\end{equation}
for some coefficients $k_{ij} \in \C$. The main result of \cite{Z} shows that the complete Jacobi algebra of any monomialised Type $A$ potential on $Q_n$ can be realised as the contraction algebra of a crepant resolution of a $cA_n$ singularity (see \ref{11}). Moreover, there is a correspondence between crepant resolutions of $cA_n$ singularities and our intrinsic noncommutative monomialised Type $A$ potentials (see \ref{12}).

Since the contraction algebra determines the associated GV invariants by \ref{intro:determin}, this correspondence suggests studying GV invariants of crepant resolutions of $cA_n$ singularities via their associated monomialised Type~$A$ potentials on $Q_n$.

For fixed $n \geq 1$, consider the family of all monomialised Type~$A$ potentials on $Q_n$,
\begin{equation*}
     f(\upkappa)=\sum_{i=1}^{2n-2}\x_i^{\prime}\x_{i+1} + \sum_{i=1}^{2n-1}\sum_{j=2}^{\infty} \upkappa_{ij} \x_i^{j},
\end{equation*}
parametrised by
\begin{equation*}
   \M \colonequals  \bigl\{(k_{12},k_{13}, \dots, k_{22},k_{23},\dots, k_{2n-1,2},k_{2n-1,3}, \dots)\mid \text{all }k_{ij} \in\mathbb{C}\bigl\}.
\end{equation*}
Using the correspondence above, for each $k\in \M$ we define generalised GV invariants $N_{\upbeta}(f(k))$ via the associated crepant resolution (see \ref{def:gvf}).
The next result gives a filtration structure of the parameter space $\M$ in terms of generalised GV invariants.

\begin{theorem}[\ref{stra}]\label{13}
Fix $s,t$ with $1 \leq s \leq t \leq n$, then $\M$ admits a filtration structure $\M=M_1 \supsetneq M_2 \supsetneq M_3 \supsetneq \cdots $ such that:
\begin{enumerate}
\item For each $i \geq 1$, one has $N_{\upbeta}(f(k))=i$ for all $k \in M_{i} \setminus M_{i+1}$.
\item Each $M_{i}$ is the common zero locus of power series in the coordinates $\upkappa$.
\item If $s=t$, then for each $i \geq 2$, $M_i=\{k \in \M \mid k_{2s-1,j} =0 \textnormal{ for }2 \leq j \leq i \}$.
\end{enumerate}
\end{theorem}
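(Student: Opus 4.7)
The plan is to define the filtration tautologically by
\[M_i \colonequals \{k \in \M \mid N_{\upbeta}(f(k)) \geq i\}, \qquad i \geq 1,\]
so that $\M = M_1 \supseteq M_2 \supseteq \cdots$ and (1) is automatic: if $k \in M_i \setminus M_{i+1}$ then $i \leq N_{\upbeta}(f(k)) < i+1$ forces equality. The substantive content is then the polynomiality in (2), the explicit description in (3), and the strictness of the inclusions.

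For (2), I would first unpack the ideal $I_{\upbeta}$ as constructed in \ref{def:Nij}. Since $f(k)$ and hence its cyclic derivatives depend polynomially on $k$, and the reduction to $I_{\upbeta} \subseteq \C\lal x, y\ral$ used in that construction is algebraic, $I_{\upbeta}$ admits a generating set whose coefficients are polynomials in $k$. The condition $\dim_{\C} \C\lal x, y\ral / I_{\upbeta} \geq i$ is then Zariski closed by standard upper semicontinuity: for $N$ sufficiently large, the finite-dimensional quotient $\C\lal x, y\ral / (I_{\upbeta} + (x,y)^N)$ has dimension $\geq i$ iff certain minors of its matrix of relations (polynomial in $k$) vanish, realising $M_i$ as a polynomial zero locus.

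For (3) with $s = t$, the ideal $I_{\Curve_s}$ simplifies because only the loop data at vertex $s$, i.e.\ the series $\sum_{j \geq 2} k_{2s-1, j}\,\x_{2s-1}^{j}$, contributes to the relations cutting out $I_{\Curve_s}$. A direct computation identifies $I_{\Curve_s} = (y, g_s(x))$ where $g_s$ has order $j^{*} - 1$ for $j^{*} \colonequals \min\{j \geq 2 \mid k_{2s-1, j} \neq 0\}$, so $N_{\Curve_s}(f(k)) = j^{*} - 1$, yielding
\[M_i = \{k \in \M \mid k_{2s-1, j} = 0 \textnormal{ for } 2 \leq j \leq i\} \qquad \textnormal{for } i \geq 2.\]
This description also makes strictness transparent in the single-curve case: setting $k_{2s-1, i+1} = 1$ with all other $k_{2s-1, j}$ zero produces a potential with $N_{\Curve_s}(f(k)) = i$. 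For the general case $s < t$, strictness follows by producing for each $i$ a potential with $N_{\upbeta}(f(k)) = i$, which one can arrange by tuning the loop coefficient at one endpoint of the chain (appealing, if needed, to the reflection symmetry from \ref{intro:deter}).

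The main obstacle is (2): carefully establishing that $I_{\upbeta}$ admits a presentation with polynomial dependence on $k$. This requires tracking through the construction in \ref{def:Nij}, which passes through the contraction algebra and its idempotent structure, and verifying that the normalisations used (including any change of variables reducing to the two-variable power series ring) remain algebraic in the parameters $k$.
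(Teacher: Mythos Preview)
Your tautological definition $M_i = \{k : N_\upbeta(f(k)) \geq i\}$ is correct and agrees with what the paper ultimately produces. Where you diverge from the paper is in how (2) and strictness are established, and here your proposal is both less direct and incomplete in one place.

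For (2), the paper does not go through the contraction algebra or its idempotent structure at all. Instead it uses the explicit recursive system \eqref{501}: fixing $g_{2s-2}=y$, $g_{2s-1}=x$ and solving forward gives each $g_i\in\C\lal x,y\ral$ with coefficients that are \emph{polynomials} in the $\upkappa_{ij}$. Setting $y=0$ reduces $N_{st}(f(k))=\dim_\C \C\lal x,y\ral/(g_{2s-2},g_{2t})$ to a \emph{one}-variable computation $\dim_\C \C\lal x\ral/(h_{2s-2,2t})$ with $h_{2s-2,2t}=\sum_{i\geq 1} c_i(\upkappa)\,x^i$ (this is \eqref{Nstf} and \ref{064}). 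Then $M_i$ is simply the vanishing locus of $c_1,\dots,c_{i-1}$, which are explicit polynomials; no abstract semicontinuity or truncation argument is needed, and as a bonus one gets the closed formula $M_2=\{\det A^2_{2s-1,2t-1}(f(k))=0\}$. Your two-variable semicontinuity route would work in principle, but you would still need to verify polynomial dependence of the generators on $k$, and the place to find that is the recursion \eqref{501}, not \ref{def:Nij}.

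For strictness when $s<t$, your sentence ``tuning the loop coefficient at one endpoint of the chain'' is not a proof: with the other loop coefficients generic, $N_{st}$ is generically $1$ regardless of what you do at one endpoint. The paper proves strictness by passing to the more general statement \ref{58} over the subspaces $\M_{\p'}$: choosing $\p'$ so that $d_{2s-1,2t-1}(\p')=i+1$ (which forces \emph{all} loop coefficients $\upkappa_{2s-1,\,\bullet},\dots,\upkappa_{2t-1,\,\bullet}$ to vanish up to suitable order) produces a nonempty region where $N_{st}=i$ exactly, via the determinant computation in \ref{41}. So the strictness you want for \ref{stra} is genuinely a corollary of the stratified version \ref{58}, not something one can read off from a single endpoint.
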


We stress that the filtration in \ref{13} depends strongly on the curve class $\upbeta$; as $\upbeta$ varies, so does the filtration. This behaviour mirrors a familiar phenomenon in moduli theory: objects in general position exhibit the most regular behaviour, while more degenerate behaviour occurs along loci of higher codimension (see \ref{rmk:moduli}).

\subsection{Obstructions}
For any curve class $\upbeta$ and $N \in \N_{\infty} \colonequals \N \cup \infty$, Theorem~\ref{13} shows that there exists a crepant resolution $\uppi$ of a $cA_n$ singularity such that $N_{\upbeta}(\uppi)=N$.
However, this statement no longer holds when one considers
generalised GV invariants for multiple curve classes simultaneously.
We therefore study obstructions and constructions for the tuples of generalised GV invariants that can arise from crepant resolutions of $cA_n$ singularities.

\begin{notation}[\ref{notation:gv_tuple2}, \ref{notation:gv_tuple}]\label{notation:gv_tuple_intro}
Fix a curve class $\upbeta = \Curve_s+\Curve_{s+1}+\dots +\Curve_t$, and a tuple $(q_s,q_{s+1}, \dots, q_t)\in \N_{\infty}^{t-s+1}$. Set $\mathbf{q}_{\min}\colonequals \min\{q_i\}$, and consider the subset of crepant resolutions of $cA_n$ singularities with respect to $(q_s, \dots, q_t)$ defined as
\begin{equation*}
\mathsf{CA}_{\mathbf{q}} \colonequals\{ cA_n\textnormal{ crepant resolution }\uppi \mid (N_{\Curve_s}(\uppi),N_{\Curve_{s+1}}(\uppi),\dots, N_{\Curve_t}(\uppi))=(q_s, q_{s+1}, \dots,q_t)  \}.
\end{equation*}        
\end{notation}

The following is the main obstruction result, which is new even in the case when $\scrX$ is smooth and $\scrR$ is isolated (in which case $N_{\upbeta}=\GV_{\upbeta}$ by \ref{intro: 37}).

\begin{theorem}[\ref{thm:obs}]\label{thm:obs_intro}
Fix integers $s$, $t$ with $1 \leq s \leq t \leq n$, and a tuple $(q_s, q_{s+1},\dots, q_t)\in \N_{\infty}^{t-s+1}$. With notation as in \textnormal{\ref{notation:gv_tuple_intro}} and $\upbeta \colonequals \Curve_s+ \Curve_{s+1}+ \dots +\Curve_t$, the following holds.
\begin{enumerate}
\item For every $\uppi \in \mathsf{CA}_{\mathbf{q}}$ necessarily $N_{\upbeta}(\uppi) \geq \mathbf{q}_{\min}$. Moreover, there exists $\uppi \in \mathsf{CA}_{\mathbf{q}}$ such that $N_{\upbeta}(\uppi)=\mathbf{q}_{\min}$.
\item Assume that $\mathbf{q}_{\min}$ is finite. Then the equality $N_{\upbeta}(\uppi) = \mathbf{q}_{\min}$ holds for all $\uppi \in \mathsf{CA}_{\mathbf{q}}$ if and only if $\#\{i  \mid q_i=\mathbf{q}_{\min}\}=1 $.
\end{enumerate}   
\end{theorem}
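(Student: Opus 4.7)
The approach rests on the correspondence between $cA_n$ crepant resolutions and monomialized Type $A$ potentials $f(\upkappa) \in \M$ (Theorem~\ref{12}), together with the filtration in Theorem~\ref{13}: every $\uppi \in \mathsf{CA}_{\mathbf{q}}$ arises as the crepant resolution associated to some $\upkappa \in \M$, and each $N_{\upbeta}(\uppi)$ is read off from $\upkappa$ via the ideal $I_{\upbeta}$ of Definition~\ref{def:Nij}. The plan is to translate the whole theorem into a statement about these ideals, and then use the explicit filtration description to verify the inequalities and construct the required examples.

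For the lower bound in~(1), I would fix any $i_0 \in \{s, s+1, \dots, t\}$ with $q_{i_0} = \mathbf{q}_{\min}$ and exhibit a surjection
\[
\C\lal x, y\ral / I_{\upbeta} \twoheadrightarrow \C\lal x, y\ral / I_{\Curve_{i_0}},
\]
which witnesses $N_{\upbeta}(\uppi) \geq N_{\Curve_{i_0}}(\uppi) = \mathbf{q}_{\min}$. This comes down to verifying the inclusion $I_{\upbeta} \subseteq I_{\Curve_{i_0}}$ after identifying two-variable presentations, reflecting the geometric fact that deformation data for the multi-curve class specialises to that of any single component. For the existence of $\uppi$ realising equality, Theorem~\ref{13}(3) characterises $N_{\Curve_i}(f) = q_i$ as $\upkappa_{2i-1,j} = 0$ for $2 \leq j \leq q_i$ together with $\upkappa_{2i-1, q_i+1} \neq 0$. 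I would define $f \in \M$ obeying these constraints for every $i \in \{s,\dots,t\}$ and with the coupling coefficients $\upkappa_{2i, j}$ either zero or chosen generically; a direct calculation in the Jacobi algebra should collapse $I_{\upbeta}$ to $I_{\Curve_{i_0}}$, giving $N_{\upbeta}(f) = \mathbf{q}_{\min}$.

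For~(2), one direction is a construction and the other a rigidity statement. If at least two indices $i_1 < i_2$ in $\{s, \dots, t\}$ satisfy $q_{i_1} = q_{i_2} = \mathbf{q}_{\min}$, I would tune the leading loop coefficients $\upkappa_{2i_1-1, q_{i_1}+1}$ and $\upkappa_{2i_2-1, q_{i_2}+1}$ together with the coupling coefficients along the chain from $i_1$ to $i_2$ so that the lowest-order contributions to $I_{\upbeta}$ from these two vertices cancel. The filtration in Theorem~\ref{13}(2) guarantees that such a cancellation locus is a proper subvariety of the relevant parameter space, producing $f \in \mathsf{CA}_{\mathbf{q}}$ with $N_{\upbeta}(f) > \mathbf{q}_{\min}$. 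Conversely, when $i_0$ is the unique minimiser, the loop at vertex $i_0$ contributes a generator to $I_{\upbeta}$ of strictly lower order than any other possible contribution (from loops at non-minimising vertices, whose initial order is strictly larger, or from coupling terms, which involve products of such loops). This generator then survives in the quotient regardless of the remaining coefficients, forcing $N_{\upbeta}(\uppi) = \mathbf{q}_{\min}$ for every $\uppi \in \mathsf{CA}_{\mathbf{q}}$.

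The hard part will be this last rigidity direction of~(2). Controlling the full set of generators of $I_{\upbeta}$ over the entire parameter space, not merely over a generic open set, requires precise tracking of how the coupling coefficients $\upkappa_{2i,j}$ and the higher-order loop coefficients can interact with the minimising loop at $i_0$. I expect this to hinge on combining the polynomial system description of the filtration $M_i$ from Theorem~\ref{13}(2) for the multi-class $\upbeta$ with the single-class characterisation in Theorem~\ref{13}(3) for each $\Curve_i$, and arguing that when exactly one $q_i$ attains $\mathbf{q}_{\min}$ the minimal-order term in $I_{\upbeta}$ cannot be destroyed by any admissible perturbation.
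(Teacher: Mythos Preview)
Your lower-bound argument for (1) has a real gap. The ideal inclusion $I_{\upbeta}\subseteq I_{\Curve_{i_0}}$ is not justified by the ``geometric specialisation'' heuristic you invoke: that heuristic, taken at face value, would give the inclusion for \emph{every} component $\Curve_i$ with $s\leq i\leq t$, hence $N_{\upbeta}\geq N_{\Curve_i}$ for all such $i$. This is false. For instance with $n=2$, taking $g_1=x$, $g_2=y$, $k_{13}=1$, $k_{32}=1$ and all other $k_{\ast\ast}=0$ gives $g_0=-3x^2-y$, $g_4=-y+2x$, so $(g_0,g_4)=(x,y)$ while $(g_0,g_2)=(x^2,y)$; thus $I_{\Curve_1+\Curve_2}\not\subseteq I_{\Curve_1}$ and correspondingly $N_{12}=1<2=N_{11}$. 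The inclusion \emph{does} hold when $i_0$ is a minimiser, but proving it amounts to showing that $g_{2t}|_{y=0}$ and $g_{2s-2}|_{y=0}$ each have $x$-order at least $q_{i_0}$, and establishing that requires propagating lowest-degree information through the recursion \eqref{501} in both directions---which is exactly the content of Proposition~\ref{064}. So your route is not a shortcut; it repackages the same hard step.

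The paper organises this computation differently and, crucially, in a way that also dispatches your acknowledged hard part (2)($\Leftarrow$). It encodes the lowest-degree coefficient of $h_{2s-2,2t}$ as $\pm\det A^{d}_{2s-1,2t-1}(\upkappa_{\mathbf p})$ for explicit matrices (Proposition~\ref{064}), and the lower bound then falls out of the filtration Theorem~\ref{58}. For (2), Lemma~\ref{lemma:matrix_2} shows that when the minimiser is unique this determinant collapses to a single monomial $\pm d\,\upkappa_{2m-1,d}$, which by definition cannot vanish on $\M_{\mathbf p}^{\circ}$; when there are several minimisers it is a genuine linear combination, and a short unique-factorisation argument shows its zero locus meets $\M_{\mathbf p}^{\circ}$. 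Your outline for (2)---``the loop at $i_0$ contributes a generator of strictly lower order than any other''---is morally the unique-minimiser determinant computation, but without the matrices $A^d_{ij}$ you have no mechanism for verifying that no cancellation occurs across the whole parameter space. I would recommend engaging with those matrices directly rather than trying to control $I_{\upbeta}$ generator-by-generator.
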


In \ref{cor:obs} we show that the actions on curve classes from \cite[5.4]{NW1} and \cite[5.10]{VG}, together with \ref{thm:obs_intro}, yield further obstructions and constructions. As a sample consequence (see the end of \S\ref{section:obs} for more), we obtain the following complete description in the $cA_2$ case.

\begin{cor}[\ref{prop:gv_cA2}]
The generalised GV invariants of crepant resolutions of $cA_2$ singularities have the following two forms:
\[
\begin{tikzpicture}[bend angle=30, looseness=1]
\node (a) at (0,0) {$N_{\Curve_1}$};
\node (b) at (1.2,0) {$N_{\Curve_2}$};
\node (c) at (0.6,-0.6) {$N_{\Curve_1+\Curve_2}$};
\node (d) at (2,-0.4) {$=$};
\node (e) at (2.7,0) {$p$};
\node (f) at (3.5,0) {$q$};
\node (g) at (3.1,-0.6) {$\min(p,q)$};
\node (h) at (4.3,-0.4) {or};
\node (e) at (5,0) {$p$};
\node (f) at (5.8,0) {$p$};
\node (g) at (5.4,-0.6) {$r$};
\end{tikzpicture}
\]
where $p$, $q$, $r \in \N_{\infty}$ with $p \neq q$ and $ r \geq p$. Moreover, all possible such $p,q,r$ arise.
\end{cor}

\subsection*{Conventions}\label{con}
Throughout this paper, we work over the complex numbers $\mathbb{C}$, which is necessary for various statements in \S \ref{geometry}. We also adopt the following notation.
\begin{enumerate}
\item In \S\ref{sec: GVPartial}, $m$ denotes the number of exceptional curves of a crepant partial resolution of a $cA_n$ singularity.
\item In \S\ref{sec:GV}, \S\ref{sec: Filtrations} and \S\ref{section:obs}, $n$ denotes both the number of vertices of the quiver $Q_{n}$ and the index in $cA_n$. Moreover, $\p$ denotes a tuple $(p_1,p_2,\dots,p_{2n-1})$ with $p_i \in \N_{\infty}$ and $p \geq 2$ (see \ref{TypeAp}).
\item Vector space dimension is written $\dim_{\C}V$.
\end{enumerate}

\subsection*{Acknowledgements}
This work forms part of the author’s PhD at the University of Glasgow, funded by the China Scholarship Council.
The author would like to thank his supervisor, Michael Wemyss, for valuable guidance, and his external examiner, Alastair Craw, for helpful comments. He also thanks Yanki Lekili for helpful explanations concerning the interpretation of Gopakumar--Vafa invariants as local intersection numbers of plane curve germs.


\section{Preliminaries and Recap}

\subsection{Algebraic Preliminaries}
To set notation, let $Q=(Q_0,Q_1,t,h)$ be a \emph{quiver}, where $Q_0$ is a finite set of vertices, $Q_1$ is a finite set of arrows, and $t,h\colon Q_1\to Q_0$ are the tail and head maps. A path $a$ is \emph{cyclic} if $h(a) = t(a)$.

Let $k$ be a field. The \emph{complete path algebra} $k\lbl Q\rbl$ is the completion of the usual path algebra $kQ$ with respect to the arrow ideal. That is, the elements of $k\lbl Q\rbl$ are possibly infinite $k$-linear combinations of paths in $Q$.

\begin{definition}\label{QP}
Let $Q$ be a quiver.
\begin{enumerate}

\item  A \emph{quiver with potential} (\emph{QP} for short) is a pair $(Q, W)$, where $W$ is a $k$-linear combination of cyclic paths in $Q$.
\item For each $a \in Q_1$ and cyclic path $a_1 \dots a_d$ in $Q$, define the \emph{cyclic derivative} as
\begin{equation*}
\partial_{a}\left(a_{1} \ldots a_{d}\right)=\sum_{i=1}^{d} \delta_{a, a_{i}} a_{i+1} \ldots a_{d} a_{1} \ldots a_{i-1}
\end{equation*}
(where $\delta_{a, a_{i}}$ is the Kronecker delta), and then extend $\partial_{a}$ by linearity.
\item The \emph{Jacobi ideal} $J(W)$ is the closure of the two-sided ideal in $k\lbl Q\rbl$ generated by $\partial_{a}W$ for all $a \in Q_1$. 
\item The \emph{Jacobi algebra} $\Jac(Q,W)$ is the quotient $k\lbl Q\rbl/J(W)$. When the quiver is clear from context, we simply write $\Jac(W)$.
\end{enumerate}
\end{definition}

\subsection{Geometric Preliminaries}\label{geometry}
We briefly recall the geometric framework needed in this paper. We first introduce compound Du Val (cDV) singularities and their crepant (partial) resolutions. We then review modification algebras and contraction algebras associated to such resolutions, before recalling Gopakumar--Vafa invariants in the case of crepant resolutions.


Throughout the remainder of the paper, we reserve the notation $\scrR$ for complete local $\C$-algebras of the following form.
\begin{definition}
A complete local $\mathbb{C}$-algebra $\scrR$ is called a \emph{compound Du Val (cDV) singularity} if
\begin{equation*}
\scrR \cong \frac{\mathbb{C} \lal u, v, x, t \ral}{f+t g}
\end{equation*}
where $f \in \mathbb{C} \lal u, v, x \ral$ defines a Du Val, or equivalently Kleinian, surface singularity and $g \in \mathbb{C} \lal u, v, x, t \ral$ is arbitrary.
\end{definition}

\begin{definition}\label{def:crepant}
A projective birational morphism $\uppi \colon \scrX \rightarrow \Spec \scrR$ is called \emph{crepant partial resolution} if $\omega_{\scrX} \cong \uppi^{*}\omega_{\scrR}$. A \emph{minimal model} is a crepant partial resolution such that $\scrX$ has only $\Q$-factorial terminal singularities. When $\scrX$ is furthermore smooth, we call $\uppi$ a \emph{crepant resolution}.
If $\scrR$ is isolated, crepant partial resolutions and crepant resolutions are equivalently called \emph{flopping contractions} and \emph{smooth flopping contractions}, respectively $($see e.g. \cite[\S 1]{R1}$)$.
\end{definition}

Gopakumar--Vafa invariants are traditionally defined for smooth Calabi--Yau threefolds, which in the present setting corresponds to the case of crepant resolutions (see \S\ref{intro: GV}). Minimal models are often the primary objects of study in the birational geometry of threefolds. In this paper, we work more generally with crepant partial resolutions, allowing for singularities, with the aim of formulating and studying generalised Gopakumar--Vafa invariants in this broader context (see \S\ref{sec: generalised GV1}).

\subsubsection{Contraction Algebras}\label{intro: contration}
This subsection first introduces modification algebras and contraction algebras of crepant partial resolutions of cDV singularities and then recalls some associated theorems.

Given $\scrR$ \textnormal{cDV} as before, a finitely generated $\scrR$-module $M\in\mod \scrR$ is called \emph{maximal Cohen--Macaulay} (CM) provided
\[
\depth_\scrR M\colonequals\inf \{ i\geq 0\mid \Ext^i_\scrR(\scrR/ \m,M)\neq 0 \}=\dim \scrR.
\]
We write $\CM \, \scrR$ for the category of CM $\scrR$-modules.  Further, for $(-)^*\colonequals\Hom_\scrR(-,\scrR)$, $M\in\mod \scrR$ is called reflexive if the natural morphism $M\to M^{**}$ is an isomorphism, and we write $\refl \scrR$ for the category of reflexive $\scrR$-modules.

\begin{definition}\label{MMdefin}
We say $N\in\refl \scrR$ is a \emph{modifying (M) module} if $\End_\scrR(N)\in\CM \, \scrR$, and we say that $N\in\refl \scrR$ is a \emph{maximal modifying (MM) module} if it is modifying and it is maximal with respect to this property;  equivalently,
\[
\add N=\{ X\in\refl \scrR\mid \End_{\scrR}(N \oplus X)\in\CM \, \scrR  \}.
\]
If $N$ is an \textnormal{M} module $($resp. \textnormal{MM} module$)$, we call $\End_\scrR(N)$ a \emph{modification algebra} $($resp. \emph{maximal modification algebra}$)$.  
\end{definition}

The notion of a smooth noncommutative minimal model, known as a noncommutative crepant resolution, was introduced by Van den Bergh \cite{V1}.

\begin{definition} \label{thm32}
A \emph{noncommutative crepant resolution (NCCR)} of $\scrR$ is a algebra of the form $\Lambda\colonequals\End_\scrR(N)$ where $N \in\refl \scrR$, such that $\Lambda \in \CM \, \scrR$ and has finite global dimension.
\end{definition}

It turns out that if there exists an NCCR $\End_\scrR(N)$, then $N$ is automatically MM, and further all MM modules give NCCRs.  In other words, if one noncommutative minimal model is smooth, they all are \cite[5.11]{IW3}.

\begin{theorem}\cite[\S 4]{W1}\label{36}
Let $\scrR$ be \textnormal{cDV}, then there exist bijections
\begin{align*}
    \qquad \qquad (\mathrm{M}\,\scrR)\cap(\CM \,\scrR)  & \longleftrightarrow\left\{\text {crepant partial resolutions } \uppi: \scrX \rightarrow \Spec  \scrR \right\}, \\
    (\mathrm{MM}\,\scrR)\cap(\CM \,\scrR)  &\longleftrightarrow\left\{\text {minimal models } \uppi: \scrX \rightarrow \Spec  \scrR \right\}.
\end{align*}
If further $R$ admits a crepant resolution, then
\begin{equation*}
    (\mathrm{MM}\,\scrR)\cap(\CM \, \scrR)  \longleftrightarrow\left\{\text {crepant resolutions } \uppi: \scrX \rightarrow \Spec  \scrR \right\}.
\end{equation*}
\end{theorem}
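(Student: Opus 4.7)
The plan is to construct mutually inverse assignments between reflexive modifying CM modules and crepant partial resolutions, bridging the commutative and noncommutative sides via tilting theory. In the forward direction, given a crepant partial resolution $\uppi\colon\scrX\to\Spec\scrR$, I would invoke Van den Bergh's construction of a tilting bundle $\scrV$ on $\scrX$, which is available in the cDV setting by exploiting that $\scrR$ has only rational singularities and that $\uppi$ is crepant. Setting $N_\uppi\colonequals\uppi_*\scrV$, one checks that $N_\uppi$ is reflexive on $\scrR$, lies in $\CM\,\scrR$, and that $\End_\scrR(N_\uppi)\cong\End_\scrX(\scrV)$ is CM, so $N_\uppi$ is a modifying module. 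The CM property of the endomorphism ring reflects Ext-vanishing for the tilting bundle, which in turn uses the crepant hypothesis via Grothendieck duality.

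In the reverse direction, to a modifying CM module $N$ I would associate a geometric object by taking moduli of $\theta$-stable $\End_\scrR(N)$-modules for a suitable generic stability parameter $\theta$ (equivalently, a $\mathsf{Proj}$ construction on a Rees-type algebra built from $N$). The crepancy of the resulting morphism $\scrX_N\to\Spec\scrR$ follows from the CM assumption on $\End_\scrR(N)$, and the universal bundle on the moduli space pushes forward to recover $N$, closing the loop. Mutual inverseness is then checked by a Morita/tilting argument: the derived equivalence induced by $\scrV$ sends the tautological $t$-structure on one side to the natural one on the other.

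The two refined statements follow by matching extremality conditions on each side. For minimal models, $\scrX$ has $\Q$-factorial terminal singularities if and only if $N$ admits no non-trivial modification: geometrically, a non-trivial summand extending $N$ corresponds either to a $\uppi$-exceptional divisor or to a non-trivial class in a local divisor class group, each of which obstructs $\Q$-factorial terminality, so MM modules biject with minimal models. For the third statement, smoothness of $\scrX$ is equivalent to $\End_\scrR(N)$ having finite global dimension, that is, $N$ giving an NCCR; the structural result from \cite{IW3} that one NCCR forces every MM endomorphism ring to be an NCCR ensures that whenever a crepant resolution exists, the entire MM set parameterises crepant resolutions.

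The main obstacle is the construction of the tilting bundle in the partial-resolution setting, where $\scrX$ is singular: one must produce a tilting generator and control the Ext-vanishing between its summands, drawing on the precise local structure of cDV singularities and on the behaviour of perverse sheaves under $\uppi_*$. Once this bridge is in place, the translation of extremal conditions on each side is largely formal, though the identification $\Q$-factorial terminal $\leftrightarrow$ maximal modifying also requires careful bookkeeping with divisor class groups and depth estimates for $\End_\scrR(N)$.
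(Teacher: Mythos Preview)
The paper does not prove this theorem: it is stated with a citation to \cite{W1} and no proof is given. The text immediately following the theorem is explanatory, describing what the passage in each direction looks like (moduli of representations one way, pushforward of Van den Bergh's tilting bundle the other), but this is exposition rather than argument.

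Your sketch is broadly in line with how this result is established in the cited literature: the forward map via $\uppi_*$ of a tilting bundle, the backward map via moduli of stable modules over $\End_\scrR(N)$, and the identification of maximality/smoothness on the two sides. The paper even points to these ingredients explicitly (Van den Bergh's bundle \cite[3.5.4, 3.5.5]{V1}, the moduli-of-representations description, and \cite[5.11]{IW3} for the NCCR statement). So there is nothing to compare against here beyond noting that your outline matches the expected shape of the argument; the actual work lives in \cite{W1} and is not reproduced in this paper.
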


The passage from left to right of the first line takes a given $N \in (\mathrm{M}\,\scrR)\cap(\CM \, \scrR)$ and associates a certain moduli space of representations of $\End_{\scrR}(N)$. 
In particular, passing from a crepant partial resolution to the corresponding modification algebra retains the geometric information relevant for our purposes.

We next explain the passage from right to left in detail. 
Let $\uppi \colon \scrX \rightarrow \Spec \scrR$ be a crepant partial resolution with exceptional curves $\Curve_1, \Curve_2 \dots, \Curve_m$. For any $1 \leq i \leq m$, there is a unique bundle $\mathcal{N}_i$ on $\scrX$ \cite[3.5.4]{V1},
and 
\begin{equation*}
    \mathcal{N} \colonequals \scrO_{\scrX} \oplus \bigoplus_{i=1}^m \mathcal{N}_i
\end{equation*}
is a tilting bundle on $\scrX$ \cite[3.5.5]{V1}. Pushing forward via $\uppi$ gives $\uppi_{*}(\scrO_{\scrX})= \scrR$ and $\uppi_{*}(\mathcal{N}_i)= N_i$ for some $\scrR$-module $N_i$. Set $N= \scrR \oplus \bigoplus_{i=1}^m N_i$. Then $N, \, \End_{\scrR}\left(N\right) \in \CM \, \scrR$ \cite[\S 4]{V1}, thus $N \in  (\mathrm{M}\,\scrR)\cap(\CM \, \scrR)$.
Therefore,
\begin{equation*}
    \Lambda(\uppi) \colonequals \End_{\scrX}(\mathcal{N})  \cong \End_{\scrR}\left(N\right).
\end{equation*}
where the isomorphism follows from the crepancy of $\uppi$; see \cite[3.2.10]{V1}.

The contraction algebra associated to $\uppi$ can be defined as a quotient of the modification algebra $\End_{\scrR}\left(N\right)$.
\begin{definition}
With notation above, define the \emph{contraction algebra} associated to a crepant partial resolution $\uppi$ to be the stable endomorphism algebra
\begin{equation*}
   \Lambda_{\mathrm{con}}(\uppi)  \colonequals  \underline{\End}_{\scrR}(N) = \End_{\scrR}(N)/\langle \scrR \rangle,
\end{equation*}
where $\langle \scrR \rangle$ denotes the two-sided ideal consisting of all morphisms which factor through $\add \scrR$.
\end{definition}

The distinction between flopping contractions and divisor-to-curve contractions can be detected by the finite dimensionality of the associated contraction algebra, as follows.
\begin{theorem}\label{352}
Suppose that $\uppi \colon \scrX \rightarrow \Spec \scrR$ is a crepant partial resolution, and write $Z$ for the locus in $\Spec \scrR$ for which $\uppi$ is not an isomorphism. Then $\operatorname{Supp}_{\scrR}\Lambda_{\mathrm{con}}(\uppi)= Z$, and 
\begin{equation*}
    \uppi  \text{ is a flopping contraction}  \iff  \operatorname{dim}_{\mathbb{C}}\Lambda_{\mathrm{con}}(\uppi) < \infty.
\end{equation*}
If moreover $\scrX$ is smooth, then these conditions are equivalent to $\scrR$ being an isolated singularity.
\end{theorem} 

\begin{proof}
The equivalences preceding the final assertion are established in \cite[4.8]{DW1}, so we only justify the case where $\scrX$ is smooth. 
If $\scrX$ is smooth, then the locus over which $\uppi$ fails to be an isomorphism coincides with the singular locus of $\scrR$, that is, $Z= \Sing \scrR$.  Hence $\Supp_{\scrR}\Lambda_{\mathrm{con}}(\uppi)= \Sing \scrR$. 

Since $\scrR$ is complete local and $\Lambda_{\mathrm{con}}(\uppi)$ is finitely generated over $\scrR$, it follows from \cite[2.13, 2.15, 2.17]{E3} that
\[
\operatorname{dim}_{\mathbb{C}}\Lambda_{\mathrm{con}}(\uppi) < \infty \iff \Supp_{\scrR}\Lambda_{\mathrm{con}}(\uppi)= V(\m),
\]
where $\m$ denotes the maximal ideal of $\scrR$ and $V(\m)\subseteq \Spec \scrR$ is the corresponding Zariski closed point.
Therefore, $\dim_{\mathbb{C}}\Lambda_{\mathrm{con}}(\uppi) < \infty  \iff \Sing\scrR=V(\m)$, which is equivalent to $\scrR$ having an isolated singularity.
\end{proof}

\subsubsection{Gopakumar--Vafa invariants}\label{intro: GV}
Now let $\uppi: \scrX \to\Spec\scrR$ be a crepant resolution.
The reduced fibre above the origin $\uppi^{-1}(0)^{\textnormal{red}} = \bigcup_{i=1}^m \Curve_i$ is a union of rational curves. Let $A_1(\uppi) \colonequals \bigoplus_{i=1}^m \Z \left\langle \Curve_i \right\rangle$ be the abelian group freely generated by these curves. 

For a curve class $\upbeta=(\upbeta_1,\dots,\upbeta_m)\in A_1(\uppi)$, the associated genus-zero Gopakumar--Vafa invariant $\GV_{\upbeta}(\scrX)$ is defined using the virtual fundamental class (equivalently, via Behrend's constructible function), and therefore yields a deformation-invariant numerical invariant.
\begin{definition}\label{GV}
The invariant $\GV_{\upbeta}(\scrX)$ admits the following equivalent interpretations:
\begin{enumerate}
\item Set
\begin{equation*}
    \GV_{\upbeta}(\scrX)=\int_{\operatorname{Sh}_{\upbeta}(\scrX)} v=\sum_{n \in \mathbb{Z}} n \chi\left(v^{-1}(n)\right) \quad \text{ or } \quad \GV_{\upbeta}(\scrX)= \int_{[\operatorname{Sh}_{\upbeta}(\scrX)]^{vir}} 1
\end{equation*}
where $v$ denotes Behrend's constructible function \cite{B} on the moduli space $\operatorname{Sh}_{\upbeta}(\scrX)$ of one-dimensional stable sheaves $F$ on $\scrX$ with support $\upbeta$ and Euler characteristic $\chi(F)=1$. This definition is equivalent to integration against the virtual fundamental class $[\operatorname{Sh}_{\upbeta}(\scrX)]^{vir}$ induced by the associated symmetric perfect obstruction theory \cite{K, MT}.

\item $\GV_{\upbeta}(\scrX)=\Omega_{\scrX}^{\text{num}}(1, \upbeta)$ where $\Omega_{\scrX}(1, \upbeta)$ is a noncommutative BPS invariant \cite{VG}.

\item If $\scrR$ is isolated, $\GV_{\upbeta}(\scrX)$ is equal to the number of $(-1, -1)$-curves with curve class $\upbeta$ on a one-parameter deformation of $\uppi \colon \scrX \rightarrow \Spec \scrR$ \cite{BKL}. 
\end{enumerate}
\end{definition}

If furthermore $\scrR$ is isolated, GV invariants can be read off from the contraction algebra via Toda's formula.

\begin{theorem} \textnormal{(Toda's formula, \cite[\S 4.4]{T2})}  \label{34}
Let $\uppi \colon \scrX \rightarrow \Spec \scrR$ be a crepant resolution of an isolated \textnormal{cDV} singularity $\scrR$ with exceptional curves $\bigcup_{i=1}^m \Curve_i$. For any $1 \leq s  \leq t \leq m$, the following equality holds.
\begin{equation*}
    \operatorname{dim}_{\C}e_s\Lambda_{\mathrm{con}}(\uppi)e_t= 
   \sum_{\upbeta=(\upbeta_1,\dots ,\upbeta_m)} \upbeta_s \cdot \upbeta_t \cdot \GV_{\upbeta}(\uppi)=\operatorname{dim}_{\C}e_t\Lambda_{\mathrm{con}}(\uppi)e_s.
\end{equation*}
In particular, $ \operatorname{dim}_{\C}\Lambda_{\mathrm{con}}(\uppi)=\sum_{\upbeta} |\upbeta|^2 \GV_{\upbeta}(\uppi)$ where $|\upbeta| = \upbeta_1 + \dots + \upbeta_m$.
\end{theorem}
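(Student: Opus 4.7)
The plan is to combine Van den Bergh's tilting equivalence with Joyce--Song/Kontsevich--Soibelman wall-crossing in the motivic Hall algebra of the CY3 category $\mathrm{mod}\,\Lambda_{\mathrm{con}}(\uppi)$.

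First, I would transport the problem to noncommutative geometry. The tilting equivalence $D^b(\scrX) \simeq D^b(\Lambda(\uppi))$ from \cite{V1} restricts to an identification of coherent sheaves on $\scrX$ set-theoretically supported on $\uppi^{-1}(0)^{\mathrm{red}}$ with finite-dimensional modules over $\Lambda_{\mathrm{con}}(\uppi)$, obtained by killing the idempotent for $\scrO_\scrX$. Under this correspondence, the homology class $\upbeta=(\upbeta_1,\dots,\upbeta_m)$ of a sheaf matches the dimension vector of the corresponding module, so any $M$ of class $\upbeta$ satisfies $\dim_\C e_s M = \upbeta_s$. Crucially, the CY3 structure transports across the equivalence, so the Behrend functions on the two sides agree.

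Second, I would compute $\dim_\C e_s \Lambda_{\mathrm{con}}(\uppi) e_t$ motivically. Since $\Lambda_{\mathrm{con}}(\uppi)$ is finite-dimensional by the Contraction Theorem \ref{352} and carries a CY3 Jacobi algebra structure, a Joyce--Song identity in the motivic Hall algebra of $\mathrm{mod}\,\Lambda_{\mathrm{con}}$ is available. Extracting the dimensions of idempotent truncations from this identity yields
\[
\dim_\C e_s \Lambda_{\mathrm{con}}(\uppi) e_t = \sum_\upbeta \upbeta_s \upbeta_t \cdot \Omega(\upbeta),
\]
where $\Omega(\upbeta)$ is the Behrend-weighted Euler characteristic of the moduli of stable $\Lambda_{\mathrm{con}}$-modules of class $\upbeta$. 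The $s\leftrightarrow t$ symmetry follows from the opposite-quiver duality on the QP presentation, which induces a vector-space isomorphism $e_s \Lambda_{\mathrm{con}} e_t \cong e_t \Lambda_{\mathrm{con}} e_s$. Finally, transporting $\Omega(\upbeta)$ back through the tilting equivalence and matching Behrend functions identifies it with the integral in Definition \ref{GV}(1), that is, with $\GV_\upbeta(\uppi)$.

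The main obstacle is the motivic wall-crossing step: one must pick a suitable stability condition on $\mathrm{mod}\,\Lambda_{\mathrm{con}}$, describe its stable loci, and justify the motivic integration. In the isolated cDV case at hand, an attractive alternative is to use deformation invariance of $\Lambda_{\mathrm{con}}$ together with Bryan--Katz--Leung's characterization in Definition \ref{GV}(3): a generic one-parameter deformation of $\scrX$ splits the exceptional locus into isolated $(-1,-1)$-curves, and in this rigid limit both sides of the formula can be computed directly and seen to coincide, bypassing the heaviest technology.
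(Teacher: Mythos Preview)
This theorem is not proved in the paper: it is quoted verbatim as a background result from Toda \cite[\S4.4]{T2}, with no argument given. There is therefore no ``paper's own proof'' to compare against. What the paper does prove is a generalization, Theorem~\ref{thm:Toda}, but only in the $cA_n$ setting and for the generalized invariants $N_\upbeta$; that proof proceeds by an entirely different, elementary route---explicit computation of $\underline{\Hom}$ spaces via the presentation of $\End_\scrR(M^\scrF)$ in \ref{35}, combined with the factorization lemma~\ref{lemma:Toda}---and makes no contact with Hall algebras or wall-crossing.

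Your sketch is broadly in the spirit of Toda's original argument, which does pass through the NCCR, the motivic Hall algebra, and an integration/wall-crossing identity. A few remarks on the sketch itself: the step ``extracting the dimensions of idempotent truncations from this identity yields \ldots'' hides the real work, namely identifying the virtual Poincar\'e series of the stack of finite-dimensional $\Lambda_{\mathrm{con}}$-modules with the generating function for $\dim e_s\Lambda_{\mathrm{con}}e_t$ and then applying the integration map; this is where Toda's argument has content and is not just a formal manipulation. Your $s\leftrightarrow t$ symmetry via ``opposite-quiver duality'' is not quite right as stated---the opposite of a Jacobi algebra of $(Q,W)$ is the Jacobi algebra of $(Q^{\mathrm{op}},W^{\mathrm{op}})$, which is a different algebra, not an anti-isomorphism of the same one; the symmetry in the isolated case really comes from the 2-CY property of $\underline{\mathrm{CM}}\,\scrR$. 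The deformation alternative you mention at the end is attractive but would require knowing that $\dim e_s\Lambda_{\mathrm{con}}e_t$ is constant in the family, which is again nontrivial.
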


\subsection{Recap}
We conclude this section by recalling several foundational results from~\cite{Z} that will be used throughout the paper.

The first main result in \cite{Z} is that the complete Jacobi algebra of any monomialised Type $A$ potential on $Q_n$ (as defined in \eqref{def:TypeA}) can be realised as the contraction algebra of a crepant resolution of some $cA_n$ singularity. 

\begin{theorem}\cite[5.11]{Z}\label{11}
For any monomialised Type $A$ potential $f$ on $Q_n$, there exists a crepant resolution $\uppi \colon \scrX \rightarrow \Spec\scrR$ where $\scrR$ is $cA_n$, such that $\Lambda_{\mathrm{con}}(\uppi) \cong \Jac(f)$.
\end{theorem}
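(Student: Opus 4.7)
The plan is to construct, from the data of a monomialized Type $A$ potential $f$ with parameters $\{k_{ij}\}$, an explicit $cA_n$ singularity $\scrR$ together with a crepant resolution $\uppi \colon \scrX \to \Spec \scrR$ whose contraction algebra realizes $\Jac(f)$. The key conceptual split of $f$ is that the universal linking sum $\sum_{i=1}^{2n-2}\x_i'\x_{i+1}$ only records the $A_n$-type quiver skeleton and is forced by the Iyama--Wemyss description of $cA_n$ resolutions together with Van den Bergh's theorem identifying contraction algebras as Jacobi algebras, whereas the loop sums $\sum_{j\geq 2}k_{ij}\x_i^j$ carry all the geometric data and should be readable as Taylor coefficients of the polynomial defining $\scrR$.

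\textbf{Construction of $\scrR$ and $\uppi$.} Every $cA_n$ singularity can be presented as $\scrR \cong \mathbb{C}\lal u,v,x,t \ral / (uv - g_1 g_2 \cdots g_{n+1})$ for power series $g_\ell \in \mathbb{C}\lal x, t \ral$, and by Iyama--Wemyss any fixed ordering of the factors produces a crepant resolution $\uppi$ whose $i$-th exceptional curve $\Curve_i$ is controlled by the adjacent pair $(g_i, g_{i+1})$. I would read the coefficients $k_{ij}$ off $f$ and package them into the $g_\ell$: the loop data at vertex $i$ (odd index $2i-1$) should determine a single factor, so set $g_i \colonequals t + \varphi_i(x)$ where $\varphi_i(x)$ is a power series whose coefficients are explicit polynomial functions of $\{k_{2i-1,j}\}_j$, adjusted as necessary using neighboring $k_{2i,j}$ and $k_{2i-2,j}$ as dictated by monomialization. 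Having fixed the $g_\ell$, this defines both $\scrR$ and $\uppi$.

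\textbf{Identification.} By Iyama--Wemyss, $\End_\scrX(\mathcal{N}) \cong \End_\scrR(N)$ for $N = \scrR \oplus N_1 \oplus \cdots \oplus N_n$, a sum of CM ideals built from partial products of the $g_\ell$; this endomorphism algebra admits an explicit presentation as a quotient of $\mathbb{C}\lbl Q_n \rbl$ by matrix relations involving the $g_\ell$. Quotienting out morphisms factoring through $\scrR$ produces $\Lambda_{\mathrm{con}}(\uppi)$, and by Van den Bergh this is a Jacobi algebra $\Jac(Q_n, W)$ for some $W$. The problem then reduces to showing $W$ and $f$ are cyclically equivalent, which I would verify by computing $\partial_a W$ and $\partial_a f$ arrow-by-arrow and matching.

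\textbf{The main obstacle.} The delicate verification is at the loops: $\partial_{a_{2i-1}} f$ produces a power series in $\x_{2i-1}$ with coefficients $\{j k_{2i-1,j}\}_j$ plus linking adjustments from $\x_{2i-2}'$ and $\x_{2i}$, and this must match the relation extracted from the Iyama--Wemyss presentation, which arises by expanding $uv = \prod g_\ell$ along the exceptional curve. The bookkeeping of matching these two power series --- one indexed by $\{k_{2i-1,j}\}$, the other by coefficients of $\varphi_i(x)$ --- is where the bulk of the work lies. It is exactly here that the adjective \emph{monomialized} is doing work: without it, one would need an additional formal change of variables, i.e.\ a continuous $\mathbb{C}$-algebra automorphism of $\mathbb{C}\lbl Q_n \rbl$ preserving vertex idempotents, to convert the geometrically produced potential into $f$, and ruling out the need for such a reduction is precisely the content of monomialization.
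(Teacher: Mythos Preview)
This theorem is cited from \cite[5.11]{Z} and is not proved in the present paper; it appears here only as a recap.  That said, the paper does describe the construction explicitly in \S\ref{sec: GGV}, and comparing your proposal against that description reveals a genuine gap.

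Your ansatz $g_i = t + \varphi_i(x)$, with $\varphi_i$ built from the loop coefficients $\{k_{2i-1,j}\}_j$ at vertex $i$ alone, is too rigid and does not produce the right $\scrR$.  The actual construction produces $2n+1$ power series $g_0, g_1, \dots, g_{2n} \in \mathbb{C}\lal x,y\ral$, not $n+1$, by solving the recursive system
\[
g_{i-1} + \sum_{j\geq 2} j k_{ij}\, g_i^{\,j-1} + g_{i+1} = 0 \qquad (1 \leq i \leq 2n-1)
\]
with initial conditions $g_s = y$, $g_{s+1} = x$ for some chosen $s$.  These equations are nothing other than the Jacobi relations $\partial_a f = 0$ rewritten in commutative variables, so the matching you describe as ``the main obstacle'' is in fact built into the construction from the start rather than verified afterwards.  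Only the \emph{even}-indexed $g_0, g_2, \dots, g_{2n}$ are then used as the factors defining $\scrR = \mathbb{C}\lal u,v,x,y\ral / (uv - g_0 g_2 \cdots g_{2n})$; the odd-indexed $g_i$ are auxiliary.  The point is that each $g_\ell$ depends on \emph{all} the coefficients $k_{ij}$ through the recursion, not just those at the corresponding vertex, so your decoupled formula cannot work.

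Separately, the identification $\Lambda_{\mathrm{con}}(\uppi) \cong \Jac(f)$ is not obtained by invoking Van den Bergh's abstract result that contraction algebras are Jacobi algebras and then matching potentials up to cyclic equivalence.  Rather, one uses the explicit Iyama--Wemyss presentation of $\underline{\End}_\scrR(M)$ for the specific module $M = \scrR \oplus (u,g_0) \oplus (u,g_0 g_2) \oplus \cdots$ and checks directly that the resulting relations coincide with $\partial_a f$; this is \cite[5.7]{Z}.
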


We furthermore obtain the converse to \ref{11}, as follows.
\begin{theorem}\cite[5.13]{Z}\label{514}
For any crepant resolution $\uppi \colon \scrX \rightarrow \Spec \scrR$ where $\scrR$ is $cA_n$, there exists a monomialised Type $A$ potential $f$ on $Q_{n}$ such that $\Jac(f) \cong  \Lambda_{\mathrm{con}}(\uppi)$.   
\end{theorem}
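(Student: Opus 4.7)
My plan is to first write $\Lambda_{\mathrm{con}}(\uppi)$ as a complete Jacobi algebra, then to identify the underlying quiver as $Q_n$, and finally to bring the potential into the monomialized Type $A$ form by an iterative change of variables. By the general result cited as \cite{VM}, one has $\Lambda_{\mathrm{con}}(\uppi) \cong \Jac(Q',W')$ for some QP $(Q',W')$, so the task reduces to exhibiting such a presentation with $Q' = Q_n$ and $W'$ cyclically equivalent to a potential of the shape \eqref{def:TypeA}.

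To identify $Q'$ with $Q_n$, the vertices of $Q'$ correspond to the non-trivial indecomposable summands of the tilting bundle $\mathcal{N}$ from \S\ref{intro: contration}. For a $cA_n$ crepant resolution the exceptional locus is an $A_n$-chain of $n$ smooth rational curves, so there are exactly $n$ such summands. Between adjacent summands the stable Hom-spaces are one-dimensional in each direction, producing the doubled $A_n$ quiver, while between non-adjacent summands they vanish. A single loop at each vertex arises from the one-parameter family of transverse deformations encoded in the $cA_n$ equation. Hence $Q' \cong Q_n$.

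Next I would analyze the shape of $W'$. Up to cyclic equivalence, the only cubic cyclic words in $Q_n$ are the loop cubes $\x_i^3$ for $i$ odd and the mixed cubes $\x_i'\x_{i+1}$ for $1 \leq i \leq 2n-2$. Each mixed cube must appear with nonzero coefficient in $W'$, since its absence would sever the couplings in $\partial_a W'$ between adjacent chain arrows, contradicting the explicit NCCR structure of $\End_\scrR(N)$ for a $cA_n$ crepant resolution. After a linear rescaling of the chain arrows $a_{2i},b_{2i}$ (which preserves the form of $Q_n$) these coefficients normalize to $1$, giving the fixed tail $\sum_{i=1}^{2n-2}\x_i'\x_{i+1}$. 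For the higher-degree part, I would apply iterative right-equivalences in the style of Derksen--Weyman--Zelevinsky: every mixed monomial containing a chain arrow together with a loop can be absorbed by a substitution of that chain arrow, producing only strictly higher-order contributions. Completeness of $\C\lbl Q_n\rbl$ in the arrow ideal then guarantees convergence of the substitution sequence, and the surviving higher-order terms are pure loop monomials $\x_i^j$, which is exactly \eqref{def:TypeA}.

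The main obstacle is this iterative monomialization: one needs the sequence of substitutions to converge and, crucially, not to re-introduce mixed terms of the same or lower degree once they have been cleared. The DWZ framework for right-equivalence is designed for precisely this kind of order-by-order reduction, but the bookkeeping is delicate, since clearing a degree-$d$ mixed term in one variable can propagate into cyclic words of every higher degree involving either the substituted variable or its paired chain arrow. One natural way to organize the argument is to induct on total degree while tracking a secondary measure of ``mixing'' that strictly decreases under each substitution. Once this reduction is carried out, the standard fact that cyclically equivalent potentials have the same Jacobi algebra yields $\Jac(f) \cong \Lambda_{\mathrm{con}}(\uppi)$ with $f$ of the advertised monomialized Type $A$ shape.
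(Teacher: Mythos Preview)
The paper does not prove this statement at all: it appears in the ``Recap'' subsection and is simply cited as \cite[5.13]{Z}. There is therefore no proof in the present paper to compare your attempt against.

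That said, your outline differs in spirit from what the surrounding material suggests the proof in \cite{Z} actually does. The companion direction \ref{11} (also cited from \cite{Z}) is proved by \emph{explicitly constructing} the $g_i$ from a given potential via the system \eqref{501}, and then identifying $\Jac(f)$ with $\underline{\End}_{\scrR}(M)$ using the concrete presentation of the latter in \ref{35}. The natural proof of the converse \ref{514} runs this backwards: start from the explicit Iyama--Wemyss presentation of $\Lambda_{\mathrm{con}}(\uppi)$ given by \ref{35}, read off the $g_i$, and exhibit a potential $f$ whose associated system \eqref{501} is satisfied by (a change of coordinates of) these $g_i$. Your route via the abstract existence result of \cite{VM} followed by DWZ-style monomialization is not wrong in principle, but it discards exactly the explicit structure that makes the $cA_n$ case tractable.

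There is also a genuine gap in your quiver identification. You assert that a loop appears at every vertex, but \ref{35} shows that loops at vertex $M_{I_i}$ are added \emph{conditionally}, depending on whether $(g_{i-1},g_i)=(x,y)$. For a general crepant resolution the underlying quiver of $\Lambda_{\mathrm{con}}(\uppi)$ may therefore be a proper subquiver of $Q_n$. This is not fatal---a missing loop can be encoded by a quadratic term $k_{i2}\x_i^2$ in the potential, whose Jacobi relation kills $\x_i$---but your argument as written does not address it, and the sentence ``A single loop at each vertex arises from the one-parameter family of transverse deformations'' is not a proof.
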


Combining \ref{11} and \ref{514} gives a correspondence between crepant resolutions of $cA_n$ singularities and our intrinsic noncommutative monomialised Type $A$ potentials, as follows.

\begin{cor}\cite[5.21]{Z}\label{12}
For any $n$, the set of isomorphism classes of contraction algebras associated to crepant resolutions of $cA_n$ singularities is equal to the set of isomorphism classes of Jacobi algebras of monomialised Type $A$ potentials on $Q_n$.
\end{cor}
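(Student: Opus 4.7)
The plan is to prove \ref{12} as a straightforward set-theoretic double containment, treating the contents of \ref{11} and \ref{514} as black-box inputs. Write $\scrS_{\mathrm{con}}(n)$ for the set of isomorphism classes of contraction algebras $\Lambda_{\mathrm{con}}(\uppi)$ as $\uppi\colon \scrX\to\Spec\scrR$ ranges over crepant resolutions of $cA_n$ singularities, and $\scrS_{\mathrm{Jac}}(n)$ for the set of isomorphism classes $\Jac(f)$ as $f$ ranges over monomialized Type $A$ potentials on $Q_n$. The goal is simply to verify $\scrS_{\mathrm{con}}(n) = \scrS_{\mathrm{Jac}}(n)$.

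For the containment $\scrS_{\mathrm{con}}(n) \subseteq \scrS_{\mathrm{Jac}}(n)$, I would take an arbitrary crepant resolution $\uppi\colon \scrX\to\Spec\scrR$ with $\scrR$ a $cA_n$ singularity and apply \ref{514} directly: it produces a monomialized Type $A$ potential $f$ on $Q_n$ with $\Jac(f)\cong \Lambda_{\mathrm{con}}(\uppi)$, so the isomorphism class of $\Lambda_{\mathrm{con}}(\uppi)$ lies in $\scrS_{\mathrm{Jac}}(n)$. Conversely, for $\scrS_{\mathrm{Jac}}(n)\subseteq \scrS_{\mathrm{con}}(n)$, take any monomialized Type $A$ potential $f$ on $Q_n$ and apply \ref{11} to obtain a crepant resolution $\uppi\colon \scrX\to \Spec\scrR$ of a $cA_n$ singularity $\scrR$ with $\Lambda_{\mathrm{con}}(\uppi)\cong \Jac(f)$; so the isomorphism class of $\Jac(f)$ lies in $\scrS_{\mathrm{con}}(n)$.

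There is no real obstacle here: the two theorems \ref{11} and \ref{514} already perform the nontrivial existence constructions in both directions, and what remains is a purely formal packaging. The only subtlety worth stating, to make the argument read cleanly, is to emphasize that in both directions we are working with \emph{isomorphism classes} rather than representatives, so that the isomorphisms $\Lambda_{\mathrm{con}}(\uppi)\cong \Jac(f)$ supplied by the input theorems are enough to conclude equality of the associated classes in the two sets. No extra step (no well-definedness argument, no normalization of the parameter $\upkappa$, no invariance under cyclic equivalence of $f$) is needed beyond what was already established in \cite{Z}.
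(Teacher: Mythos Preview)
Your proposal is correct and matches the paper's approach exactly: the paper does not give a separate proof of \ref{12} but simply states that combining \ref{11} and \ref{514} yields the corollary, which is precisely the double containment you spell out.
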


\section{Generalised GV Invariants of Crepant Partial Resolutions}\label{sec: GVPartial}

In this section, we introduce generalised Gopakumar--Vafa invariants for crepant partial resolutions of $cA_n$ singularities. In \S\ref{sec: generalised GV1}, we define these invariants in purely algebraic terms. In \S\ref{sec: generalised GV2}, we show that they satisfy an analogue of Toda's formula and are determined by the associated contraction algebra. Finally, in \S\ref{sec: generalised GV3}, we restrict to crepant resolutions and show that the generalised GV invariants recover the classical GV invariants.

\subsection{Generalised GV invariants}\label{sec: generalised GV1}
Recall that every $cA_{t-1}$ singularity $\scrR$ has the form 
\begin{equation*}
    \scrR \cong \frac{\mathbb{C} \lal u, v, x, y \ral}{uv-f_0f_1 \dots f_{n}},
\end{equation*}
where $t$ is the order of the power series $f_0f_1 \dots f_{n}$, and each $f_i$ is a prime element of $\mathbb{C}\lal x,y \ral$. 
For any subset $I\subseteq\{ 0,1,\hdots, n\}$ set $I^c=\{ 0,1,\hdots, n\}\setminus I$ and denote
\[
f_I:=\prod_{i\in I}f_i\ \mbox{ and }\ M_I:=(u,f_I)
\]
where $M_I$ is an ideal of $\scrR$ generated by $u$ and $f_I$. For a collection of subsets $\emptyset\subsetneq I_1\subsetneq I_2\subsetneq\hdots\subsetneq
I_m\subsetneq\{0,1,\hdots,n\}$, we say that $\F=(I_1,\hdots,I_m)$ is a {\em flag in the set $\{ 0,1,\hdots, n\}$}.  We say that the flag $\c{F}$ is {\em maximal} if $n=m$.  Given a flag $\c{F}=(I_1,\hdots,I_m)$, we define
\[
M^\c{F}:=\scrR\oplus\left(\bigoplus_{j=1}^{m} M_{I_j}\right) .
\]

To ease notation, set $I_0:=\emptyset$ and $I_{m+1}:=\{0,1,\hdots,n\}$, and then $g_j:=f_{I_{j+1}\setminus I_{j}}$ for all $0\leq j\leq m$. Thus $f_{I_j}=\prod_{i=0}^{j-1}g_i$ and $M_{I_j}=(u,\prod_{i=0}^{j-1}g_i)$.
Then using \cite[\S 5]{IW1} $\F$ is given pictorially by
\[
\begin{array}{ccc}
\begin{array}{c}
\F
\end{array} &
\begin{array}{c}
\begin{tikzpicture}[xscale=0.6,yscale=0.6]
\draw[black] (-0.1,-0.04,0) to [bend left=25] (2.1,-0.04,0);
\draw[black] (1.9,-0.04,0) to [bend left=25] (4.1,-0.04,0);
\node at (5.5,0,0) {$\hdots$};
\draw[black] (6.9,-0.04,0) to [bend left=25] (9.1,-0.04,0);
\node at (1,0.6,0) {$\scriptstyle \Curve_{1}$};
\node at (3,0.6,0) {$\scriptstyle \Curve_{2}$};
\node at (8,0.6,0) {$\scriptstyle \Curve_{m}$};
\filldraw [red] (0,0,0) circle (1pt);
\filldraw [red] (2,0,0) circle (1pt);
\filldraw [red] (4,0,0) circle (1pt);
\filldraw [red] (7,0,0) circle (1pt);
\filldraw [red] (9,0,0) circle (1pt);
\node at (0,-0.4,0) {$\scriptstyle g_0$};
\node at (2,-0.4,0) {$\scriptstyle g_1$};
\node at (4,-0.4,0) {$\scriptstyle g_2$};
\node at (7,-0.4,0) {$\scriptstyle g_{m-1}$};
\node at (9,-0.4,0) {$\scriptstyle g_{m}$};
\end{tikzpicture} 
\end{array}
\end{array}
\]

By \cite[5.1]{IW1}, the set $(\mathrm{M}\,\scrR)\cap(\CM \, \scrR)$ is equal to modules $M^{\F}$, where $\F$ is a flag in $\{0,1,\hdots,n\}$.
By \ref{36}, for each flag $\F$ there exists a crepant partial resolution $\uppi^{\F} \colon \scrX^{\F} \rightarrow \Spec\scrR$ such that $\Lambda(\uppi^{\F}) \cong  \End_{\scrR}(M^{\F})$ and $\Lambda_{\con}(\uppi^{\F}) \cong  \underline\End_{\scrR}(M^{\F})$.

We now introduce our numerical invariants for crepant partial resolutions.
\begin{definition}\label{def:Nij}
With notation as above, define the \emph{generalised GV invariant} $N_{\upbeta}(\uppi^{\F})$ for a curve class $\upbeta  \in \bigoplus_{i=1}^m \Z \left\langle \Curve_i \right\rangle$ by
\[
N_{\upbeta}(\uppi^{\F})\colonequals
\begin{cases}
\dim_\mathbb{C}\tfrac{\mathbb{C}\lal x,y \ral}{(g_{i-1},g_{j})} & \mbox{if } \upbeta = \Curve_i +  \Curve_{i+1}+\hdots+ \Curve_j\\
0 &\mbox{otherwise.}
\end{cases}
\]
\end{definition}
The above generalised GV invariant \ref{def:Nij} is parallel to GV invariants. Indeed, if $\uppi^{\F}$ is a crepant resolution, then $\{\Curve_i+\Curve_{i+1}+ \dots +\Curve_j \mid 1\leq i \leq j \leq m\}$ are the only curve classes with non-zero \textnormal{GV} invariants \cite{NW1, VG}.

Thus throughout this paper we will often write $N_{ij}(\uppi)$ (resp. $\GV_{ij}(\uppi)$) for $N_{\upbeta}(\uppi)$ (resp. $\GV_{\upbeta}(\uppi)$) when $\upbeta=\Curve_i +  \Curve_{i+1}+\hdots+ \Curve_j$.

\begin{example}\label{example:Nij}
Consider $f_0f_1f_2f_3f_4f_5$ with a flag $\c{F}=(\{ 0,1\}\subsetneq \{ 0,1,2 \})$. Then $g_0=f_0f_1$, $g_1=f_2$, $g_2=f_3f_4f_5$, and $\c{F}$ corresponds to
\[
\begin{tikzpicture}[xscale=0.6,yscale=0.6]
\draw[black] (-0.1,-0.04,0) to [bend left=25] (2.1,-0.04,0);
\draw[black] (1.9,-0.04,0) to [bend left=25] (4.1,-0.04,0);
\filldraw [red] (0,0,0) circle (1pt);
\filldraw [red] (2,0,0) circle (1pt);
\filldraw [red] (4,0,0) circle (1pt);
\node at (0,-0.4,0) {$\scriptstyle f_0f_1$};
\node at (2,-0.4,0) {$\scriptstyle f_2$};
\node at (4,-0.4,0) {$\scriptstyle f_3f_4f_5$};

\node at (1,0.6,0) {$\scriptstyle \Curve_{1}$};
\node at (3,0.6,0) {$\scriptstyle \Curve_{2}$};
\end{tikzpicture}
\]
Then $M^\c{F}$ is $\scrR\oplus (u,f_0f_1)\oplus (u,f_0f_1f_2)$, and the generalised GV invariants are
\begin{equation*}
    N_{11}(\uppi^{\F})= \operatorname{dim}_{\mathbb{C}} \frac{\mathbb{C}\lal x,y \ral}{(f_0f_1,f_2)}, \ N_{22}(\uppi^{\F})= \operatorname{dim}_{\mathbb{C}} \frac{\mathbb{C}\lal x,y \ral}{(f_2,f_3f_4f_5)}, \ N_{12}(\uppi^{\F})= \operatorname{dim}_{\mathbb{C}} \frac{\mathbb{C}\lal x,y \ral}{(f_0f_1,f_3f_4f_5)}.
\end{equation*}
\end{example}

\begin{remark}\label{rmk:intersect}
The generalised GV invariants $N_{ij}(\uppi^{\F})$ from Definition~\ref{def:Nij}
admit an interpretation as local intersection multiplicities of plane curve germs.
Recall that
\[
g_{i-1}= \prod_{k \in I_{i}\setminus I_{i-1}} f_k
\quad \text{and} \quad
g_{j}= \prod_{k \in I_{j+1}\setminus I_{j}} f_k,
\]
where each $f_k$ is irreducible in $\C\lal x,y\ral$.
Hence
\[
Z(g_{i-1})=\bigcup_{k \in I_{i}\setminus I_{i-1}} Z(f_k),
\qquad
Z(g_{j})=\bigcup_{k \in I_{j+1}\setminus I_{j}} Z(f_k),
\]
and each $Z(f_k)$ is an irreducible plane curve germ passing through the origin.

Assume that $Z(g_{i-1})$ and $Z(g_j)$ have no common irreducible component.
Equivalently, $\Spec \C\lal x,y \ral/(g_{i-1},g_j)$ is zero-dimensional, or
$N_{ij}(\uppi^{\F})<\infty$ by \ref{lemma:Toda2} below.
In this case, the local intersection multiplicity at the origin is given by
\[
I_0\big(Z(g_{i-1}),Z(g_j)\big)
=\operatorname{dim}_{\C}\frac{\mathbb{C}\lal x,y \ral}{(g_{i-1},g_j)}.
\]
Thus, under the above hypothesis, we have
\[
N_{ij}(\uppi^{\F})
= I_0\big(Z(g_{i-1}),Z(g_j)\big).
\]

Moreover, by \ref{37} below, the generalised GV invariants recover the
classical GV invariants whenever $\uppi^{\F}$ is a crepant resolution
(equivalently, $\scrX^{\F}$ is smooth).
Consequently, in the $cA_n$ case, a curve-counting invariant of a threefold can be expressed as a local
intersection multiplicity of plane curve germs.
\end{remark}

\begin{example}\label{example:gv}
Consider the $cA_2$ singularity $\scrR$ defined by $uv=xy(x+y^n)$ where $n \geq 1$ and the $\scrR$-module $M= \scrR \oplus (u,x) \oplus (u,xy)$. Let $\uppi \colon \scrX \rightarrow \Spec \scrR$ be the associated crepant resolution, which is given pictorially by
\[
\begin{array}{ccc}
\begin{array}{c}
\scrX
\end{array} &
\begin{array}{c}
\begin{tikzpicture}[xscale=0.6,yscale=0.6]
\draw[black] (-0.1,-0.04,0) to [bend left=25] (2.1,-0.04,0);
\draw[black] (1.9,-0.04,0) to [bend left=25] (4.1,-0.04,0);

\node at (1,0.6,0) {$\scriptstyle \Curve_{1}$};
\node at (3,0.6,0) {$\scriptstyle \Curve_{2}$};

\node at (0,-0.4,0) {$\scriptstyle x$};
\node at (2,-0.4,0) {$\scriptstyle y$};
\node at (4,-0.4,0) {$\scriptstyle x+y^n$};

\end{tikzpicture} 
\end{array}
\end{array}
\]
The generalised GV invariants of $\uppi$ are
\begin{enumerate}
\item $N_{11}(\uppi)=\operatorname{dim}_\mathbb{C} \mathbb{C}\lal x,y \ral/(x,y) =1$, which equals the local intersection number of the curves $x=0$ and $y=0$ at the origin.
\item $N_{22}(\uppi)=\operatorname{dim}_\mathbb{C} \mathbb{C}\lal x,y \ral/(y,x+y^n) =1$, which equals the local intersection number of the curves $y=0$ and $x+y^n=0$.
\item $N_{12}(\uppi)=\operatorname{dim}_\mathbb{C} \mathbb{C}\lal x,y \ral/(x,x+y^n) =n$, which equals the local intersection number of the curves $x=0$ and $x+y^n=0$.
\end{enumerate}

For ease of visualisation, Figure~\ref{fig:example-n2} depicts these curves inside $\mathbb{C}^2$ in the case $n=2$. 

Although the generalised GV invariants are defined using the complete local ring $\mathbb{C}\lal x,y \ral$, the intersection numbers depend only on the germs at the origin, so the global picture faithfully reflects the local geometry.

\begin{figure}[h]
  \centering
\begin{tikzpicture}[scale=0.6]
  \draw[->] (-2.5,0) -- (2.5,0) node[right] {$y=0$};
  \draw[->] (0,-2.2) -- (0,2.2) node[above] {$x=0$};
  \draw[thick] (-2.5,0) -- (2.5,0);
  \draw[thick] (0,-2.2) -- (0,2.2);
  \draw[thick,domain=-1.5:1.5,samples=100]
    plot ({- \x*\x}, {\x});
    \node at (-3,2) {$x + y^2 = 0$};
\end{tikzpicture}
\caption{The case $n=2$.}
\label{fig:example-n2}
\end{figure}
\end{example}

The following result gives an explicit quiver description of the modification algebras associated to $cA_n$ singularities.

\begin{cor}\cite[5.33]{IW1}\label{35}
Given a flag $\c{F}=(I_1,\hdots,I_m)$, with notation as above the quiver of $\End_{\scrR}(M^\c{F})$ is as follows:
\[
\begin{tikzpicture}
\node at (0,-1.3) {$\scriptstyle m\geq 2$};
\node at (0,0)
{\begin{tikzpicture}[xscale=1.8,yscale=1.4,bend angle=13, looseness=1]
\node (1) at (1,0) {$\scriptstyle {M_{I_1}}$}; 
\node (2) at (2,0) {$\scriptstyle {M_{I_2}}$};
\node (4) at (3,0) {$\scriptstyle \cdots$};
\node (5) at (4,0) {$\scriptstyle {M_{I_m}}$};
\node (R) at (2.5,-1) {$\scriptstyle \scrR$};
\draw [bend right,<-,pos=0.5] (1) to node[inner sep=0.5pt,fill=white,below=-3pt] {$\scriptstyle inc$} (2);
\draw [bend right,<-,pos=0.5] (2) to node[inner sep=0.5pt,fill=white,below=-3pt] {$\scriptstyle g_1$}(1);
\draw [bend right,<-,pos=0.5] (2) to node[inner sep=0.5pt,fill=white,below=-3pt] {$\scriptstyle inc$} (4);
\draw [bend right,<-,pos=0.5] (4) to node[inner sep=0.5pt,fill=white,below=-3pt] {$\scriptstyle g_2$}(2);
\draw [bend right,<-,pos=0.5] (4) to node[inner sep=0.5pt,fill=white,below=-3pt] {$\scriptstyle inc$} (5);
\draw [bend right,<-,pos=0.5] (5) to node[inner sep=0.5pt,fill=white,below=-3pt] {$\scriptstyle g_{m-1}$} (4);
\draw [bend right=7,<-,pos=0.5] ($(R)+(135:4pt)$) to node[inner sep=0.5pt,fill=white,below=-5pt] {$\scriptstyle inc$} ($(1)+(-45:6pt)$);
\draw [bend right=7,<-,pos=0.5] ($(1)+(-75:6pt)$) to node[inner sep=0.5pt,fill=white,below=-3pt] {$\scriptstyle g_0$}  ($(R)+(165:4pt)$);
\draw [bend right=7,<-,pos=0.5] ($(R)+(15:4pt)$) to node[inner sep=0.5pt,fill=white,below=-1pt] {$\scriptstyle \frac{g_{m}}{u}$} ($(5)+(-100:5pt)$);
\draw [bend right=7,<-,pos=0.5] ($(5)+(-125:5pt)$) to node[inner sep=0.5pt,fill=white,below=-3pt] {$\scriptstyle u$} ($(R)+(45:4pt)$);
\end{tikzpicture}};
\node at (6,0) {\begin{tikzpicture} 
\node (C1) at (0,0)  {$\scriptstyle \scrR$};
\node (C1a) at (-0.1,0.05)  {};
\node (C1b) at (-0.1,-0.05)  {};
\node (C2) at (1.75,0)  {$\scriptstyle M_{I_1}$};
\node (C2a) at (1.85,0.05) {};
\node (C2b) at (1.85,-0.05) {};
\draw [->,bend left=45,looseness=1,pos=0.5] (C1) to node[inner sep=0.5pt,fill=white]  {$\scriptstyle g_0$} (C2);
\draw [->,bend left=20,looseness=1,pos=0.5] (C1) to node[inner sep=0.5pt,fill=white]  {$\scriptstyle u$} (C2);
\draw [->,bend left=45,looseness=1,pos=0.5] (C2) to node[inner sep=0.5pt,fill=white]  {$\scriptstyle \frac{g_1}{u}$} (C1);
\draw [->,bend left=20,looseness=1,pos=0.5] (C2) to node[inner sep=0.5pt,fill=white,below=-5pt] {$\scriptstyle inc$} (C1);
\end{tikzpicture}};
\node at (6,-1.3) {$\scriptstyle  m=1$};
\end{tikzpicture}
\] 
together with the possible addition of some loops, given by the following rules:
\begin{itemize}
\item Consider vertex $\scrR$.  If $(g_0,g_{m})=(x,y)$ in the ring $\C\lal x,y\ral$, add no loops at vertex $\scrR$. Hence suppose $(g_0,g_{m})\subsetneq (x,y)$. If there exists $t\in (x,y)$ such that $(g_0,g_{m},t)=(x,y)$, add a loop labelled $t$ at vertex $\scrR$.  If there exists no such $t$, add two loops labelled $x$ and $y$ at vertex $\scrR$.
\item Consider vertex $M_{I_i}$.  If $(g_{i-1},g_{i})=(x,y)$ in the ring $\C\lal x,y\ral$, add no loops at vertex $M_{I_i}$.  Hence suppose $(g_{i-1},g_{i})\subsetneq (x,y)$. If there exists $t\in (x,y)$ such that $(g_{i-1},g_{i},t)=(x,y)$, add a loop labelled $t$ at vertex $M_{I_i}$.  If there exists no such $t$, add two loops labelled $x$ and $y$ at vertex $M_{I_i}$.
\end{itemize}
\end{cor}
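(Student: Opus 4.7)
The plan is to compute each Hom space $\Hom_{\scrR}(M_{I_i}, M_{I_j})$ for $i, j \in \{0, 1, \dots, m\}$ (writing $M_{I_0} \colonequals \scrR$), identify a minimal set of generators of $\End_\scrR(M^{\c{F}})$ as a $\C$-algebra, and then assign them as arrows between vertices, discarding those that factor through other summands. Since each $M_{I_k} = (u, f_{I_k})$ is an ideal of the domain $\scrR$, we may use the identification $\Hom_{\scrR}(I, J) \cong \{r \in Q(\scrR) \mid rI \subseteq J\}$, where $Q(\scrR)$ is the field of fractions of $\scrR$. The defining relation $uv = f_0 f_1 \cdots f_n$ then gives explicit generators of each Hom space in terms of $u$, the $f_j$'s, and the $g_j$'s.

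For consecutive $I_i \subsetneq I_{i+1}$ with $f_{I_{i+1}}/f_{I_i} = g_i$, the inclusion $M_{I_{i+1}} \hookrightarrow M_{I_i}$ and multiplication by $g_i$ from $M_{I_i}$ into $M_{I_{i+1}}$ are the claimed generators of the two bidirectional arrows. For the vertex $\scrR$, one similarly checks that $\Hom_{\scrR}(\scrR, M_{I_1}) = M_{I_1}$ is generated by $u$ and $f_{I_1} = g_0$, while $\Hom_{\scrR}(M_{I_m}, \scrR)$ is generated by the inclusion (multiplication by $1$) and by multiplication by $f_{I_m^c}/u = g_m/u$, which is well-defined as an element of $\Hom_{\scrR}(M_{I_m}, \scrR)$ via $uv = f_{I_m} f_{I_m^c}$. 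The next step would be to verify that for non-consecutive $i,j$, every element of $\Hom_{\scrR}(M_{I_i}, M_{I_j})$ factors as a composition of the maps above through the intermediate vertices $M_{I_{i+1}}, \dots, M_{I_{j-1}}$, and thus no further arrows are needed between non-adjacent modules.

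Finally, the loops at each vertex record additional generators of the endomorphism ring not accounted for by the composition of arrows going in and out. A direct calculation shows that the quotient of $\End_{\scrR}(M_{I_i})$ by morphisms factoring through adjacent vertices is isomorphic to $\C\lal x, y \ral / (g_{i-1}, g_i)$, and similarly $\C\lal x, y \ral / (g_0, g_m)$ at the vertex $\scrR$. The number and labels of loops needed are then determined by a minimal set of lifts of generators of this quotient algebra, analyzed modulo $\mathfrak{m}^2$: no loops when $(g_{i-1}, g_i) = (x,y)$, a single loop labelled $t$ when $(g_{i-1}, g_i, t) = (x,y)$ for some $t \in (x,y)$, and two loops labelled $x, y$ otherwise. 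The main technical obstacle will be the minimality argument verifying that the proposed generators suffice and none are redundant, which requires carefully tracking how the action of $\C\lal x, y \ral$ interacts with the presentation of each $M_{I_k}$ as an ideal in $\scrR$ — in essence, a Nakayama-type reduction analysis at each vertex.
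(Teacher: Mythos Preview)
The paper does not prove this statement; it is quoted verbatim from \cite[5.33]{IW1} and used as a black box throughout. So there is no proof in the paper to compare against.

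Your outline is the standard and correct route to such a result, and is presumably close in spirit to what \cite{IW1} does: identify each $\Hom_{\scrR}(M_{I_i},M_{I_j})$ as a fractional ideal inside $Q(\scrR)$, exhibit explicit generators, and then perform a Nakayama-type minimality analysis to extract the arrows. One small point worth tightening: your claim that the quotient of $\End_{\scrR}(M_{I_i})$ by morphisms factoring through adjacent summands is $\C\lal x,y\ral/(g_{i-1},g_i)$ is not quite the object that counts loops. What you need is the space of \emph{irreducible} endomorphisms, i.e.\ $e_i(\mathrm{rad}\,\Lambda/\mathrm{rad}^2\Lambda)e_i$, and the composites $g_{i-1}\cdot\mathrm{inc}$ and $\mathrm{inc}\cdot g_i$ coming from the adjacent arrows lie in $\mathrm{rad}^2$, not in the subspace you quotient by. The correct computation is that $\End_{\scrR}(M_{I_i})$ itself is identified with a subring of $\C\lal x,y\ral$ (rank one reflexive over a normal domain), the arrow composites through neighbours contribute $g_{i-1}$ and $g_i$ to its maximal ideal, and the loop count is then the minimal number of additional elements needed to generate $(x,y)$ together with $g_{i-1},g_i$ --- which is exactly the trichotomy in the statement. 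Your phrase ``analyzed modulo $\mathfrak{m}^2$'' shows you have the right idea; just be careful that the relevant quotient is of the radical, not of the full endomorphism ring.
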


\subsection{Contraction algebra determines generalised GV invariants}\label{sec: generalised GV2}

Throughout this subsection we retain the notation $\scrR$, $\F$, $M_{I_j}$, $g_j$ and $\uppi^{\F}$ from \S\ref{sec: generalised GV1}.
The main technical input is the following proposition, which identifies certain stable Hom spaces in $\underline{\CM}\,\scrR$ with the quotient ring appearing in the definition of the corresponding generalised GV invariant.

\begin{prop}\label{lemma:con_iso}
There are isomorphisms of $\scrR$-modules
\[
\underline{\Hom}_{\scrR}\big((u,g_0),(u,g_0\cdots g_{m-1})\big)
\ \cong\ 
\frac{\C\lal u,v,x,y\ral}{(u,v,g_0,g_m)}
\ \cong\ 
\underline{\Hom}_{\scrR}\big((u,g_0\cdots g_{m-1}),(u,g_0)\big).
\]
In particular, the dimension of each as a $\mathbb{C}$-vector space equals $\dim_{\C} \mathbb{C} \lal x, y \ral/(g_0,g_m)$.
\end{prop}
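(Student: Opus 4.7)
My plan is to compute these $\Hom$-spaces as fractional ideals in the fraction field of $\scrR$, then pass to $\underline{\Hom}$ by a single quotient. Since $uv-g_0\cdots g_m$ is irreducible in the UFD $\mathbb{C}\lal u,v,x,y\ral$ (viewed as degree one in $u$ with coprime coefficients $v$ and $-g_0\cdots g_m$), the ring $\scrR$ is an integral domain; let $K$ denote its fraction field. Under the identification of fractional ideals with colon ideals $(N:M)\colonequals\{\eta\in K\mid \eta M\subseteq N\}$, the submodule of $\Hom_{\scrR}(M_{I_1},M_{I_m})$ consisting of morphisms that factor through $\add\scrR$ equals the fractional-ideal product $M_{I_m}\cdot M_{I_1}^{*}$, where $M_{I_1}^{*}\colonequals\Hom_{\scrR}(M_{I_1},\scrR)$. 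Hence it suffices to compute these two fractional ideals and quotient.

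The central calculation is $\Hom_{\scrR}(M_{I_1},M_{I_m})=(u,g_1\cdots g_{m-1})$ as an ordinary ideal of $\scrR$. The inclusion $\supseteq$ is immediate, since multiplication by $u$ and by $g_1\cdots g_{m-1}$ both send $(u,g_0)$ into $(u,g_0g_1\cdots g_{m-1})$. For the reverse inclusion, take $\eta\in K$ with $\eta u,\eta g_0\in M_{I_m}$. Writing $\eta g_0=cu+d\cdot g_0\cdots g_{m-1}$ gives $\eta=cu/g_0+d\cdot g_1\cdots g_{m-1}$. Imposing $\eta u\in M_{I_m}$ and clearing denominators in the UFD $\mathbb{C}\lal u,v,x,y\ral$, coprimality of $u$ with every $g_i\in\mathbb{C}\lal x,y\ral$ combined with the relation $uv=g_0\cdots g_m$ forces $c\in(v,g_0)$. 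Writing $c=\lambda v+\mu g_0$ and using $uv/g_0=g_1\cdots g_m$ gives $\eta=\lambda g_1\cdots g_m+\mu u+d\cdot g_1\cdots g_{m-1}\in(u,g_1\cdots g_{m-1})$. A parallel argument yields $M_{I_1}^{*}=\scrR+\scrR\cdot v/g_0$, so multiplying through by $M_{I_m}$ gives $M_{I_m}\cdot M_{I_1}^{*}=(u)+g_1\cdots g_{m-1}(g_0,g_m,v)$.

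Combining,
\[
\underline{\Hom}_{\scrR}(M_{I_1},M_{I_m})=\frac{(u,g_1\cdots g_{m-1})}{(u)+g_1\cdots g_{m-1}(g_0,g_m,v)}.
\]
Since $\scrR$ is a domain, multiplication by $g_1\cdots g_{m-1}$ induces an $\scrR$-module isomorphism $\scrR\to\scrR\cdot g_1\cdots g_{m-1}$; a short modular-law argument then identifies the displayed quotient with $\scrR/(u,v,g_0,g_m)$. Because $uv-g_0\cdots g_m\in(u)$ already lies in the ideal being quotiented by, this equals $\mathbb{C}\lal u,v,x,y\ral/(u,v,g_0,g_m)$, from which the identification with $\mathbb{C}\lal x,y\ral/(g_0,g_m)$ and the dimension count follow. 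The second isomorphism is by a symmetric computation, interchanging source and target throughout (equivalently, using reflexivity of the $M_{I_j}$ to apply $\Hom_{\scrR}(-,\scrR)$). The main technical obstacle is the UFD-coprimality analysis in the central paragraph, where one has to weave the defining relation $uv=g_0\cdots g_m$ together with unique factorisation in $\mathbb{C}\lal u,v,x,y\ral$; everything else is bookkeeping.
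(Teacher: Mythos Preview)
Your argument is correct and reaches exactly the same intermediate presentation $(u,G)/(u,g_0G,g_mG,vG)$ with $G=g_1\cdots g_{m-1}$ that the paper obtains, and your final multiplication-by-$G$ step is the paper's final step verbatim. The route to that presentation, however, is genuinely different. The paper passes through the triangulated structure of $\underline{\CM}\,\scrR$: it identifies $\underline{\Hom}_{\scrR}(M_{I_1},M_{I_m})\cong\Ext^1_{\scrR}(M_{I_1},\Omega M_{I_m})=\Ext^1_{\scrR}((u,g_0),(u,g_m))$ using the syzygy computation from the short exact sequence \eqref{302}, then reads off $\Ext^1$ as a cokernel in the long exact sequence, invoking \cite[5.4]{IW1} for the explicit fractional-ideal descriptions of the relevant $\Hom$'s. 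You instead stay entirely inside the fraction field $K$: you compute $\Hom_{\scrR}(M_{I_1},M_{I_m})$ and $M_{I_1}^{*}$ directly as colon ideals (in effect rederiving \cite[5.4]{IW1} by hand), and then use the elementary fact that maps factoring through $\add\scrR$ form the fractional-ideal product $M_{I_m}\cdot M_{I_1}^{*}$.

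What each approach buys: the paper's route is conceptually cleaner and makes visible the link to the shift functor in $\underline{\CM}\,\scrR$, at the cost of importing the $\Ext$ machinery and citing \cite{IW1} for the $\Hom$ computations. Your route is more self-contained and elementary, needing only that $\scrR$ is a domain and that the maps-through-free submodule is $N\cdot M^{*}$; the price is the UFD coprimality step (showing $c\in(v,g_0)$), which is correct but does require the observation that $u$ remains a non-zerodivisor modulo $(v,g_0)$ even when $g_0$ is not irreducible, since $\mathbb{C}\lal u,v,x,y\ral/(v,g_0)\cong(\mathbb{C}\lal x,y\ral/(g_0))\lal u\ral$. You might want to make that point explicit, as in this generality the $g_j$ are products of the prime elements $f_i$ rather than primes themselves.
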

\begin{proof}
\noindent
(1) We prove 
\[
\underline{\Hom}_{\scrR}\big((u,g_0),(u,g_0\cdots g_{m-1})\big)
\cong \frac{\mathbb{C}\lal u, v, x, y \ral}{(u,v,g_0,g_m)}.
\]
We first claim that $\underline{\Hom}_{\scrR}\big((u,g_0),(u,g_0\hdots g_{m-1})\big) \cong \Ext_{\scrR}^1\big((u,g_0),(u,g_m)\big)$.

From \cite[\S 5]{IW1} there is an exact sequence
\begin{equation}\label{302}
0 \rightarrow (u,g_m) \xrightarrow[]{\begin{psmallmatrix} \frac{g_0\hdots g_{m-1}}{u}&-inc \end{psmallmatrix}} \scrR^2 \xrightarrow[]{\begin{psmallmatrix}u \\ g_0\hdots g_{m-1} \end{psmallmatrix}} (u, g_0\hdots g_{m-1}) \rightarrow 0.
\end{equation}
Thus $\Omega (u,g_0\hdots g_{m-1})=(u,g_m)$ where $\Omega$ denotes the syzygy. Then we have
\begin{align*}
\underline{\Hom}_{\scrR}\big((u,g_0),(u,\prod_{i=0}^{m-1}g_i)\big) &\cong  \underline{\Hom}_{\scrR}\big((u,g_0),\Omega (u,\prod_{i=0}^{m-1}g_i)[1]\big) \tag{$\Omega[1]=\mathrm{Id}$ in $\underline{\mathrm{CM}}\,\scrR$}\\
&\cong \underline{\Hom}_{\scrR}\big((u,g_0), (u,g_m)[1]\big)\tag{by above}\\ 
 &\cong \Ext_{\scrR}^1\big((u,g_0),(u,g_m)\big)\tag{by e.g. \cite{IW3}}.
\end{align*}

We next claim that $\Ext_{\scrR}^1\big((u,g_0),(u,g_m)\big) \cong (u, G)/(u, g_0G, Gg_m, Gv)$ as $\scrR$-modules, where $G \colonequals g_1g_2 \dots g_{m-1}$ and the right-hand side is the quotient of one ideal by another.

Applying $\mathbb{F}=\Hom\big((u,g_0),-\big)$ to the short exact sequence \eqref{302} gives
\[
0 \to \mathbb{F} (u,g_m) \rightarrow \mathbb{F}\scrR^2  \xrightarrow[]{\begin{psmallmatrix}u \\ \prod_{i=0}^{m-1} g_{i} \end{psmallmatrix}}
   \mathbb{F}(u,\prod_{i=0}^{m-1}g_i) \to  \Ext_{\scrR}^1\big((u,g_0),(u,g_m)\big) \rightarrow \Ext_{\scrR}^1\big((u,g_0),\scrR^2\big).
\]
Since $(u,g_0)\in\mathrm{CM}\, \scrR$ by \cite[5.3]{IW1}, $\Ext_{\scrR}^1\big((u,g_0),\scrR^2\big)=0$. Further, by \cite[5.4]{IW1}, there are isomorphisms
\begin{align*}
    (u,\prod_{i=1}^m g_i) &\cong \mathbb{F}\scrR\quad\mbox{via } \ r \mapsto(\cdot \frac{r}{u}), \\
     (u, \prod_{i=1}^{m-1} g_i) &\cong \mathbb{F}(u,\prod_{i=0}^{m-1}g_i)\quad\mbox{via }  \ r \mapsto(\cdot r).
\end{align*}
Combining these together gives an exact sequence
\begin{equation*}
    (u,\prod_{i=1}^m g_i)^{\oplus 2} \xrightarrow[]{d=\begin{psmallmatrix}inc \\ \frac{\prod_{i=0}^{m-1} g_{i}}{u} \end{psmallmatrix}}  (u, \prod_{i=1}^{m-1} g_i)  \rightarrow  \Ext_{\scrR}^1\big((u,g_0),(u,g_m)\big) \rightarrow 0.
\end{equation*}
Thus $\Ext_{\scrR}^1\big((u,g_0),(u,g_m)\big) \cong  (u, \prod_{i=1}^{m-1} g_i) /\Im d$. It is elementary to check that $\Im d \cong (u,g_0G,g_mG,vG)$, proving the second claim.

Finally, we claim that $(u,G)/(u,g_0G,g_mG,vG) \cong \mathbb{C} \lal u, v, x, y \ral / (u,v,g_0,g_m)$ as $\scrR$-modules.

We first define a $\mathbb{C} \lal u, v, x, y \ral$-homomorphism $\varphi$ as follows,
\begin{equation*}
    \varphi \colon \mathbb{C} \lal u, v, x, y \ral \xrightarrow[]{\cdot G}(u,G)/(u,g_0G,g_mG,vG).
\end{equation*}
Clearly, $\varphi$ is well defined and $(u,v,g_0,g_m) \subseteq \ker \varphi$. We claim that $\ker \varphi\subseteq (u,v,g_0,g_m)$. 

Let $r \in \mathbb{C} \lal u, v, x, y \ral$ be such that $\varphi(r)=0$.  Then $rG= r_1u+r_2g_0G+r_3g_mG+r_4vG$ for some $r_i \in \mathbb{C} \lal u, v, x, y \ral$. Thus $r_1u=(r-r_2g_0-r_3g_m-r_4v)G$. Since $u$ and $G$ have no common factors, we have $r_1 = r_5G$ for some $r_5 \in \mathbb{C} \lal u, v, x, y \ral$. Thus $rG= (r_5u+r_2g_0+r_3g_m+r_4v)G$. Since $\mathbb{C} \lal u, v, x, y \ral$ is domain, then $r=r_5u+r_2g_0+r_3g_m+r_4v \in (u,v,g_0,g_m)$, and so $\ker \varphi\subseteq (u,v,g_0,g_m)$, proving the claim.  
Thus $\ker \varphi= (u,v,g_0,g_m)$. 

Since $\varphi$ is evidently surjective, it induces a $\mathbb{C} \lal u, v, x, y \ral$-isomorphism
\begin{equation*}
    \overline{\varphi}  \colon \frac{\mathbb{C} \lal u, v, x, y \ral}{(u,v,g_0,g_m)} \xrightarrow[]{\sim} \frac{(u,G)}{(u,g_0G,Gg_m,Gv)}.
\end{equation*}
It is easy to check that this is also an $\scrR$-module isomorphism.

\noindent(2) We prove
\[
\underline{\Hom}_{\scrR}\big((u,g_0\hdots g_{m-1}),(u,g_0)\big)  \cong  \frac{ \mathbb{C} \lal u, v, x, y \ral}{(u,v,g_0,g_m)}.
\]
We first claim that $\underline{\Hom}_{\scrR}\big((u,g_0\hdots g_{m-1}),(u,g_0)\big) \cong \Ext_{\scrR}^1\big((u,\prod_{i=0}^{m-1}g_i),(u,\prod_{i=1}^{m}g_i)\big)$.

Similar to (1), from \cite[\S 5]{IW1} there is an exact sequence
\begin{equation}\label{303}
0 \rightarrow (u,g_1 \hdots g_m) \xrightarrow[]{\begin{psmallmatrix} \frac{g_0}{u}&-inc \end{psmallmatrix}} \scrR^2 \xrightarrow[]{\begin{psmallmatrix}u \\ g_0 \end{psmallmatrix}} (u, g_0) \rightarrow 0.
\end{equation}
Thus $\Omega (u,g_0)=(u,g_1 \hdots g_m)$ and
\begin{align*}
\underline{\Hom}_{\scrR}\big((u,\prod_{i=0}^{m-1}g_i),(u,g_0)\big) &\cong  \underline{\Hom}_{\scrR}\big((u,\prod_{i=0}^{m-1}g_i),\Omega (u,g_0)[1]\big) \tag{$\Omega[1]=\mathrm{Id}$ in $\underline{\mathrm{CM}}\,\scrR$}\\
&\cong \underline{\Hom}_{\scrR}\big((u,\prod_{i=0}^{m-1}g_i), (u,\prod_{i=1}^{m}g_i)[1]\big)\tag{by above}\\ 
 &\cong \Ext_{\scrR}^1\big((u,\prod_{i=0}^{m-1}g_i),(u,\prod_{i=1}^{m}g_i)\big)\tag{by e.g. \cite{IW3}}.
\end{align*}

We next claim that $\Ext_{\scrR}^1\big((u,\prod_{i=0}^{m-1}g_i),(u,\prod_{i=1}^{m}g_i)\big) \cong (u, g_0g_m)/(u^2, ug_0, ug_m, g_0g_m)$ as $\scrR$-modules, where the right-hand side is the quotient of two fractional ideals.

Similar to (1), applying $\mathbb{G}=\Hom_\scrR\big((u,\prod_{i=0}^{m-1}g_i),-\big)$ to the exact sequence \eqref{303} gives
\[
0 \to \mathbb{G} (u,\prod_{i=1}^{m}g_i) \rightarrow \mathbb{G}\scrR^2  \xrightarrow[]{\begin{psmallmatrix}u \\ g_0 \end{psmallmatrix}}
   \mathbb{G}(u,g_0) \to  \Ext_{\scrR}^1\big((u,\prod_{i=0}^{m-1}g_i),(u,\prod_{i=1}^{m}g_i)\big) \rightarrow 0.
\]
By \cite[5.4]{IW1}, there are isomorphisms
\begin{align*}
    (u,g_m) &\cong \mathbb{G}\scrR\quad\mbox{via } \ r \mapsto(\cdot \frac{r}{u}), \\
     (u, g_0g_m) &\cong \mathbb{G}(u,g_0)\quad\mbox{via }  \ r \mapsto(\cdot \frac{r}{u}).
\end{align*}
Combining these together gives an exact sequence
\begin{equation*}
    (u,g_m)^{\oplus 2} \xrightarrow[]{d=\begin{psmallmatrix} u \\ g_0 \end{psmallmatrix}}  (u, g_0g_m) \rightarrow  \Ext_{\scrR}^1\big((u,\prod_{i=0}^{m-1}g_i),(u,\prod_{i=1}^{m}g_i)\big) \rightarrow 0.
\end{equation*}
Thus $\Ext_{\scrR}^1\big((u,\prod_{i=0}^{m-1}g_i),(u,\prod_{i=1}^{m}g_i)\big) \cong (u, g_0g_m)/\Im d$. It is elementary to check that $\Im d \cong (u^2,ug_0,ug_m,g_0g_m)$, proving the second claim.

Finally, we claim that $(u, g_0g_m)/(u^2, ug_0, ug_m, g_0g_m) \cong \C\lal u, v, x, y \ral / (u,v,g_0,g_m)$ as $\scrR$-modules. Similar to (1), we first define a $\mathbb{C} \lal u, v, x, y \ral$-homomorphism $\varphi$ as follows,
\begin{equation*}
    \varphi \colon \mathbb{C} \lal u, v, x, y \ral \xrightarrow[]{\cdot u}(u, g_0g_m)/(u^2, ug_0, ug_m, g_0g_m).
\end{equation*}
Clearly, $\varphi$ is well defined and $(u,v,g_0,g_m) \subseteq \ker \varphi$. We claim that $\ker \varphi\subseteq (u,v,g_0,g_m)$. 

Let $r \in \mathbb{C} \lal u, v, x, y \ral$ be such that $\varphi(r)=0$.  Then $ru= r_1u^2+r_2g_0u+r_3g_mu+r_4g_0g_m$ for some $r_i \in \mathbb{C} \lal u, v, x, y \ral$.
Thus $(r-r_1u-r_2g_0-r_3g_m)u=r_4g_0g_m$. Since $u$ and $g_0g_m$ have no common factors, we have $r_4 = r_5u$ for some $r_5 \in \mathbb{C} \lal u, v, x, y \ral$. Thus $ru= (r_1u+r_2g_0+r_3g_m+r_5g_0g_m)u$. Since $\mathbb{C} \lal u, v, x, y \ral$ is domain, then $r=r_1u+r_2g_0+r_3g_m+r_5g_0g_m \in (u,v,g_0,g_m)$, and so $\ker \varphi\subseteq (u,v,g_0,g_m)$, proving the claim.  

Since $\varphi$ is evidently surjective, it induces a $\mathbb{C} \lal u, v, x, y \ral$-isomorphism
\begin{equation*}
    \overline{\varphi}  \colon \frac{\mathbb{C} \lal u, v, x, y \ral}{(u,v,g_0,g_m)} \xrightarrow[]{\sim} \frac{(u, g_0g_m)}{(u^2, ug_0, ug_m, g_0g_m)}.
\end{equation*}
It is easy to check that this is also an $\scrR$-module isomorphism.
\end{proof}

The following two lemmas establish how the generalised GV invariants change under the contraction in \ref{cor:factor} and \ref{example:gv2}.
\begin{lemma}\label{lemma:Toda2}
Let $p,q \in \C\lal x,y \ral$. If $p$ and $q$ have a non-unit common divisor, then $\dim_{\C}  \C\lal x,y \ral/(p,q)= \infty$.
\end{lemma}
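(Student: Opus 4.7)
The plan is to leverage the fact that $\C\lal x,y\ral$ is a two-dimensional regular local (hence UFD) $\C$-algebra, and to reduce the problem to the well-known statement that the quotient by a single nonzero non-unit is infinite-dimensional over $\C$.

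First I would set $h \colonequals \gcd(p,q)$, which by hypothesis is a non-unit, and write $p = hp'$ and $q = hq'$ in $\C\lal x,y\ral$. This immediately gives the containment $(p,q) \subseteq (h)$, so there is a surjection of $\C$-vector spaces
\[
\C\lal x,y\ral/(p,q) \twoheadrightarrow \C\lal x,y\ral/(h).
\]
Hence it suffices to show that $\dim_\C \C\lal x,y\ral/(h) = \infty$ whenever $h$ is a nonzero non-unit.

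The main (and only) step is therefore to justify the claim for a single principal ideal. I would argue in either of two ways. The conceptual argument is that $\C\lal x,y\ral$ is a two-dimensional regular local ring and $h \in \mathfrak{m}\setminus\{0\}$, so by Krull's Hauptidealsatz the quotient $\C\lal x,y\ral/(h)$ is a Noetherian local $\C$-algebra of Krull dimension $1$; in particular it is not Artinian, and since its residue field is $\C$, it cannot be finite-dimensional as a $\C$-vector space. The more elementary alternative, which avoids dimension theory, is to invoke the Weierstrass preparation theorem: after a suitable linear change of coordinates one may assume $h$ is $y$-regular, and then $h$ is associate to a Weierstrass polynomial of some degree $d \geq 1$ in $\C\lal x\ral[y]$. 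The division algorithm then exhibits $\C\lal x,y\ral/(h)$ as a free $\C\lal x\ral$-module of rank $d$, which is manifestly infinite-dimensional over $\C$.

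I expect no serious obstacle here; the only care needed is to ensure $h \neq 0$ (which holds because $p,q$ themselves are nonzero — otherwise the statement is vacuous or trivial) and to note that even though $\C\lal x,y\ral$ is complete, neither argument is disturbed by completion, since the Weierstrass preparation theorem is available and Krull dimension behaves well under quotient by a principal ideal generated by a nonzerodivisor in a domain. Combining the surjection above with either argument yields $\dim_\C \C\lal x,y\ral/(p,q) = \infty$, as required.
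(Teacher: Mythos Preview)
Your proposal is correct and follows essentially the same approach as the paper: both arguments reduce to the principal ideal case via the containment $(p,q)\subseteq(h)$ where $h=\gcd(p,q)$, and then conclude that $\C\lal x,y\ral/(h)$ is infinite-dimensional. The paper simply asserts this last step by appealing to the fact that the ring has two variables, whereas you supply more explicit justification via the Hauptidealsatz or Weierstrass preparation.
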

\begin{proof}
Let $r \in \mathbb{C}\lal x,y \ral$ be a non-unit common divisor of $p$ and $q$, so that $p=rp'$ and $q= rq'$ for some $p',q'\in \C\lal x,y \ral$. Then $(p,q)=(r)(p',q') \subseteq (r)$, hence there is a surjection
\[
\C\lal x,y\ral/(p,q)\twoheadrightarrow \C\lal x,y\ral/(r).
\]
Since $r$ is not a unit,  $\operatorname{dim}_{\C}\C\lal x,y \ral/(r) = \infty$, and so the statement follows.
\end{proof}

\begin{lemma}\label{lemma:Toda}
Let $p_i$ and $q_j \in \C \lal x,y \ral$ for $0 \leq i \leq s $ and $0 \leq j \leq t$. Then
\[
\operatorname{dim}_{\C}\frac{\C\lal x,y\ral}{\big(\prod_{i=0}^s p_i,\ \prod_{j=0}^t q_j\big)}
=
\sum_{i=0}^s\sum_{j=0}^t
\operatorname{dim}_{\C}\frac{\C\lal x,y\ral}{(p_i,q_j)}.
\]
\end{lemma}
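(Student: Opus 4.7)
The plan is to induct on $s+t$, after first treating the infinite case. First I would observe that if $\gcd(p_i, q_j) \neq 1$ for some $i, j$, then \ref{lemma:Toda2} makes the corresponding term $\dim_\C \C\lal x, y\ral/(p_i, q_j)$ on the right-hand side equal to $\infty$; meanwhile the common factor $\gcd(p_i, q_j)$ divides both $\prod_i p_i$ and $\prod_j q_j$, so $\gcd(\prod p_i, \prod q_j) \neq 1$ and \ref{lemma:Toda2} again forces the left-hand side to be $\infty$. Hence both sides agree in this case, and it remains to verify the identity under the assumption that $\gcd(p_i, q_j) = 1$ for all $i, j$. Since $\C\lal x, y\ral$ is a UFD, this coprimality is inherited by any sub-products $\gcd(\prod_{i \in S} p_i, \prod_{j \in T} q_j) = 1$, so every dimension that appears is finite.

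With this reduction in place, the base case $s = t = 0$ is immediate. For the inductive step, by the symmetry of the formula I may assume $s \geq 1$. Writing $P \colonequals \prod_{i=0}^s p_i$, $p' \colonequals \prod_{i=1}^s p_i$, and $Q \colonequals \prod_{j=0}^t q_j$, the heart of the argument is to establish the short exact sequence of $\C$-vector spaces
\[
0 \to \frac{\C\lal x, y\ral}{(p_0, Q)} \xrightarrow{\cdot p'} \frac{\C\lal x, y\ral}{(P, Q)} \to \frac{\C\lal x, y\ral}{(p', Q)} \to 0.
\]
Once this is in hand, additivity of dimension combined with the inductive hypothesis applied to the left term (parameters $(0, t)$, total $t < s + t$) and to the right term (parameters $(s-1, t)$ after relabelling $p_1, \dots, p_s$) gives exactly the desired sum.

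The main obstacle is verifying this short exact sequence, and specifically the injectivity of multiplication by $p'$. Well-definedness is immediate from $(p_0, Q) \cdot p' \subseteq (P, Q)$, and surjectivity of the right-hand map follows because the cokernel is $\C\lal x, y\ral/(P, Q, p') = \C\lal x, y\ral/(p', Q)$, using $P = p_0 p' \in (p')$. For injectivity, suppose $rp' = sP + tQ$ for some $s, t \in \C\lal x, y\ral$; then $(r - s p_0) p' = tQ$, and since $\gcd(p', Q) = 1$ in the UFD $\C\lal x, y\ral$ by the coprimality reduction above, unique factorisation forces $p' \mid t$, whence $r \in (p_0, Q)$. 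This coprimality ingredient is precisely what the preceding reduction via \ref{lemma:Toda2} was arranged to supply, and is the step that would otherwise break down.
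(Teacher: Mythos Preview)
Your proof is correct and follows essentially the same approach as the paper: reduce to the coprime case via \ref{lemma:Toda2}, then use the short exact sequence coming from multiplication by one factor, with injectivity supplied by the UFD property. The paper states only the special case $\dim \C\lal x,y\ral/(p_0,q_0q_1)=\dim \C\lal x,y\ral/(p_0,q_0)+\dim \C\lal x,y\ral/(p_0,q_1)$ and leaves the induction implicit, whereas you organize the same idea as an explicit induction on $s+t$ with the general sequence $0\to \C\lal x,y\ral/(p_0,Q)\xrightarrow{\cdot p'}\C\lal x,y\ral/(P,Q)\to\C\lal x,y\ral/(p',Q)\to 0$.
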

\begin{proof}
We split the proof into two cases.

\noindent
(1) There exists $i'$ and $j'$ such that the greatest common divisor $\gcd(p_{i'},q_{j'}) \neq 1$.

Since $ \gcd(p_{i'},q_{j'}) \neq 1$, $\gcd( \prod_{i=0}^s p_i,\prod_{j=0}^t q_j) \neq 1$. By \ref{lemma:Toda2},
\begin{equation*}
    \operatorname{dim}_{\C}\frac{\C\lal x,y \ral}{\big( \prod_{i=0}^s p_i,\prod_{j=0}^t q_j \big)}= \infty = \operatorname{dim}_{\C} \frac{\C \lal x,y \ral}{(p_{i'},q_{j'})}.
\end{equation*}
Since the dimension of a vector space can not be negative, the statement follows.

\noindent
(2) The greatest common divisor $\gcd(p_i,q_j)=1$ for each $i$ and $j$.

It suffices to prove that 
\begin{equation*}
  \operatorname{dim}_{\C}\frac{\C \lal x,y\ral}{(p_0,q_0q_1)}=\operatorname{dim}_{\C} \frac{\C \lal x,y\ral}{(p_0,q_0)} +\operatorname{dim}_{\C}\frac{\C \lal x,y\ral}{(p_0,q_1)},
\end{equation*}
since then the statement follows by induction. 
We first consider the natural quotient $\C \lal x,y\ral$-homomorphism 
\begin{align*}
   \varphi \colon  \frac{\C \lal x,y\ral}{(p_0,q_0q_1)}   \twoheadrightarrow \frac{\C \lal x,y\ral}{(p_0,q_0)}.
\end{align*}
It is clear that $\ker \varphi = (q_0)/(p_0,q_0q_1)$. So we only need to prove that $(q_0)/(p_0,q_0q_1) \cong \C\lal x,y \ral/(p_0,q_1) $. To see this, we define a $\C \lal x,y\ral$-homomorphism as
\begin{align*}
   \upvartheta \colon \frac{\C \lal x,y\ral}{(p_0,q_1)} & \rightarrow  \frac{(q_0)}{(p_0,q_0q_1)} \\
    r & \mapsto q_0r
\end{align*}
It is clear that $\upvartheta$ is well-defined and surjective. So we only need to prove the injectivity. If $q_0r= r_1p_0+r_2q_0q_1$ for some $r_1,r_2 \in \C \lal x,y \ral$, since $\gcd (p_0,q_0)=1$, then $r_1=r_3q_0$ for some $r_3 \in \C \lal x,y \ral$. Since $\C \lal x,y \ral$ is a domain, $r=r_3p_0+r_2q_1 \in (p_0,q_1)$, and so $\upvartheta$ is injective.
\end{proof}
In the viewpoint of \ref{rmk:intersect}, the above \ref{lemma:Toda} reflects the additivity of local intersection multiplicities under taking unions of components.

Consider a contraction of $\uppi^{\F}$ as $\scrX \to \scrY \xrightarrow{\omega} \Spec\scrR$. Then there exists a subset $J\subseteq\{1,\dots,m\}$ indexing the curves contracted by $\omega$, namely
\[
A_1(\omega)=\bigoplus_{j\in J}\Z\langle \Curve_j\rangle.
\]
Write $k:=|J|$ and list $J(1)<\cdots<J(k)$.
By \cite[\S 5]{IW1}, $\scrY$ is given pictorially by

\[
\begin{array}{ccc}
\begin{array}{c}
\scrY
\end{array} &
\begin{array}{c}
\begin{tikzpicture}[xscale=1,yscale=0.6]
\draw[black] (-0.1,-0.04,0) to [bend left=25] (2.1,-0.04,0);
\draw[black] (1.9,-0.04,0) to [bend left=25] (4.1,-0.04,0);
\node at (5.5,0,0) {$\hdots$};
\draw[black] (6.9,-0.04,0) to [bend left=25] (9.1,-0.04,0);
\node at (1,0.6,0) {$\scriptstyle \Curve_{J(1)}$};
\node at (3,0.6,0) {$\scriptstyle \Curve_{J(2)}$};
\node at (8,0.6,0) {$\scriptstyle \Curve_{J(k)}$};
\filldraw [red] (0,0,0) circle (1pt);
\filldraw [red] (2,0,0) circle (1pt);
\filldraw [red] (4,0,0) circle (1pt);
\filldraw [red] (7,0,0) circle (1pt);
\filldraw [red] (9,0,0) circle (1pt);
\node at (0,-0.4,0) {$\scriptstyle g_0\hdots g_{J(1)-1}$};
\node at (2,-0.4,0) {$\scriptstyle g_{J(1)}\hdots g_{J(2)-1}$};
\node at (4,-0.4,0) {$\scriptstyle g_{J(2)}\hdots g_{J(3)-1}$};
\node at (7,-0.4,0) {$\scriptstyle g_{J(k-1)}\hdots g_{J(k)-1}$};
\node at (9,-0.4,0) {$\scriptstyle g_{J(k)}\hdots g_{m}$};
\end{tikzpicture} 
\end{array}
\end{array}
\]

The following shows that the generalised GV invariants of $\uppi^{\F}$ determine those of $\omega$.
\begin{cor}\label{cor:factor}
For $1 \leq s \leq t \leq k$, the generalised GV invariants $N_{st}(\omega)$ associated to the curve class $\Curve_{J(s)}+ \Curve_{J(s+1)}+ \hdots + \Curve_{J(t)}$ in $\scrY$ satisfies 
\[
N_{st}(\omega) = \sum_{i=J(s-1)+1}^{J(s)}\sum_{j=J(t)}^{J(t+1)-1} N_{ij}(\uppi^{\F}).
\]
\end{cor}
\begin{proof}
To unify the proof, set $J(0)=0$ and $J(k+1)=m+1$. Then $g_{J(0)}=g_0$ and $g_{J(k+1)-1}=g_m$.
By the definition of the generalised GV invariants~\ref{def:Nij}, we have
\begin{align*}
N_{st}(\omega) & = \operatorname{dim}_\mathbb{C}\frac{\mathbb{C}\lal x,y \ral}{(g_{J(s-1)} \hdots g_{J(s)-1}, \, g_{J(t)} \hdots g_{J(t+1)-1})}    \\
& =  \sum_{i=J(s-1)}^{J(s)-1}\sum_{j=J(t)}^{J(t+1)-1} \operatorname{dim}_\mathbb{C}\frac{\mathbb{C}\lal x,y \ral}{(g_i,g_j)} \tag{by \ref{lemma:Toda}} \\
& = \sum_{i=J(s-1)+1}^{J(s)}\sum_{j=J(t)}^{J(t+1)-1} N_{ij}(\uppi^{\F}).
\qedhere
\end{align*}
\end{proof}

The following example illustrates \ref{cor:factor} in the case of two exceptional curves.

\begin{example}\label{example:gv2}
Consider the singularity $\scrR$ defined by $uv=g_0g_1g_2$, together with the $\scrR$-module $M= \scrR \oplus (u,g_0) \oplus (u,g_0g_1)$, and the associated crepant partial resolution $\uppi \colon \scrX \rightarrow \Spec \scrR$, which is given pictorially by
\[
\begin{array}{ccc}
\begin{array}{c}
\scrX
\end{array} &
\begin{array}{c}
\begin{tikzpicture}[xscale=0.6,yscale=0.6]
\draw[black] (-0.1,-0.04,0) to [bend left=25] (2.1,-0.04,0);
\draw[black] (1.9,-0.04,0) to [bend left=25] (4.1,-0.04,0);

\node at (1,0.6,0) {$\scriptstyle \Curve_{1}$};
\node at (3,0.6,0) {$\scriptstyle \Curve_{2}$};

\node at (0,-0.4,0) {$\scriptstyle g_0$};
\node at (2,-0.4,0) {$\scriptstyle g_1$};
\node at (4,-0.4,0) {$\scriptstyle g_2$};

\end{tikzpicture} 
\end{array}
\end{array}
\]   
Consider the contractions $\scrX \xrightarrow{} \scrY_i \xrightarrow{\omega_i}\Spec\scrR$ such that $A_1(\omega_i)=\Z\langle\Curve_i\rangle$ for $i=1,2$. By~\cite[\S 5]{IW1}, $\scrY_1$ and $\scrY_2$ are given pictorially by

\[
\begin{array}{cccc}
\begin{array}{c}
\scrY_1
\end{array} &
\begin{array}{c}
\begin{tikzpicture}[xscale=0.6,yscale=0.6]
\draw[black] (-0.1,-0.04) to [bend left=25] (2.1,-0.04);
\node at (1,0.6) {$\scriptstyle \Curve_{1}$};

\node at (0,-0.4) {$\scriptstyle g_0$};
\node at (2,-0.4) {$\scriptstyle g_1g_2$};
\end{tikzpicture} 
\end{array}
\qquad \qquad
\begin{array}{c}
\scrY_2
\end{array} &
\begin{array}{c}
\begin{tikzpicture}[xscale=0.6,yscale=0.6]
\draw[black] (-0.1,-0.04) to [bend left=25] (2.1,-0.04);
\node at (1,0.6) {$\scriptstyle \Curve_{2}$};

\node at (0,-0.4) {$\scriptstyle g_0g_1$};
\node at (2,-0.4) {$\scriptstyle g_2$};
\end{tikzpicture} 
\end{array}
\end{array}
\]
Hence the generalised GV invariants of the single exceptional curve in $\scrY_1$ and $\scrY_2$ are
\begin{align*}
  &  N_{11}(\omega_1) = \operatorname{dim}_\mathbb{C} \frac{\mathbb{C}\lal x,y \ral}{(g_0,g_1g_2)}=\operatorname{dim}_\mathbb{C} \frac{\mathbb{C}\lal x,y \ral}{(g_0,g_1)}+\operatorname{dim}_\mathbb{C} \frac{\mathbb{C}\lal x,y \ral}{(g_0,g_2)} =N_{11}(\uppi)+N_{12}(\uppi),\\
    &  N_{11}(\omega_2) = \operatorname{dim}_\mathbb{C} \frac{\mathbb{C}\lal x,y \ral}{(g_0g_1,g_2)}=\operatorname{dim}_\mathbb{C} \frac{\mathbb{C}\lal x,y \ral}{(g_0,g_2)}+\operatorname{dim}_\mathbb{C} \frac{\mathbb{C}\lal x,y \ral}{(g_1,g_2)} =N_{12}(\uppi)+N_{22}(\uppi).
\end{align*}
\end{example}

The papers \cite{NW1, VG} give a combinatorial description of the matrix which controls the transformation of the non-zero GV invariants under a flop. For crepant resolutions of $cA_n$ singularities, see \S\ref{sec: reduction} below. 
By definition \ref{def:Nij}, it is clear that generalised GV invariants of crepant partial resolutions of $cA_n$ singularities also satisfy this transformation under a flop. We provide only an example to illustrate the general idea.
\begin{example}\label{example:permutation}
We continue the example \ref{example:gv2} and consider the $\uppi_i \colon \scrX_i \rightarrow \Spec\scrR$ obtained by flopping the exceptional curve $\Curve_i$ in $\scrX$ for $i=1,2$. Then by \cite[\S 5]{IW1} $\scrX_1$ and $\scrX_2$ are given pictorially by
\[
\begin{array}{cccc}
\begin{array}{c}
\scrX_1
\end{array} &
\begin{array}{c}
\begin{tikzpicture}[xscale=0.6,yscale=0.6]
\draw[black] (-0.1,-0.04,0) to [bend left=25] (2.1,-0.04,0);
\draw[black] (1.9,-0.04,0) to [bend left=25] (4.1,-0.04,0);

\node at (1,0.6,0) {$\scriptstyle \Curve'_{1}$};
\node at (3,0.6,0) {$\scriptstyle \Curve_{2}$};

\filldraw [red] (0,0,0) circle (1pt);
\filldraw [red] (2,0,0) circle (1pt);
\filldraw [red] (4,0,0) circle (1pt);

\node at (0,-0.4,0) {$\scriptstyle g_1$};
\node at (2,-0.4,0) {$\scriptstyle g_0$};
\node at (4,-0.4,0) {$\scriptstyle g_2$};
\end{tikzpicture} 
\end{array}
\qquad \qquad
\begin{array}{c}
\scrX_2
\end{array} &
\begin{array}{c}
\begin{tikzpicture}[xscale=0.6,yscale=0.6]
\draw[black] (-0.1,-0.04,0) to [bend left=25] (2.1,-0.04,0);
\draw[black] (1.9,-0.04,0) to [bend left=25] (4.1,-0.04,0);

\node at (1,0.6,0) {$\scriptstyle \Curve_{1}$};
\node at (3,0.6,0) {$\scriptstyle \Curve'_{2}$};

\filldraw [red] (0,0,0) circle (1pt);
\filldraw [red] (2,0,0) circle (1pt);
\filldraw [red] (4,0,0) circle (1pt);

\node at (0,-0.4,0) {$\scriptstyle g_0$};
\node at (2,-0.4,0) {$\scriptstyle g_2$};
\node at (4,-0.4,0) {$\scriptstyle g_1$};
\end{tikzpicture} 
\end{array}
\end{array}
\]
Thus by \ref{def:Nij} the generalised GV invariants of $\uppi_i$ are obtained by permutating those of $\uppi$ as follows,
\[
\begin{array}{cc}
  \begin{tikzpicture}[bend angle=30, looseness=1]
\node (a) at (-0.5,0) {$N_{11}(\uppi_1)$};
\node (b) at (1,0) {$N_{22}(\uppi_1)$};
\node (c) at (0.3,-0.6) {$N_{12}(\uppi_1)$};
\node (d) at (1.9,-0.4) {$=$};
\node (e) at (2.7,0) {$N_{11}(\uppi)$};
\node (f) at (4.2,0) {$N_{12}(\uppi)$};
\node (g) at (3.5,-0.6) {$N_{22}(\uppi)$};
\end{tikzpicture}
& \qquad
\begin{tikzpicture}[bend angle=30, looseness=1]
\node (a) at (-0.5,0) {$N_{11}(\uppi_2)$};
\node (b) at (1,0) {$N_{22}(\uppi_2)$};
\node (c) at (0.3,-0.6) {$N_{12}(\uppi_2)$};
\node (d) at (1.9,-0.4) {$=$};
\node (e) at (2.7,0) {$N_{12}(\uppi)$};
\node (f) at (4.2,0) {$N_{22}(\uppi)$};
\node (g) at (3.5,-0.6) {$N_{11}(\uppi)$};
\end{tikzpicture}
\end{array}
\]
\end{example}

\begin{remark}\label{rmk:determine}
Given a crepant partial resolution $\uppi$ of a $cA_n$ singularity, by the permutation rule under flops and \ref{cor:factor}, for any morphism $\omega$ obtained from $\uppi$ by a sequence of flops and contractions, the generalised GV invariant of any curve class appearing in $\omega$ can be expressed as a finite sum of some of the generalised GV invariants of $\uppi$.
\end{remark}

Recall that $\uppi^{\F}$ is a crepant partial resolution with $m$ exceptional curves and $\Lambda(\uppi^{\F}) \cong \End_{\scrR}(M^{\F})$. Moreover, $\Lambda(\uppi^{\F})$ can be presented as the quiver in \ref{35} with trivial arrow $e_i$ at each vertex $i$.

The following result gives an analogue of Toda's formula for the generalised GV invariants. In the case of smooth flopping contractions (equivalently, crepant resolutions of isolated $cA_n$ singularities),
this formula recovers Toda's formula (see \ref{34}), via \ref{37} below.
\begin{theorem}\label{thm:Toda}
For any $1 \leq s  \leq t \leq m$, the following equality holds.
\begin{equation*}
    \operatorname{dim}_{\C}e_s\Lambda_{\mathrm{con}}(\uppi^{\F})e_t= 
    \sum_{i=1}^s \sum_{j=t}^m N_{ij}(\uppi^{\F}) =\operatorname{dim}_{\C}e_t\Lambda_{\mathrm{con}}(\uppi^{\F})e_s.
\end{equation*}
In particular, $ \operatorname{dim}_{\C}\Lambda_{\mathrm{con}}(\uppi^{\F})=\sum_{i=1}^m \sum_{j=i}^m (j-i+1)^2N_{ij}(\uppi^{\F})$.
\end{theorem}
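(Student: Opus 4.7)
The plan is to reduce the computation of each $e_s\Lambda_{\con}(\uppi^{\F})e_t$ to an instance of Proposition \ref{lemma:con_iso} by coarsening the factorisation of $uv$, to extract the resulting dimension via Lemma \ref{lemma:Toda}, and finally to handle the total-dimension formula by a short combinatorial count.

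First, the decomposition $M^{\F}=\scrR\oplus\bigoplus_{j=1}^m M_{I_j}$ together with the definition $\Lambda_{\con}(\uppi^{\F})=\underline{\End}_{\scrR}(M^{\F})$ identifies
\[
e_s\Lambda_{\con}(\uppi^{\F})e_t\cong\underline{\Hom}_{\scrR}(M_{I_t},M_{I_s}),\qquad e_t\Lambda_{\con}(\uppi^{\F})e_s\cong\underline{\Hom}_{\scrR}(M_{I_s},M_{I_t}),
\]
so it suffices to compute these two stable Hom spaces for each $1\leq s\leq t\leq m$. Fix such $s\leq t$ and introduce the coarse factorisation $uv=G_1G_2G_3$ with
\[
G_1\colonequals\prod_{i=0}^{s-1}g_i,\qquad G_2\colonequals\prod_{i=s}^{t-1}g_i,\qquad G_3\colonequals\prod_{i=t}^{m}g_i,
\]
so that $M_{I_s}=(u,G_1)$ and $M_{I_t}=(u,G_1G_2)$. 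The proof of Proposition \ref{lemma:con_iso} never exploits the primality of the middle factors individually: the middle block enters the syzygy exact sequences and the $\Ext^1$ computation only as the single product $g_1\cdots g_{m-1}$, and the prerequisites it uses ($(u,g_0)\in\CM\,\scrR$, the ideal isomorphisms of \cite[5.3, 5.4]{IW1}, and coprimality of $u$ with the middle block) all hold for any factorisation of $uv$ into mutually coprime pieces. Applying the identical argument with $(g_0,\,g_1\cdots g_{m-1},\,g_m)$ replaced by $(G_1,G_2,G_3)$ therefore gives
\[
\underline{\Hom}_{\scrR}(M_{I_s},M_{I_t})\cong\frac{\mathbb{C}\lal u,v,x,y\ral}{(u,v,G_1,G_3)}\cong\underline{\Hom}_{\scrR}(M_{I_t},M_{I_s}),
\]
of common $\C$-dimension $\dim_{\C}\C\lal x,y\ral/(G_1,G_3)$. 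This already establishes the symmetry $\dim_{\C}e_s\Lambda_{\con}(\uppi^{\F})e_t=\dim_{\C}e_t\Lambda_{\con}(\uppi^{\F})e_s$.

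Feeding $(G_1,G_3)=\big(\prod_{i=0}^{s-1}g_i,\prod_{j=t}^{m}g_j\big)$ into Lemma \ref{lemma:Toda} and re-indexing $i\mapsto i+1$ (using $N_{ij}(\uppi^{\F})=\dim_{\C}\C\lal x,y\ral/(g_{i-1},g_j)$) yields
\[
\dim_{\C}e_s\Lambda_{\con}(\uppi^{\F})e_t=\sum_{i=0}^{s-1}\sum_{j=t}^{m}\dim_{\C}\frac{\C\lal x,y\ral}{(g_i,g_j)}=\sum_{i=1}^{s}\sum_{j=t}^{m}N_{ij}(\uppi^{\F}),
\]
which is the first displayed equality of the theorem. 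For the total-dimension statement, the formula $\dim_{\C}e_s\Lambda_{\con}(\uppi^{\F})e_t=\sum_{i\leq\min(s,t),\,j\geq\max(s,t)}N_{ij}(\uppi^{\F})$ holds for every $(s,t)\in\{1,\dots,m\}^2$, and summing over all such $(s,t)$ makes each $N_{ij}(\uppi^{\F})$ (with $i\leq j$) appear with multiplicity $\#\{(s,t)\in\{i,\dots,j\}^2\}=(j-i+1)^2$.

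The only non-routine point is the coarsening observation used in the second paragraph: one must explicitly verify that Proposition \ref{lemma:con_iso} really is insensitive to whether its middle factor factors further, so that it can be applied to the coarse data $(G_1,G_2,G_3)$. Once this is noted, the remaining steps are direct applications of the earlier results together with elementary index-counting.
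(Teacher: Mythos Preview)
Your proof is correct and follows essentially the same strategy as the paper's: reduce $e_s\Lambda_{\con}e_t$ to a stable Hom between $(u,G_1)$ and $(u,G_1G_2)$, invoke Proposition \ref{lemma:con_iso} for the coarse factorisation $uv=G_1G_2G_3$, and then split the resulting dimension via Lemma \ref{lemma:Toda}. The only packaging difference is that the paper achieves the coarsening geometrically, by factoring $\uppi^{\F}$ through an intermediate $\scrY\xrightarrow{\omega}\Spec\scrR$ whose flag data are precisely $(G_1,g_s,\dots,g_{t-1},G_3)$ and then citing $\Lambda_{\con}(\omega)\cong e_{st}\Lambda_{\con}(\uppi^{\F})e_{st}$ from \cite[\S5]{IW1}; you instead observe directly that the proof of Proposition \ref{lemma:con_iso} is insensitive to further factoring the middle block and so applies verbatim to $(G_1,G_2,G_3)$. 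Your route is marginally more elementary since it avoids the extra geometric input, while the paper's route has the advantage that Proposition \ref{lemma:con_iso} is applied literally as stated (to the flag of $\omega$) rather than after a re-inspection of its proof. One small remark: your identification $e_s\Lambda_{\con}e_t\cong\underline{\Hom}_{\scrR}(M_{I_t},M_{I_s})$ has the arguments in the opposite order to the paper's convention, but since Proposition \ref{lemma:con_iso} gives the same answer in both directions this is harmless.
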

\begin{proof}
To ease notation, set $\uppi\colonequals \uppi^{\F}$. 
We first factor $\uppi$ as $\scrX \xrightarrow{} \scrY \xrightarrow{\omega}\Spec\scrR$ such that $A_1(\omega)=\bigoplus_{k=s}^t \Z\langle\Curve_k\rangle$. By \cite[\S 5]{IW1}, 
$\scrY$ is given pictorially by
\[
\begin{array}{ccc}
\begin{array}{c}
\scrY
\end{array} &
\begin{array}{c}
\begin{tikzpicture}[xscale=0.6,yscale=0.6]
\draw[black] (-0.1,-0.04,0) to [bend left=25] (2.1,-0.04,0);
\draw[black] (1.9,-0.04,0) to [bend left=25] (4.1,-0.04,0);
\node at (5.5,0,0) {$\hdots$};
\draw[black] (6.9,-0.04,0) to [bend left=25] (9.1,-0.04,0);
\node at (1,0.6,0) {$\scriptstyle \Curve_{s}$};
\node at (3,0.6,0) {$\scriptstyle \Curve_{s+1}$};
\node at (8,0.6,0) {$\scriptstyle \Curve_{t}$};
\filldraw [red] (0,0,0) circle (1pt);
\filldraw [black] (2,0,0) circle (1pt);
\filldraw [black] (4,0,0) circle (1pt);
\filldraw [black] (7,0,0) circle (1pt);
\filldraw [red] (9,0,0) circle (1pt);
\node at (0,-0.4,0) {$\scriptstyle g_0\hdots g_{s-1}$};
\node at (2,-0.4,0) {};
\node at (4,-0.4,0) {};
\node at (7,-0.4,0) {};
\node at (9,-0.4,0) {$\scriptstyle g_t\hdots g_{m}$};
\end{tikzpicture} 
\end{array}
\end{array}
\]
and $\Lambda_{\mathrm{con}}(\omega) \cong  e_{st}\Lambda_{\mathrm{con}}(\uppi)e_{st}$ where $e_{st} \colonequals e_s+\cdots +e_t$. Thus
\begin{align*}
    e_s\Lambda_{\mathrm{con}}(\uppi)e_t & \cong e_s e_{st}\Lambda_{\mathrm{con}}(\uppi) e_{st}e_t \tag{since $e_{s}e_{st}=e_s$ and $e_{st}e_{t}=e_t$}\\
    & \cong e_s \Lambda_{\mathrm{con}}(\omega) e_t \tag{since $\Lambda_{\mathrm{con}}(\omega) \cong  e_{st}\Lambda_{\mathrm{con}}(\uppi)e_{st}$}\\
    & \cong \underline{\Hom}_{\scrR}((u,g_0\hdots g_{s-1}),(u,g_0\hdots g_{t-1})) \tag{by \ref{35}} \\
    & \cong \frac{\mathbb{C} \lal u, v, x, y \ral}{(u,v,g_0\hdots g_{s-1},g_t\hdots g_{m})},
\end{align*}
where in the last step uses the first isomorphism in \ref{lemma:con_iso}, but with $g_0$ and $g_m$ replaced by $g_0\hdots g_{s-1}$ and $g_t\hdots g_{m}$.
Similarly, 
\begin{align*}
    e_t\Lambda_{\mathrm{con}}(\uppi)e_s & \cong e_t e_{st}\Lambda_{\mathrm{con}}(\uppi) e_{st}e_s \tag{since $e_{t}e_{st}=e_t$ and $e_{st}e_{s}=e_s$}\\
    & \cong e_t \Lambda_{\mathrm{con}}(\omega) e_s \tag{since $\Lambda_{\mathrm{con}}(\omega) \cong  e_{st}\Lambda_{\mathrm{con}}(\uppi)e_{st}$}\\
    & \cong \underline{\Hom}_{\scrR}((u,g_0\hdots g_{t-1}),(u,g_0\hdots g_{s-1})) \tag{by \ref{35}} \\
    & \cong \frac{\mathbb{C} \lal u, v, x, y \ral}{(u,v,g_0\hdots g_{s-1},g_t\hdots g_{m})},
\end{align*}
where again the last step uses the second isomorphism in \ref{lemma:con_iso}, with $g_0$ and $g_m$ replaced by $g_0\hdots g_{s-1}$ and $g_t\hdots g_{m}$.
Combining these together, it follows that
\begin{equation*}
    \operatorname{dim}_{\C}e_s\Lambda_{\mathrm{con}}(\uppi)e_t=  \operatorname{dim}_{\C} \frac{\mathbb{C} \lal  x, y \ral}{(\prod_{i=0}^{s-1}g_{i},\prod_{j=t}^{m}g_{j})}= \operatorname{dim}_{\C}e_t\Lambda_{\mathrm{con}}(\uppi)e_s.
\end{equation*}
Moreover,
\begin{align*}
    \operatorname{dim}_{\C} \frac{\mathbb{C} \lal  x, y \ral}{(\prod_{i=0}^{s-1}g_{i},\prod_{j=t}^{m}g_{j})} & =\sum_{i=0}^{s-1}\sum_{j=t}^m \operatorname{dim}_{\C} \frac{\C \lal x,y \ral}{(g_i,g_j)} \tag{by \ref{lemma:Toda}}\\
    & = \sum_{i=1}^{s}\sum_{j=t}^m \operatorname{dim}_{\C} \frac{\C \lal x,y \ral}{(g_{i-1},g_j)}\\
    & =  \sum_{i=1}^{s}\sum_{j=t}^m  N_{ij}(\uppi) \tag{by definition \ref{def:Nij}}.
\end{align*}
Writing  $N_{ij}=N_{ij}(\uppi)$ and $\Lambda_{\mathrm{con}}=\Lambda_{\mathrm{con}}(\uppi)$ to ease notation, it follows that 
\begin{equation}\label{s_con_t}
    \operatorname{dim}_{\C}e_s\Lambda_{\mathrm{con}}e_t=   \sum_{i=1}^{s}\sum_{j=t}^m  N_{ij}= \operatorname{dim}_{\C}e_t\Lambda_{\mathrm{con}} e_s.
\end{equation}

Now by \ref{35},
\begin{align*}
     \Lambda_{\mathrm{con}} &= 
    \begin{bmatrix}
        e_1\Lambda_{\mathrm{con}}e_1  & e_1\Lambda_{\mathrm{con}}e_2  &\cdots  & e_1\Lambda_{\mathrm{con}}e_m\\
        e_2\Lambda_{\mathrm{con}}e_1  & e_2\Lambda_{\mathrm{con}}e_2  &\cdots  & e_2\Lambda_{\mathrm{con}}e_m\\
        \vdots     & \vdots &  \ddots&  \vdots   \\
        e_m\Lambda_{\mathrm{con}}e_1  & e_m\Lambda_{\mathrm{con}}e_2  &\cdots  & e_m\Lambda_{\mathrm{con}}e_m
    \end{bmatrix},
\end{align*}
so using \eqref{s_con_t}
\begin{align*}
 \operatorname{dim}_{\C} \Lambda_{\mathrm{con}}   & =\begin{bmatrix}
    \sum_{i=1}^1\sum_{j=1}^m  N_{ij} 
        &  \sum_{i=1}^1\sum_{j=2}^m  N_{ij} &\cdots  &  \sum_{i=1}^1\sum_{j=m}^m  N_{ij}\\
          \sum_{i=1}^1\sum_{j=2}^m  N_{ij} &   \sum_{i=1}^2\sum_{j=2}^m  N_{ij}&\cdots  &  \sum_{i=1}^2\sum_{j=m}^m  N_{ij}\\
        \vdots     & \vdots &  \ddots&  \vdots   \\
        \sum_{i=1}^1\sum_{j=m}^m  N_{ij}  &  \sum_{i=1}^2\sum_{j=m}^m N_{ij}  &\cdots  &  \sum_{i=1}^m\sum_{j=m}^m  N_{ij}
    \end{bmatrix}.
\end{align*}
For $1 \leq i \leq j \leq m$, $N_{ij}$ only appears in each entry of the submatrix from row $i$ to row $j$ and column $i$ to column $j $ of the above matrix, and so $N_{ij}$ appears $(j-i+1)^2$ times in $\operatorname{dim}_{\C} \Lambda_{\mathrm{con}}$. Thus $ \operatorname{dim}_{\C}\Lambda_{\mathrm{con}}=\sum_{i=1}^m \sum_{j=i}^m (j-i+1)^2N_{ij}$.
\end{proof}

The following asserts that any algebra isomorphism between contraction algebras can only send $e_i$ to $e_i$ or $e_{m+1-i}$ for $1 \leq i \leq m$.
\begin{prop}\label{372}
Let $\uppi_k \colon \scrX_k \rightarrow \Spec \scrR_k$ be two crepant partial resolutions of $cA_{n_k}$ singularities $\scrR_k$ with $m_k$ exceptional curves for $k=1,2$. If there exists an algebra isomorphism $\phi \colon \Lambda_{\mathrm{con}}(\uppi_1) \xrightarrow{\sim}  \Lambda_{\mathrm{con}}(\uppi_2)$, then $m_1=m_2$ and $\phi$ must belong to one of the following cases:
\begin{enumerate}
    \item $\phi(e_i)=e_i$ for $1 \leq i \leq m$,
    \item $\phi(e_i)=e_{m+1-i}$ for $1 \leq i \leq m$,
\end{enumerate}
where $m \colonequals m_1=m_2$.
\end{prop}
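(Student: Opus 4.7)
The plan is to reduce the claim to a graph-automorphism question on the Gabriel quiver of the contraction algebra. First I would note that $m_1 = m_2 =: m$: both $\Lambda_{\mathrm{con}}(\uppi_k)$ are basic (being Jacobi algebras of quivers with potential via \ref{11} and \ref{514}), so their numbers of isomorphism classes of simple modules equal $m_k$ and must agree under the algebra isomorphism $\phi$.

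Next I would use the standard fact that in a basic complete algebra any two complete sets of primitive orthogonal idempotents are conjugate by an inner unit; thus after replacing $\phi$ with $\mathrm{Ad}(u) \circ \phi$ for a suitable unit $u$, I may assume $\phi(e_i) = e_{\sigma(i)}$ for some permutation $\sigma \in S_m$. The geometric input then comes from Corollary \ref{35}: the quiver of $\End_\scrR(M^\c{F})$ has direct arrows only between adjacent vertices $M_{I_i}$ and $M_{I_{i+1}}$, together with at most two loops at each vertex and arrows through $\scrR$. Passing to $\Lambda_{\mathrm{con}}(\uppi_k)$ kills every morphism factoring through $\scrR$, so for $|i-j| \geq 2$ every element of $e_i \Lambda_{\mathrm{con}}(\uppi_k) e_j$ must factor through some intermediate $M_{I_\ell}$ and hence lies in $\mathrm{rad}^2$. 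Consequently $e_i(\mathrm{rad}/\mathrm{rad}^2)e_j$ is nonzero exactly when $|i-j| = 1$, so the underlying undirected graph of the Gabriel quiver of $\Lambda_{\mathrm{con}}(\uppi_k)$ (forgetting loops) is the path on $m$ vertices, $1 - 2 - \cdots - m$.

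Finally, because any algebra isomorphism preserves Gabriel quivers, $\sigma$ must be an automorphism of this path, and the only such automorphisms are the identity and the reflection $i \mapsto m+1-i$, giving cases (1) and (2) respectively. The main obstacle I anticipate is rigorously verifying the Gabriel quiver computation: one must confirm that the displayed adjacent arrows in Corollary \ref{35} really remain linearly independent modulo $\mathrm{rad}^2$ after quotienting by $\langle \scrR \rangle$, and that no new irreducible morphism between non-adjacent $M_{I_i}$ and $M_{I_j}$ emerges. This should follow from the explicit module-theoretic description of $M^\c{F}$ by showing any map factoring through $\scrR$ is a composition of the listed arrows, but it is the place where the combinatorics needs to be handled with care.
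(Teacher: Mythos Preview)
Your strategy is essentially the paper's: both reduce to showing that the Gabriel (equivalently $\Ext^1$-) quiver of $\Lambda_{\mathrm{con}}$, ignoring loops, is the path $1\text{--}2\text{--}\cdots\text{--}m$, whose only graph automorphisms are the identity and the reflection. The paper resolves your stated obstacle more cleanly by working on the level of simple modules: rather than computing $\mathrm{rad}/\mathrm{rad}^2$ from the presentation in \ref{35}, it uses that arrows $i\to j$ are counted by $\dim\Ext^1(\scrS_i,\scrS_j)$ and invokes \ref{35} together with the intersection theory of \cite[2.15]{W1} to conclude $\Ext^1(\scrS_i,\scrS_j)\neq 0$ precisely when $|i-j|=1$, thereby sidestepping the radical-layer verification you flag. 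One small correction: your appeal to \ref{11} and \ref{514} for basicness only covers crepant \emph{resolutions}, whereas the proposition concerns crepant \emph{partial} resolutions; basicness here follows instead from the summands $M_{I_j}$ of $M^{\c F}$ being pairwise non-isomorphic indecomposables.
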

\begin{proof}
Write $\mod \Lambda_{\mathrm{con}}(\uppi_k)$ for the category of finitely generated right $ \Lambda_{\mathrm{con}}(\uppi_k)$-modules for $k=1,2$.
For $1 \leq i \leq m_1$, write $\scrS_i$ for the simple $\Lambda_{\mathrm{con}}(\uppi_1)$-module corresponding to the vertex $i$ in the quiver of $\Lambda_{\mathrm{con}}(\uppi_1)$ (see \cite[\S 5.2]{HW}). Similarly, for $1 \leq i \leq m_2$, write $\scrS_i'$ for the simple $\Lambda_{\mathrm{con}}(\uppi_2)$-module corresponding to the vertex $i$ in the quiver of $\Lambda_{\mathrm{con}}(\uppi_2)$. 
By \cite[2.11]{W1}, for each $1 \leq i \leq m_1$ the simple module $\scrS_i$ corresponds to the $i$-th exceptional curve $\Curve_i$ in $\scrX_1$. The same holds for $\scrS_i'$ and the exceptional curves in $\scrX_2$. 

The algebra isomorphism $\phi$ induces an equivalence $\varphi \colon \mod \Lambda_{\mathrm{con}}(\uppi_1) \xrightarrow{\sim} \mod \Lambda_{\mathrm{con}}(\uppi_2)$. By Morita theory, $m_1=m_2$ and $\varphi$ maps simple modules to simple modules, and furthermore there is a $\upsigma$ in the symmetric group $\mathfrak{S}_{m}$ such that $\varphi(\scrS_i)= \scrS'_{\upsigma(i)}$ where $m \colonequals m_1=m_2$.

We next use the intersection diagram of exceptional curves for a $cA_n$ singularity—namely, a Dynkin diagram of type~$A$, obtained from the diagram in \ref{35} by removing the vertex~$0$—together with the correspondence between $\scrS_i$, $\scrS_i'$ and exceptional curves, to constrain the permutation $\sigma$.

Since $\uppi_1$ is a crepant partial resolution of a $cA_{n_1}$ singularity, $\scrS_2$ is the unique simple $\Lambda_{\mathrm{con}}(\uppi_1)$-module other than $\scrS_1$ that satisfies $\Ext^1_{\Lambda_{\mathrm{con}}(\uppi_1)}(\scrS_1,\scrS_2) \neq 0$ by \ref{35} and the intersection theory of \cite[2.15]{W1}. Since $\mod \Lambda_{\mathrm{con}}(\uppi_1)$ is equivalent to $\mod \Lambda_{\mathrm{con}}(\uppi_2)$, there exists unique simple $\Lambda_{\mathrm{con}}(\uppi_2)$-module $\scrT$ other than $\scrS'_{\sigma(1)}$ such that $\Ext^1_{\Lambda_{\mathrm{con}}(\uppi_2)}(\scrS'_{\upsigma(1)},\scrT) \neq 0$. Thus the exceptional curve $\upsigma(1)$ in $\uppi_2$ must be a edge curve, by \ref{35} and the intersection theory of \cite[2.15]{W1}. Thus $\upsigma(1)=1$ or $m$. We split the proof into two cases.

\noindent
(1) $\upsigma(1)=1$. 

Since $\Ext^1_{\Lambda_{\mathrm{con}}(\uppi_1)}(\scrS_1,\scrS_2) \neq 0$ and $\mod \Lambda_{\mathrm{con}}(\uppi_1)$ is equivalent to $\mod \Lambda_{\mathrm{con}}(\uppi_2)$, we have $\Ext^1_{\Lambda_{\mathrm{con}}(\uppi_2)}(\scrS'_{\upsigma(1)},\scrS'_{\upsigma(2)}) \neq 0$, and so $\Ext^1_{\Lambda_{\mathrm{con}}(\uppi_2)}(\scrS'_{1},\scrS'_{\upsigma(2)}) \neq 0$.
Thus the curve $\upsigma(2)$ in $\uppi_2$ must be connected to the curve $\upsigma(1)=1$, and so $\upsigma(2)=2$ by \ref{35} and the intersection theory of \cite[2.15]{W1}.
Repeating the same process, we can prove $\upsigma(i)=i$, and so  $\varphi(\scrS_i)= \scrS'_{i}$, and furthermore $\phi(e_i)=e_i$ for each $i$.

\noindent
(2) $\upsigma(1)=m$. 

Since $\Ext^1_{\Lambda_{\mathrm{con}}(\uppi_1)}(\scrS_1,\scrS_2) \neq 0$ and $\mod \Lambda_{\mathrm{con}}(\uppi_1)$ is equivalent to $\mod \Lambda_{\mathrm{con}}(\uppi_2)$, we have $\Ext^1_{\Lambda_{\mathrm{con}}(\uppi_2)}(\scrS'_{\upsigma(1)},\scrS'_{\upsigma(2)}) \neq 0$, and so $\Ext^1_{\Lambda_{\mathrm{con}}(\uppi_2)}(\scrS'_{m},\scrS'_{\upsigma(2)}) \neq 0$.
Thus the curve $\upsigma(2)$ in $\uppi_2$ must be connected to the curve $\upsigma(1)=m$, and so $\upsigma(2)=m-1$ by \ref{35} and the intersection theory of \cite[2.15]{W1}.
Repeating the same process, we can prove $\upsigma(i)=m+1-i$, and so  $\varphi(\scrS_i)= \scrS'_{m+1-i}$, and furthermore $\phi(e_i)=e_{m+1-i}$ for each $i$.
\end{proof}

The following strengthens \ref{34} and \ref{thm:Toda}, in that it intrinsically extracts the generalised GV invariants from the contraction algebra, and is new even in the setting of smooth flopping contractions.
\begin{lemma}\label{cor: Nij}
For any $1 \leq i \leq j \leq m$, the following equality holds.
\begin{equation*}
    N_{ij}(\uppi^{\F}) = \operatorname{dim}_{\C}e_i\left( \frac{\Lambda_{\mathrm{con}}(\uppi^{\F})}{\langle e_1, e_2,\dots, e_{i-1}, e_{j+1},e_{j+2},\dots ,e_m \rangle} \right)e_j.
\end{equation*}
\end{lemma}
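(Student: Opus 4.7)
The plan is to realize the quotient algebra as the contraction algebra of a simpler crepant partial resolution, and then to apply the generalized Toda formula (Theorem~\ref{thm:Toda}). Consider the $cA$-type cDV singularity $\scrR' \colonequals \C\lal u,v,x,y\ral/(uv - g_{i-1}g_i\cdots g_j)$. There is a flag $\mathcal{G}$ in the appropriate index set for $\scrR'$ yielding a crepant partial resolution $\uppi^{\mathcal{G}} \colon \scrX' \to \Spec\scrR'$ with $j-i+1$ exceptional curves and associated $g$-data $g'_k = g_{k+i-1}$ for $0 \le k \le j-i+1$. The key claim is that there is an algebra isomorphism
\[
\Lambda_{\mathrm{con}}(\uppi^{\F})/\langle e_k : k \notin [i,j]\rangle \;\cong\; \Lambda_{\mathrm{con}}(\uppi^{\mathcal{G}}),
\]
sending $e_{k+i-1} \mapsto e'_k$ for each $1 \leq k \leq j-i+1$.

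The first observation, using Corollary~\ref{35}, is that both algebras share the same underlying quiver. At each interior vertex $k$ on the left with $i < k < j$ (corresponding to vertex $k - i + 1$ on the right), the loop data depends on the consecutive pair $(g_{k-1}, g_k) = (g'_{k-i}, g'_{k-i+1})$. At the boundary vertices, the loops depend on $(g_{i-1},g_i)$ and $(g_{j-1},g_j)$, matching $(g'_0,g'_1)$ and $(g'_{j-i},g'_{j-i+1})$ respectively. The loops at the boundary vertices of the quotient algebra on the left are exactly those at vertices $i$ and $j$ of $\Lambda_{\mathrm{con}}(\uppi^{\F})$, which survive the quotient since they do not factor through any $e_k$ with $k \notin [i, j]$.

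Once the algebra isomorphism is established, applying Theorem~\ref{thm:Toda} to $\uppi^{\mathcal{G}}$ gives
\[
\dim_{\C}\;e_i\left(\frac{\Lambda_{\mathrm{con}}(\uppi^{\F})}{\langle e_k : k \notin [i,j]\rangle}\right)e_j = \dim_{\C} e'_1 \Lambda_{\mathrm{con}}(\uppi^{\mathcal{G}}) e'_{j-i+1} = N_{1,j-i+1}(\uppi^{\mathcal{G}}) = \dim_{\C}\frac{\C\lal x,y \ral}{(g_{i-1}, g_j)} = N_{ij}(\uppi^{\F}),
\]
which is the desired formula.

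The main obstacle is proving the algebra isomorphism: while the quivers agree, verifying that the relations (coming from the Jacobi ideal of the appropriate potential) match requires care. A direct approach is to track, via Proposition~\ref{lemma:con_iso}, which morphisms in $\underline{\Hom}_\scrR(M_{I_l}, M_{I_k})$ factor through $\bigoplus_{p \notin [i,j]} M_{I_p}$, and to identify these with the ``extra'' elements that distinguish $\Lambda_{\mathrm{con}}(\uppi^{\F})$ from $\Lambda_{\mathrm{con}}(\uppi^{\mathcal{G}})$. The decomposition from Lemma~\ref{lemma:Toda},
\[
\dim_\C \frac{\C\lal x,y\ral}{(g_0\cdots g_{i-1},g_j\cdots g_m)} = \sum_{a=1}^i \sum_{b=j}^m \dim_\C \frac{\C\lal x,y\ral}{(g_{a-1},g_b)},
\]
strongly suggests that morphisms factoring through $M_{I_{a-1}}$ (for $a < i$) or through $M_{I_b}$ (for $b > j$) correspond precisely to the summands with $(a,b) \neq (i,j)$, so that the quotient isolates the $(i,j)$ summand $\C\lal x,y\ral/(g_{i-1}, g_j)$ of the expected dimension $N_{ij}(\uppi^{\F})$.
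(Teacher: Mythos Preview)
Your approach is essentially the same as the paper's: both pass to the smaller $cA$-type singularity $\scrS = \C\lal u,v,x,y\ral/(uv-g_{i-1}\cdots g_j)$, identify the quotient algebra with the contraction algebra over $\scrS$, and then read off the dimension. Two remarks on the differences.

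For the final step, the paper applies Proposition~\ref{lemma:con_iso} directly to $\scrS$ (this is exactly the extremal case $s=1$, $t=\text{max}$ of the Hom computation), whereas you invoke Theorem~\ref{thm:Toda}. Since the double sum in \ref{thm:Toda} collapses to a single term at the extremes, the two are equivalent and there is no circularity.

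For the key isomorphism, the paper factors $\uppi^{\F}$ as $\scrX \xrightarrow{\omega} \scrY \to \Spec\scrR$ with $A_1(\omega)=\bigoplus_{k=i}^j\Z\langle\Curve_k\rangle$ and cites \cite[\S5]{IW1} directly for
\[
\Lambda_{\mathrm{con}}(\omega) \cong \Lambda_{\mathrm{con}}(\uppi^{\F})/\langle e_k : k\notin[i,j]\rangle,
\]
rather than reproving it. Your quiver-level argument, as you acknowledge, is not complete: matching the underlying quivers via Corollary~\ref{35} does not by itself match the relations, and quotienting by idempotents can in principle produce new loops from compositions through deleted vertices (e.g.\ the path $M_{I_i}\to M_{I_{i-1}}\to M_{I_i}$ becomes multiplication by $g_{i-1}$ at vertex $i$). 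Verifying by hand that these are already accounted for in the presentation of $\Lambda_{\mathrm{con}}(\uppi^{\mathcal{G}})$ is possible but tedious; the paper's route via the factorization and \cite{IW1} is the clean way to close this gap.
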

\begin{proof}
When $i=1$ and $j=m$, 
\begin{align*}
N_{1m}(\uppi^{\F})= &\operatorname{dim}_{\C}\frac{\mathbb{C} \lal x, y \ral}{(g_0,g_m)} \tag{by the definition \ref{def:Nij} of $N_{ij}(\uppi^{\F})$ }\\      =  & \operatorname{dim}_{\C}\underline{\Hom}_{\scrR}\big((u,g_0),(u,g_0\hdots g_{m-1})\big) \tag{by \ref{lemma:con_iso}} \\
= &  \operatorname{dim}_{\C}\underline{\Hom}_{\scrR}(M_{I_1},M_{I_m}) \tag{since $M_{I_1}=(u,g_0)$ and $M_{I_m}=(u,g_0\hdots g_{m-1})$}\\
= & \operatorname{dim}_{\C}e_1\Lambda_{\mathrm{con}}(\uppi^{\F})e_m. \tag{by \ref{35}}
\end{align*}
Thus the statement holds. When $i \neq 1$ or $j \neq m$, we factor $\uppi^{\F}$ as $\scrX \xrightarrow{\omega} \scrY \xrightarrow{} \Spec\scrR$ such that $A_1(\omega)=\bigcup_{k=i}^j\Z\langle\Curve_k\rangle$. By \cite[\S 5]{IW1}, 
$\scrY$ is given pictorially by
\[
\begin{array}{ccc}
\begin{array}{c}
\scrY
\end{array} &
\begin{array}{c}
\begin{tikzpicture}[xscale=0.6,yscale=0.6]
\draw[black] (-0.1,-0.04,0) to [bend left=25] (2.1,-0.04,0);
\node at (3,0,0) {$\hdots$};
\draw[black] (3.9,-0.04,0) to [bend left=25] (6.1,-0.04,0);
\draw[black] (5.9,-0.04,0) to [bend left=25] (8.1,-0.04,0);
\node at (9,0,0) {$\hdots$};
\draw[black] (9.9,-0.04,0) to [bend left=25] (12.1,-0.04,0);
\node at (1,0.6,0) {$\scriptstyle \Curve_{1}$};
\node at (5,0.6,0) {$\scriptstyle \Curve_{i-1}$};
\node at (7,0.6,0) {$\scriptstyle \Curve_{j+1}$};
\node at (11,0.6,0) {$\scriptstyle \Curve_{m}$};
\filldraw [black] (0,0,0) circle (1pt);
\filldraw [black] (2,0,0) circle (1pt);
\filldraw [black] (4,0,0) circle (1pt);
\filldraw [red] (6,0,0) circle (1pt);
\filldraw [black] (8,0,0) circle (1pt);
\filldraw [black] (10,0,0) circle (1pt);
\filldraw [black] (12,0,0) circle (1pt);
\node at (0,-0.4,0) {};
\node at (2,-0.4,0) {};
\node at (4,-0.4,0) {};
\node at (6,-0.4,0) {$\scriptstyle g_{i-1}g_{i} \dots g_j$};
\node at (9,-0.4,0) {};
\node at (11,-0.4,0) {};
\end{tikzpicture} 
\end{array}
\end{array}
\]
where the red dot labelled $g_{i-1}g_{i}\cdots g_j$ corresponds, complete locally, to the singularity $\scrS \colonequals \C\lal u,v,x,y\ral /(uv - g_{i-1}g_{i}\cdots g_j)$. Here we slightly abuse notation by again using $u,v,x,y$ as local coordinates to define $\scrS$.

Then consider the flat morphism $\Spec\scrS \rightarrow \scrY$, the fibre product $\scrU \colonequals \scrX \times_{\scrY} \Spec\scrS$, and the morphism $\omega|_{\scrU} \colon \scrU \rightarrow \Spec\scrS$. The following picture illustrates the $m=3,\ i=j=2$ case.

\[
\begin{tikzpicture}
    \draw (1.7,0.2) ellipse (60pt and 25pt);
    \draw[black, bend left] (0,0) to (1.2,0);
     \draw[black, bend left] (1,0) to (2.2,0);
     \draw[black, bend left] (2.0,0) to (3.2,0);
     \node (a1) at (0.6,0.4)  {$\scriptstyle\Curve_1$};
     \node (a2) at (1.6,0.4)  {$\scriptstyle\Curve_2$};
     \node (a4) at (2.6,0.4)  {$\scriptstyle\Curve_3$};

\draw[->] (1.7,-1) -- node[left] {$\scriptstyle \omega$} (1.7,-2);
      
     \draw (2,-2.8) ellipse (50pt and 20pt);

 \draw[black, bend left] (1,-3) to (2.2,-3);
     \draw[black, bend left] (2,-3) to (3.2,-3);
     \node (a1) at (1.6,-2.6)  {$\scriptstyle\Curve_1$};
     \node (a2) at (2.6,-2.6)  {$\scriptstyle\Curve_3$};
     \filldraw [red] (2.1,-2.95) circle (1pt);
     \node at (2.1,-3.2,0) {$\scriptstyle g_1g_2$};

      \draw (6,0.3) ellipse (20pt and 12pt); 
     \draw[black, bend left] (5.6,0.1) to (6.4,0.1);
     \node (a2) at (6,0.4)  {$\scriptstyle\Curve_2$};

\draw[->] (6,-0.6) -- node[left] {$\scriptstyle \omega|_{\scrU}$} (6,-2.2);
      
     \draw (6,-2.8) ellipse (12pt and 6pt);
     \draw[fill][red] (6,-2.8) circle (1pt);

    \draw[->] (5.2,-2.8) -- node[above] {} (4,-2.8);
     \draw[->] (4.8,0.2) -- node[left] {} (4,0.2);

    \node (d1) at (-1,0.2)  {$\scrX$};
    \node (d2) at (-0.4,-2.8)  {$\scrY$};
    \node (d3) at (7.2,0.2)  {$\scrU$};
     \node (d4) at (8.7,-2.8)  {$\Spec\scrS \colonequals  \Spec\frac{\C\lal u,v,x,y \ral}{(uv-g_1g_2)} $};

\end{tikzpicture}
\]
By \cite[\S 5]{IW1},
\begin{equation*}
   \Lambda_{\mathrm{con}}(\omega|_{\scrU}) \cong  \Lambda_{\mathrm{con}}(\uppi^{\F})/\langle e_1, e_2,\dots, e_{i-1}, e_{j+1},e_{j+2},\dots ,e_m \rangle.
\end{equation*}
Thus we have
\begin{align*}
N_{ij}(\uppi^{\F})= &\operatorname{dim}_{\C}\frac{\mathbb{C} \lal x, y \ral}{(g_{i-1},g_j)} \tag{by the definition \ref{def:Nij} of $N_{ij}(\uppi^{\F})$ }\\      =  & \operatorname{dim}_{\C}\underline{\Hom}_{\scrS}\big((u,g_{i-1}),(u,g_{i-1}\hdots g_{j-1})\big) \tag{by \ref{lemma:con_iso}} \\
= & \operatorname{dim}_{\C}e_i\Lambda_{\mathrm{con}}(\omega)e_j \tag{by \ref{35}}\\
= & \operatorname{dim}_{\C}e_i \bigl( \Lambda_{\mathrm{con}}(\uppi^{\F})/\langle e_1, e_2,\dots, e_{i-1}, e_{j+1},e_{j+2},\dots ,e_m \rangle \bigr) e_j. \qedhere
\end{align*}
\end{proof}

The following shows that the contraction algebra of a crepant partial resolution of a $cA_n$ singularity determines its associated generalised GV invariants. 
\begin{theorem}\label{371}
Let $\uppi^{\F_k} \colon \scrX^{\F_k} \rightarrow \Spec \scrR_k$ be two crepant partial resolutions of $cA_{n_k}$ singularities $\scrR_k$ with $m_k$ exceptional curves for $k=1,2$. If $\Lambda_{\mathrm{con}}(\uppi^{\F_1}) \cong \Lambda_{\mathrm{con}}(\uppi^{\F_2})$, then $m_1=m_2$ and one of the following cases holds:
\begin{enumerate}
     \item $N_{ij}(\uppi^{\F_1})=N_{ij}(\uppi^{\F_2})$ for $1 \leq i \leq j \leq m$,
    \item $N_{ij}(\uppi^{\F_1})=N_{m+1-j,m+1-i}(\uppi^{\F_2})$ for $1 \leq i \leq j \leq m$,
\end{enumerate}
where $m \colonequals m_1=m_2$.
\end{theorem}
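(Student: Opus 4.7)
The plan is to combine the rigidity statement for the isomorphism $\phi$ in Proposition \ref{372} with the intrinsic formula for $N_{ij}$ in Lemma \ref{cor: Nij}. Proposition \ref{372} already supplies $m_1 = m_2 =: m$ and splits the algebra isomorphism $\phi\colon \Lambda_{\mathrm{con}}(\uppi^{\F_1}) \to \Lambda_{\mathrm{con}}(\uppi^{\F_2})$ into two possibilities according to its action on vertex idempotents: either $\phi(e_i)=e_i$ for all $i$ (Case 1), or $\phi(e_i)=e_{m+1-i}$ for all $i$ (Case 2). The strategy is then to track how $\phi$ transports the two-sided ideal $I_{ij} := \langle e_1,\ldots,e_{i-1},e_{j+1},\ldots,e_m\rangle$ appearing in Lemma \ref{cor: Nij}, descend to quotients, and read off the appropriate $e_i(-)e_j$ block on each side.

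In Case 1, $\phi(I_{ij})$ is visibly the identically-indexed ideal in $\Lambda_{\mathrm{con}}(\uppi^{\F_2})$, so $\phi$ descends to a $\C$-algebra isomorphism of the quotients. Since $\phi$ fixes $e_i$ and $e_j$, taking dimensions of the $e_i(-)e_j$ pieces on both sides of the induced isomorphism immediately yields $N_{ij}(\uppi^{\F_1}) = N_{ij}(\uppi^{\F_2})$ via Lemma \ref{cor: Nij}.

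In Case 2, a routine relabelling shows $\phi(I_{ij}) = I_{i'j'}$ where $i' := m+1-j$ and $j' := m+1-i$, while $\phi(e_i)=e_{j'}$ and $\phi(e_j)=e_{i'}$. So $\phi$ descends to an isomorphism of the quotients and gives
\begin{equation*}
N_{ij}(\uppi^{\F_1}) \;=\; \dim_{\C} e_{j'}\bigl(\Lambda_{\mathrm{con}}(\uppi^{\F_2})/I_{i'j'}\bigr)e_{i'}.
\end{equation*}
What remains is to swap $e_{j'}(-)e_{i'}$ for $e_{i'}(-)e_{j'}$ so as to recognise this as $N_{i'j'}(\uppi^{\F_2}) = N_{m+1-j,\,m+1-i}(\uppi^{\F_2})$ via Lemma \ref{cor: Nij}. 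As already used in the proof of Lemma \ref{cor: Nij}, the quotient $\Lambda_{\mathrm{con}}(\uppi^{\F_2})/I_{i'j'}$ is itself the contraction algebra of a subpartial resolution (namely the one obtained by contracting all exceptional curves outside $\Curve_{i'},\ldots,\Curve_{j'}$), so the required symmetry is simply the $\dim_{\C}e_s\Lambda_{\mathrm{con}} e_t=\dim_{\C}e_t\Lambda_{\mathrm{con}} e_s$ half of Theorem \ref{thm:Toda} applied to this smaller crepant partial resolution.

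The only mildly delicate point is the symmetry step in Case 2, but it is already packaged inside Theorem \ref{thm:Toda}, whose proof in turn relied on Lemma \ref{lemma:con_iso} to identify both $e_s\Lambda_{\mathrm{con}} e_t$ and $e_t\Lambda_{\mathrm{con}} e_s$ with the same quotient ring $\C\lal u,v,x,y\ral/(u,v,g_0\cdots g_{s-1},g_t\cdots g_{m})$. Beyond this, the argument reduces to bookkeeping of idempotent indices, so the two-case structure of Proposition \ref{372} translates directly into the two possibilities (1) and (2) for the generalised GV invariants.
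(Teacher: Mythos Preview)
Your proposal is correct and follows essentially the same route as the paper: invoke Proposition~\ref{372} to obtain $m_1=m_2$ and the two-case split on idempotents, then in each case use Lemma~\ref{cor: Nij} to express $N_{ij}$ intrinsically and transport through $\phi$, with the final $e_{j'}(-)e_{i'}\leftrightarrow e_{i'}(-)e_{j'}$ swap in Case~2 coming from Theorem~\ref{thm:Toda} applied to the contracted crepant partial resolution. Your explanation of why the symmetry step is legitimate (identifying the quotient as a genuine contraction algebra of a smaller resolution) is in fact slightly more explicit than the paper's, which just cites~\ref{thm:Toda} at that point.
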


\begin{proof}
To ease notation, set $\uppi_k \colonequals \uppi^{\F_k}$ for $k=1,2$. 
Since $\Lambda_{\mathrm{con}}(\uppi_1) \cong \Lambda_{\mathrm{con}}(\uppi_2)$, $m_1=m_2$ by \ref{372}.
Let $\phi$ be the algebra isomorphism between $\Lambda_{\mathrm{con}}(\uppi_1)$ and $\Lambda_{\mathrm{con}}(\uppi_2)$. By \ref{372}, either $\phi(e_i)=e_i$ or $\phi(e_i)=e_{m+1-i}$ for $1 \leq i \leq m$. Then we split the proof into two cases.

\noindent
(1) $\phi(e_i)=e_i$ for $1 \leq i \leq m$.  In that case, for $1 \leq i \leq j \leq m$, 
\begin{align*}
    N_{ij}(\uppi_1) & \stackrel{\scriptstyle \ref{cor: Nij}}{=}  \operatorname{dim}_{\C}e_i \bigl( \Lambda_{\mathrm{con}}(\uppi_1)/\langle e_1, e_2,\dots, e_{i-1}, e_{j+1},e_{j+2},\dots ,e_m \rangle \bigr) e_j\\
   & = \operatorname{dim}_{\C}e_i \bigl( \Lambda_{\mathrm{con}}(\uppi_2)/\langle e_1, e_2,\dots, e_{i-1}, e_{j+1},e_{j+2},\dots ,e_m \rangle \bigr) e_j\\
   & \stackrel{\scriptstyle \ref{cor: Nij}}{=}N_{ij}(\uppi_2).
\end{align*}

\noindent
(2) $\phi(e_i)=e_{m+1-i}$ for $1 \leq i \leq m$. In that case, for $1 \leq i \leq j \leq m$, 
\begin{align*}
    N_{ij}(\uppi_1)& \stackrel{\scriptstyle \ref{cor: Nij}}{=} \operatorname{dim}_{\C}e_i \bigl( \Lambda_{\mathrm{con}}(\uppi_1)/\langle e_1, e_2,\dots, e_{i-1}, e_{j+1},e_{j+2},\dots ,e_m \rangle \bigr) e_j\\
   & = \operatorname{dim}_{\C}e_{m+1-i}  \bigl( \Lambda_{\mathrm{con}}(\uppi_2)/\langle e_m, e_{m-1},\dots, e_{m-i+2}, e_{m-j},e_{m-j+1},\dots ,e_1 \rangle \bigr) e_{m+1-j}\\
   &  \stackrel{\scriptstyle \ref{thm:Toda}}{=} \operatorname{dim}_{\C}e_{m+1-j}  \bigl( \Lambda_{\mathrm{con}}(\uppi_2)/\langle e_1, e_{2},\dots, e_{m-j}, e_{m-i+2},e_{m-i+3},\dots ,e_m \rangle \bigr) e_{m+1-i} \\
  &  \stackrel{\scriptstyle \ref{cor: Nij}}{=} N_{m+1-j,m+1-i}(\uppi_2). \qedhere
\end{align*}
\end{proof}

\subsection{Classical case: known facts}\label{sec: generalised GV3}
In this subsection, we restrict to those $cA_n$ singularities that admit a crepant resolution, and recall several facts about their NCCRs from \cite{IW1}. These results will be used in \S\ref{subsec:newres} to show that the generalised GV invariants are equivalent to the classical GV invariants.

Recall that in \S\ref{sec: generalised GV1}, every $cA_{t-1}$ singularity $\scrR$ can be written in the form 
\begin{equation*}
    \scrR \cong \frac{\mathbb{C} \lal u, v, x, y \ral}{uv-f_0f_1 \dots f_{n}},
\end{equation*}
where $t$ is the order of the power series $f_0f_1 \dots f_{n}$, and each $f_i$ is a prime element of $\mathbb{C}\lal x,y \ral$.
Moreover, $\scrR$ admits a crepant resolution if and only if each $f_i$ has a linear term by e.g. \cite[5.1]{IW1}. Throughout this subsection, we only consider those $\scrR$ that admit a crepant resolution. In particular, $t=n+1$, and hence $\scrR$ is a $cA_n$ singularity.

Recall from \S\ref{sec: generalised GV1} the notion of the maximal flag $\F$ in the set $\{ 0,1,\hdots, n\}$, and the associated $\CM$ $\scrR$-module $M^{\F}$. Following the notation in \cite[\S 5]{IW1}, we identify maximal flags with elements of the symmetric group $\mathfrak{S}_{n+1}$. 
Thus, for each $\sigma \in \mathfrak{S}_{n+1}$, we regard $\sigma$ as the maximal flag
\[
\{\sigma(0) \}\subset\{\sigma(0),\sigma(1) \}\subset\hdots\subset\{\sigma(0),\hdots,\sigma(n-1)\}.
\]
We define
\[
M^\sigma:=\scrR\oplus\left(\bigoplus_{j=0}^{n-1} M_{\{\sigma(0),\hdots,\sigma(j)\}}\right).
\]

By \cite[5.1]{IW1}, the modules in $(\mathrm{MM}\,\scrR)\cap(\CM\,\scrR)$ are precisely the $M^\sigma$ for $\sigma \in \mathfrak{S}_{n+1}$. Since we assume that $\scrR$ admits a crepant resolution, by \ref{36} there exists a crepant resolution
$\uppi^{\sigma} \colon \scrX^{\sigma} \to \Spec \scrR$
such that $\Lambda(\uppi^{\sigma}) \cong \End_{\scrR}(M^{\sigma})$.

\begin{notation}\label{notation:cAn}
We adopt the following notation.
\begin{enumerate}\label{notation: cAn}
\item Let $k \geq 1$ and consider a $k$-tuple $\mathbf{r}= (r_1,\dots , r_k)$ with $1 \leq r_i \leq n$ for each $i$. Set
\begin{equation*}
    \sigma(\mathbf{r}) \colonequals (r_k,r_k+1) \cdots (r_2,r_2+1)(r_1,r_1+1) \in \mathfrak{S}_{n+1},
\end{equation*}
and define $M^{\mathbf{r}}\colonequals M^{\sigma(\mathbf{r})}$. We write $\uppi^{\mathbf{r}}\colon \scrX^{\mathbf{r}} \rightarrow \Spec\scrR$ for $\uppi^{\sigma(\mathbf{r})}\colon \scrX^{\sigma(\mathbf{r})} \rightarrow \Spec\scrR$.
\item For $1\leq i \leq n$, we write $\uppi^i$, $\scrX^i$ and $M^i$ for $\uppi^{(i)}$, $\scrX^{(i)}$ and $M^{(i)}$ respectively.
\end{enumerate}
\end{notation}

\subsubsection{Reduction Steps for GV invariants}\label{sec: reduction}
This subsection recalls various permutation results from \cite{NW1, VG}, and then shows that GV invariants are suitably local. 

The first reduction step we will use below is to permute the GV invariant of an arbitrary curve class into that of a particular curve class.
From \cite[5.4]{NW1} and \cite[5.10]{VG}, for any $cA_n$ crepant resolution $\uppi$ and any $1 \leq i \leq n$, there is a linear isomorphism 
\begin{equation*}
    F_{i} \colon A_1(\uppi) \rightarrow  A_1(\uppi^{i}),
\end{equation*}
such that $\GV_{\upbeta}(\uppi)= \GV_{|F_{i}(\upbeta)|}(\uppi^{i})$ for any $\upbeta \in A_1(\uppi)$.   
Here we identify $A_1(\uppi)\cong \mathbb{Z}^n\cong A_1(\uppi^{i})$, and so that $F_i$ is an element of $\mathbf{M}_n(\Z)$. Explicitly,
\begin{equation*}
F_i=
\begin{cases}
\mathbf{I}_n -2E_{11}+E_{12},   & \text{ if } i=1\\
\mathbf{I}_n -2E_{nn}+E_{n,n-1},   & \text{ if } i=n\\
\mathbf{I}_n -2E_{ii}+E_{i,i-1}+E_{i,i+1}, & \text{ otherwise }
\end{cases}
\end{equation*}
where $E_{ij} \in \mathbf{M}_n(\Z)$ denotes the standard basis matrix with a $1$ in the $(i,j)$-entry and zeros elsewhere.
Inspired by the above $\GV_{\upbeta}(\uppi)= \GV_{|F_{i}(\upbeta)|}(\uppi^{i})$, we adopt the following notation.

\begin{notation}\label{notation:NW}
For $1 \leq i \leq n$ and $\mathbf{r}$ as in \ref{notation:cAn}, denote $|F_i| \colonequals |-| \circ F_i$ and $|F_{\mathbf{r}}| \colonequals |F_{r_k}| \circ \cdots \circ |F_{r_2}| \circ |F_{r_1}|$. Thus $\GV_{\upbeta}(\uppi)= \GV_{|F_{i}|(\upbeta)}(\uppi^{i})$ and $\GV_{\upbeta}(\uppi)= \GV_{|F_{\mathbf{r}}|(\upbeta)}(\uppi^{\mathbf{r}})$.  
\end{notation}

For $1 \leq i \leq j \leq n$, we write $v_{ij}$ for the vector in $\Z^n$ corresponding to the curve class $\Curve_i+\Curve_{i+1}+ \cdots + \Curve_j$. Thus $v_{ij} = \sum_{k=i}^j \mathsf{e}_k$ where $\mathsf{e}_k$ denotes the $k$-th standard basis vector.

\begin{lemma}\label{lemma:permutate}
With the notation as above, the following holds.
\begin{enumerate}
\item For $2 \leq i \leq j \leq n$, $F_{i-1}v_{ij}=v_{i-1,j}$.
\item For $1 \leq i < j \leq n$, $F_{j}v_{ij}=v_{i,j-1}$.
\item \label{lemma:permutate3} For $1 \leq i \leq j \leq n$, set
\begin{equation*}
    \mathbf{r} = 
    \begin{cases}
        \emptyset \text{ and } F_{\mathbf{r}}=\mathrm{Id}, & \text{ if } i=j=1\\
        (j,j-1,\dots,3, 2), & \text{ if } i=1 \text { and } 2 \leq j \leq n \\
        (i-1,i-2,\dots ,2, 1,j,j-1,\dots,3, 2), & \text{ if } 2 \leq i \leq j \leq n
    \end{cases}
\end{equation*}
then $|F_{\mathbf{r}}|v_{ij}=v_{11}$.
\end{enumerate}
\end{lemma}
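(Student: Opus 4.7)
The plan is to reduce everything to the following explicit description of how each $F_k$ acts on standard basis vectors: $F_k$ sends $\mathsf{e}_k \mapsto -\mathsf{e}_k$, sends each existing neighbor $\mathsf{e}_{k\pm 1} \mapsto \mathsf{e}_{k\pm 1}+\mathsf{e}_k$, and fixes every other $\mathsf{e}_\ell$. Once this is in hand, parts (1) and (2) become one-line verifications, and part (3) will follow by iterating them.

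For (1), I would observe that $v_{ij}=\mathsf{e}_i+\mathsf{e}_{i+1}+\dots+\mathsf{e}_j$ and that $F_{i-1}$ is applied with $i-1$ strictly below the range of indices appearing in $v_{ij}$. Among the summands of $v_{ij}$, the only one affected by $F_{i-1}$ is its right neighbor $\mathsf{e}_i$, which is sent to $\mathsf{e}_i+\mathsf{e}_{i-1}$. The new $\mathsf{e}_{i-1}$ term extends the range downward, yielding $v_{i-1,j}$. The boundary subcase $i=2$ (where $F_1$ has only the off-diagonal $E_{12}$) works identically.

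For (2), by the symmetric bookkeeping, both $\mathsf{e}_{j-1}$ and $\mathsf{e}_j$ appear as summands of $v_{ij}$ because $i<j$. Under $F_j$ they become $\mathsf{e}_{j-1}+\mathsf{e}_j$ and $-\mathsf{e}_j$ respectively, so the two $\pm\mathsf{e}_j$ contributions cancel and only the new $\mathsf{e}_{j-1}$ is a net change, giving $v_{i,j-1}$. The boundary subcase $j=n$ is identical.

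For (3), I would iterate (1) and (2). At each intermediate step the output is again a $v_{i'j'}$, which has nonnegative integer entries, so the absolute value in each $|F_k|$ acts trivially and the whole composition reduces to a composition of the plain $F_k$. The listed tuple $\mathbf{r}$ then encodes precisely: first apply $F_{i-1},F_{i-2},\dots,F_1$ in that order to push $v_{ij}$ down to $v_{1j}$ via (1); then apply $F_j,F_{j-1},\dots,F_2$ in that order to shrink $v_{1j}$ down to $v_{11}$ via (2). The degenerate cases $i=j=1$ and $i=1$ simply skip the appropriate block. The only real care required is to match the composition convention $|F_{\mathbf{r}}|=|F_{r_k}|\circ\cdots\circ|F_{r_1}|$ to the left-to-right order of entries in $\mathbf{r}$; this bookkeeping is the main (and essentially only) obstacle, since no genuine analytic difficulty arises.
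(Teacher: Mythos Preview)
Your proposal is correct and follows essentially the same approach as the paper: the paper applies the explicit matrix formulas $F_k=\mathbf{I}_n-2E_{kk}+E_{k,k-1}+E_{k,k+1}$ directly to $v_{ij}$ using $E_{ij}\mathsf{e}_t=\delta_{jt}\mathsf{e}_i$, which is precisely your description of how $F_k$ acts on basis vectors, and then iterates (1) and (2) for part (3) exactly as you do. Your observation that the intermediate outputs $v_{i'j'}$ have nonnegative entries, so that each $|F_k|$ coincides with $F_k$ on them, is the only point the paper leaves implicit.
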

\begin{proof}
From the basic facts of linear algebra, we have $E_{ij}\mathsf{e}_t=\begin{cases}
    \mathsf{e}_i, & \text{ if } t=j \\
    \mathbf{0},  & \text{ else }
\end{cases}$.

\noindent
(1) When $3 \leq i \leq j \leq n$, then $F_{i-1}=\mathbf{I}_n -2E_{i-1,i-1}+E_{i-1,i-2}+E_{i-1,i}$, and so
\begin{equation*}
F_{i-1}v_{ij}=  (\mathbf{I}_n -2E_{i-1,i-1}+E_{i-1,i-2}+E_{i-1,i})(\sum_{k=i}^j \mathsf{e}_k)=v_{ij}+\mathsf{e}_{i-1}=v_{i-1,j}.
\end{equation*}
When $2=i \leq j \leq n$, then $F_{i-1}=F_{1}=\mathbf{I}_n -2E_{11}+E_{12}$, and so
\begin{equation*}
   F_{i-1}v_{ij}=F_{1}v_{2j}=(\mathbf{I}_n -2E_{11}+E_{12})(\sum_{k=2}^j \mathsf{e}_k) =v_{2j}+\mathsf{e}_{1}=v_{1j}=v_{i-1,j}.
\end{equation*}

\noindent
(2) When $1 \leq i < j \leq n-1$, then $F_{j}=\mathbf{I}_n -2E_{jj}+E_{j,j-1}+E_{j,j+1}$, and so
\begin{equation*}
    F_jv_{ij}=(\mathbf{I}_n -2E_{jj}+E_{j,j-1}+E_{j,j+1})(\sum_{k=i}^j \mathsf{e}_k)= v_{ij}-2\mathsf{e}_j+\mathsf{e}_j=v_{i,j-1}.   
\end{equation*}
When $1 \leq i < j = n$, then $F_j=F_{n}=\mathbf{I}_n -2E_{nn}+E_{n,n-1}$, and so
\begin{equation*}
   F_jv_{ij}= F_nv_{in}=(\mathbf{I}_n -2E_{nn}+E_{n,n-1})(\sum_{k=i}^n \mathsf{e}_k)= v_{in}-2\mathsf{e}_n+\mathsf{e}_n=v_{i,n-1}=v_{i,j-1}.   
\end{equation*}

\noindent
(3) We only prove the case of $2 \leq i \leq j \leq n$. The other two cases are similar.

By (1), $|F_{1}|\circ |F_{2}| \circ \dots \circ |F_{i-2}|\circ |F_{i-1}| v_{ij}=v_{1j}$. By (2), $|F_{2}|\circ |F_{3}|\circ \dots \circ |F_{j-1}| \circ |F_{j}| v_{1j}=v_{11}$. Thus $|F_{\mathbf{r}}|v_{ij}= |F_{2}|\circ |F_{3}| \circ \dots \circ |F_{j-1}| \circ |F_{j}| \circ |F_{1}|\circ |F_{2}|\circ\dots \circ |F_{i-2}| \circ |F_{i-1}|v_{ij}= v_{11}$.
\end{proof}

The second reduction step will show that the GV invariants are suitably local, following \cite{VG}. More precisely, flopping a curve only affects a neighbourhood of that curve.

Fix integers $s$ and $t$ with $1 \leq s \leq t \leq n$. We factor $\uppi$ as 
\begin{equation*}
  \uppi \colon  \scrX \xrightarrow{\omega} \scrY \xrightarrow{} \Spec\scrR
\end{equation*}
such that $A_1(\omega)=\bigoplus_{k=s}^t \Z\langle  \Curve_k\rangle$.
Let $\Spec S$ be the affine open subset of $\scrY$ containing the singular point, and let $\scrS$ denote the completion of $S$ at the singular point. Then we consider the flat morphism $\Spec\scrS \rightarrow \scrY$, the fibre product $\scrU \colonequals \scrX \times_{\scrY} \Spec\scrS$, and the induced morphism $\omega|_{\scrU} \colon \scrU \rightarrow \Spec\scrS$. 

By abuse of notation, we also write $A_1(\omega|_{\scrU})=\bigoplus_{k=s}^t \Z\langle  \Curve_k\rangle$ for the exceptional curve classes of $\omega|_{\scrU}$. The following picture illustrates the case $n=4$, $s=2$ and $t=3$, where the red dots represent the singular points of $\scrY$ and $\Spec\scrS$.

\[
\begin{tikzpicture}
    \draw (1.7,0.2) ellipse (60pt and 25pt);
    \draw[black, bend left] (0,0) to (1,0);
     \draw[black, bend left] (0.8,0) to (1.8,0);
     \draw[black, bend left] (1.6,0) to (2.6,0);
     \draw[black, bend left] (2.4,0) to (3.4,0);
     \node (a1) at (0.5,0.4)  {$\scriptstyle\Curve_1$};
     \node (a2) at (1.3,0.4)  {$\scriptstyle\Curve_2$};
     \node (a3) at (2.1,0.4)  {$\scriptstyle\Curve_3$};
     \node (a4) at (2.9,0.4)  {$\scriptstyle\Curve_4$};

\draw[->] (1.7,-1) -- node[left] {$\scriptstyle \omega$} (1.7,-2);
      
     \draw (1.7,-2.8) ellipse (30pt and 15pt);
     \draw[black, bend left] (0.9,-3) to (1.8,-3);
     \draw[black, bend left] (1.6,-3) to (2.5,-3);
     \node (b1) at (1.3,-2.6)  {$\scriptstyle\Curve_1$};
     \node (b2) at (2.1,-2.6)  {$\scriptstyle\Curve_4$};
    \draw[fill][red] (1.7,-2.95) circle (1pt);

      \draw (6,0.2) ellipse (30pt and 15pt); 
     \draw[black, bend left] (5.3,0) to (6.1,0);
     \draw[black, bend left] (5.9,0) to (6.7,0);
     \node (a2) at (5.7,0.4)  {$\scriptstyle\Curve_2$};
     \node (a3) at (6.3,0.4)  {$\scriptstyle\Curve_3$};

\draw[->] (6,-0.6) -- node[left] {$\scriptstyle \omega|_{\scrU}$} (6,-2.3);
      
     \draw (6,-2.8) ellipse (15pt and 7pt);
     \draw[fill][red] (6,-2.8) circle (1pt);

    \draw[->] (5.2,-2.8) -- node[above] {} (3,-2.8);
     \draw[->] (4.8,0.2) -- node[left] {} (4,0.2);

    \node (d1) at (-1,0.2)  {$\scrX$};
    \node (d2) at (0,-2.8)  {$\scrY$};
    \node (d3) at (7.5,0.2)  {$\scrU$};
     \node (d4) at (7.3,-2.8)  {$\Spec\scrS$};

\end{tikzpicture}
\]

We now prove that flopping a curve only affects a neighbourhood of that curve. Recall from notation~\ref{notation:cAn} that $\scrX^i$ denotes the variety obtained by flopping the exceptional curve $\Curve_i$ in $\scrX$. For $s \leq i \leq t$, we consider the diagram below: flopping $\Curve_i$ in $\scrX$ yields a morphism $\omega^i \colon \scrX^i \to \scrY$, with induced birational map $\upvarphi \colon \scrX^i \dashrightarrow \scrX$. Pulling back $\omega^i$ along $\Spec\scrS \to \scrY$, we obtain the morphism $(\omega|_{\scrU})^i$, and define the birational map
$\upvarphi^i \colonequals (\omega|_{\scrU})^i \circ (\omega|_{\scrU})^{-1}$.

\[
\begin{tikzpicture}[>=stealth,scale=1.2]
\node[black!70!white] (Xi) at (2,0) {$\scrX^i$};
\node[black!70!white] (Ui) at (4,0) {$\scrU^{+}$};

\node[black!70!white] (Y) at (2,-3) {$\scrY$};
\node[black!70!white] (S) at (4,-3) {$\Spec \scrS$};

\node[black!70!white] (X) at (1,-1) {$\scrX$};
\node[black!70!white] (U) at (3,-1) {$\scrU$};

\draw[->] (Ui)--node[above]{}(Xi);
\draw[->] (U)--node[above]{}(X);
\draw[densely dotted,->] (Xi)--node[above,pos=0.6]{$\upvarphi$}(X);
\draw[densely dotted,->] (Ui)--node[above,pos=0.7]{$\upvarphi^i$}(U);
\draw[->] (X)--node[left]{$\omega$}(Y);
\draw[black!70!white,->] (Xi)--node[gap,pos=0.3]{\phantom .}node[right]{$\omega^i$}(Y);
\draw[->] (U)--node[left]{$\omega|_{\scrU}$}(S);
\draw[->] (Ui)--node[right]{$(\omega|_{\scrU})^i$}(S);
\draw[->] (S)--node[above]{}(Y);
\end{tikzpicture}
\]
\begin{lemma}\label{floplocal}
With the diagram above, $(\omega|_{\scrU})^i$ is the flop of $\omega|_{\scrU}$ obtained by flopping the exceptional curve $\Curve_i$ in $\scrU$; that is $\scrU^i \cong \scrU^{+} \cong \scrX^{i} \times_{\scrY} \Spec\scrS$.
\end{lemma}
\begin{proof}
Since $\scrR$ is complete local, there exists a Cartier divisor $D_i$ on $\scrX$ such that $D_i \cdot \Curve_j=\delta_{i j}$ for all $s \leq j \leq t$. Let $\widetilde{D}_i$ denote the proper transform of $D_i$ on $\scrX_i$. Then $\widetilde{D}_i \cdot \Curve_j=-\delta_{i j}$ for all $s \leq j \leq t$. Let $D_i|_{\scrU}$ denote the pull back of $D_i$ to $\scrU$, and similarly  $\widetilde{D}_i |_{\scrU^+}$. Then for all $s \leq j \leq t$ we have
\begin{equation*}
    D_i|_{\scrU}\cdot \Curve_j=\delta_{i j}, \quad \widetilde{D}_i |_{\scrU^+}\cdot \Curve_j=-\delta_{i j}, \text{ and } (\upvarphi^{i})^*D_i|_{\scrU} \cong \widetilde{D}_i |_{\scrU^+}.
\end{equation*}
Hence, by e.g.\ \cite[2.7]{W1}, $(\omega|_{\scrU})^i$ is the flop of $\omega|_{\scrU}$ obtained by flopping the exceptional curve $\Curve_i$ in $\scrU$.
\end{proof}

\begin{lemma}\textnormal{(GV invariants are local)}\label{lemma: GV_local}
With notation as above, we have $\GV_{ij}(\scrX)=\GV_{ij}(\scrU)$ for any $s \leq i \leq j \leq t$.
\end{lemma}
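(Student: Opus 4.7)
The plan is to use the moduli-theoretic description of GV invariants in \ref{GV}(1) and to exploit the fact that the sheaves being counted have support contained in the compact exceptional locus of $\omega$, so that the moduli problem localises to a formal neighbourhood of the singular point of $\scrY$.

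First, I would observe that any one-dimensional stable sheaf $F$ on $\scrX$ with curve class $\upbeta=\Curve_i+\cdots+\Curve_j$ for $s\leq i\leq j\leq t$ has set-theoretic support contained in $\Curve_s\cup\cdots\cup\Curve_t$, which is precisely the exceptional locus of $\omega$. This locus is compact and is $\omega$-contracted to the singular point $\mathsf{p}$ of $\scrY$. The natural flat morphism $\scrU\to\scrX$ becomes an isomorphism after formal completion along $\omega^{-1}(\mathsf{p})$, and so identifies coherent sheaves on $\scrX$ with support in $\omega^{-1}(\mathsf{p})$ with coherent sheaves on $\scrU$ with support in $(\omega|_\scrU)^{-1}(\mathsf{p})$. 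This identification preserves stability, Euler characteristic and curve class, giving a canonical isomorphism of moduli schemes $\operatorname{Sh}_\upbeta(\scrX)\cong\operatorname{Sh}_\upbeta(\scrU)$. Moreover, since the Behrend function and the symmetric perfect obstruction theory are built from local $\Ext$-groups at $\mathsf{p}$, the identification is compatible with them, so either formula in \ref{GV}(1) yields $\GV_\upbeta(\scrX)=\GV_\upbeta(\scrU)$.

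The main obstacle is that $\Spec\scrS\to\scrY$ is flat onto a formal rather than Zariski open neighbourhood of $\mathsf{p}$, so the moduli comparison above must be executed at the level of formal or analytic schemes, invoking formal GAGA or an equivalent comparison of deformation theories. A purely algebraic alternative that fits more cleanly into the framework already developed in this paper would be to use flat base change on $\underline{\Hom}$-groups to identify $\Lambda_{\mathrm{con}}(\omega|_\scrU)\cong \Lambda_{\mathrm{con}}(\omega)\cong e_{st}\Lambda_{\mathrm{con}}(\uppi)e_{st}$, where $e_{st}=e_s+\cdots+e_t$. Then \ref{cor: Nij} expresses both $N_{ij}(\uppi)$ and $N_{ij}(\omega|_\scrU)$ as the dimension of the same idempotent subquotient $e_i\bigl(\Lambda/\langle e_k:k\notin[i,j]\rangle\bigr)e_j$, after which \ref{intro: 37} converts these generalised invariants into $\GV_{ij}(\scrX)$ and $\GV_{ij}(\scrU)$ respectively, giving the desired equality.
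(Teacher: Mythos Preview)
Your second, ``purely algebraic'' alternative is circular: \ref{intro: 37} is the introduction's restatement of Theorem~\ref{37}, and the proof of Theorem~\ref{37} explicitly invokes Lemma~\ref{lemma: GV_local} (the present statement) to pass from $\GV_{11}(\omega)$ to $\GV_{11}(\omega|_{\scrU})$. So you cannot use the equivalence between $N_{ij}$ and $\GV_{ij}$ to deduce locality of $\GV$, since that equivalence is established \emph{after} and \emph{using} this lemma.

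Your first approach---directly identifying $\operatorname{Sh}_\upbeta(\scrX)$ with $\operatorname{Sh}_\upbeta(\scrU)$ together with their obstruction theories---is morally correct but, as you yourself note, the passage through formal base change is not immediate and you do not carry it out. The paper avoids this difficulty by a two-step argument. First it treats only the diagonal case $i=j=k$: here, by \cite[5.3]{VG}, the simple module $S_k$ is the unique nilpotent point of the relevant moduli space, so one only needs to compare Behrend-function values at a single point on each side; this reduces to comparing the DG endomorphism algebras $\End^{\mathrm{DG}}_{\Lambda(\omega)}(S_k)$ and $\End^{\mathrm{DG}}_{\Lambda(\omega|_\scrU)}(S'_k)$, which are identified via $\End^{\mathrm{DG}}_{\scrX}(\scrO_{\Curve_k}(-1))\cong\End^{\mathrm{DG}}_{\scrU}(\scrO_{\Curve_k}(-1))$. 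Second, the general case $i<j$ is reduced to the diagonal one by iterated flops using \ref{lemma:permutate}(2), together with the observation that flopping curves in the range $[s,t]$ is compatible with the fibre-product construction, so that $\scrU^{\mathbf{r}}\cong\scrX^{\mathbf{r}}\times_{\scrY}\Spec\scrS$. This sidesteps any global moduli comparison and any appeal to formal GAGA.
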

\begin{proof}
\noindent
(1) We first prove that $\GV_{kk}(\scrX)=\GV_{kk}(\scrU)$ for any $s \leq k \leq t$.

Fix $k$ satisfying $s \leq k \leq t$. Consider the following derived equivalences from \cite[3.5.8]{V1},
\begin{align*} 
 & \mathrm{D}^b(\operatorname{coh} \scrX) \xrightarrow{\sim} \mathrm{D}^b(\bmod \Lambda(\omega)), \qquad 
 \mathrm{D}^b(\operatorname{coh} \scrU)  \xrightarrow{\sim} \mathrm{D}^b(\bmod \Lambda(\omega|_{\scrU}))\\
 &\qquad \quad \scrO_{\Curve_k}(-1) \leftrightarrow S_k \qquad \qquad \qquad \qquad \quad \scrO_{\Curve_k}(-1) \leftrightarrow S'_k 
\end{align*}
where $S_k$ denotes the simple-$\Lambda(\omega)$ module that corresponds to $\scrO_{\Curve_k}(-1)$. The $S_k'$ is similar. 

By \cite[5.3]{VG}, $S_k$ (resp.$\ S_k'$) is the only nilpotent point in the moduli space of semisimple $\Lambda(\omega)$ (resp.$\ \Lambda(\omega|_{\scrU})$)-modules of its dimension vector. So, to compare $\GV_{kk}(\scrX)$ and $\GV_{kk}(\scrU)$, it suffices to compare the values of the Behrend function at these two points. 

By \cite{J}, these values only depend on the formal neighbourhood, which can be presented as the Maurer-Cartan locus of their enhancement differential graded algebras $\End_{\Lambda(\omega)}^{\mathrm{dg}}(S_k)$ and $\End_{\Lambda(\omega|_{\scrU})}^{\mathrm{dg}}(S'_k)$ respectively. By \cite{DW1}, these two dg algebras are dg equivalent, via
\begin{equation*}
    \End_{\Lambda(\omega)}^{\mathrm{dg}}(S_k) \cong \End_{\scrX}^{\mathrm{dg}}(\scrO_{\Curve_k}(-1)) \cong \End_{\scrU}^{\mathrm{dg}}( \scrO_{\Curve_k}(-1)) \cong \End_{\Lambda(\omega|_{\scrU})}^{\mathrm{dg}}(S_k'). 
\end{equation*}
Hence the Behrend function values coincide, and therefore we have $\GV_{kk}(\scrX)=\GV_{kk}(\scrU)$ by \cite[5.3]{VG}.

\noindent
(2) We now prove that $\GV_{ij}(\scrX)=\GV_{ij}(\scrU)$ for any $s \leq i \leq j \leq t$.

When $i = j$, the statement holds by (1). Thus we assume $s \leq i < j \leq t$. Set $\mathbf{r} = (j,j-1, \dots, i+1)$. Then by \ref{lemma:permutate}(2), $\GV_{ij}(\scrX)= \GV_{ii}(\scrX^{\mathbf{r}})$ and $\GV_{ij}(\scrU)= \GV_{ii}(\scrU^{\mathbf{r}})$.  
By \ref{floplocal}, we have
$\scrU^{\mathbf{r}} \cong \scrX^{\mathbf{r}} \times_{\scrY} \Spec\scrS$,
and hence $\GV_{ii}(\scrX^{\mathbf{r}})=\GV_{ii}(\scrU^{\mathbf{r}})$ by (1). Therefore $\GV_{ij}(\scrX)=\GV_{ij}(\scrU)$.
\end{proof}

\subsection{Classical case: new results}\label{subsec:newres}
This subsection first shows in \ref{37} that generalised GV invariants are equivalent to the classical GV invariants in the crepant resolution case. Combined with \ref{371}, this yields \ref{thm: determine}, asserting that the contraction algebra of a crepant resolution of a $cA_n$ singularity determines its associated GV invariants. For the isolated $cA_n$ singularities, this is already known from Toda's formula \ref{34} and \cite{HT}; our result extends this to the non-isolated case.

\begin{theorem}\label{37}
Let $\uppi \colon \scrX \to \Spec\scrR$ be a crepant resolution, where $\scrR$ is a $cA_n$ singularity. Then for any $1 \leq i \leq j \leq n$, the following holds.
\begin{enumerate}
\item  $N_{ij}(\uppi) = \infty \iff \GV_{ij}(\uppi)=-1$.
\item  $N_{ij}(\uppi) < \infty \iff \GV_{ij}(\uppi)=N_{ij}(\uppi)$.
\end{enumerate}
\end{theorem}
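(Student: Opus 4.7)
The plan is to reduce, via locality and the flop action, to a local model consisting of a single exceptional curve over a $cA_1$ singularity, where both invariants can be computed directly.

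First I would factor $\uppi$ as $\scrX \xrightarrow{\omega} \scrY \to \Spec\scrR$ contracting precisely the curves $\Curve_i, \ldots, \Curve_j$ in $\omega$, complete at the image point of these curves, and form $\scrU \to \Spec\scrS$ as in Lemma \ref{lemma: GV_local}; here $\scrS$ is a $cA_{m}$ singularity with $m = j-i+1$ whose defining factors are $g_{i-1}, g_i, \ldots, g_j$. Lemma \ref{lemma: GV_local} gives $\GV_{ij}(\scrX) = \GV_{ij}(\scrU)$, and Definition \ref{def:Nij} applied to the new flag gives $N_{ij}(\scrX) = N_{1m}(\scrU)$, since both invariants depend only on the pair $(g_{i-1}, g_j)$. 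Next I would apply the sequence of flops $\mathbf{r} = (m, m-1, \ldots, 2)$ (iterating Lemma \ref{lemma:permutate}(2)) to obtain a crepant resolution $\scrU^{\mathbf{r}}$ on which the class $v_{1m}$ becomes the single-curve class $v_{11}$. By Notation \ref{notation:NW} and Remark \ref{rmk:permutation}, both invariants transform identically under flops, so $\GV_{1m}(\scrU) = \GV_{11}(\scrU^{\mathbf{r}})$ and $N_{1m}(\scrU) = N_{11}(\scrU^{\mathbf{r}})$. A second application of Lemma \ref{lemma: GV_local}, this time with $s = t = 1$, then replaces $\scrU^{\mathbf{r}}$ by a formal local model $\scrU''$ with a single exceptional curve sitting over a $cA_1$ singularity $\scrS'' \cong \C\lal u,v,x,y \ral/(uv - gh)$, preserving both $\GV_{11}$ and $N_{11}$.

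Inside this single-curve local model, Proposition \ref{lemma:con_iso} identifies $\Lambda_{\mathrm{con}}(\scrU'') \cong \C\lal x, y\ral/(g,h)$, so $\dim_\C \Lambda_{\mathrm{con}}(\scrU'') = N_{11}(\scrU'')$. I then split on whether this dimension is finite. In the finite case, Theorem \ref{352} forces $\scrS''$ to be isolated and $\scrU''$ to be a flopping contraction; Toda's formula \ref{34}, applied to a resolution with a single exceptional curve, collapses to $\dim_\C \Lambda_{\mathrm{con}} = \GV_{11}$, yielding $\GV_{11}(\scrU'') = N_{11}(\scrU'')$. If instead $N_{11}(\scrU'') = \infty$, then $\gcd(g,h) \neq 1$, the singularity $\scrS''$ is non-isolated, and the exceptional curve moves in a $1$-parameter family along the singular locus of $\scrS''$; using the moduli definition \ref{GV}(1), the moduli space of stable sheaves of class $v_{11}$ with Euler characteristic $1$ is essentially this family, and the Behrend-function integration (or equivalently the noncommutative BPS interpretation of \cite{VG}) gives $\GV_{11}(\scrU'') = -1$.

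The steps involving locality and the flop action reduce cleanly using machinery already established in the paper, and the isolated-singularity case of the final step follows at once from Toda's formula. The main obstacle is the non-isolated case: one must leave the purely algebraic framework and pin down the specific value $\GV_{11} = -1$ by a moduli- or BPS-theoretic computation. Concretely, one has to verify that the family of exceptional curves, together with its Behrend function, behaves as expected in the formal setting so that the signed Euler-characteristic integration produces exactly $-1$ rather than some other integer; executing this carefully, presumably along the lines of \cite[\S5]{VG} adapted to the non-isolated $cA_1$ local model, is where the real content of the proof lies.
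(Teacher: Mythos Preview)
Your proposal is essentially correct and follows the same overall strategy as the paper: reduce via flops and locality to a single exceptional curve over a $cA_1$ singularity, then compute both invariants there. The order of operations differs slightly---the paper flops first on the whole $cA_n$ (using the $\mathbf{r}$ of Lemma~\ref{lemma:permutate}\eqref{lemma:permutate3}) and only then localises, whereas you localise to $cA_m$ first and then flop---but these are interchangeable and lead to the same $cA_1$ local model $\scrS'' \cong \C\lal u,v,x,y\ral/(uv - f_{i-1}f_j)$.

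The substantive difference is in the endgame at the $cA_1$ model. You split on whether $\dim_{\C}\Lambda_{\mathrm{con}}$ is finite and, in the non-isolated case, propose a direct moduli/Behrend-function computation, correctly identifying this as the hard step. The paper sidesteps this entirely by invoking Reid's classification \cite{R1}: any $cA_1$ admitting a crepant resolution can be put, after a change of coordinates, into one of two explicit normal forms, either $\varphi(f_{i-1})=x=\varphi(f_j)$ (divisor-to-curve) or $\varphi(f_{i-1})=x+y^n$, $\varphi(f_j)=x-y^n$ (flop). In the first case $\GV_{11}=-1$ is simply cited from \cite{VG} and $N_{ij}=\dim_{\C}\C\lal y\ral=\infty$ is read off; in the second, $\Lambda_{\mathrm{con}}\cong\C\lal y\ral/(y^n)$ by \cite{DW2}, so Toda gives $\GV_{11}=n$ and $N_{ij}=\dim_{\C}\C\lal y\ral/(y^n)=n$. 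This normal-form trick turns what you flagged as ``the real content of the proof'' into a one-line citation, and is worth knowing: rather than analysing a general non-isolated $cA_1$, one can always arrange that the two factors literally coincide.
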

\begin{proof}
Without loss of generality, we assume
\begin{equation*}
    \scrR \cong \frac{\mathbb{C} \lal u, v, x, y \ral}{uv-f_0f_1 \dots f_{n}},
\end{equation*}
and $M=(u,f_{0}) \oplus (u,f_{0}f_{1}) \oplus \ldots \oplus (u,\prod_{i=0}^{n-1} f_{i})$ such that $\uppi$ is the associated crepant resolution with $\Lambda(\uppi) \cong \End_{\scrR}(M)$ in \ref{36}.

Let $\mathbf{r}$ be the tuple in \ref{lemma:permutate}. Then $\GV_{ij}(\uppi)=\GV_{11}(\uppi^{\mathbf{r}})$ by \ref{lemma:permutate}. Factor $\uppi^{\mathbf{r}}$ as $\scrX^{\mathbf{r}} \xrightarrow{\omega} \scrY \xrightarrow{} \Spec\scrR$ such that $A_1(\omega)=\Z\langle\Curve_1\rangle$. Since $\GV_{11}(\uppi^{\mathbf{r}})$ only depends on $\scrX^{\mathbf{r}}$ and the curve class $\Curve_1$ by \ref{GV}, we have $\GV_{11}(\uppi^{\mathbf{r}})=\GV_{11}(\omega)$, and hence $\GV_{ij}(\uppi)=\GV_{11}(\omega)$.

By \ref{36}, we have $\Lambda(\uppi^{\mathbf{r}})\cong \End_{\scrR}(M^{\mathbf{r}})$, where $M^{\mathbf{r}}=\scrR \oplus (u,f_{i-1}) \oplus (u,f_{i-1}f_{j}) \oplus \ldots \oplus (u,\prod_{i=0}^{n-1} f_{i})$. Using \cite[\S 5]{IW1}, $\scrX^{\mathbf{r}}$ is given pictorially by
\[
\begin{array}{ccc}
\begin{array}{c}
\scrX^{\mathbf{r}}
\end{array} &
\begin{array}{c}
\begin{tikzpicture}[xscale=0.6,yscale=0.6]
\draw[black] (-0.1,-0.04,0) to [bend left=25] (2.1,-0.04,0);
\draw[black] (1.9,-0.04,0) to [bend left=25] (4.1,-0.04,0);
\node at (5.5,0,0) {$\hdots$};
\draw[black] (6.9,-0.04,0) to [bend left=25] (9.1,-0.04,0);
\node at (1,0.6,0) {$\scriptstyle \Curve_{1}$};
\node at (3,0.6,0) {$\scriptstyle \Curve_{2}$};
\node at (8,0.6,0) {$\scriptstyle \Curve_{n}$};
\filldraw [black] (0,0,0) circle (1pt);
\filldraw [black] (2,0,0) circle (1pt);
\filldraw [black] (4,0,0) circle (1pt);
\filldraw [black] (7,0,0) circle (1pt);
\filldraw [black] (9,0,0) circle (1pt);
\node at (0,-0.4,0) {$\scriptstyle f_{i-1}$};
\node at (2,-0.4,0) {$\scriptstyle f_j$};
\node at (4,-0.4,0) {};
\node at (7,-0.4,0) {};
\node at (9,-0.4,0) {};
\end{tikzpicture} 
\end{array}
\end{array}
\]
Since $\uppi^{\mathbf{r}}  \colon \scrX^{\mathbf{r}} \xrightarrow{\omega} \scrY \xrightarrow{} \Spec\scrR$ where $A_1(\omega)=\Z\langle\Curve_1\rangle$, again by \cite[\S 5]{IW1} $\scrY$ is given pictorially by
\[
\begin{array}{ccc}
\begin{array}{c}
\scrY
\end{array} &
\begin{array}{c}
\begin{tikzpicture}[xscale=0.6,yscale=0.6]
\draw[black] (-0.1,-0.04,0) to [bend left=25] (2.1,-0.04,0);
\draw[black] (1.9,-0.04,0) to [bend left=25] (4.1,-0.04,0);
\node at (5.5,0,0) {$\hdots$};
\draw[black] (6.9,-0.04,0) to [bend left=25] (9.1,-0.04,0);
\node at (1,0.6,0) {$\scriptstyle \Curve_{2}$};
\node at (3,0.6,0) {$\scriptstyle \Curve_{3}$};
\node at (8,0.6,0) {$\scriptstyle \Curve_{n}$};
\filldraw [red] (0,0,0) circle (1pt);
\filldraw [black] (2,0,0) circle (1pt);
\filldraw [black] (4,0,0) circle (1pt);
\filldraw [black] (7,0,0) circle (1pt);
\filldraw [black] (9,0,0) circle (1pt);
\node at (0,-0.4,0) {$\scriptstyle f_{i-1}f_j$};
\node at (2,-0.4,0) {};
\node at (4,-0.4,0) {};
\node at (7,-0.4,0) {};
\node at (9,-0.4,0) {};
\end{tikzpicture} 
\end{array}
\end{array}
\]
where the singular point of $\scrY$ is locally $S \colonequals \C[ u,v,x,y ]/(uv-f_{i-1}f_j)$. Write $\scrS$ for the completion of $S$ at the singular point. Consider the flat morphism $\Spec\scrS \rightarrow \scrY$, the fibre product $\scrU \colonequals \scrX^{\mathbf{r}} \times_{\scrY} \Spec\scrS$, and the induced morphism $\omega|_{\scrU} \colon \scrU \rightarrow \Spec\scrS$. Since GV invariants are local by \ref{lemma: GV_local}, we have $\GV_{11}(\omega)= \GV_{11}(\omega|_{\scrU})$, and hence $\GV_{ij}(\uppi)= \GV_{11}(\omega|_{\scrU})$. 

\[
\begin{tikzpicture}
    \draw (1.7,0.2) ellipse (60pt and 25pt);
    \draw[black, bend left] (0,0) to (1,0);
     \draw[black, bend left] (0.8,0) to (1.8,0);
     \draw[black, bend left] (2.4,0) to (3.4,0);
     \node (a1) at (0.5,0.4)  {$\scriptstyle\Curve_1$};
     \node (a2) at (1.3,0.4)  {$\scriptstyle\Curve_2$};
     \node (a3) at (2.1,0.1)  {$\scriptstyle\dots$};
     \node (a4) at (2.9,0.4)  {$\scriptstyle\Curve_n$};

\draw[->] (1.7,-1) -- node[left] {$\scriptstyle \omega$} (1.7,-2);
      
     \draw (2,-2.8) ellipse (50pt and 20pt);
     \draw[black, bend left] (0.9,-3) to (1.8,-3);
     \draw[black, bend left] (2.4,-3) to (3.4,-3);
     \node (b1) at (1.3,-2.6)  {$\scriptstyle\Curve_2$};
     \node (b2) at (2.1,-2.9)  {$\scriptstyle\dots$};
     \node (b3) at (2.9,-2.6)  {$\scriptstyle\Curve_n$};
    \draw[fill][red] (1,-2.95) circle (1pt);

      \draw (6,0.3) ellipse (20pt and 12pt); 
     \draw[black, bend left] (5.6,0.1) to (6.4,0.1);
     \node (a2) at (6,0.4)  {$\scriptstyle\Curve_1$};

\draw[->] (6,-0.6) -- node[left] {$\scriptstyle \omega|_{\scrU}$} (6,-2.2);
      
     \draw (6,-2.8) ellipse (12pt and 6pt);
     \draw[fill][red] (6,-2.8) circle (1pt);

    \draw[->] (5.2,-2.8) -- node[above] {} (4,-2.8);
     \draw[->] (4.8,0.2) -- node[left] {} (4,0.2);

    \node (d1) at (-1,0.2)  {$\scrX^{\mathbf{r}}$};
    \node (d2) at (-0.4,-2.8)  {$\scrY$};
    \node (d3) at (7.2,0.2)  {$\scrU$};
     \node (d4) at (8.7,-2.8)  {$\Spec\scrS \colonequals  \Spec\frac{\C\lal u,v,x,y \ral}{(uv-f_{i-1}f_j)} $};

\end{tikzpicture}
\]

Consider the $\scrS$-module $N\colonequals \scrU \oplus (u,f_{i-1})$. In \ref{36}, $\omega |_{\scrU}$ is the crepant resolution of $\Spec \scrS$ associated to $N$.
Since $\Spec\scrS$ is a $cA_1$ singularity and admits a crepant resolution, then by \cite{R1} there exists a change of coordinates $\varphi$ (possibly different in the two cases below) such that:
\begin{enumerate}
\item $\omega|_{\scrU}$ is a divisor-to-curve contraction $\iff$ $\varphi(f_{i-1})=x=\varphi(f_j)$. 
\item $\omega|_{\scrU}$ is a flop $\iff$ $\varphi(f_{i-1})=x+y^n$ and $\varphi(f_j)=x-y^n$ for some $n \geq 1$.
\end{enumerate}

In case (1), we have $\Lambda_{\mathrm{con}}(\omega|_{\scrU}) \cong  \C\lal y \ral$ by \cite{DW2} and $\GV_{11}(\omega|_{\scrU})=-1$ by \cite{VG}, and so $\GV_{ij}(\uppi)=-1$.  Moreover,
\begin{equation*}
    N_{ij}(\uppi)= \operatorname{dim}_{\mathbb{C}} \frac{\mathbb{C}\lal x,y \ral}{( f_{i-1},f_{j})}= \operatorname{dim}_{\mathbb{C}} \frac{\mathbb{C}\lal x,y \ral}{\big( \varphi(f_{i-1}),\varphi(f_{j})\big)}= \operatorname{dim}_{\mathbb{C}}\C\lal y \ral= \infty.
\end{equation*}

In case (2), we have $\Lambda_{\mathrm{con}}(\omega|_{\scrU}) \cong  \C\lal y \ral/(y^n)$ by \cite{DW2}, and so $\GV_{11}(\omega|_{\scrU})=n$ by \ref{34}. Therefore $\GV_{ij}(\uppi)=n$.
It follows that,
\begin{equation*}
    N_{ij}(\uppi)=\operatorname{dim}_{\mathbb{C}} \frac{\mathbb{C}\lal x,y \ral}{( f_{i-1},f_{j})}  =\operatorname{dim}_{\mathbb{C}} \frac{\mathbb{C}\lal x,y \ral}{\big( \varphi(f_{i-1}),\varphi(f_{j})\big)}=\operatorname{dim}_{\mathbb{C}}\frac{\C\lal y \ral}{(y^n)}=n,
\end{equation*}
and so $N_{ij}(\uppi)=\GV_{ij}(\uppi)$.
\end{proof}

\begin{remark}\label{rmk:Nij}
Given a crepant resolution $\uppi$ of a $cA_n$ singularity, \ref{37} shows that the data of $N_{ij}$ are equivalent to those of $\GV_{ij}$. We will freely pass between them by replacing all occurrences of $-1$ in the GV invariants by $\infty$ in the corresponding generalised GV invariants.
For example,
\[
\begin{tikzpicture}[bend angle=30, looseness=1]
\node (a) at (-3.5,0) {$\GV_{11}$};
\node (b) at (-2.5,0) {$\GV_{22}$};
\node (c) at (-3,-0.6) {$\GV_{12}$};
\node (d) at (-1.6,-0.4) {$=$};
\node (e) at (-1,0) {$1$};
\node (f) at (-0.5,0) {$3$};
\node (g) at (-0.75,-0.6) {$-1$};
\node (h) at (0.5,-0.4) {$\iff$};
\node (e) at (1.5,0) {$N_{11}$};
\node (f) at (2.5,0) {$N_{22}$};
\node (g) at (2,-0.6) {$N_{12}$};
\node (h) at (3.3,-0.4) {$=$};
\node (i) at (4,0) {$1$};
\node (j) at (4.5,0) {$3$};
\node (k) at (4.25,-0.6) {$\infty$};
\end{tikzpicture}
\]
Below, the $N_{ij}$ are mildly easier to control, and they unify statements about the filtration structure in \ref{58} and \ref{stra}.
\end{remark}

\begin{cor}\label{thm: determine}
Let $\uppi_k \colon \scrX_k \rightarrow \Spec \scrR_k$ be two crepant resolutions of $cA_{n}$ singularities $\scrR_k$ for $k=1,2$. If $\Lambda_{\mathrm{con}}(\uppi_1) \cong \Lambda_{\mathrm{con}}(\uppi_2)$, then one of the following cases holds:
\begin{enumerate}
     \item $\GV_{ij}(\uppi_1)=\GV_{ij}(\uppi_2)$ for $1 \leq i \leq j \leq n$,
    \item $\GV_{ij}(\uppi_1)=\GV_{n+1-j,n+1-i}(\uppi_2)$ for $1 \leq i \leq j \leq n$.
\end{enumerate}
\end{cor}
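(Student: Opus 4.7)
The plan is to deduce Corollary \ref{thm: determine} directly by combining the two main tools already established in the excerpt: Theorem \ref{371}, which settles the analogous statement for the \emph{generalized} GV invariants $N_{ij}$, and Theorem \ref{37}, which shows that on a crepant resolution of a $cA_n$ singularity the data of $N_{ij}$ is equivalent to the data of $\GV_{ij}$.

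First I would observe that a crepant resolution of a $cA_n$ singularity is in particular a crepant partial resolution, and furthermore such a crepant resolution has exactly $n$ exceptional curves (since for each $\uppi_k$ the associated flag is maximal, i.e.\ $m=n$, by the discussion in \S\ref{sec: Generalized GV3}). Thus the hypotheses of Theorem \ref{371} apply with $m_1 = m_2 = n$, giving two cases: either $N_{ij}(\uppi_1) = N_{ij}(\uppi_2)$ for all $1 \leq i \leq j \leq n$, or $N_{ij}(\uppi_1) = N_{n+1-j,n+1-i}(\uppi_2)$ for all $1 \leq i \leq j \leq n$.

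Next, I would invoke Theorem \ref{37} to translate each such $N$-equality into the corresponding $\GV$-equality. The key point is that the assignment $N_{ij} \mapsto \GV_{ij}$ determined by \ref{37} is a well-defined function of $N_{ij}$ alone: namely $\GV_{ij} = -1$ if $N_{ij} = \infty$, and $\GV_{ij} = N_{ij}$ otherwise (cf.\ \ref{rmk:Nij}). In particular, whenever $N_{ij}(\uppi_1) = N_{kl}(\uppi_2)$ for some indices $(k,l)$, applying this translation on both sides yields $\GV_{ij}(\uppi_1) = \GV_{kl}(\uppi_2)$. Substituting $(k,l) = (i,j)$ in the first case of \ref{371} and $(k,l) = (n+1-j, n+1-i)$ in the second case gives exactly the two alternatives stated in the corollary.

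There is essentially no obstacle here; the corollary is the formal combination of \ref{371} (a contraction-algebra isomorphism pins down the $N$-invariants up to the reflection $(i,j) \mapsto (n+1-j, n+1-i)$) and \ref{37} (the $N$-invariants and $\GV$-invariants determine each other on a smooth crepant resolution). The only thing to be mildly careful about is checking that the reflection indices match on the two sides; this is automatic because both resolutions have the same number $n$ of exceptional curves as noted above.
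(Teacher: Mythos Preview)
Your proposal is correct and follows exactly the same approach as the paper, which simply states that the corollary is immediate from \ref{371} and \ref{37}. Your elaboration on why both resolutions have $n$ exceptional curves and how the $N_{ij}\leftrightarrow\GV_{ij}$ dictionary of \ref{37} transfers each $N$-equality to the corresponding $\GV$-equality is a faithful unpacking of that one-line argument.
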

\begin{proof}
This is immediate from \ref{371} and \ref{37}.
\end{proof}

\begin{remark}\label{rmk:ijtobeta}
This subsection has formulated several results using the indices $N_{ij}$ and $\GV_{ij}$. Using the facts below, these statements can be rephrased in terms of $N_{\upbeta}$ and $\GV_{\upbeta}$, as used in the introduction.

Let $\uppi$ be a crepant partial resolution of a $cA_n$ singularity with exceptional curves $\Curve_1,\dots,\Curve_m$, and consider the set of exceptional curve classes
\[
S \colonequals \{\Curve_i+\Curve_{i+1}+ \dots +\Curve_j \mid 1 \leq i \leq j \leq m\}.
\]
Recall that for a curve class $\upbeta=(\upbeta_1,\dots,\upbeta_m)$, its reflective curve class is defined by $\Bar{\upbeta}=(\upbeta_m,\dots,\upbeta_1)$.
\begin{enumerate}
\item By \cite{NW1,VG}, $\GV_{\upbeta}(\uppi)\neq 0$ if and only if $\upbeta \in S$.
\item By Definition~\ref{def:Nij}, $N_{\upbeta}(\uppi)\neq 0$ if and only if $\upbeta \in S$.
\item By the definition of the reflective curve class, $\upbeta \in S$ if and only if $\Bar{\upbeta} \in S$.
\item If $\upbeta=\Curve_i+\Curve_{i+1}+\dots+\Curve_j$, then, using the notation of \ref{intro:Toda}, we have $|\upbeta|=j-i+1$.
\item If $\upbeta=\Curve_i+\Curve_{i+1}+\dots+\Curve_j$, then its reflective curve class is
$\Bar{\upbeta}=\Curve_{m+1-j}+\Curve_{m+2-j}+\dots+\Curve_{m+1-i}$.

\end{enumerate}

Using these facts, the results of this subsection can be rephrased as those stated in the introduction.
\begin{itemize}
\item By (2) and (4), \ref{thm:Toda} implies \ref{intro:Toda}.
\item By (2), (3) and (5), \ref{371} implies \ref{intro:deter}.
\item By (1) and (2), \ref{37} implies \ref{intro: 37}.
\item By (1), (3) and (5), \ref{371} implies \ref{intro:determin}.
\end{itemize}
\end{remark}

\section{Matrices from Potentials}\label{sec:GV}
This section is largely technical: we construct explicit matrices attached to monomialised Type~$A$ potentials on the quiver $Q_n$ and study their determinants.
These determinant functions will later be used in \S\ref{sec: Filtrations} to define a filtration of the parameter space by vanishing loci, encoding the behaviour of the generalised GV invariants.
Readers primarily interested in the applications may skip to \S\ref{sec: Filtrations} on a first reading.

Throughout, we fix an integer $n\geq 1$ and work with monomialised Type~$A$ potentials \eqref{def:TypeA} on the quiver $Q_n$. We recall the quiver and the associated paths appearing in the definition of these potentials:
 \[
\begin{array}{c}
\begin{tikzpicture}[bend angle=15, looseness=1.2]
\node (q) at (-2.5,0)  {$Q_n$};
\node (a) at (-1.5,0) [vertex] {};
\node (b) at (0,0) [vertex] {};
\node (c) at (1.5,0) [vertex] {};
\node (c2) at (2,0) {$\hdots$};
\node (d) at (2.5,0) [vertex] {};
\node (e) at (4,0) [vertex] {};
\node (a1) at (-1.5,-0.2) {$\scriptstyle 1$};
\node (a2) at (0,-0.2) {$\scriptstyle 2$};
\node (a3) at (1.5,-0.2) {$\scriptstyle 3$};
\node (a4) at (2.5,-0.25) {$\scriptstyle n-1$};
\node (a5) at (4,-0.25) {$\scriptstyle n$};
\draw[->,bend left] (a) to node[above] {$\scriptstyle a_{2}$} (b);
\draw[<-,bend right] (a) to node[below] {$\scriptstyle b_{2}$} (b);
\draw[->,bend left] (b) to node[above] {$\scriptstyle a_{4}$} (c);
\draw[<-,bend right] (b) to node[below] {$\scriptstyle b_{4}$} (c);
\draw[->,bend left] (d) to node[above] {$\scriptstyle a_{2n-2}$} (e);
\draw[<-,bend right] (d) to node[below] {$\scriptstyle b_{2n-2}$} (e);
\draw[<-]  (a) edge [in=120,out=55,loop,looseness=10] node[above] {$\scriptstyle a_{1}$} (a);
\draw[<-]  (b) edge [in=120,out=55,loop,looseness=11] node[above] {$\scriptstyle a_{3}$} (b);
\draw[<-]  (c) edge [in=120,out=55,loop,looseness=11] node[above] {$\scriptstyle a_{5}$} (c);
\draw[<-]  (d) edge [in=120,out=55,loop,looseness=11] node[above] {$\scriptstyle a_{2n-3}$} (d);
\draw[<-]  (e) edge [in=120,out=55,loop,looseness=11] node[above] {$\scriptstyle a_{2n-1}$} (e);
\end{tikzpicture}
\end{array}
\]
As before, $b_{2i-1}$ denotes the trivial path $e_i$ at the vertex $i$ for $1 \leq i \leq n$, and we write $\x_i = a_ib_i$ and $\x_i' = b_ia_i$ for $1 \leq i \leq 2n-1$.

\begin{notation}\label{TypeA}
Since \S\ref{sec: Filtrations} and \S\ref{section:obs} study the parameter space of monomialised Type $A$ potentials on $Q_{n}$, we fix the following notation.
\begin{enumerate}
\item Define the set of monomialised Type $A$ potentials on $Q_{n}$
\begin{equation*}
    \MA\colonequals\{\sum_{i=1}^{2n-2}\x_i^{\prime}\x_{i+1} + \sum_{i=1}^{2n-1}\sum_{j \geq 2} k_{ij} \x_i^{j} \mid \text{ all } k_{ij} \in \C\}.
\end{equation*}
\item  Let $\M$ be the corresponding parameter space of coefficients,
\begin{equation*}
    \M \colonequals \{(k_{12},k_{13}, \dots,  k_{2n-1,2},k_{2n-1,3}, \dots)\mid \text{ all } k_{ij} \in \C\}.
\end{equation*}
We identify $\MA$ with $\M$ via the coefficients $(k_{ij})$.

\item Write \emph{$\uk$} for the tuple of \emph{variables} $\uk_{ij}$ for $1 \leq i \leq 2n-1$ and $j \geq 2$, inside the formal power series ring $\C \lal \upkappa_{12},\upkappa_{13},\hdots \upkappa_{2n-1,2},\upkappa_{2n-1,3}, \dots \ral\colonequals \C \lal \upkappa \ral$.

\item For each $i$ and $j$, define $\upvarepsilon_{ij}\colon \MA\to\C$ by
$\upvarepsilon_{ij}(f)\colonequals j\,k_{ij}$.
Via the identification $\M\cong \MA$, we also regard $\upvarepsilon_{ij}$ as a function on $\M$, so that $\upvarepsilon_{ij}(\upkappa)=j\,\upkappa_{ij}$. \label{TypeA 4}
\end{enumerate}
\end{notation}

Given matrices $A = (a_{ij})_{p\times q}$ and $B = (b_{ij})_{s \times t}$ such that $a_{pq}=b_{11}$, define their \emph{gluing} $A \oblong B \in \mathbf{M}_{(p+s-1) \times (q+t-1)}$ by 
\[
A \oblong B \colonequals
\begin{bmatrix}
a_{11}     & a_{12}  & \cdots& a_{1,n-1} & a_{1n} & 0  & \cdots & 0\\
\vdots     & \vdots  &  \ddots&  \vdots &  \vdots & \vdots & \ddots & \vdots \\
a_{p-1,1}  & a_{p-1,2}  & \cdots& a_{p-1,q-1} & a_{p-1,q} & 0 & \cdots & 0\\
a_{p1}     & a_{p2}  & \cdots & a_{p-1,q}& a_{pq} & b_{12} & \cdots & b_{1t}\\
    0      & 0       & \cdots &0         & b_{21} & b_{22} & \cdots & b_{2t}\\
\vdots      & \vdots  &  \ddots &\vdots  &  \vdots & \vdots & \ddots & \vdots \\
    0 & 0 & \cdots &0 & b_{s1} & b_{s2} & \cdots & b_{st}
\end{bmatrix}.
\]

\begin{definition}\label{Aij}
With the $\upvarepsilon_{ij}$ in \textnormal{\ref{TypeA}\eqref{TypeA 4}}, we next define a set of matrices $A_{ij}^d$ for 
\begin{enumerate}\label{Aijd}
    \item $1 \leq  i \leq j \leq 2n-1$, $j-i$ is odd, and $d=2$,
    \item $1 \leq  i \leq j \leq 2n-1$, $j-i$ is even, and $d \geq 2$.
\end{enumerate}
For any $1 \leq i \leq 2n-1$ and $d \geq 2$, define $A_{i,i}^d \colonequals 
\begin{bmatrix} \upvarepsilon_{i,d} \end{bmatrix}$. 
 
For any $1 \leq i \leq 2n-2$, define
$A_{i,i+1}^2 \colonequals
\begin{bmatrix}
    \upvarepsilon_{i,2}  & 1\\
    1 & \upvarepsilon_{i+1,2}
\end{bmatrix}$.

For any $1 \leq i \leq 2n-3$ and $d>2$, define $A_{i,i+2}^d \in \mathbf{M}_{(d+1) \times (d+1)}$ to be
\begin{equation}\label{Aii2}
     A_{i,i+2}^d \colonequals 
    \begin{bmatrix}
        \upvarepsilon_{i,d}  & 0 & 0 &\cdots & 0 & 1 & 0\\
        1 & 1 & 0 &\cdots & 0 & 0 & 0\\
        0 & 1 & 1 &\cdots & 0 & 0 & 0\\
        0 & 0 & 1 &\cdots & 0 & 0 & 0\\
        \vdots     & \vdots & \vdots &  \ddots&  \vdots &  \vdots & \vdots  \\
        0 & 0 & 0 &\cdots & 1 & 0 & 0\\
        0 & 0 & 0 &\cdots & 1 & 0 & 1\\
        0 & 0 & 0 &\cdots & 0 & 1 & \upvarepsilon_{i+2,d}
    \end{bmatrix}.
\end{equation}

The other $A_{ij}^d$ are defined inductively.
For any $i$, $j$ satisfying $j-i \geq 2$, define 
\begin{equation}\label{Aij2}
    A_{i,j}^2 \colonequals A_{i,i+1}^2 \oblong A_{i+1,i+2}^2 \oblong \dots \oblong A_{j-1,j}^2.
\end{equation}

For any $d > 2$, and $i$, $j$ satisfying $j-i \geq 4$ and even, define
\begin{equation}\label{Aijdd}
    A_{i,j}^d \colonequals A_{i,i+2}^d \oblong A_{i+2,i+4}^d \oblong \dots \oblong A_{j-2,j}^d.
\end{equation}

Given $f\in\MA$, we write $A_{ij}^d(f)$ for the matrix obtained from $A_{ij}^d$ by evaluating each symbol $\upvarepsilon_{*,d}$ at $f$, i.e.\ replacing $\upvarepsilon_{t,d}$ by $\upvarepsilon_{t,d}(f)$.
\end{definition}

\begin{remark}
Since $\upvarepsilon_{ij} \colon \MA \rightarrow \C$ in \ref{TypeA}\eqref{TypeA 4}, for each triple $(i,j,d)$ in \ref{Aijd}, the construction gives a map
\begin{align*}
    A_{ij}^d \colon \MA  & \rightarrow \mathbf{M}(\C),\\
                f & \mapsto A_{ij}^d(f)
\end{align*}
where $\mathbf{M}(\C)$ denotes the set of matrices with entries in $\C$. 
Via $\M\cong \MA$ we may also view $A_{ij}^d$ as a map on $\M$, and in particular $A_{ij}^d(\upkappa)$ has entries in $\C\lal \upkappa\ral$, so $\det A_{ij}^d(\upkappa)\in \C\lal \upkappa\ral$.
\end{remark}

\begin{example}\label{example:matrix}
$A_{i,i}^d(\upkappa) = \begin{bmatrix} d\upkappa_{id} \end{bmatrix}$, $A_{i,i+1}^2(\upkappa)  =
\begin{bmatrix}
   2\upkappa_{i,2} & 1\\
    1 & 2\upkappa_{i+1,2}
\end{bmatrix}$, and for $d>2$  
\begin{equation*}
     A_{i,i+2}^d(\upkappa)  = 
    \begin{bmatrix}
       d\upkappa_{id} & 0 & 0 &\cdots & 0 & 1 & 0\\
        1 & 1 & 0 &\cdots & 0 & 0 & 0\\
        0 & 1 & 1 &\cdots & 0 & 0 & 0\\
        0 & 0 & 1 &\cdots & 0 & 0 & 0\\
        \vdots     & \vdots & \vdots &  \ddots&  \vdots &  \vdots & \vdots  \\
        0 & 0 & 0 &\cdots & 1 & 0 & 0\\
        0 & 0 & 0 &\cdots & 1 & 0 & 1\\
        0 & 0 & 0 &\cdots & 0 & 1 & d\upkappa_{i+2,d}
    \end{bmatrix}.
\end{equation*}
\end{example}

Then we consider some subsets of the monomialised Type $A$ potentials $\MA$ on $Q_{n}$.
\begin{notation}\label{TypeAp}
Fix a tuple $\mathbf{p}=(p_1,p_2,\dots,p_{2n-1})$ where each $ 2 \leq p_i \in \N_{\infty}$, we adopt the following notation, which is parallel to that in \ref{TypeA}.
\begin{enumerate}
\item Define the following subset of monomialised Type $A$ potentials on $Q_{n}$
\begin{equation}\label{MAp}
    \MA_{\p}\colonequals\{\sum_{i=1}^{2n-2}\x_i^{\prime}\x_{i+1} + \sum_{i=1}^{2n-1}\sum_{j \geq 2} k_{ij} \x_i^{j} \mid k_{i,j_i}=0 \text{ for } 1 \leq i \leq 2n-1, 2 \leq j_i <p_i\}.
\end{equation}

\item Then set the parameter space $\M_{\p}$ associated to $\MA_{\p}$ to be
\begin{equation}\label{Mp}
    \M_{\p} \colonequals \{(k_{12},k_{13}, \dots,  k_{2n-1,2},k_{2n-1,3}, \dots)\mid k_{i,j_i}=0 \text{ for } 1 \leq i \leq 2n-1, 2 \leq j_i <p_i\}.
\end{equation}

\item Write \emph{$\uk_{\p}$} for the tuple of \emph{variables} $\uk_{ij_i}$, for $1 \leq i \leq 2n-1$ and $p_i \leq j_i \leq \infty$.

\item For any $i$, $j$ satisfying $1 \leq i \leq j \leq 2n-1$, define $d_{ij}(\p)$ to be 
\begin{equation}\label{dp}
d_{ij}(\mathbf{p})\colonequals \left\{
\begin{array}{cl}
        2 & \text{ if } j-i \text{ is odd}\\
        \min \left(p_i, p_{i+2}, \dots , p_j\right) & \text{ if } j-i \text{ is even}
\end{array}\right.
\end{equation}

\item Given another tuple $\p'=(p'_1,p'_2,\dots,p'_{2n-1})$, write $\mathbf{p'} \geq \mathbf{p}$ if $p'_i \geq p_i$ for each $i$. \label{TypeAp 5}
\end{enumerate}
\end{notation}

\begin{remark}\label{rmk: MAp}
We record several elementary remarks concerning the above notation. 
\begin{enumerate}
\item If $\p=(2,2,\dots,2)$, then $\upkappa_{\p}$, $\M_{\p}$ and $\MA_{\p}$ coincide with $\upkappa$, $\M$ and $\MA$, respectively.

\item The natural inclusion $\MA_{\p}\hookrightarrow \MA$ ensures that, for any $f\in \MA_{\p}$ and any triple $(i,j,d)$ as in \ref{Aijd}, the functions $\upvarepsilon_{ij}(f)$ and $A_{ij}^d(f)$ are well defined.

\item Similarly, via the inclusion $\M_{\p}\hookrightarrow \M$, we freely regard the functions $\upvarepsilon_{ij}$ and $A_{ij}^d$ as defined on $\M_{\p}$. In particular,
\[
A_{ij}^d(\upkappa_{\p})\in \mathbf{M}(\C\lal \upkappa_{\p}\ral),
\qquad
\upvarepsilon_{ij}(\upkappa_{\p}),\ \det A_{ij}^d(\upkappa_{\p}) \in \C\lal \upkappa_{\p}\ral .
\]

\item Let $f\in \MA_{\p}$ and write
\[
f=\sum_{i=1}^{2n-2} \x_i' \x_{i+1} + \sum_{i=1}^{2n-1}\sum_{j\ge 2} k_{ij} \x_i^{j}.
\]
If $d<p_i$, then $k_{id}=0$, hence $\upvarepsilon_{i,d}(f)=d k_{id}=0$. Equivalently, the function $\upvarepsilon_{i,d}$ restricts to the zero function on $\MA_{\p}$, and therefore $\upvarepsilon_{i,d}(\upkappa_{\p})=0$. \label{rmk: MAp 4}

\item If $\p' \geq \p$, then $\MA_{\p'}\subseteq \MA_{\p}$ and $\M_{\p'}\subseteq \M_{\p}$. \label{rmk: MAp 5}
\end{enumerate}
\end{remark}
Throughout, we will freely restrict functions defined on $\M$ or $\MA$ to their subsets $\M_{\p}$ and $\MA_{\p}$ without further comment.

The following identities are immediate from the inductive construction of the matrices $A_{ij}^d$ and will be used repeatedly in \S\ref{sec: Filtrations}.
\begin{lemma} \label{517}
Let $1\leq i\leq j\leq 2n-1$ with $j-i\ge 2$.
\begin{enumerate}
\item $\det A_{ij}^2 = \upvarepsilon_{j2} \det A_{i,j-1}^2 - \det A_{i,j-2}^2$, \label{517 1}
\item $\det A_{ij}^2 = \upvarepsilon_{i2} \det A_{i-1,j}^2 - \det A_{i-2,j}^2$. \label{517 2}
\end{enumerate}
If moreover $j-i$ is even, then for any $d>2$ we have 
\begin{enumerate}
\setcounter{enumi}{2}
\item $\det A_{ij}^d= -\det A_{i,j-2}^d+(-1)^{(j-i)(d-1)/2}\upvarepsilon_{j,d}$, \label{517 3}
\item $\det A_{ij}^d= (-1)^{d-1}\det A_{i+2,j}^d+(-1)^{(j-i)/2}\upvarepsilon_{i,d}$. \label{517 4}
\end{enumerate}
\end{lemma}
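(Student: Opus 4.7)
The four formulas split naturally into the tridiagonal case (1), (2) and the general $d>2$ case (3), (4); all follow by careful cofactor expansion exploiting the sparse structure dictated by \eqref{Aij2} and \eqref{Aii2}. For (1) and (2), unwinding \eqref{Aij2} shows that $A_{ij}^2$ is simply the tridiagonal matrix of size $j-i+1$ with diagonal entries $\upvarepsilon_{i,2},\ldots,\upvarepsilon_{j,2}$ and $1$s on both off-diagonals. The last row contains exactly two non-zero entries, a $1$ at column $j-i$ and $\upvarepsilon_{j,2}$ at column $j-i+1$, so cofactor expansion along it yields the three-term recurrence (1); the symmetric expansion along the first row gives (2).

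For (3) and (4), I argue by induction on $k:=(j-i)/2$, using the two decompositions $A_{ij}^d = A_{i,j-2}^d \oblong A_{j-2,j}^d = A_{i,i+2}^d \oblong A_{i+2,j}^d$ coming from \eqref{Aijdd}, which allow one to peel off either the final or the initial block. The base case $k=1$ is a direct computation from \eqref{Aii2}: expanding $\det A_{i,i+2}^d$ along the first column and simplifying the two resulting sub-minors (each nearly upper-triangular by the bidiagonal pattern of the interior rows) produces $\det A_{i,i+2}^d = (-1)^{d-1}\upvarepsilon_{i+2,d}-\upvarepsilon_{i,d}$, which is consistent with both (3) and (4) at $k=1$.

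For the inductive step of (3), let $N$ be the size of $A_{ij}^d$ and expand $\det A_{ij}^d$ along the last row — this row coincides with the final row $[0,\ldots,0,1,\upvarepsilon_{j,d}]$ of $A_{j-2,j}^d$ — to obtain
\[
\det A_{ij}^d = -\det M_{N,N-1}+\upvarepsilon_{j,d}\det M_{N,N}.
\]
I then establish separately that $\det M_{N,N-1}=\det A_{i,j-2}^d$ and $\det M_{N,N}=(-1)^{k(d-1)}$. For the first equality, after removing the last row and penultimate column, the new last column of the remaining matrix has a single $1$ at the bottom right; iteratively expanding along the last column $d-1$ times (each step contributing sign $+1$ because the lone $1$ sits at the corner) peels off precisely the block $A_{j-2,j}^d$ and leaves $A_{i,j-2}^d$ intact. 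For the second equality, expansion along the new last column (a single $1$ in the shared row $M$, contributing sign $(-1)^{d-1}$) followed by row operations that use the bidiagonal structure of the bottom block to clear the shared column above the block reduces $\det M_{N,N}(A_{ij}^d)$ to $(-1)^{d-1}$ times the analogous bottom-right minor of $A_{i,j-2}^d$; iterating this identity $k$ times yields $(-1)^{k(d-1)}$. Part (4) is the mirror argument: expand $\det A_{ij}^d$ along the first row $[\upvarepsilon_{i,d},0,\ldots,0,1,0]$ of $A_{i,i+2}^d$, show $\det M_{1,d}=\det A_{i+2,j}^d$ by peeling from the top-left, and $\det M_{1,1}=(-1)^k$ by the analogous inductive computation; combining with the $(-1)^{1+d}$ sign on the $M_{1,d}$-term gives (4).

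The main obstacle is signed bookkeeping. The asymmetric exponents $k(d-1)$ in (3) and $k$ in (4) reflect the left-right asymmetry of $A_{i,i+2}^d$ in \eqref{Aii2}: the entry at position $(1,d)$ equals $1$, whereas the reflected position lies inside the interior bidiagonal pattern. As a result, the sign accumulated in the cascading reduction of $\det M_{N,N}$ differs from that of $\det M_{1,1}$, and one must verify at the base case $k=1$ that both formulas (3) and (4) reproduce the same direct computation of $\det A_{i,i+2}^d$.
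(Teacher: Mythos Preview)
Your proposal is correct and follows essentially the same strategy as the paper: both expand $\det A_{ij}^d$ along the last (respectively first) row to isolate the $\upvarepsilon_{jd}$ (resp.\ $\upvarepsilon_{id}$) term, then show the two resulting minors are $\det A_{i,j-2}^d$ and $(-1)^{(j-i)(d-1)/2}$ by iterated cofactor expansion exploiting the sparse block structure of the $\oblong$ decomposition. The only cosmetic difference is that for the corner minor $M_{N,N}$ (the paper's $C_{ij}^d$) the paper expands repeatedly along the last \emph{row} to obtain the recursion $\det C_{ij}^d=(-1)^{d-1}\det C_{i,j-2}^d$, whereas you expand once along the last \emph{column} and then invoke row operations; both routes yield the same recursion.
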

\begin{proof}
We prove each statement by expanding along the last or first row, using the inductive construction of the matrices.

\noindent
(1) By the inductive definition of $A_{ij}^2$ and $A_{i,j-1}^2$ \eqref{Aij2}, 
\begin{equation*}
    A_{ij}^2 = A_{i,j-1}^2 \oblong A_{j-1,j}^2 \text{ and } \ A_{i,j-1}^2 = A_{i,j-2}^2 \oblong A_{j-2,j-1}^2.
\end{equation*}

Let $v_m\colonequals[0,\dots,0,1]\in \C^{1\times m}$, and write $v_m^T\in \C^{m\times 1}$ for its transpose.
With respect to the block decomposition induced by $\oblong$, the matrix $A_{ij}^2$ takes the form
\begin{align*}
 A_{ij}^2 = \left[ 
\begin{array}{c | c} 
  \begin{array}{c c c} 
      & & \\ 
      & A_{i,j-1}^2 & \\ 
    &  & 
  \end{array} & v_{j-i}^T \\ 
  \hline 
  v_{j-i} & \upvarepsilon_{j2} 
 \end{array} 
\right],  
& & 
 A_{i,j-1}^2 = \left[ 
\begin{array}{c | c} 
  \begin{array}{c c c} 
      & & \\ 
      & A_{i,j-2}^2 & \\ 
    &  & 
  \end{array} & v_{j-i-1}^T \\ 
  \hline 
  v_{j-i-1} & \upvarepsilon_{j-1,2} 
 \end{array} 
\right].
\end{align*}
Let $B$ be the matrix obtained from $A_{ij}^2$ by deleting the last row and the second-to-last column. 
Expanding $\det A_{ij}^2$ along the last row yields $\det A_{ij}^2= \upvarepsilon_{j2}\det A_{i,j-1}^2 - \det B$.
From the above block form, $B$ has the form
\[
B = \left[ 
\begin{array}{c | c} 
  \begin{array}{c c c} 
      & & \\ 
      & A_{i,j-2}^2 & \\ 
    &  & 
  \end{array} & \text{\huge 0} \\ 
  \hline 
  v_{j-i-1} & 1 
 \end{array} 
\right],
\]
and expanding along the last column gives $\det B=\det A_{i,j-2}^2$. This proves~\eqref{517 1}.

\noindent
(2) This follows analogously by expanding along the first row of $A_{ij}^2$.

\noindent
(3) Suppose $j-i$ is even and $d>2$. By definition \eqref{Aijdd}, $ A_{ij}^d = A_{i,j-2}^d \oblong A_{j-2,j}^d.$ Using the explicit form of $A_{i,i+2}^d$ in \eqref{Aii2}, the matrix $A_{ij}^d$ admits the following block decomposition
\begin{equation*}
A_{ij}^d = \left[ 
\begin{array}{c | c} 
  \begin{array}{c c c c} 
      & & \\ 
      & A_{i,j-2}^d & \\ 
    &  & 
  \end{array} & \begin{array}{c  c c c c c  }
      0 & 0  &  \cdots   &0 & 0 &  0 \\ 
      \vdots      & \vdots &  \ddots &  \vdots &  \vdots &  \vdots \\
      0  & 0 &  \cdots  &0 & 0 &  0 \\
     0  & 0 & \cdots  &0& 1 & 0
  \end{array} \\ 
  \hline 
  \begin{array}{c c c c} 
      0 & 0 & \cdots & 1 \\
      0 & 0 & \cdots & 0 \\
      0 & 0 & \cdots & 0 \\ 
      \vdots    &  \vdots &  \ddots&  \vdots  \\
      0 & 0 & \cdots & 0 \\ 
     0 & 0 & \cdots & 0\\
      0 & 0 & \cdots & 0
  \end{array}  & \begin{array}{c c c c c c} 
    \ \ 1  & 0 & \cdots & 0&  0 & 0 \\ 
    \ \ 1 & 1 &     \cdots & 0&0 & 0\\ 
  \ \  0 & 1   & \cdots & 0&0 & 0\\
  \ \ \vdots     &  \vdots&  \ddots&  \vdots  &  \vdots &  \vdots\\
  \ \  0  & 0 & \cdots    & 1&0 & 0\\
  \ \   0 & 0 & \cdots  & 1 &0 &1\\
 \ \    0 & 0 & \cdots  & 0 &1 & \upvarepsilon_{jd}
  \end{array}
 \end{array} 
\right].    
\end{equation*}

Writing $C_{ij}^d$ (resp.\ $D$) for the matrix obtained from $A_{ij}^d$ by deleting the last row and last column (resp.\ the last row and second-to-last column). Expanding $A_{ij}^d$ along the last row yields $\det A_{ij}^d= \upvarepsilon_{jd} \det C_{ij}^d - \det D$. We claim that $\det D = \det A_{i,j-2}^d$ and  $\det C_{ij}^d=(-1)^{(j-i)(d-1)/2}$. So the statement follows.

To see this, by the form of $A_{ij}^d$, 
\begin{equation*}
D = \left[ 
\begin{array}{c | c} 
  \begin{array}{c c c c} 
      & & \\ 
      & A_{i,j-2}^d & \\ 
    &  & 
  \end{array} & \begin{array}{c  c c c c   }
      0 & 0  &  \cdots   &0  &  0 \\ 
      \vdots      & \vdots &  \ddots &  \vdots &   \vdots \\
      0  & 0 &  \cdots  &0  &  0 \\
     0  & 0 & \cdots  &1 & 0
  \end{array} \\ 
  \hline 
  \begin{array}{c c c c} 
      0 & 0 & \cdots & 1 \\
      0 & 0 & \cdots & 0 \\
      0 & 0 & \cdots & 0 \\ 
      \vdots    &  \vdots &  \ddots&  \vdots  \\
      0 & 0 & \cdots & 0 \\ 
     0 & 0 & \cdots & 0\\
  \end{array}  & \begin{array}{c c c c c } 
     1  & 0 & \cdots & 0&  0 \\ 
     1 & 1 &     \cdots & 0& 0\\ 
    0 & 1   & \cdots & 0& 0\\
   \vdots     &  \vdots&  \ddots&  \vdots  &    \vdots\\
    0  & 0 & \cdots    & 1& 0\\
    0 & 0 & \cdots  & 1 &1 
  \end{array}
 \end{array} 
\right].    
\end{equation*}

By repeated expansion along the rows/columns in the lower-right block that contain a single nonzero entry, it follows that $\det D = \det A_{i,j-2}^d$.

By the definition of $C_{ij}^d$, $C_{i,j-2}^d$ and the form of $A_{ij}^d$,
\begin{equation*}
C_{ij}^d = \left[ 
\begin{array}{c | c} 
  \begin{array}{c c c} 
      & & \\ 
      & C_{i,j-2}^d & \\ 
    &  & 
  \end{array} & \begin{array}{c cc c c c   }
     \quad 0 & \quad 0 & 0 &   \cdots & 0 &  0 \\ 
    \quad  \vdots     & \quad \vdots & \vdots &  \ddots&  \vdots &  \vdots  \\
     \quad 0 & \quad 0 & 0 &  \cdots & 0 &  0 \\
    \quad 1&\quad 0  & 0 &  \cdots & 0 & 0
  \end{array} \\ 
  \hline 
  \begin{array}{c c c c} 
      0 & 0 & \cdots & 1 \\
      0 & 0 & \cdots & 0 \\
      0 & 0 & \cdots & 0 \\ 
      \vdots    &  \vdots &  \ddots&  \vdots  \\
      0 & 0 & \cdots & 0 \\ 
     0 & 0 & \cdots & 0
  \end{array}  & \begin{array}{c c c c c c} 
     \upvarepsilon_{j-2,d} & 0 & 0 & \cdots & 0&  1 \\ 
     1 & 1 &   0&  \cdots & 0&0\\ 
    0 & 1 & 1  & \cdots & 0&0\\
   \vdots    &  \vdots &  \vdots&  \ddots&  \vdots  &  \vdots \\
    0  & 0 & 0&\cdots    & 1&0\\
     0 & 0 & 0&\cdots  & 1 &0
  \end{array}
 \end{array} 
\right],    
\end{equation*}
where the lower-right corner block is a $ d\times d$ matrix. 
By repeated expansion along the rows $d-1$ times in the lower-right block that contain a single nonzero entry, it follows that $\det C_{ij}^d = (-1)^{d-1} \det C$ where
\begin{equation*}
C = \left[ 
\begin{array}{c | c} 
  \begin{array}{c c c} 
      & & \\ 
      & C_{i,j-2}^d & \\ 
    &  & 
  \end{array} & \begin{array}{c   }
    0 \\ 
    \vdots  \\
     0 \\
   0
  \end{array} \\ 
  \hline 
  \begin{array}{c c c c} 
      0 & 0 & \cdots & 1 
  \end{array}  & \begin{array}{ c} 
     1 
  \end{array}
 \end{array} 
\right].    
\end{equation*}
Thus $\det C = \det C_{i,j-2}^d$, and so $\det C_{ij}^d = (-1)^{d-1} \det C_{i,j-2}^d$. Since $C_{i,i+2}^d$ is obtained by removing the last row and the last column of $A_{i,i+2}^d$ \eqref{Aii2}, $\det C_{i,i+2}^d = (-1)^{d-1}$. So
\begin{equation*}
    \det C_{ij}^d =   (-1)^{d-1} \det C_{i,j-2}^d=(-1)^{(j-i-2)(d-1)/2} \det C_{i,i+2}^d=(-1)^{(j-i)(d-1)/2}.
\end{equation*}

\noindent
(4) The proof is analogous to~(3), expanding instead along the first row of $A_{ij}^d$.
\end{proof}

\begin{notation}\label{Eij}
Let $1\leq i\leq j\leq 2n-1$ with $j-i$ even.
We adopt the following notation for the ideals in the polynomial ring
$\C[\upvarepsilon_{i,2},\upvarepsilon_{i+2,2},\dots,\upvarepsilon_{j,2}]$.
\begin{enumerate}
\item Let $m_{ij}\colonequals(\upvarepsilon_{i,2},\upvarepsilon_{i+2,2},\dots,\upvarepsilon_{j,2})$.
\item Let $E_{ij}$ be the ideal generated by all degree-two monomials in
$\upvarepsilon_{i,2},\upvarepsilon_{i+2,2},\dots,\upvarepsilon_{j,2}$
except the squares $\upvarepsilon_{i,2}^2,\upvarepsilon_{i+2,2}^2,\dots,\upvarepsilon_{j,2}^2$.
\end{enumerate}
\end{notation}

The following shows that the determinant of $A_{ij}^d$ admits an explicit leading term, with all remaining contributions lying in higher-order ideals. 
\begin{lemma}\label{lemma:matrix_2}
Let $1\leq i\leq j\leq 2n-1$.
\begin{enumerate}
\item If $j-i$ is odd, then $\det A_{ij}^2= (-1)^{(j-i+1)/2} +\upepsilon$, where $\upepsilon \in m_{i,j-1} \cap m_{i+1,j}$. \label{lemma:matrix_2 1}
\item If $j-i$ is even, then $\det A_{ij}^2=(-1)^{(j-i)/2}(\upvarepsilon_{i2}+\upvarepsilon_{i+2,2} + \dots +\upvarepsilon_{j2}) +\upepsilon$ where $\upepsilon \in E_{ij}$. \label{lemma:matrix_2 2}
\item If $j-i$ is even and $d >2$, then $\det A_{ij}^d= (-1)^{(j-i)/2}(\upvarepsilon_{id}+(-1)^d\upvarepsilon_{i+2,d} + \dots +(-1)^{(j-i)d/2}\upvarepsilon_{jd})$. \label{lemma:matrix_2 3}
\end{enumerate}
\end{lemma}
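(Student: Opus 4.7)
The plan is to prove parts (1) and (2) simultaneously by strong induction on $j-i$ using the recursion formulas in Lemma~\ref{517}(1), and to prove part (3) independently by induction on $(j-i)/2$ using Lemma~\ref{517}(3).

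For parts (1) and (2), the base cases are $j=i$ and $j=i+1$. When $j=i$, part (2) is immediate since $A_{ii}^2 = [\upvarepsilon_{i,2}]$ and $E_{ii}=0$. When $j=i+1$, direct computation gives $\det A_{i,i+1}^2 = \upvarepsilon_{i,2}\upvarepsilon_{i+1,2}-1$, and the error $\upvarepsilon_{i,2}\upvarepsilon_{i+1,2}$ lies in $(\upvarepsilon_{i,2})\cap(\upvarepsilon_{i+1,2}) = m_{i,i}\cap m_{i+1,i+1}$, as required. For the inductive step, apply Lemma~\ref{517}(1) to write
\[
\det A_{ij}^2 = \upvarepsilon_{j,2}\det A_{i,j-1}^2 - \det A_{i,j-2}^2.
\]
Depending on the parity of $j-i$, precisely one of $j-1-i$, $j-2-i$ is odd (triggering the inductive hypothesis for (1)) and the other is even (triggering (2)). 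Substituting in and collecting the constant or linear part yields the claimed main term; the sign works out since $(-1)^{(j-i+1)/2}=-(-1)^{(j-i-1)/2}$.

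The main obstacle is tracking the error term and verifying membership in the correct ideal. For part (2), after substitution the error is $\upvarepsilon_{j,2}\upepsilon_1 - \upepsilon_2$ with $\upepsilon_1 \in m_{i,j-2}\cap m_{i+1,j-1}$ from (1) and $\upepsilon_2 \in E_{i,j-2}$ from (2); both terms are visibly in $E_{ij}$ since the inductive error $\upepsilon_2$ only involves cross products of $\upvarepsilon_{k,2}$'s with $k<j$, and $\upvarepsilon_{j,2}\upepsilon_1$ contributes cross products $\upvarepsilon_{j,2}\upvarepsilon_{k,2}$ that are not squares. For part (1), the analogous error decomposes into three pieces: the linear-times-$\upvarepsilon_{j,2}$ sum is in $m_{i,j-1}$ (each summand is divisible by some $\upvarepsilon_{k,2}$ with $k\leq j-1$) and in $m_{i+1,j}$ (each summand is divisible by $\upvarepsilon_{j,2}$); the piece $\upvarepsilon_{j,2}\upepsilon_1$ with $\upepsilon_1\in E_{i,j-1}$ is in both ideals for the same reason; and $\upepsilon_2 \in m_{i,j-3}\cap m_{i+1,j-2}\subseteq m_{i,j-1}\cap m_{i+1,j}$ by monotonicity of the generating sets.

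Part (3) is the cleanest: with base case $\det A_{ii}^d = \upvarepsilon_{i,d}$, apply Lemma~\ref{517}(3) and the inductive hypothesis to obtain
\[
\det A_{ij}^d = -(-1)^{(j-i-2)/2}\sum_{k=0}^{(j-i-2)/2}(-1)^{kd}\upvarepsilon_{i+2k,d} + (-1)^{(j-i)(d-1)/2}\upvarepsilon_{j,d}.
\]
The first factor becomes $(-1)^{(j-i)/2}$, and matching the last coefficient reduces to the identity $(-1)^{(j-i)(d-1)/2} = (-1)^{(j-i)/2+(j-i)d/2}$, which holds because $j-i$ is even. This closes the induction.
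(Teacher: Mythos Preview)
Your proof is correct and follows essentially the same inductive strategy as the paper, using Lemma~\ref{517} to reduce $\det A_{ij}^d$ to shorter determinants. There is one minor but worthwhile difference in part~(1): the paper keeps the inductions for (1) and (2) separate and, to obtain the containment $\upepsilon\in m_{i,j-1}$, invokes the \emph{second} recursion \ref{517}\eqref{517 2} (expanding from the left) symmetrically to how \ref{517}\eqref{517 1} gives $\upepsilon\in m_{i+1,j}$. You instead run a simultaneous strong induction on (1) and (2), which lets you substitute the explicit linear-plus-$E_{i,j-1}$ form of $\det A_{i,j-1}^2$ from (2); this gives enough control to read off both ideal memberships from the single recursion \ref{517}\eqref{517 1}. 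Your route is slightly more self-contained (it never needs \ref{517}\eqref{517 2} or \eqref{517 4}), at the cost of tracking three error pieces rather than two; the paper's route is more symmetric and avoids unpacking $\det A_{i,j-1}^2$ at all. Parts (2) and (3) match the paper's (sketched) arguments exactly.
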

\begin{proof}
\noindent
(1) If $j-i=1$, then by definition $\det A_{ij}^2=-1+\upvarepsilon_{i,2}\upvarepsilon_{i+1,2}$. Since $\upvarepsilon_{i,2}\upvarepsilon_{i+1,2} \in (\upvarepsilon_{i,2}) \cap (\upvarepsilon_{i+1,2}) = m_{i,j-1} \cap m_{i+1,j}$, the statement follows. 

We next prove this statement by induction. 
Fix some $i$, $j$ satisfying $j-i \geq 3$ and odd. Assume that $\det A_{i,j-2}^2= (-1)^{(j-i-1)/2} +\upepsilon'$ where $\upepsilon' \in m_{i,j-3} \cap m_{i+1,j-2}$.
So we have
\begin{align*}
 \det A_{ij}^2 & = \upvarepsilon_{j2} \det A_{i,j-1}^2 - \det A_{i,j-2}^2 \tag{by \ref{517}\eqref{517 1}}\\
 &=  \upvarepsilon_{j2} \det A_{i,j-1}^2  -(-1)^{(j-i-1)/2} -\upepsilon' \tag{by assumption}\\
& = (-1)^{(j-i+1)/2}+\upvarepsilon_{j2} \det A_{i,j-1}^2-\upepsilon'.
\end{align*}

Set $\upepsilon \colonequals \upvarepsilon_{j2} \det A_{i,j-1}^2-\upepsilon'$. So it suffices to prove that $\upepsilon \in m_{i,j-1} \cap m_{i+1,j}$.

Since by definition \eqref{Aij2} $\det A_{i,j-1}^2 \in \C [\upvarepsilon_{i,2},\upvarepsilon_{i+1,2}, \dots, \upvarepsilon_{j-1,2}]$, $\upvarepsilon_{j2} \det A_{i,j-1}^2 \in m_{i+1,j}$. Together with $\upepsilon' \in  m_{i+1,j-2} \subseteq m_{i+1,j}$, it follows that $\upepsilon \in m_{i+1,j}$.
Similarly, we can prove $\upepsilon \in  m_{i,j-1}$ by  $\det A_{ij}^2 = \upvarepsilon_{i2} \det A_{i-1,j}^2 - \det A_{i-2,j}^2$ in \ref{517}\eqref{517 2}.
So $\upepsilon \in m_{i,j-1} \cap m_{i+1,j}$.

\noindent
(2),\,(3) These are similar, by \ref{517} and induction.
\end{proof}

\begin{prop}\label{518}
Let $f \in \MA$ and write
\begin{equation*}
    f= \sum_{i=1}^{2n-2}\x_i^{\prime}\x_{i+1} + \sum_{i=1}^{2n-1}\sum_{j=2}^{\infty} k_{ij} \x_i^{j}.
\end{equation*}
For any $1 \leq i \leq j \leq 2n-1$ such that $j-i$ is odd, the following holds. 
\begin{enumerate}
\item If $k_{t2}=0$ for $t=i, i+2,\dots , j-1$, then $\det {A}_{ij}^2(f)=(-1)^{(j-i+1)/2}$.
\item If $k_{t2}=0$ for $t=i+1, i+3,\dots , j$, then $\det {A}_{ij}^2(f)=(-1)^{(j-i+1)/2}$.
\end{enumerate}
In particular, given some $\mathbf{p}$ satisfying $d_{i,j-1}(\mathbf{p})>2$ or $d_{i+1,j}(\mathbf{p})>2$, then we have $\det A_{ij}^2(\uk_{\p})=(-1)^{(j-i+1)/2}$.
\end{prop}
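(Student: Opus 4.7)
The plan is to derive this as an immediate consequence of Lemma \ref{lemma:matrix_2}\eqref{lemma:matrix_2 1}. Since $j-i$ is odd, that lemma already gives
\[
\det A_{ij}^2 = (-1)^{(j-i+1)/2} + \upepsilon,
\]
with $\upepsilon \in m_{i,j-1} \cap m_{i+1,j}$. Because $j-i$ is odd, $(j-1)-i$ and $j-(i+1)$ are both even, so the indices defining the two ideals via \ref{Eij} line up cleanly: $m_{i,j-1}$ is generated by $\upvarepsilon_{i,2}, \upvarepsilon_{i+2,2}, \ldots, \upvarepsilon_{j-1,2}$ and $m_{i+1,j}$ is generated by $\upvarepsilon_{i+1,2}, \upvarepsilon_{i+3,2}, \ldots, \upvarepsilon_{j,2}$. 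All that remains is to show that each hypothesis forces the error term $\upepsilon$ to vanish.

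For part (1), evaluate at $f$. By definition $\upvarepsilon_{t,2}(f) = 2k_{t2}$ (see \ref{TypeA}\eqref{TypeA 4}), so the hypothesis $k_{t2}=0$ for $t=i,i+2,\ldots,j-1$ kills every generator of $m_{i,j-1}$ at $f$. Hence $\upepsilon(f)=0$ and $\det A_{ij}^2(f) = (-1)^{(j-i+1)/2}$. Part (2) is entirely symmetric, using instead that $\upepsilon \in m_{i+1,j}$: the hypothesis $k_{t2}=0$ for $t=i+1,i+3,\ldots,j$ kills every generator of $m_{i+1,j}$ at $f$, giving the same conclusion.

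For the \emph{in particular} clause, suppose $d_{i,j-1}(\p) > 2$. Since $(j-1)-i$ is even, \eqref{dp} gives $d_{i,j-1}(\p) = \min(p_i, p_{i+2}, \ldots, p_{j-1})$, so $p_t \geq 3$ for $t = i, i+2, \ldots, j-1$. By Remark \ref{rmk: MAp}\eqref{rmk: MAp 4}, $\upvarepsilon_{t,2}$ is identically zero on $\M_\p$ for each such $t$, so every generator of $m_{i,j-1}$ vanishes on $\M_\p$, i.e.\ $\upepsilon(\uk_\p)=0$; part (1) then yields $\det A_{ij}^2(\uk_\p) = (-1)^{(j-i+1)/2}$. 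The case $d_{i+1,j}(\p) > 2$ follows identically from part (2). There is no real obstacle here: the proposition is essentially a direct specialisation of Lemma \ref{lemma:matrix_2}\eqref{lemma:matrix_2 1}, with the indexing bookkeeping being the only thing to verify.
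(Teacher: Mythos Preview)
Your proof is correct and follows essentially the same approach as the paper: both invoke Lemma~\ref{lemma:matrix_2}\eqref{lemma:matrix_2 1} to obtain $\det A_{ij}^2 = (-1)^{(j-i+1)/2} + \upepsilon$ with $\upepsilon \in m_{i,j-1}\cap m_{i+1,j}$, then observe that the hypotheses in (1) and (2) kill the generators of $m_{i,j-1}$ and $m_{i+1,j}$ respectively at $f$, and deduce the \emph{in particular} clause by unpacking $d_{i,j-1}(\p)>2$ (resp.\ $d_{i+1,j}(\p)>2$) via \eqref{dp} and \ref{rmk: MAp}\eqref{rmk: MAp 4}.
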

\begin{proof}
\noindent
(1) For $t=i, i+2,\dots , j-1$, since $k_{t2}=0$, then $\upvarepsilon_{t2}(f)= 2k_{t2}=0$.
By \ref{lemma:matrix_2}\eqref{lemma:matrix_2 1}, $\det A_{ij}^2(f)= (-1)^{(j-i+1)/2} +\upepsilon(f)$ where $\upepsilon \in m_{i,j-1} \cap m_{i+1,j}$. In particular $\upepsilon$ belongs to the ideal generated by the functions $\upvarepsilon_{i2},\upvarepsilon_{i+2,2},\dots, \upvarepsilon_{j-1,2}$, all of which evaluate at $f$ to be zero. Thus $\upepsilon(f)=0$, and so $\det {A}_{ij}^2(f)=(-1)^{(j-i+1)/2}$. 

\noindent
(2) This is similar.

If $d_{i,j-1}(\mathbf{p})>2$, then  by \eqref{dp} $p_i,p_{i+2}, \dots , p_{j-1} > 2$. If further $f \in \MA_{\p}$, then $k_{t2}=0$ for $t=i, i+2,\dots , j-1$ by \eqref{MAp}, and so by (1) $\det {A}_{ij}^2(f)=(-1)^{(j-i+1)/2}$. Since $f$ is an arbitrary potential in $\MA_{\p}$, $\det A_{ij}^2(\upkappa_{\p})=(-1)^{(j-i+1)/2}$. Similarly, if $d_{i+1,j}(\mathbf{p})>2$, then by (2) $\det A_{ij}^2(\upkappa_{\p})=(-1)^{(j-i+1)/2}$.
\end{proof}

Recall the notation $\uk_{\p}$, $d_{ij}(\mathbf{p})$ in \ref{TypeAp}, and $\det A_{ij}^d(\uk_{\p})$ in \ref{rmk: MAp}.  The following is the main technical result of this subsection. It will be used in \S\ref{sec: Filtrations} below to construct a filtration structure on $\M_\p$ (for some fixed $\p$) with respect to the generalised GV invariant of some chosen curve class $\Curve_{i}+ \hdots +\Curve_{j}$. The zero locus of the polynomial $\det A_{ij}^{d_{ij}(\p)}(\uk_{\p})\in \C \lal \upkappa_{\mathbf{p}} \ral$ will turn out to be the first strata in the filtration, which motivates proving that this polynomial is nonzero in part (2) below.  Part (1) is more technical, but will be needed for inductive proof in \ref{064}.

\begin{prop} \label{41}
Given some $\mathbf{p}$, and any $i,j,d$ in \textnormal{\ref{Aij}}, then the following holds.
\begin{enumerate}
\item If $d < d_{ij}(\mathbf{p})$, then $\det A_{ij}^d(\uk_{\p})=0\in \C \lal \upkappa_{\mathbf{p}} \ral$. 
\item If $d= d_{ij}(\mathbf{p})$ and $d$ is finite, then $\det A_{ij}^d(\uk_{\p})\neq 0$ in $\C \lal \upkappa_{\mathbf{p}} \ral$.
\end{enumerate}
\end{prop}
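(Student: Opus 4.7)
The plan is to reduce both parts to the explicit formulas for $\det A_{ij}^d$ in Lemma \ref{lemma:matrix_2}, combined with the basic observation (Remark \ref{rmk: MAp}\eqref{rmk: MAp 4}) that $\upvarepsilon_{t,d}(\upkappa_\p) = 0 \in \C\lal\upkappa_\p\ral$ whenever $d < p_t$. For part (1), if $j-i$ is odd then $d_{ij}(\p) = 2$ equals the only allowed value of $d$, so the hypothesis $d < d_{ij}(\p)$ is vacuous. Hence suppose $j-i$ is even; then $d < \min(p_i, p_{i+2}, \ldots, p_j)$ forces $\upvarepsilon_{t,d}(\upkappa_\p) = 0$ for every $t \in \{i, i+2, \ldots, j\}$. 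If $d = 2$, Lemma \ref{lemma:matrix_2}\eqref{lemma:matrix_2 2} writes $\det A_{ij}^2$ as a polynomial with every monomial in the ideal $(\upvarepsilon_{i,2}, \ldots, \upvarepsilon_{j,2})$, while if $d > 2$, Lemma \ref{lemma:matrix_2}\eqref{lemma:matrix_2 3} writes $\det A_{ij}^d$ as a $\C$-linear combination of $\upvarepsilon_{i,d}, \upvarepsilon_{i+2,d}, \ldots, \upvarepsilon_{j,d}$. Either way every term vanishes on $\upkappa_\p$, giving $\det A_{ij}^d(\upkappa_\p) = 0$.

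For part (2) I split on the parity of $j-i$. If $j-i$ is odd then $d = 2$, and Lemma \ref{lemma:matrix_2}\eqref{lemma:matrix_2 1} supplies a nonzero constant term $(-1)^{(j-i+1)/2}$ which survives restriction to $\C\lal\upkappa_\p\ral$. If $j-i$ is even, the equality $d_{ij}(\p) = \min(p_i, p_{i+2}, \ldots, p_j) = d$ forces some $t_0 \in \{i, i+2, \ldots, j\}$ with $p_{t_0} = d$, so $\upvarepsilon_{t_0, d}(\upkappa_\p) = d\upkappa_{t_0, d}$ is a genuine degree-one variable in $\C\lal\upkappa_\p\ral$. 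For $d > 2$, Lemma \ref{lemma:matrix_2}\eqref{lemma:matrix_2 3} shows directly that $\det A_{ij}^d(\upkappa_\p)$ contains $\pm d\upkappa_{t_0, d}$ as a nonzero linear term (the other $\upvarepsilon_{t,d}$ with $p_t > d$ contribute zero, and distinct variables cannot combine to cancel). For $d = 2$, Lemma \ref{lemma:matrix_2}\eqref{lemma:matrix_2 2} gives $\det A_{ij}^2 = (-1)^{(j-i)/2}(\upvarepsilon_{i,2} + \cdots + \upvarepsilon_{j,2}) + \upepsilon$ with $\upepsilon \in E_{ij}$, and the same sort of linear argument applies provided $\upepsilon$ contributes nothing in degree one.

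The main obstacle is precisely this last sub-case ($d = 2$, $j-i$ even): one must rule out that the degree-one contribution $\pm 2\upkappa_{t_0, 2}$ is cancelled by the error $\upepsilon$. This follows from the definition of $E_{ij}$ in Notation \ref{Eij}: each generator of $E_{ij}$ is a product $\upvarepsilon_{s,2}\upvarepsilon_{t,2}$ with $s \neq t$, which becomes a polynomial of degree at least two in the $\upkappa$-variables after the substitution $\upvarepsilon_{u,2} = 2\upkappa_{u,2}$, and so cannot contribute to the degree-one part of $\det A_{ij}^2(\upkappa_\p)$. Consequently the $\pm 2\upkappa_{t_0, 2}$ term survives, giving $\det A_{ij}^2(\upkappa_\p) \neq 0$ as required.
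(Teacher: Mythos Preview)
Your proof is correct and follows essentially the same approach as the paper's: both reduce to the explicit formulas in Lemma~\ref{lemma:matrix_2} together with Remark~\ref{rmk: MAp}\eqref{rmk: MAp 4}, handling part~(1) by noting that $j-i$ must be even (so all relevant $\upvarepsilon_{t,d}$ vanish), and part~(2) by a parity split in which the key point for $j-i$ even is that at least one $\upvarepsilon_{t_0,d}(\upkappa_\p)$ is a genuine variable while the error term $\upepsilon\in E_{ij}$ contributes only in degree~$\geq 2$. The paper's write-up introduces the auxiliary index sets $S_d,\overline{S_d}$ to organise which $\upvarepsilon_{t,d}$ survive, but the substance is the same as yours.
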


\begin{proof}
For any $d\geq 2$, consider two complementary subsets of $S \colonequals \{i,i+2,\dots ,j \}$ 
\begin{align*}
    S_d &\colonequals \{t \in S \mid p_t \leq d \}, \quad  \overline{S_d} \colonequals \{t \in S \mid p_t > d \}.
\end{align*}

Then by \ref{rmk: MAp}\eqref{rmk: MAp 4},
\begin{align}
   t \in   \overline{S_d} \iff \upvarepsilon_{td}(f) = 0 \mbox{ for all } f \in \MA_\p \iff \upvarepsilon_{td}(\uk_\p) \mbox{ is the zero function over } \M_{\p}. \label{Sd}
\end{align}

If $j-i$ is even and $d < d_{ij}(\p)$, then by \eqref{dp} $d < \min(p_i,p_{i+2}, \dots , p_j)$, and so $S_d = \emptyset$, $\overline{S_d}=S$. 
If $j-i$ is even and $d = d_{ij}(\p)$, then by \eqref{dp} $d = \min(p_i,p_{i+2}, \dots , p_j)$, and so  $S_d \neq \emptyset$, $\overline{S_d} \neq S$. 

\noindent
(1) Since $d \geq 2$, the case $d_{ij}(\mathbf{p})=2$ cannot occur. Consequently $d_{ij}(\mathbf{p}) > 2$, and thus $j-i$ must be even by \eqref{dp}. Since $d< d_{ij}(\mathbf{p})$, $\overline{S_d} =S$, and so by \eqref{Sd} $\upvarepsilon_{td}(\uk_{\p}) $ is a zero function for each $t \in S =\{i,i+2,\dots, j\}$.

If furthermore $d > 2$, then
\begin{align*}
 \det A_{ij}^d(\uk_{\p})
 & = (-1)^{(j-i)/2}(\upvarepsilon_{id}(\uk_{\p})+(-1)^d\upvarepsilon_{i+2,d}(\uk_{\p}) + \dots +(-1)^{(j-i)d/2}\upvarepsilon_{jd}(\uk_{\p})) \tag{by \ref{lemma:matrix_2}\eqref{lemma:matrix_2 3}}\\
 & =0. \tag{since $\upvarepsilon_{td}(\uk_{\p}) =0$ for $t=i,i+2,\dots, j$}
\end{align*}

Otherwise, if $d=2$, then
\begin{align*}
\det A_{ij}^d(\uk_{\p}) 
& =\det A_{ij}^2(\uk_{\p})\\
&=(-1)^{(j-i)/2}(\upvarepsilon_{i2}(\uk_{\p})+\upvarepsilon_{i+2,2}(\uk_{\p}) + \dots +\upvarepsilon_{j2}(\uk_{\p}))+\upepsilon(\uk_{\p}) \tag{by \ref{lemma:matrix_2}\eqref{lemma:matrix_2 2}}\\
& =\upepsilon(\uk_{\p}), \tag{since $\upvarepsilon_{t2}(\uk_{\p}) =0$ for $t=i,i+2,\dots, j$}
\end{align*}
where $\upepsilon \in E_{ij}$ and $E_{ij}$ is the ideal generated by some degree two terms of $\upvarepsilon_{i2}$, $\upvarepsilon_{i+2,2}, \dots, \upvarepsilon_{j2}$. Since $\upvarepsilon_{t2}(\uk_{\p}) =0$ for $t=i,i+2,\dots, j$, $\upepsilon(\uk_{\p}) =0$, and so $\det A_{ij}^d(\uk_{\p})=0$.

\noindent
(2) We split the proof into cases.

\noindent
(i) $j-i$ is odd, $d=d_{ij}(\mathbf{p})$ and finite.

Since $j-i$ is odd, $d=d_{ij}(\mathbf{p}) = 2$ by \eqref{dp}. Thus by \ref{lemma:matrix_2}\eqref{lemma:matrix_2 1}, 
\begin{equation*}
    \det A_{ij}^d(\uk_{\p}) =\det A_{ij}^2 (\uk_{\p})= (-1)^{(j-i+1)/2} +\upepsilon(\uk_{\p}),
\end{equation*}
where $\upepsilon \in m_{i,j-1}$ and $m_{i,j-1}$ is the ideal generated by $\upvarepsilon_{i,2},\upvarepsilon_{i+2,2} \dots, \upvarepsilon_{j-1,2}$. Since by \ref{rmk: MAp}\eqref{rmk: MAp 4} $\upvarepsilon_{t2}(\uk_{\p})$ is either $2\uk_{t2}$ or zero for any $t$, $\upepsilon(\uk_{\p}) \in (\uk_{\p})$, and so $\det A_{ij}^d (\uk_{\p})$ is a non-zero polynomial.

\noindent
(ii) $j-i$ is even, $d= d_{ij}(\mathbf{p})  >2$ and finite.

Since $j-i$ is even and $d >2$, 
\begin{align*}
\det A_{ij}^d(\uk_{\p})&= (-1)^{(j-i)/2}(\upvarepsilon_{id}(\uk_{\p})+(-1)^d\upvarepsilon_{i+2,d}(\uk_{\p}) + \dots +(-1)^{(j-i)d/2}\upvarepsilon_{jd}(\uk_{\p}))  \tag{by \ref{lemma:matrix_2}\eqref{lemma:matrix_2 3}} \\
& = (-1)^{(j-i)/2} \sum_{t \in S_d} (-1)^{(t-i)d/2}d\uk_{td}. \tag{by \eqref{Sd}}
\end{align*}
Since $j-i$ is even and $d= d_{ij}(\p)$, $S_d \neq \emptyset$, and so $\det A_{ij}^d(\uk_{\p})$ is a non-zero polynomial.

\noindent
(iii) $j-i$ is even and $d= d_{ij}(\mathbf{p}) =2$. 

Since $j-i$ is even and $d=2$,
\begin{align*}
    \det A_{ij}^d(\uk_{\p})
    & =\det A_{ij}^2(\uk_{\p})\\
 &  = (-1)^{(j-i)/2}(\upvarepsilon_{i2}(\uk_{\p})+\upvarepsilon_{i+2,2}(\uk_{\p}) + \dots +\upvarepsilon_{j2}(\uk_{\p})) +\upepsilon(\uk_{\p})  \tag{by \ref{lemma:matrix_2}\eqref{lemma:matrix_2 2}}\\
 & = (-1)^{(j-i)/2} \big(\sum_{t \in S_d} 2\uk_{t2}\big)  +\upepsilon(\uk_{\p}), \tag{by \eqref{Sd}}
\end{align*}
where $\upepsilon \in E_{ij}$ and $E_{ij}$ is the ideal generated by some degree two terms of $\upvarepsilon_{i2}$, $\upvarepsilon_{i+2,2}, \dots, \upvarepsilon_{j2}$. Since by \ref{rmk: MAp}\eqref{rmk: MAp 4} $\upvarepsilon_{t2}(\uk_{\p})$ is either $2\uk_{t2}$ or zero for any $t$, $\upepsilon(\uk_{\p})$ is a degree two term in $\C \lal \uk_{\p} \ral$. Since $j-i$ is even and $d= d_{ij}(\p)$, $S_d \neq \emptyset$, and so $\sum_{t \in S_d} 2\uk_{t2}$ is a non-zero degree one term in $\C \lal \uk_{\p} \ral$. Combining these facts together, it follows that $\det A_{ij}^d(\upkappa_{\p})$ is a non-zero polynomial. 
\end{proof}

\section{Generalised GV Invariants of Potentials and Filtration Structures}\label{sec: Filtrations}
Section~\ref{sec: GGV} introduces generalised GV invariants associated to monomialised Type~A potentials on $Q_n$, paralleling those arising from crepant resolutions of $cA_n$ singularities (see Definition~\ref{def:Nij}).  
In \S\ref{sec: FS}, we study filtration structures on the parameter space of such potentials with respect to these invariants.

\subsection{Generalised GV invariants}\label{sec: GGV}
Motivated by the correspondence between monomialised Type~A potentials on $Q_n$ and crepant resolutions of $cA_n$ singularities established in \ref{12} and \cite[\S 5]{Z}, we define generalised GV invariants for monomialised Type~A potentials via their associated crepant resolutions.

We begin by recalling several results from \cite[\S 5]{Z}. Fix a monomialised Type $A$ potential $f$ on $Q_{n}$
\begin{equation*}
    f=\sum_{i=1}^{2n-2}\x_i^{\prime}\x_{i+1} + \sum_{i=1}^{2n-1}\sum_{j=2}^{\infty}k_{ij}\x_i^j,
\end{equation*}
where each $k_{ij} \in \C$. Consider the following system of equations, where each $g_i \in \C\lal x,y \ral$:
\begin{align}
   g_0+\sum_{j=2}^{\infty}jk_{1j}g_1^{j-1}+g_2&=0 \notag\\
     g_1+\sum_{j=2}^{\infty}jk_{2j}g_2^{j-1}+g_3&=0  \notag\\
 &\vdotswithin{=}  \label{501} \\
   g_{2n-2}+\sum_{j=2}^{\infty}jk_{2n-1,j}g_{2n-1}^{j-1}+g_{2n}&=0.  \notag
\end{align}

Fix an integer $s$ with $0 \leq s \leq 2n-1$, and set $g_s=y$ and $g_{s+1}=x$. 
Then there exist elements $g_0,\dots,g_{2n}$ satisfying \eqref{501}, with each $g_i\in(x,y)\subseteq\mathbb{C}\lal x,y \ral$. Moreover, for all $0\leq i\leq 2n-1$, one has $(g_i,g_{i+1})=(x,y)$.
The following definition mirrors the construction of generalised GV invariants for crepant resolutions (cf.~Definition~\ref{def:Nij}).
\begin{definition}\label{def:gvf}
With notation as above, for any $1 \leq i \leq j \leq n$, define the \emph{generalised GV invariant} $N_{ij}(f)$ by 
\begin{equation*}
    N_{ij}(f)\colonequals \operatorname{dim}_{\mathbb{C}} \frac{\mathbb{C}\lal x,y \ral}{(g_{2i-2},g_{2j})}. 
\end{equation*}
\end{definition}
We then consider the $cA_n$ singularity
\begin{equation*}
\scrR \colonequals \frac{\mathbb{C} \lal u, v, x, y \ral}{uv-g_{0}g_{2} \dots g_{2n}},
\end{equation*}
and the $\scrR$-module 
\[
M\colonequals \scrR \oplus (u,g_{0}) \oplus (u,g_{0}g_{2}) \oplus \ldots \oplus (u,\prod_{i=0}^{n-1} g_{2i}) \in  (\mathrm{MM}\,\scrR)\cap(\CM \, \scrR).
\]

In view of the above results \ref{372} and \ref{371}, we introduce the following notation.
\begin{notation}\label{no:ei}
Suppose that $\Lambda_1,\Lambda_2$ are complete quiver algebras of $Q_n$ subject to some relations. Write $e_i$ for the trivial path at vertex $i$ of $Q_n$, and $\varphi \colon\Lambda_1 \xrightarrow[]{\sim}\Lambda_2 $ if $\varphi $ is an algebra isomorphism satisfying $\varphi(e_i)=e_i$ for all vertices $i$.
\end{notation}

By \cite[5.7]{Z} $\underline\End_{\scrR}(M) \cong \Jac (f)$.
Since $( g_i,g_{i+1} ) = ( x,y )$ for $0\leq i \leq 2n-1$, each $g_i$ has a linear term. Hence $\scrR$ admits a crepant resolution by e.g. \cite[5.1]{IW1}. 
Moreover, since $M \in (\mathrm{MM}\,\scrR)\cap(\CM \, \scrR)$, 
it follows from \ref{36} that there exists a crepant resolution $\uppi: \scrX \rightarrow \Spec  \scrR$ with $\Lambda_{\con}(\uppi) \cong\underline\End_{\scrR}(M)$. 

By \ref{35}, $\underline\End_{\scrR}(M)$ and $\Lambda_{\con}(\uppi)$ can be presented as a complete quiver algebra of $Q_n$ with some relations. In this paper, we declare that the $i$th vertex of $\underline\End_{\scrR}(M) \cong \Lambda_{\con}(\uppi)$ is the vertex corresponding to the summand  $(u,\prod_{i=0}^{i-1} g_{2i})$.
Using \cite[\S 5]{IW1} $\scrX$ is given pictorially by
\[
\begin{array}{ccc}
\begin{array}{c}
\scrX
\end{array} &
\begin{array}{c}
\begin{tikzpicture}[xscale=0.6,yscale=0.6]
\draw[black] (-0.1,-0.04,0) to [bend left=25] (2.1,-0.04,0);
\draw[black] (1.9,-0.04,0) to [bend left=25] (4.1,-0.04,0);
\node at (5.5,0,0) {$\hdots$};
\draw[black] (6.9,-0.04,0) to [bend left=25] (9.1,-0.04,0);
\node at (1,0.6,0) {$\scriptstyle \Curve_{1}$};
\node at (3,0.6,0) {$\scriptstyle \Curve_{2}$};
\node at (8,0.6,0) {$\scriptstyle \Curve_{n}$};
\filldraw [red] (0,0,0) circle (1pt);
\filldraw [red] (2,0,0) circle (1pt);
\filldraw [red] (4,0,0) circle (1pt);
\filldraw [red] (7,0,0) circle (1pt);
\filldraw [red] (9,0,0) circle (1pt);
\node at (0,-0.4,0) {$\scriptstyle g_0$};
\node at (2,-0.4,0) {$\scriptstyle g_2$};
\node at (4,-0.4,0) {$\scriptstyle g_4$};
\node at (7,-0.4,0) {$\scriptstyle g_{2n-2}$};
\node at (9,-0.4,0) {$\scriptstyle g_{2n}$};
\end{tikzpicture}
\end{array}
\end{array}
\]
and under this convention, the curve $\Curve_i$ corresponds to the summand $(u,\prod_{i=0}^{i-1} g_{2i})$, and thus the vertex $i$ of $\Lambda_{\con}(\uppi)$.
Moreover, $\Jac(f)  \xrightarrow[]{\sim} \underline\End_{\scrR}(M) \xrightarrow[]{\sim} \Lambda_{\mathrm{con}}(\uppi)$. 

Thus the generalised GV invariant $N_{ij}(f)$ of a monomialised Type A potential $f$ coincides with $N_{ij}(\uppi)$ (see \ref{def:Nij}), where $\uppi$ is its associated crepant resolution. Namely,
\begin{equation}\label{Nst}
    N_{ij}(\uppi)= \operatorname{dim}_{\mathbb{C}} \frac{\mathbb{C}\lal x,y \ral}{(g_{2i-2},g_{2j})} = N_{ij}(f).
\end{equation}
Thus the data of $N_{ij}(f)$ is equivalent to the data of $\GV_{ij}(\uppi)$ in the sense of \ref{37} and \ref{rmk:Nij}. 
So in the rest of this section and \S\ref{section:obs}, we discuss generalised GV invariants of monomialised Type A potentials to reach conclusions about GV invariants of crepant resolutions of $cA_n$ singularities.

Recall that, in order to define $N_{ij}(f)$ in \ref{def:gvf}, we first fix some integer $s$ and set $g_{s}=y$, $g_{s+1}=x$, then solve \eqref{501} to obtain $g_0, g_1,  \dots, g_{2n}$. From this, we define $N_{ij}(f)=\operatorname{dim}_{\mathbb{C}} \mathbb{C}\lal x,y \ral/(g_{2i-2},g_{2j})$.
\begin{lemma}\label{lemma:Nijf}
The generalised GV invariant $N_{ij}(f)$ in \textnormal{\ref{def:gvf}} does not depend on $s$.
\end{lemma}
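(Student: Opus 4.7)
Write $g_i^{(s)}$ for the $g_i$ produced by the construction above \ref{def:gvf} when initialized with $g_s = y$ and $g_{s+1} = x$. The first observation is a uniqueness principle for \eqref{501}: the $i$-th equation has the form $g_{i-1}+g_{i+1}=-\sum_{j=2}^{\infty}jk_{ij}g_i^{j-1}$, so once any two consecutive values $g_\ell, g_{\ell+1}$ are specified, every other $g_i$ is determined by forward and backward recursion. Consequently $(g_i^{(s)})_{0\leq i \leq 2n}$ is the \emph{unique} tuple in $(x,y)\subseteq \C\lal x,y\ral$ solving \eqref{501} with the chosen initialization at position $s$.

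The main step is to fix another index $0\leq s' \leq 2n-1$ and exploit the equality $(g_{s'}^{(s)}, g_{s'+1}^{(s)}) = (x,y)$ stated above \ref{def:gvf} to build a change of coordinates between $(g_i^{(s)})$ and $(g_i^{(s')})$. Define a continuous $\C$-algebra endomorphism
\[
\varphi \colon \C\lal x,y\ral \longrightarrow \C\lal x,y\ral, \quad x \longmapsto g_{s'+1}^{(s)},\quad y \longmapsto g_{s'}^{(s)}.
\]
Because $g_{s'}^{(s)}$ and $g_{s'+1}^{(s)}$ both lie in the maximal ideal and together generate it, their classes form a basis of $(x,y)/(x,y)^2$, so the formal inverse function theorem (equivalently, Nakayama) shows that $\varphi$ is an automorphism. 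Setting $h_i \colonequals \varphi^{-1}(g_i^{(s)})$ and applying $\varphi^{-1}$ term-by-term to \eqref{501} (legitimate by continuity of $\varphi^{-1}$ and the fact that each $k_{ij}\in\C$) yields that $(h_i)$ again satisfies \eqref{501}, now with $h_{s'} = y$ and $h_{s'+1} = x$. The uniqueness principle then forces $h_i = g_i^{(s')}$ for all $i$.

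Since $\varphi^{-1}$ is a ring automorphism of $\C\lal x,y\ral$, it induces a $\C$-linear isomorphism
\[
\frac{\C\lal x,y\ral}{(g_{2i-2}^{(s)}, g_{2j}^{(s)})} \;\xrightarrow{\sim}\; \frac{\C\lal x,y\ral}{(g_{2i-2}^{(s')}, g_{2j}^{(s')})},
\]
so the two quotients have the same $\C$-dimension, proving the lemma. The only real technicality is verifying that $\varphi$ is invertible: this reduces to the standard fact that a continuous $\C$-algebra endomorphism of $\C\lal x,y\ral$ is an automorphism iff its linear part lies in $\mathrm{GL}_2(\C)$, and here invertibility of the linear part is immediate from $(g_{s'}^{(s)}, g_{s'+1}^{(s)}) = (x,y)$.
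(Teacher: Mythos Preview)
Your argument is correct but takes a genuinely different route from the paper. The paper's proof is top-down: it constructs, from each choice of $s$, a crepant resolution $\uppi$ with $\Lambda_{\mathrm{con}}(\uppi)\xrightarrow{\sim}\Jac(f)$, and then invokes Theorem~\ref{371} (the contraction algebra determines the generalized GV invariants) to conclude that the two dimensions agree. Your proof is bottom-up and entirely elementary: you exploit the uniqueness of the recursion \eqref{501} and the cited fact $(g_{s'}^{(s)},g_{s'+1}^{(s)})=(x,y)$ to build an explicit coordinate change $\varphi$ of $\C\lal x,y\ral$ intertwining the two solution tuples, from which the equality of dimensions is immediate. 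Your approach avoids all of the geometric and representation-theoretic machinery (in particular \ref{cor: Nij}, \ref{372}, \ref{371}) and would work even before those results are established; the paper's approach, by contrast, ties the lemma into the central theme that $\Lambda_{\mathrm{con}}$ governs everything. Both are clean; yours is more self-contained, the paper's is more structural.
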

\begin{proof}
Fix $s$, set $g_s=y$ and $g_{s+1}=x$, and solve \eqref{501} to obtain $g_0,g_1,\dots,g_{2n}$. From this, the above constructs $\scrR$, $\uppi$ such that $\Lambda_{\mathrm{con}}(\uppi) \xrightarrow[]{\sim}  \Jac(f)$.    

We next start with another integer $t$ and set $g'_{t}=y$, $g'_{t+1}=x$, then solve to obtain $g'_0, g'_1,  \dots, g'_{2n}$. Similarly, the above constructs  $\scrR'$, $\uppi'$ such that $\Lambda_{\mathrm{con}}(\uppi') \xrightarrow[]{\sim}  \Jac(f)$. Thus $\Lambda_{\mathrm{con}}(\uppi) \xrightarrow[]{\sim}\Lambda_{\mathrm{con}}(\uppi')$, and so $N_{ij}(\uppi)=N_{ij}(\uppi') $ by \ref{371}. In particular 
\[
\operatorname{dim}_{\mathbb{C}} \mathbb{C}\lal x,y \ral/(g_{2i-2},g_{2j})=N_{ij}(\uppi)=N_{ij}(\uppi')=\operatorname{dim}_{\mathbb{C}} \mathbb{C}\lal x,y \ral/(g'_{2i-2},g'_{2j}),
\]
and so $N_{ij}(f)$ does not depend on $s$.
\end{proof}

\subsection{Filtration Structures}\label{sec: FS}
Fix some $\mathbf{p}$ and consider the bijection map $f \colon \M_\p \rightarrow \mathsf{MA}_{\mathbf{p}} $ under which
\begin{equation}\label{fkp}
    f(\uk_{\p})=\sum_{i=1}^{2n-2}\x_i^{\prime}\x_{i+1} + \sum_{i=1}^{2n-1}\sum_{j\geq 2} \upkappa_{ij} \x_i^{j},
\end{equation}
where  $\upkappa_{i,j_i}=0$ for $ 1 \leq i \leq 2n-1$ and $2 \leq j_i <p_i$.

By considering $\uk_{ij}$ as variables and solving the system of equations \eqref{501}, we can also realise the family of monomialised Type A potentials $f(\uk_{\p})$ over $\M_{\p}$ \eqref{Mp} by a family of crepant resolutions of $cA_n$ singularities over $\M_{\p}$. More precisely, fix some $s$ satisfying $0 \leq s \leq 2n-1$, and set $g_{s}=y$, $g_{s+1}=x$, then solve $g_{0}, g_{1}, \dots, g_{2n}$ by \eqref{501} where each $g_t  \in (  \uk_{\p},x,y ) \subseteq \C \lal \uk_{\p},x,y  \ral$. 

For any $k \in \M_{\p}$, write $g_t(k) \in \C \lal x,y \ral$ for $g_t$ evaluated at $k$, and consider the $cA_n$ singularity 
\begin{equation*}
\scrR_{k} \colonequals \frac{\mathbb{C} \lal u, v, x, y \ral}{uv-g_{0}(k)g_{2}(k) \dots g_{2n}(k)},
\end{equation*}
and the $\scrR_k$-module 
\[
M_k\colonequals \scrR_k \oplus (u,g_{0}(k)) \oplus (u,g_{0}(k)g_{2}(k)) \oplus \ldots \oplus (u,\prod_{i=0}^{n-1} g_{2i}(k)) \in  (\mathrm{MM}\,\scrR_k)\cap(\CM \, \scrR_k).
\]
Similar to \S\ref{sec: GGV}, $\Jac(f(k))  \xrightarrow[]{\sim} \underline\End_{\scrR_k}(M_k) \xrightarrow[]{\sim} \Lambda_{\mathrm{con}}(\uppi_k)$. Thus, if we vary $k$ over the parameter space $\M_{\p}$, the family of crepant resolutions $\uppi_k$ realises the family of potentials $f(\uk_\p)$ over $\M_{\p}$.

Recall that in the above construction, we first fix some integer $s$ satisfying $0 \leq s \leq 2n-1$, then construct $g_{0},g_{1},\dots, g_{2n}$ with $g_s=y$ and $g_{s+1}=x$ to realise $f(\uk_{\p})$.

\begin{notation}\label{notation:hst}
With the fixed $s$ as above, we adopt the following notation in \ref{064}.
\begin{enumerate}
\item Set $(g_{s0},g_{s1},\dots, g_{s,2n}) \colonequals (g_{0},g_{1},\dots, g_{2n}) $. 
\item For $0 \leq t \leq 2n$, set $h_{st} \colonequals g_{st}(\upkappa_\mathbf{p},x,0) \in \C\lal \upkappa_\mathbf{p}, x  \ral$.
\item Given any $h\in \C\lal \upkappa_\mathbf{p}, x  \ral$, write $[h]_i$ for the degree $i$ graded piece with respect to $x$.
\item Write $\scrO_d$ for an element in $\C\lal \upkappa_\mathbf{p}, x  \ral$ that satisfies $[\scrO_d]_i=0$ for each $i < d$.
\item For $1 \leq t \leq 2n-1$, write \emph{$\uk_{t,\p}$} for the tuple of variables $\uk_{ij}$ appearing in $\uk_{\p}$ with $1 \leq i \leq t$.
\end{enumerate}    
\end{notation}

For $0 \leq s \leq 2n-1$, since $g_{ss}=y$, for any $t$ we have $(g_{ss}, g_{st})= (y,g_{st})= (h_{st})$. Thus 
\begin{align}
    N_{ij}(f(\uk_{\p}))&= \operatorname{dim}_{\mathbb{C}} \frac{\mathbb{C}\lal x,y \ral}{(g_{2i-2,2i-2},g_{2i-2,2j})}\tag{by \ref{lemma:Nijf} with $s=2i-2$}\\
    &= \operatorname{dim}_{\mathbb{C}} \frac{\mathbb{C}\lal x,y \ral}{(y,g_{2i-2,2j})}\tag{since $g_{2i-2,2i-2}=y$}\\
    &=\operatorname{dim}_{\mathbb{C}} \frac{\mathbb{C}\lal x\ral}{(h_{2i-2,2j})}\label{Nstf} .
\end{align}

Thus the power series $h_{2i-2,2j}$ completely determines generalised GV invariant $N_{ij}(f(\uk_{\p}))$.
In particular, the lowest degree term (wrt.\ $x$) of $h_{2i-2,2j}$ determines the general value and general position of $N_{ij}(f(\uk_{\p}))$ over the parameter space $\M_{\p}$. 
The following establishes that the lowest degree term can be described by the matrix $A^d_{2i-1,2j-1}(\uk_{\p})$ where $d=d_{2i-1,2j-1}(\p)$ in \ref{TypeAp}.

\begin{prop}\label{064}
Given the family of monomialised Type $A$ potentials $f(\upkappa_{\mathbf{p}})$ \eqref{fkp} on $Q_n$ and with notation in \textnormal{\ref{notation:hst}}, for any $1 \leq s \leq t \leq 2n-1$, we have
\begin{equation*}
    h_{s-1,t+1}= \sum_{i=r}^{\infty} c_i x^{i}
\end{equation*}
for some $r\in\N_{\infty}$ with $r\ge 1$ and each $c_i \in \C \lal \upkappa_\mathbf{p}  \ral$. Moreover, the following holds.
\begin{enumerate}
\item If $d_{st}(\mathbf{p}) = \infty$, then $h_{s-1,t+1}=0$.
\item If $\mathsf{d}\colonequals d_{st}(\mathbf{p}) < \infty$, then $r= \mathsf{d}-1$, and the lowest degree term $(wrt.\ $x$)$ in $h_{s-1,t+1}$ has coefficient $c_r=(-1)^{t-s+1}\det A_{st}^{\mathsf{d}}(\upkappa_\mathbf{p})$. 
\end{enumerate}
\end{prop}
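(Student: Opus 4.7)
I will prove both statements simultaneously by induction on $t$, with $s$ fixed. The key tool is the one-step recurrence obtained by evaluating the $t$th equation of \eqref{501} at $y=0$:
\begin{equation*}
h_{s-1,t+1} \;=\; -h_{s-1,t-1} \;-\; \sum_{j\geq p_t} j\,\upkappa_{t,j}\, h_{s-1,t}^{\,j-1},
\end{equation*}
starting from the initial data $h_{s-1,s-1}=0$ and $h_{s-1,s}=x$.

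Statement (1) follows quickly: since $d_{st}(\p)=2$ whenever $t-s$ is odd, $d_{st}(\p)=\infty$ forces $t-s$ even with $p_s=p_{s+2}=\cdots=p_t=\infty$. Then $\upkappa_{t,j}=0$ for every finite $j$, killing the sum, while $d_{s,t-2}(\p)=\infty$ gives $h_{s-1,t-1}=0$ by induction; hence $h_{s-1,t+1}=0$. For statement (2), the base case $t=s$ is direct: $h_{s-1,s+1}=-\sum_{j\geq p_s} j\upkappa_{s,j}x^{j-1}$ has lowest-degree term $-p_s\upkappa_{s,p_s}x^{p_s-1}$, which matches the formula since $\det A_{ss}^{p_s}(\upkappa_\p)=p_s\upkappa_{s,p_s}$.

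For the inductive step, I apply the hypothesis to $h_{s-1,t-1}$ and $h_{s-1,t}$ and read off the leading $x$-degree of each term in the recurrence. The first summand $-h_{s-1,t-1}$ starts in degree $d_{s,t-2}(\p)-1$, while the $j$th summand in the sum starts in degree $(d_{s,t-1}(\p)-1)(j-1)$; the minimum over $j\geq p_t$ is attained at $j=p_t$. A parity check confirms that the overall minimum equals $d_{st}(\p)-1$: when $t-s$ is odd we have $d_{st}(\p)=2=d_{s,t-2}(\p)$, while for $t-s$ even we have $d_{s,t-1}(\p)=2$, so the sum contribution is at degree $p_t-1$ and $\min(d_{s,t-2}(\p),p_t)=d_{st}(\p)$. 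At most two terms contribute to the coefficient at degree $d_{st}(\p)-1$, and I then match their combined value with $(-1)^{t-s+1}\det A_{st}^{d_{st}(\p)}(\upkappa_\p)$ using the determinantal recursions \ref{517}\eqref{517 1} (for $d_{st}(\p)=2$) and \ref{517}\eqref{517 3} (for $d_{st}(\p)>2$). Two auxiliary inputs are essential: \ref{518} provides the simplification $\det A_{s,t-1}^2(\upkappa_\p)=(-1)^{(t-s)/2}$ whenever $d_{s,t-2}(\p)>2$, which converts the $(p_t-1)$th power of the leading coefficient of $h_{s-1,t}$ into the sign $(-1)^{(t-s)(p_t-1)/2}$ appearing in \ref{517}\eqref{517 3}; and \ref{41}(1) forces $\det A_{s,t-2}^{d}(\upkappa_\p)=0$ whenever $d<d_{s,t-2}(\p)$, which correctly discards the contribution of the first summand when its leading degree strictly exceeds $d_{st}(\p)-1$.

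The main obstacle is not conceptual but organizational: verifying the identity on leading coefficients splits into several sub-cases depending on whether $p_t=2$ or $p_t>2$, whether $d_{s,t-2}(\p)$ is finite or infinite, and whether the two candidate leading degrees coincide or one dominates. In each branch the algebraic identity to check is a consequence of exactly one of \ref{517}\eqref{517 1}, \ref{517}\eqref{517 3}, or their trivialization via \ref{41}(1), with the sign tracking driven by the parity factor $(-1)^{(t-s)(d-1)/2}$. Once this case analysis is laid out systematically, each sub-case reduces to a one-line matching of coefficients, and the induction closes.
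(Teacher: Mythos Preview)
Your approach is essentially the paper's: both induct on $t$ through the recurrence $h_{s-1,t+1}=-h_{s-1,t-1}-\sum_{j\geq p_t}j\upkappa_{tj}h_{s-1,t}^{\,j-1}$, split on the parity of $t-s$, and invoke \ref{517}, \ref{518}, and \ref{41}(1) in exactly the places you indicate. The paper merely packages the argument into four separate lemmas (\ref{lemma:h1}--\ref{lemma:h3}), whereas you run a single induction; the case analysis in \ref{lemma:h3} is the same as the one you sketch.

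One step you should make explicit. After matching the coefficient at degree $d_{st}(\p)-1$ with $(-1)^{t-s+1}\det A_{st}^{d_{st}(\p)}(\upkappa_\p)$, you still need this quantity to be \emph{nonzero} in order to conclude $r=d_{st}(\p)-1$ and to feed the correct leading term of $h_{s-1,t}$ into the next inductive step. You cite \ref{41}(1) but not \ref{41}(2), which is what supplies this. The paper handles the same point by a different route: it proves an auxiliary lemma (\ref{lemma:h4}) that $h_{s-1,t+1}$ involves only the variables $\upkappa_{ij}$ with $i\leq t$, and then argues non-cancellation directly in \ref{lemma:h2} (the leading term of $h_{s-1,t-1}$ lives in $\C\lal\upkappa_{t-2,\p}\ral$, while every term in the sum carries a factor $\upkappa_{t,*}$, so they cannot cancel). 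Either device closes the induction; yours via \ref{41}(2) is shorter, but you should state it.
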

\begin{proof}
Since $h_{s-1,t+1} \in \C\lal \upkappa_\mathbf{p}, x  \ral$ , we first write $h_{s-1,t+1}$ as
\begin{equation}\label{hst0}
    h_{s-1,t+1}= \sum_{i=r_{st}}^{\infty} c_{st,i} x^{i} = \uplambda_{st}x^{r_{st}}+\scrO_{r_{st}+1},
\end{equation}
for some $r_{st} \geq 0$, each $c_{st,i} \in \C \lal \upkappa_\mathbf{p}  \ral$ and $\uplambda_{st} \colonequals c_{st,r_{st}}$.  
Now since the $h$'s are obtained from the $g$'s by evaluating at $y=0$, they must satisfy the same relations as the $g$'s. In particular, by \eqref{501}, 
\begin{align}\label{hst}
   h_{s-1,t-1}+\sum_{j=p_t}^{\infty}j\upkappa_{tj}h_{s-1,t}^{j-1}+h_{s-1,t+1}=0.  
\end{align}
In the equation above, the index $j$ starts at $p_t$ because $\upkappa_{tj}=0$ for $j < p_t$ in $f(\uk_{\p})$ \eqref{fkp}.
Rearranging \eqref{hst} in the case $t=s$, then using the fact that $g_{s-1,s-1}=y$, $g_{s-1,s}=x$ (thus $h_{s-1,s-1}=0$, $h_{s-1,s}=x$), we obtain 
\begin{equation}
    h_{s-1,s+1} =  -h_{s-1,s-1}-\sum_{j=p_s}^{\infty}j\upkappa_{sj}h_{s-1,s}^{j-1}     =  -\sum_{j=p_s}^{\infty}j\upkappa_{sj}x^{j-1}. \label{hst1}
\end{equation}
Next, rearranging \eqref{hst} in the case $t=s+1$ gives
\begin{align}\label{hst2}
h_{s-1,s+2} &= -
h_{s-1,s}-\sum_{j=p_{s+1}}^{\infty}j\upkappa_{s+1,j}h_{s-1,s+1}^{j-1}\notag\\
&= -
x-\sum_{j=p_{s+1}}^{\infty}j\upkappa_{s+1,j}h_{s-1,s+1}^{j-1}.
\end{align}
In the double index of $h_{s-1,*}$, we now induct on the second of the two indices to prove the result. 
We split the remainder of the proof into the following four lemmas (\ref{lemma:h1}, \ref{lemma:h4}, \ref{lemma:h2} and \ref{lemma:h3}).
\end{proof}

\begin{lemma}\label{lemma:h1}
0With notation in \textnormal{\ref{064}}, if $d_{st}(\mathbf{p}) = \infty$, then $h_{s-1,t+1}=0$.
\end{lemma}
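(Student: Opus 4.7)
The plan is to proceed by induction on the second index $t$ of $h_{s-1,t+1}$, stepping by $2$, using the recursion \eqref{hst} derived in the proof of \ref{064}. First I would unpack what the hypothesis $d_{st}(\p)=\infty$ really means. By the definition \eqref{dp}, if $t-s$ is odd then $d_{st}(\p)=2\neq\infty$; hence the hypothesis forces $t-s$ to be even and $\min(p_s,p_{s+2},\dots,p_t)=\infty$, so that $p_s=p_{s+2}=\dots=p_t=\infty$. By \eqref{MAp}, this in turn forces $\upkappa_{ij}=0$ in the potential $f(\upkappa_\p)$ for all $j\ge 2$ and all $i\in\{s,s+2,\dots,t\}$, so every sum $\sum_{j=p_i}^{\infty} j\upkappa_{ij}(\cdot)$ for $i$ in this set is empty.

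Next, the base case is $t=s$. Then $d_{ss}(\p)=p_s=\infty$, and equation \eqref{hst1} immediately gives $h_{s-1,s+1}=-\sum_{j=p_s}^{\infty} j\upkappa_{sj}x^{j-1}=0$ since the sum is empty.

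For the inductive step, suppose $t\ge s+2$ with $t-s$ even and $d_{st}(\p)=\infty$. Because taking the minimum over a subset can only increase it, $d_{s,t-2}(\p)=\min(p_s,p_{s+2},\dots,p_{t-2})=\infty$ as well, so by the inductive hypothesis $h_{s-1,(t-2)+1}=h_{s-1,t-1}=0$. Since also $p_t=\infty$, the middle term of \eqref{hst} is an empty sum, and the recursion \eqref{hst} rearranges to
\[
h_{s-1,t+1}=-h_{s-1,t-1}-\sum_{j=p_t}^{\infty} j\upkappa_{tj}\,h_{s-1,t}^{j-1}=0-0=0,
\]
completing the induction.

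The argument is essentially just bookkeeping with the recursion and the definition of $d_{st}(\p)$; there is no substantive technical obstacle. The only mild point to be careful about is checking that the parity is consistent, i.e.\ that $d_{st}(\p)=\infty$ genuinely implies $t-s$ even so that the step-$2$ induction is well-founded, and that at each step both the ``carry-over'' term $h_{s-1,t-1}$ and the ``new'' sum indexed by $p_t$ vanish simultaneously. Both follow directly from the hypothesis, so the proof reduces to the inductive display above.
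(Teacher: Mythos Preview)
Your proof is correct and follows essentially the same approach as the paper: unpack $d_{st}(\p)=\infty$ to force $t-s$ even with $p_s=p_{s+2}=\dots=p_t=\infty$, then step through the recursion \eqref{hst} by $2$ to conclude $h_{s-1,s+1}=h_{s-1,s+3}=\dots=h_{s-1,t+1}=0$. The paper additionally computes $h_{s-1,s+2}=-x$ along the way, but as you implicitly observe, this is unnecessary since the sum $\sum j\upkappa_{s+2,j}h_{s-1,s+2}^{j-1}$ vanishes regardless.
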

\begin{proof}
If $d_{st}(\mathbf{p}) = \infty$, then by \eqref{dp} $t-s$ is even and $\upkappa_{sj}, \upkappa_{s+2,j}, \dots , \upkappa_{tj}=0$ for all $j$. 
In particular, $h_{s -1,s+1}=0$ via \eqref{hst1}. 
Substituting this into \eqref{hst2},  $h_{s-1,s+2}=-x$. Next, rearranging \eqref{hst} in the case $t=s+2$ gives
\[
h_{s-1,s+3} = -h_{s-1,s+1}-\sum_{j=p_{s+2}}^{\infty}j\upkappa_{s+2,j}h_{s-1,s+2}^{j-1}.
\]
Since $h_{s-1,s+1}=0$ and $\uk_{s+2,j}=0$ for all $j$, necessarily $h_{s-1,s+3}=0$. Iterating this argument shows that $h_{s-1,s+5}, h_{s-1,s+7}, \dots, h_{s-1,t+1}=0$.
\end{proof}

\begin{lemma}\label{lemma:h4}
With notation in \textnormal{\ref{notation:hst}} and \textnormal{\ref{064}}, for $s \leq t \leq 2n-1 $, $h_{s-1,t+1} \in \C\lal \upkappa_{t,\mathbf{p}}, x  \ral$, and in particular the lowest degree $(wrt. \ x)$ coefficient $\uplambda_{st}$ 
in $h_{s-1,t+1}$ \eqref{hst0} belongs to $\C\lal \upkappa_{t,\mathbf{p}} \ral$. 
\end{lemma}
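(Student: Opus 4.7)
The plan is to induct on $t$, using the recursion \eqref{hst} which in rearranged form reads
\begin{equation*}
    h_{s-1,t+1}=-h_{s-1,t-1}-\sum_{j=p_{t}}^{\infty}j\upkappa_{tj}\,h_{s-1,t}^{\,j-1}.
\end{equation*}
The key observation is that the only new variables introduced at stage $t$ are the $\upkappa_{tj}$ with $j \geq p_t$; all other $\upkappa$'s appear only through $h_{s-1,t-1}$ and $h_{s-1,t}$.

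For the base cases $t=s$ and $t=s+1$, the explicit formulas \eqref{hst1} and \eqref{hst2} derived in the proof of \ref{064} show that $h_{s-1,s+1}\in\C\lal \upkappa_{s,\mathbf{p}},x\ral$ and $h_{s-1,s+2}\in\C\lal \upkappa_{s+1,\mathbf{p}},x\ral$, since $h_{s-1,s+1}$ involves only $\upkappa_{sj}$'s and $h_{s-1,s+2}$ involves $\upkappa_{s+1,j}$'s together with $h_{s-1,s+1}$.

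For the inductive step, assume that $h_{s-1,t-1}\in\C\lal \upkappa_{t-2,\mathbf{p}},x\ral$ and $h_{s-1,t}\in\C\lal \upkappa_{t-1,\mathbf{p}},x\ral$. Since $\upkappa_{t-2,\mathbf{p}}$ and $\upkappa_{t-1,\mathbf{p}}$ are both sub-tuples of $\upkappa_{t,\mathbf{p}}$, both sit inside $\C\lal \upkappa_{t,\mathbf{p}},x\ral$. The recursion then exhibits $h_{s-1,t+1}$ as a $\C\lal \upkappa_{t,\mathbf{p}},x\ral$-linear combination of $h_{s-1,t-1}$ and powers of $h_{s-1,t}$ (multiplied by the new variables $\upkappa_{tj}$), so the closure conditions guarantee $h_{s-1,t+1}\in\C\lal \upkappa_{t,\mathbf{p}},x\ral$. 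The statement about $\uplambda_{st}$ then follows at once, since $\uplambda_{st}$ is a coefficient in the $x$-expansion of an element of $\C\lal \upkappa_{t,\mathbf{p}},x\ral$ and hence lies in $\C\lal \upkappa_{t,\mathbf{p}}\ral$.

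There is essentially no obstacle here; this is a bookkeeping induction whose content is that the recursion \eqref{hst} is upper-triangular in the variable index $t$. The only minor care needed is to ensure that the infinite sum $\sum_{j=p_t}^{\infty}j\upkappa_{tj}h_{s-1,t}^{j-1}$ makes sense in the completed ring $\C\lal \upkappa_{t,\mathbf{p}},x\ral$, which is immediate because $h_{s-1,t}\in (\upkappa_{t-1,\mathbf{p}},x)$ (all $h_{s-1,*}$ constructed via \eqref{hst1}--\eqref{hst2} vanish at $\upkappa=0=x$), so the powers $h_{s-1,t}^{j-1}$ have increasing order and the sum converges in the $(\upkappa_{t,\mathbf{p}},x)$-adic topology.
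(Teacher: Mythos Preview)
Your proof is correct and follows essentially the same approach as the paper: both argue by induction on $t$, verify the base cases $t=s$ and $t=s+1$ from \eqref{hst1} and \eqref{hst2}, and then use the recursion \eqref{hst} for the inductive step. Your additional remark on convergence of the infinite sum in the completed ring is a reasonable point of care that the paper leaves implicit.
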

\begin{proof}
We first check that $h_{s-1,s+1}$ and $h_{s-1,s+2}$ satisfy the statement. By \eqref{hst1}, it is straightforward that $h_{s-1,s+1} \in  \C\lal \upkappa_{s,\mathbf{p}}, x  \ral$. Then together with \eqref{hst2}, it follows that $h_{s-1,s+2} \in  \C\lal \upkappa_{s+1,\mathbf{p}}, x  \ral$.

We next prove the statement by induction on the second index: we assume that $h_{s-1,t-1} \in \C\lal \upkappa_{t-2,\mathbf{p}}, x  \ral$ and $h_{s-1,t} \in \C\lal \upkappa_{t-1,\mathbf{p}}, x  \ral$ for some $t \geq s+2$, and prove that $h_{s-1,t+1} \in \C\lal \upkappa_{t,\mathbf{p}}, x  \ral$. 
This follows immediately from \eqref{hst}, since the recursion expresses $h_{s-1,t+1}$ in terms of $h_{s-1,t-1}$ and $h_{s-1,t}$ with coefficients involving only $\upkappa_{t,\mathbf p}$.
\end{proof}

\begin{lemma}\label{lemma:h2}
With notation in \textnormal{\ref{064}}, if $\mathsf{d}\colonequals d_{st}(\mathbf{p}) < \infty$, then $r_{st}= \mathsf{d}-1$.
\end{lemma}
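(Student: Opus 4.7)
The plan is to induct on $t$ using the recursion \eqref{hst}, and in fact to prove Lemmas \ref{lemma:h2} and \ref{lemma:h3} simultaneously: when the two terms on the right-hand side of \eqref{hst} contribute at the same lowest $x$-degree, showing $r_{st} = \mathsf{d} - 1$ forces us to track the leading coefficient precisely, and the natural identification is with $(-1)^{t-s+1}\det A_{st}^{\mathsf{d}}(\upkappa_\p)$, whose nonvanishing in $\C\lal\upkappa_\p\ral$ is already in hand via \ref{41}(2).

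First I will dispose of the base cases. For $t = s$, equation \eqref{hst1} gives $h_{s-1,s+1} = -\sum_{j \geq p_s} j \upkappa_{sj} x^{j-1}$, whose lowest $x$-power is $p_s - 1 = d_{ss}(\p) - 1$ with leading coefficient $-p_s\upkappa_{s,p_s}$, matching $-\det A_{ss}^{p_s}(\upkappa_\p)$ up to sign. For $t = s+1$, equation \eqref{hst2} shows the $-x$ term has $x$-degree $1 = d_{s,s+1}(\p) - 1$; either it alone dominates the sum, or (when $p_s = p_{s+1} = 2$) it combines with the $j = 2$ summand to yield coefficient $4\upkappa_{s,2}\upkappa_{s+1,2} - 1 = \det A_{s,s+1}^2(\upkappa_\p)$, nonzero by \ref{41}(2).

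For the inductive step I will apply \eqref{hst} and split by parity of $t - s$. In the odd case, $\mathsf{d} = 2$, and the inductive hypothesis yields $r_{s,t-2} = 1$ together with either $h_{s-1,t} = 0$ (when $d_{s,t-1}(\p) = \infty$, by \ref{lemma:h1}) or $r_{s,t-1} \geq 1$; the sum over $j$ thus contributes at $x$-degree $\geq p_t - 1 \geq 1$, so the coefficient of $x^1$ in $h_{s-1,t+1}$ comes from $-h_{s-1,t-1}$ plus (only when $p_t = 2$ and $d_{s,t-1}(\p) = 2$) the $j = 2$ summand. Feeding in the inductive coefficient formulas and invoking \ref{517}\eqref{517 1} identifies this coefficient with $\det A_{st}^2(\upkappa_\p)$, which is nonzero by \ref{41}(2). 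In the even case, $\mathsf{d} = \min(p_s, p_{s+2}, \ldots, p_t)$, the inductive hypothesis gives $r_{s,t-1} = 1$; the $j = p_t$ summand has lowest $x$-degree $p_t - 1$, while $-h_{s-1,t-1}$ has lowest $x$-degree $d_{s,t-2}(\p) - 1$, so $r_{st} \geq \mathsf{d} - 1$. The leading coefficient is identified with $\pm\det A_{st}^{\mathsf{d}}(\upkappa_\p)$ via \ref{517}\eqref{517 3} (or \ref{517}\eqref{517 1} in the sub-case $\mathsf{d} = 2$), again nonzero by \ref{41}(2).

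The main obstacle is precisely the matching step in the cases above when the two terms in \eqref{hst} contribute at the same lowest $x$-degree: a priori their leading coefficients could cancel, and only the explicit identification with $\pm\det A_{st}^{\mathsf{d}}(\upkappa_\p)$, combined with the nonvanishing of this polynomial from \ref{41}(2), rules that out. Careful bookkeeping of signs and powers of $\lambda_{t-1}^{p_t - 1}$ through the inductive step, and selecting the correct determinant recursion from \ref{517} to match the parity case at hand, will be the technical heart of the argument.
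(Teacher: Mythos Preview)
Your approach is correct and will work, but it takes a genuinely different route from the paper. The paper proves \ref{lemma:h2} independently of \ref{lemma:h3}, and its key device for ruling out cancellation is the variable-separation lemma \ref{lemma:h4}: since $\uplambda_{s,t-2}\in\C\lal\upkappa_{t-2,\p}\ral$ while every term in $\sum_{j\geq p_t} j\upkappa_{tj}h_{s-1,t}^{j-1}$ carries a factor of $\upkappa_{tj}$, these two contributions to the degree-$(\mathsf{d}-1)$ coefficient live in disjoint variable sets and cannot cancel. This gives $r_{st}=\mathsf{d}-1$ without ever computing the leading coefficient; \ref{lemma:h3} is then proved afterwards. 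Your plan instead merges the two lemmas, identifies the leading coefficient with $(-1)^{t-s+1}\det A_{st}^{\mathsf{d}}(\upkappa_\p)$ at each inductive step, and invokes \ref{41}(2) for nonvanishing. The trade-off: the paper's argument for \ref{lemma:h2} alone is shorter and more conceptual, but your joint induction avoids repeating the case analysis twice. One point to watch: in the even case with $\mathsf{d}>2$ you will need $\uplambda_{s,t-1}=\pm 1$ (not merely nonzero) to match the recursion \ref{517}\eqref{517 3}, and this comes from \ref{518}; you will also need \ref{41}(1) to kill $\det A_{s,t-2}^{\mathsf{d}}$ in the sub-case $p_t<d_{s,t-2}(\p)$. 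Neither is mentioned in your outline, but both are straightforward once you reach that point in the bookkeeping.
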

\begin{proof}
We first check that $r_{ss}$ and $r_{s,s+1}$ satisfy the statement.
By \eqref{dp}, $d_{ss}(\p)=p_s$ and $d_{s,s+1}(\p)=2$.
By \eqref{hst1}, 
\begin{equation*}
     h_{s-1,s+1} = -\sum_{j=p_s}^{\infty}j\upkappa_{sj}x^{j-1} 
\end{equation*}
This has lowest degree term $x^{p_s-1}$, and thus by definition $r_{ss}=p_s-1=d_{ss}(\mathbf{p})-1$. Similarly, since each $j\upkappa_{s+1,j}h_{s-1,s+1}^{j-1}$ in \eqref{hst2} contains $\upkappa_{s+1,j}$, these terms can not cancel the $-x$ in \eqref{hst2}.
Thus the lowest degree of $h_{s-1,s+2}$ is $1$, and so $r_{s,s+1}=1=d_{s,s+1}(\mathbf{p})-1$. 

We next prove the statement by induction on the second index: we assume that $r_{s,t-2}= d_{s,t-2}(\mathbf{p})-1$ and $r_{s,t-1}= d_{s,t-1}(\mathbf{p})-1$ for some $ t \geq s+2$, and prove that $r_{st}= d_{st}(\mathbf{p})-1$ by splitting into the following two cases.

\noindent
(1) $t-s$ is odd.

Since $t-s$ is odd, $d_{s,t-2}(\mathbf{p})=d_{st}(\mathbf{p}) =2$ by \eqref{dp}. By assumption $r_{s,t-1}=d_{s,t-1}(\mathbf{p})-1$ and $r_{s,t-2}=d_{s,t-2}(\mathbf{p})-1=1$. Thus by \eqref{hst0} (applied to $t-2$ and $t-1$),
\begin{equation*}
    h_{s-1,t-1}= \uplambda_{s,t-2}x+ \scrO_2, \quad h_{s-1,t}= \uplambda_{s,t-1}x^{d_{s,t-1}-1}+ \scrO_{d_{s,t-1}},
\end{equation*}
where $\uplambda_{s,t-2},\ \uplambda_{s,t-1}\neq 0$ by assumption.
Thus by \eqref{hst}, in order to give the lowest degree $r_{st}$ of $h_{s-1,t+1}$, we only need to consider the lowest degree term of $h_{s-1,t-1}$ (namely $\uplambda_{s,t-2}x$) and $\sum_{j=p_t}^{\infty}j\upkappa_{tj}h_{s-1,t}^{j-1}$.

Since by \ref{lemma:h4} $\uplambda_{s,t-2} \in \C\lal \upkappa_{t-2,\mathbf{p}} \ral$ and each $j\upkappa_{tj}h_{s-1,t}^{j-1}$ contains $\upkappa_{tj}$, $\uplambda_{s,t-2}x$ can not be canceled by $\sum_{j=p_t}^{\infty}j\upkappa_{tj}h_{s-1,t}^{j-1}$, and so the lowest degree $r_{st}$ of $h_{s-1,t+1}$ is $1$. Since $d_{st}(\p)=2$, $r_{st}=1= d_{st}(\mathbf{p})-1$.

\noindent
(2) $t-s$ is even.

Since $t-s$ is even, $d_{s,t-1}(\mathbf{p})=2$ by \eqref{dp}. By assumption $r_{s,t-1}=d_{s,t-1}(\mathbf{p})-1=1$ and $r_{s,t-2}=d_{s,t-2}(\mathbf{p})-1$. Thus again by \eqref{hst0} (applied to $t-2$ and $t-1$),
\begin{equation*}
 h_{s-1,t-1}=\uplambda_{s,t-2}x^{d_{s,t-2}-1}+\scrO_{d_{s,t-2}},  \quad h_{s-1,t}=\uplambda_{s,t-1}x+\scrO_2,
\end{equation*}
where $\uplambda_{s,t-2},\ \uplambda_{s,t-1}\neq 0$ by assumption. Thus by \eqref{hst}, in order to give the lowest degree $r_{st}$ of $h_{s-1,t+1}$, we only need to consider the lowest degree term of $h_{s-1,t-1}$ (namely $\uplambda_{s,t-2}x^{d_{s,t-2}-1}$) and $\sum_{j=p_t}^{\infty}j\upkappa_{tj}h_{s-1,t}^{j-1}$ (namely $p_t\uk_{t,p_t}(\uplambda_{s,t-1}x)^{p_t-1}$).

Since by \ref{lemma:h4} $\uplambda_{s,t-2} \in \C\lal \upkappa_{t-2,\mathbf{p}} \ral$, and $p_t\uk_{t,p_t}(\uplambda_{s,t-1}x)^{p_t-1}$ contains $\uk_{t,p_t}$, it follows that $\uplambda_{s,t-2}x^{d_{s,t-2}-1}$ and $p_t\uk_{t,p_t}(\uplambda_{s,t-1}x)^{p_t-1}$ can not cancel each other. Thus the lowest degree $r_{st}$ of $h_{s-1,t+1}$ is $\min(d_{s,t-2}(\p)-1, p_t-1)$. Since $d_{st}(\p)= \min(d_{s,t-2}(\p), p_t)$ by \eqref{dp}, $r_{st}=d_{st}(\p)-1$.
\end{proof}

\begin{lemma}\label{lemma:h3}
With notation in \textnormal{\ref{064}}, if $\mathsf{d} \colonequals d_{st}(\mathbf{p}) <\infty$, then the lowest degree $(wrt. \ x)$ coefficient in $h_{s-1,t+1}$ \eqref{hst0} is $\uplambda_{st}=(-1)^{t-s+1}\det A_{st}^{\mathsf{d}}(\upkappa_\mathbf{p})$.
\end{lemma}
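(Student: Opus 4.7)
The plan is to proceed by strong induction on $t-s$, using the recursion \eqref{hst} rearranged as
\begin{equation*}
h_{s-1,t+1}=-h_{s-1,t-1}-\sum_{j=p_t}^{\infty}j\upkappa_{tj}\,h_{s-1,t}^{\,j-1},
\end{equation*}
and extracting the coefficient of $x^{\mathsf{d}-1}$ from both sides. For the base cases $t=s$ and $t=s+1$, use \eqref{hst1} and \eqref{hst2} directly: in the first case $\uplambda_{ss}=-p_s\upkappa_{s,p_s}$, which matches $(-1)^1\det A_{ss}^{p_s}(\upkappa_\p)=-p_s\upkappa_{s,p_s}$, while in the second, expanding $h_{s-1,s+2}$ to degree $1$ yields $\uplambda_{s,s+1}=-1+4\upkappa_{s,2}\upkappa_{s+1,2}\cdot\mathbf{1}_{\{p_s=p_{s+1}=2\}}$, and this matches $\det A_{s,s+1}^2(\upkappa_\p)$ (noting that $\upkappa_{t,2}=0$ whenever $p_t>2$).

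For the inductive step, split by the parity of $t-s$. When $t-s$ is odd, $\mathsf{d}=2$ and the only possible contribution to the $x^1$-coefficient from the sum is the $j=2$ term $2\upkappa_{t,2}h_{s-1,t}$, since all higher powers of $h_{s-1,t}$ have $x$-degree $\geq 2$. Using \ref{lemma:h2} to locate lowest degrees and the inductive hypothesis on $\uplambda_{s,t-2}$ and $\uplambda_{s,t-1}$, one obtains
\begin{equation*}
\uplambda_{st}=-\uplambda_{s,t-2}-2\upkappa_{t,2}\uplambda_{s,t-1}=-\uplambda_{s,t-2}+\upvarepsilon_{t2}(\upkappa_\p)\det A_{s,t-1}^2(\upkappa_\p),
\end{equation*}
which matches the target formula via the recursion \ref{517}\eqref{517 1}. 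When $t-s$ is even the analysis further splits according to whether $d_{s,t-2}(\p)<p_t$, $d_{s,t-2}(\p)>p_t$, or $d_{s,t-2}(\p)=p_t$. In the first sub-case only $-h_{s-1,t-1}$ contributes and one invokes \ref{517}\eqref{517 3} combined with the vanishing $\upvarepsilon_{t,\mathsf{d}}(\upkappa_\p)=0$ (since $\mathsf{d}<p_t$). In the second, only the $j=p_t$ term of the sum contributes, producing $-p_t\upkappa_{t,p_t}\uplambda_{s,t-1}^{p_t-1}$, where Proposition \ref{518} simplifies $\uplambda_{s,t-1}=\det A_{s,t-1}^2(\upkappa_\p)=(-1)^{(t-s)/2}$, and \ref{41}(1) gives the vanishing $\det A_{s,t-2}^{\mathsf{d}}(\upkappa_\p)=0$ needed to reduce \ref{517}\eqref{517 3}. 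In the third sub-case both contributions appear simultaneously, and combining them against \ref{517}\eqref{517 3} again yields the claim.

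The main obstacle will be the sign bookkeeping: each sub-case involves a different combination of $(-1)^{t-s+1}$, the sign $(-1)^{(t-s)(\mathsf{d}-1)/2}$ appearing in \ref{517}\eqref{517 3}, and the signs coming from the inductive values of $\uplambda_{s,t-2}$ and $\uplambda_{s,t-1}$. Careful tabulation — in particular using that $\upkappa_{t,j}=0$ for $j<p_t$ forces many $\upvarepsilon_{t,j}(\upkappa_\p)$ to vanish, which is exactly what \ref{41}(1) exploits — should make every sub-case collapse onto the identity $\uplambda_{st}=(-1)^{t-s+1}\det A_{st}^{\mathsf{d}}(\upkappa_\p)$. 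No substantially new ingredient beyond \ref{517}, \ref{41} and \ref{518} is required.
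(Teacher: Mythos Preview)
Your outline follows the paper's proof closely (base cases, induction on $t$, parity split, and in the even case a three-way comparison of $p_t$ with $d_{s,t-2}(\p)$), but two steps as written do not go through.

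In the odd case your displayed identity $\uplambda_{st}=-\uplambda_{s,t-2}-2\upkappa_{t,2}\uplambda_{s,t-1}$ is false when $p_t=2$ and $d_{s,t-1}(\p)>2$: by \ref{lemma:h2} the lowest $x$-degree of $h_{s-1,t}$ is then $d_{s,t-1}-1>1$, so $2\upkappa_{t,2}h_{s-1,t}$ contributes nothing to the $x^1$-coefficient and the correct relation is $\uplambda_{st}=-\uplambda_{s,t-2}$. Your second equality is also unjustified, since the inductive hypothesis gives $\uplambda_{s,t-1}=-\det A_{s,t-1}^{d_{s,t-1}}(\upkappa_\p)$, not $-\det A_{s,t-1}^{2}(\upkappa_\p)$, and these differ when $d_{s,t-1}>2$. (Your final expression $-\uplambda_{s,t-2}+\upvarepsilon_{t2}\det A_{s,t-1}^{2}(\upkappa_\p)$ happens to be correct, but only because the two errors compensate: one needs \ref{41}(1) to see $\det A_{s,t-1}^{2}(\upkappa_\p)=0$ in this subcase, which is exactly what the paper invokes.) The paper therefore splits the odd case into three subcases ($p_t>2$; $p_t=2$ with $d_{s,t-1}>2$; $p_t=2$ with $d_{s,t-1}=2$) rather than writing a single formula. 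In the even case you appeal only to \ref{517}\eqref{517 3}, but that recursion is stated for $d>2$; when $\mathsf{d}=2$ (which genuinely occurs in each of your three subcases) you must use \ref{517}\eqref{517 1} instead, and \ref{518} is not available either since its hypothesis $d_{s,t-2}(\p)>2$ can fail. The paper handles this by bifurcating each of the three subcases into $\mathsf{d}=2$ versus $\mathsf{d}>2$. None of this requires a new idea, but the extra case analysis cannot be skipped.
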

\begin{proof}
To ease notation, we set $d_{ij}\colonequals d_{ij}(\mathbf p)$ and $A_{ij}^d\colonequals A_{ij}^d(\upkappa_{\mathbf p})$ for the remainder of the proof.

We first prove that the statement holds for $t=s$.
By \eqref{hst1}, the lowest degree coefficient in $h_{s-1,s+1}$ is $-p_s\upkappa_{s,p_s}$, thus
\begin{align*}
    \uplambda_{ss}& = -p_s\upkappa_{s,p_s}\\
         & = -d_{ss}\uk_{s,d_{ss}}  \tag{since $p_s=d_{ss}$ by \eqref{dp}}\\ 
        & = -\det A_{ss}^{d_{ss}}. \tag{since $\det A_{ss}^d =d\upkappa_{sd}$ for any $d$ by \ref{example:matrix}}
\end{align*}
We next prove that the statement holds for $t=s+1$. Indeed,
\begin{align*}
    h_{s-1,s+2} &=  -h_{s-1,s}-\sum_{j=p_{s+1}}^{\infty}j\upkappa_{s+1,j}h_{s-1,s+1}^{j-1} \tag{by \eqref{hst2}} \\
    & =-x-\sum_{j=p_{s+1}}^{\infty}j\upkappa_{s+1,j}(\uplambda_{ss}x^{r_{ss}} +\scrO_{r_{ss}+1})^{j-1} \tag{since $h_{s-1,s}=x$, and \eqref{hst0}}\\
    &=-x-\sum_{j=p_{s+1}}^{\infty}j\upkappa_{s+1,j}(-p_s\uk_{s,p_s}x^{r_{ss}} +\scrO_{r_{ss}+1})^{j-1} \tag{$\uplambda_{ss}=-p_s\uk_{s,p_s}$}\\
        &=-x-\sum_{j=p_{s+1}}^{\infty}j\upkappa_{s+1,j}(-p_s\uk_{s,p_s}x^{p_s-1} +\scrO_{p_s})^{j-1} \tag{$r_{ss}=d_{ss}-1=p_s-1$ by \ref{lemma:h2}}\\
    & =-x+ (-1)^{p_{s+1}}p_{s+1}\upkappa_{s+1,p_{s+1}}(p_s\upkappa_{s,p_s})^{p_{s+1}-1}x^{(p_s-1)(p_{s+1}-1)}  +\scrO_{(p_s-1)(p_{s+1}-1)}. 
\end{align*}
If $p_s=p_{s+1}=2$, then $(4\upkappa_{s,2}\upkappa_{s+1,2}-1)x$ is the lowest degree term in $h_{s-1,s+2}$, thus
\begin{align*}
    \uplambda_{s,s+1} & =4\upkappa_{s,2}\upkappa_{s+1,2}-1\\
      & = \det A_{s,s+1}^{2} \tag{since $\det A_{s,s+1}^2=4\uk_{s,2}\uk_{s+1,2}-1$ by \ref{example:matrix}}\\
      & = \det A_{s,s+1}^{d_{s,s+1}}. \tag{since $d_{s,s+1}=2$ by \eqref{dp}}
\end{align*}
Otherwise, if $p_s >2$ or $p_{s+1} > 2$, then $-x$ is the lowest degree term in $h_{s-1,s+2}$ and by \eqref{fkp} $\upkappa_{s,2}=0$ or $\upkappa_{s+1,2}=0$. Thus 
\begin{align*}
    \uplambda_{s,s+1}&=-1 \\
& = 4\upkappa_{s,2}\upkappa_{s+1,2}-1 
\tag{since $\upkappa_{s,2}=0$ or $\upkappa_{s+1,2}=0$}\\
   &   =\det A_{s,s+1}^{2}\tag{since $\det A_{s,s+1}^2 =4\uk_{s,2}\uk_{s+1,2}-1$ by \ref{example:matrix}}\\
   &=\det A_{s,s+1}^{d_{s,s+1}}. \tag{since $d_{s,s+1}=2$ by \eqref{dp}}
\end{align*}

We next prove the statement by induction on the second index. Fix some $t$ satisfying $t \geq s+2$. We assume that $\uplambda_{s,t-2}=(-1)^{t-s-1}\det A_{s,t-2}^{d_{s,t-2}}$ and $\uplambda_{s,t-1}=(-1)^{t-s}\det A_{s,t-1}^{d_{s,t-1}}$, and prove that $\uplambda_{st}=(-1)^{t-s+1}\det A_{st}^{d_{st}}$ by splitting into the following cases.

By \eqref{hst}, for any integer $d \geq 1$, we have
\begin{equation}\label{hd}
[h_{s-1,t-1}]_d+[\sum_{j=p_t}^{\infty}j\upkappa_{tj}h_{s-1,t}^{j-1}]_d+[h_{s-1,t+1}]_d=0,  
\end{equation}
where $[h]_d$ denotes the degree (wrt.$\ x$) $d$ graded piece of $h$ (see \ref{notation:hst}). 

\noindent
(1) $t-s$ is odd.

Since $t-s$ is odd, by \eqref{dp} $d_{s,t-2}=d_{s,t}=2$. Thus by \ref{lemma:h2}, $r_{s,t-2}=r_{st}=1$ and $r_{s,t-1}=d_{s,t-1}-1$. So by \eqref{hst0},
\begin{align*}
&h_{s-1,t-1} =\uplambda_{s,t-2}x+\scrO_2, \\
 & h_{s-1,t} =\uplambda_{s,t-1}x^{r_{s,t-1}} +\scrO_{r_{s,t-1}+1}=\uplambda_{s,t-1}x^{d_{s,t-1}-1}+\scrO_{d_{s,t-1}},\\
& h_{s-1,t+1} =\uplambda_{st}x+\scrO_{2}.    
\end{align*}
Thus the lowest degree of the terms in \eqref{hst} is one.
We then consider these lowest degree terms, thus set $d=1$ in \eqref{hd}, which gives
\begin{equation}\label{h1}
    \uplambda_{s,t-2}x+[p_t\upkappa_{t,p_t}(\uplambda_{s,t-1}x^{d_{s,t-1}-1})^{p_t-1}]_1+\uplambda_{st}x=0.
\end{equation}

Since $t-s$ is odd, the inductive assumption becomes $\uplambda_{s,t-2}=\det A_{s,t-2}^{2}$ and $\uplambda_{s,t-1}=-\det A_{s,t-1}^{d_{s,t-1}}$. We need to prove that $\uplambda_{st}=\det A_{st}^{2}$. We again split into subcases.

\noindent
(1.1) $t-s$ is odd and $p_t>2$.

Since $p_t>2$, $\upvarepsilon_{t2}(\uk_\p) =2\uk_{t2}=0$ by \ref{rmk: MAp}\eqref{rmk: MAp 4} and $[p_t\upkappa_{t,p_t}(\uplambda_{s,t-1}x^{d_{s,t-1}-1})^{p_t-1}]_1=0$. To ease notation, we write $\upvarepsilon_{t2}$ for $\upvarepsilon_{t2}(\uk_\p)$ in the following.
Thus 
\begin{align*}
   \uplambda_{st} &=- \uplambda_{s,t-2} \tag{by \eqref{h1} and $[p_t\upkappa_{t,p_t}(\uplambda_{s,t-1}x^{d_{s,t-1}-1})^{p_t-1}]_1=0$}\\
   &=-\det A_{s,t-2}^2 \tag{by assumption}\\
   &=\det A_{st}^2-\upvarepsilon_{t2} \det A_{s,t-1}^2\tag{by \ref{517}\eqref{517 1}} \\
   &= \det A_{st}^2. \tag{since $\upvarepsilon_{t2}=0$}
\end{align*}

\noindent
(1.2) $t-s$ is odd, $p_t=2$ and $d_{s,t-1} >2$.

Since $d_{s,t-1} >2$, $[p_t\upkappa_{t,p_t}(\uplambda_{s,t-1}x^{d_{s,t-1}-1})^{p_t-1}]_1=0$ and by \ref{41} $\det A_{s,t-1}^2=0$. Thus 
\begin{align*}
 \uplambda_{st} &=- \uplambda_{s,t-2} \tag{by \eqref{h1} and $[p_t\upkappa_{t,p_t}(\uplambda_{s,t-1}x^{d_{s,t-1}-1})^{p_t-1}]_1=0$}\\
& =-\det A_{s,t-2}^2 \tag{by assumption}\\
&=\det A_{st}^2-\upvarepsilon_{t2} \det A_{s,t-1}^2\tag{by \ref{517}\eqref{517 1}}\\
&= \det A_{st}^2. \tag{since $\det A_{s,t-1}^2=0$}
\end{align*}

\noindent
(1.3) $t-s$ is odd, $p_t=2$ and $d_{s,t-1} =2$.

Since $p_t=2$ and $d_{s,t-1} =2$, $[p_t\upkappa_{t,p_t}(\uplambda_{s,t-1}x^{d_{s,t-1}-1})^{p_t-1}]_1= 2\upkappa_{t2}\uplambda_{s,t-1}x$. Thus
\begin{align*}
 \uplambda_{st} &=-2\upkappa_{t2}\uplambda_{s,t-1}- \uplambda_{s,t-2}\tag{by \eqref{h1} and $[p_t\upkappa_{t,p_t}(\uplambda_{s,t-1}x^{d_{s,t-1}-1})^{p_t-1}]_1= 2\upkappa_{t2}\uplambda_{s,t-1}x$}\\
 &=\upvarepsilon_{t2}\det A_{s,t-1}^{d_{s,t-1}}-\det A_{s,t-2}^2\tag{by assumption and $\upvarepsilon_{t2}=2\upkappa_{t2}$}\\
 &=\upvarepsilon_{t2}\det A_{s,t-1}^{2}-\det A_{s,t-2}^2 \tag{since $d_{s,t-1} =2$ } \\
 &= \det A_{st}^2\tag{by \ref{517}\eqref{517 1}}.
\end{align*}

\noindent
(2) $t-s$ is even.

Since $t-s$ is even, then $d_{s,t-1}=2$ by \eqref{dp}. Thus by \ref{lemma:h2}, $r_{s,t-1}=1$, $r_{s,t-2}=d_{s,t-2}-1$ and $r_{st}=d_{st}-1$. So by \eqref{hst0},
\begin{align*}
&h_{s-1,t-1} =\uplambda_{s,t-2}x^{r_{s,t-2}} +\scrO_{r_{s,t-2}+1}=\uplambda_{s,t-2}x^{d_{s,t-2}-1}+\scrO_{d_{s,t-2}}, \\
 & h_{s-1,t} =\uplambda_{s,t-1}x+\scrO_2,\\
& h_{s-1,t+1} =\uplambda_{st}x^{r_{st}} +\scrO_{r_{st}+1}=\uplambda_{st}x^{d_{st}-1}+\scrO_{d_{st}}.    
\end{align*}
Since by \eqref{dp} $d_{s,t-2} \geq d_{st}$ and $p_t\geq d_{st}$, the lowest degree of $h_{s-1,t-1}$ and $(h_{s-1,t})^{p_t-1}$ is greater than or equal to that of $h_{s-1,t+1}$. Thus the lowest degree of the terms in \eqref{hst} is $d_{st}-1$.
We then consider these lowest degree terms, thus set $d=d_{st}-1$ in \eqref{hd}, which gives
\begin{equation}\label{h2}
    [\uplambda_{s,t-2}x^{d_{s,t-2}-1}]_{d_{st}-1}+[p_t\upkappa_{t,p_t}(\uplambda_{s,t-1}x)^{p_t-1}]_{d_{st}-1}+\uplambda_{st}x^{d_{st}-1}=0.
\end{equation}

Since $t-s$ is even, the inductive assumption now becomes $\uplambda_{s,t-2}=-\det A_{s,t-2}^{d_{s,t-2}}$ and $\uplambda_{s,t-1}=\det A_{s,t-1}^{2}$. We prove $\uplambda_{st}=-\det A_{st}^{d_{st}}$ by splitting into the following subcases.

\noindent
(2.1) $t-s$ is even and $p_t<d_{s,t-2}$.

Since $p_t<d_{s,t-2}$, by \eqref{dp} $p_t=d_{st}<d_{s,t-2}$, and so $[\uplambda_{s,t-2}x^{d_{s,t-2}-1}]_{d_{st}-1}=0$.  Thus by \eqref{h2}, it follows that
\begin{equation*}
    \uplambda_{st} = -p_t\upkappa_{t,p_t}\uplambda_{s,t-1}^{p_t-1}.
\end{equation*}
Now, since $d_{st}<d_{s,t-2}$,  by \ref{41} $\det A_{s,t-2}^{d_{st}}=0$.
If furthermore $p_t= d_{st}=2$, then 
\begin{align*}
\uplambda_{st} & =-2\upkappa_{t2}\uplambda_{s,t-1} \tag{since $p_t=2$}\\
&= -2\upkappa_{t2}\det A_{s,t-1}^2 \tag{by assumption} \\
&=  - \upvarepsilon_{t2}\det A_{s,t-1}^2 +\det A_{s,t-2}^2  \tag{since $\upvarepsilon_{t2}=2\upkappa_{t2}$, $\det A_{s,t-2}^{d_{st}}=0$ and $d_{st}=2$}\\
 & =-\det A_{st}^{2} \tag{by \ref{517}\eqref{517 1}} \\
 &=-\det A_{st}^{d_{st}}. \tag{since $d_{st}=2$}
\end{align*}

Otherwise, $p_t= d_{st}>2$, and then by \ref{518} $\det A_{s,t-1}^{2}=(-1)^{(t-s)/2}$, and so
\begin{align*}
  \uplambda_{st} & = -p_t\upkappa_{t,p_t}\uplambda_{s,t-1}^{p_t-1}\\
  & =  -p_t\upkappa_{t,p_t}(\det A_{s,t-1}^{2})^{p_t-1} \tag{by assumption}\\
  &=-d_{st}\upkappa_{t,d_{st}}(-1)^{(t-s)(d_{st}-1)/2} \tag{since $\det A_{s,t-1}^{2}=(-1)^{(t-s)/2}$ and $p_t=d_{st}$}\\
  &= -(-1)^{(t-s)(d_{st}-1)/2} \upvarepsilon_{t,d_{st}}  +\det A_{s,t-2}^{d_{st}}\tag{since $\upvarepsilon_{t,d_{st}}=d_{st}\upkappa_{t,d_{st}}$ and $\det A_{s,t-2}^{d_{st}}=0$}\\
 & =-\det A_{st}^{d_{st}}. \tag{by \ref{517}\eqref{517 3}}
\end{align*}

\noindent
(2.2) $t-s$ is even and $p_t>d_{s,t-2}$.

Since $p_t > d_{s,t-2}$, by \eqref{dp} $p_t> d_{s,t-2} =d_{st}$, and thus  $[p_t\upkappa_{t,p_t}(\uplambda_{s,t-1}x)^{p_t-1}]_{d_{st}-1}=0$. Hence by \eqref{h2}, it follows that
\begin{equation*}
    \uplambda_{st}= -\uplambda_{s,t-2}.
\end{equation*}
Since $p_t > d_{s,t}$, by \ref{rmk: MAp}\eqref{rmk: MAp 4} $\upvarepsilon_{t,d_{st}}(\uk_{\p})=d_{st}\uk_{t,d_{st}}=0$.
If furthermore $d_{s,t-2} =d_{st}=2$, then
\begin{align*}
\uplambda_{st} &= -\uplambda_{s,t-2}\\
&=\det A_{s,t-2}^{d_{s,t-2}} \tag{by assumption}\\
&= \det A_{s,t-2}^{2} \tag{since $d_{s,t-2}=2$}\\
&=- \upvarepsilon_{t2} \det A_{s,t-1}^2 + \det A_{s,t-2}^2\tag{since $\upvarepsilon_{t,d_{st}}=0$ and $d_{st}=2$}\\
&=-\det A_{st}^2 \tag{by \ref{517}\eqref{517 1}}\\
&=-\det A_{st}^{d_{st}}. \tag{since $d_{st}=2$}
\end{align*}

Otherwise, $d_{s,t-2} =d_{st}>2$, and then
\begin{align*}
\uplambda_{st} & = -\uplambda_{s,t-2}\\
&=\det A_{s,t-2}^{d_{s,t-2}} \tag{by assumption}\\
&=\det A_{s,t-2}^{d_{st}}\tag{since $d_{s,t-2} =d_{st}$}\\
&=- (-1)^{(t-s)(d_{st}-1)/2}\upvarepsilon_{t,d_{st}}+\det A_{s,t-2}^{d_{st}}\tag{since $\upvarepsilon_{t,d_{st}}=0$}\\
&= -\det A_{st}^{d_{st}}. \tag{by \ref{517}\eqref{517 3}}
\end{align*}

\noindent
(2.3) $t-s$ is even and $p_t=d_{s,t-2}$.

Since $p_t=d_{s,t-2}$, by \eqref{dp} $p_t=d_{s,t-2}=d_{st}$. Thus by \eqref{h2}
\begin{equation*}
    \uplambda_{st}=-\uplambda_{s,t-2}-p_t\upkappa_{t,p_t}(\uplambda_{s,t-1})^{p_t-1}.
\end{equation*}
If furthermore $p_t=d_{s,t-2}=d_{st}=2$, then
\begin{align*}
\uplambda_{st} &=-\uplambda_{s,t-2}-2\upkappa_{t2}\uplambda_{s,t-1} \tag{since $p_t=2$}\\
&=\det A_{s,t-2}^{2} -2\upkappa_{t2}\det A_{s,t-1}^{2}\tag{by assumption and $d_{s,t-2}=2$}\\
&= \det A_{s,t-2}^{2} -\upvarepsilon_{t2}\det A_{s,t-1}^{2} \tag{since $\upvarepsilon_{t2}=2\uk_{t2}$}\\
&=-\det A_{st}^{2}\tag{by \ref{517}\eqref{517 1}}\\
&= -\det A_{st}^{d_{st}}. \tag{since $d_{st}=2$}
\end{align*}
Otherwise, $p_t=d_{s,t-2}=d_{st}>2$. But then by \ref{518} $\det A_{s,t-1}^{2}=(-1)^{(t-s)/2}$, and so
\begin{align*}
    \uplambda_{st}&=-\uplambda_{s,t-2}-p_t\upkappa_{t,p_t}(\uplambda_{s,t-1})^{p_t-1}\\
    & = \det A_{s,t-2}^{d_{s,t-2}}-p_t\upkappa_{t,p_t}(\det A_{s,t-1}^{2})^{p_t-1} \tag{by assumption}\\
    &= \det A_{s,t-2}^{d_{st}}-d_{st}\upkappa_{t,d_{st}}(-1)^{(t-s)(d_{st}-1)/2} \tag{since $\det A_{s,t-1}^{2}=(-1)^{(t-s)/2}$ and $p_t=d_{s,t-2}=d_{st}$}\\
  &= \det A_{s,t-2}^{d_{st}}-(-1)^{(t-s)(d_{st}-1)/2}\upvarepsilon_{t,d_{st}}\tag{since $\upvarepsilon_{t,d_{st}}=d_{st}\upkappa_{t,d_{st}}$}\\
 & = -\det A_{st}^{d_{st}}. \tag{by \ref{517}\eqref{517 3}} 
\end{align*}
So by induction $\uplambda_{st}=(-1)^{t-s+1}\det A_{st}^{d_{st}}$ for any $1\leq s \leq t \leq 2n-1$.
\end{proof}

We fix $\p$ and the exceptional curve class $\Curve_{s} + \Curve_{s+1}+\dots +\Curve_{t}$, and from this data construct a filtration structure of $\M_{\mathbf{p}}$, which is the main result of this section.
Recall that $\M_{\mathbf{p}}$ is the parameter space of monomialised Type A potentials $f(\uk_{\p})$ \eqref{MAp}, namely
\begin{align*}
 &  f(\uk_{\p})= \sum_{i=1}^{2n-2}\x_i^{\prime}\x_{i+1} + \sum_{i=1}^{2n-1}\sum_{j=2}^{\infty} \upkappa_{ij} \x_i^{j}, \text{ where } \upkappa_{i,j_i}=0 \text{ for } 1 \leq i \leq 2n-1, 2 \leq j_i <p_i,\\
 &\M_{\p} = \{(k_{12},k_{13}, \dots,  k_{2n-1,2},k_{2n-1,3}, \dots)\mid k_{i,j_i}=0 \text{ for } 1 \leq i \leq 2n-1, 2 \leq j_i <p_i\}.
\end{align*}
Recall the notation $d_{ij}(\p)$ and $A_{ij}^d(\uk_{\p})$ in \ref{TypeAp}.

\begin{theorem}\label{58}
Fix $\p$, and integers $s,t$ with $1 \leq s \leq t \leq n$, and set $d\colonequals d_{2s-1,\,2t-1}(\mathbf p)$.

If $d$ is finite, then $\M_{\mathbf{p}}$ admits a filtration $\M_{\mathbf{p}}= M_1 \supsetneq M_2 \supsetneq M_3 \supsetneq \cdots$ such that:
\begin{enumerate}
\item For each $i \geq 1$, one has $N_{st}(f(k))=d+i-2$ for all $ k\in M_{i} \setminus M_{i+1}$.
\item Each $M_{i}$ is the common zero locus of finitely many power series in the coordinates $\upkappa_{\mathbf p}$. Moreover, 
\[
 M_2=\{k \in \M_\p \mid \det A^d_{2s-1,2t-1}(f(k))=0 \} .
\]
\item If $s=t$, then for each $i \geq 2$
\[
M_i=\{ k \in \M_\p \mid k_{2s-1,j} =0 \textnormal{ for }p_{2s-1} \leq j \leq p_{2s-1}+i-2\}.
\]
\end{enumerate}
Otherwise, if $d_{2s-1,2t-1}(\mathbf{p})$ is infinite, then $N_{st}(f(k)) = \infty$ for all $ k \in \M_{\mathbf{p}} $.
\end{theorem}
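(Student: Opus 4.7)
The plan is to reduce the theorem directly to Proposition~\ref{064}, applied with the index substitution $(s,t) \mapsto (2s-1, 2t-1)$; this is valid since $1 \leq s \leq t \leq n$ forces $1 \leq 2s-1 \leq 2t-1 \leq 2n-1$. The crucial bridge is the identity \eqref{Nstf}, which identifies $N_{st}(f(k))$ with the $x$-adic order of vanishing of $h_{2s-2,2t}(k) \in \C\lal x \ral$ (interpreted as $\infty$ when the series is identically zero).

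If $d_{2s-1,2t-1}(\mathbf{p}) = \infty$, then Proposition~\ref{064}(1) gives $h_{2s-2,2t} \equiv 0$, so $N_{st}(f(k)) = \infty$ for every $k \in \M_\p$, handling the last clause. Assume now $d \colonequals d_{2s-1,2t-1}(\mathbf{p}) < \infty$. Proposition~\ref{064}(2) yields an expansion
\[
h_{2s-2,2t}(\uk_\p) = \sum_{j \geq d-1} c_j(\uk_\p)\, x^j, \qquad c_j \in \C\lal \uk_\p \ral,
\]
with leading coefficient $c_{d-1} = -\det A^d_{2s-1,2t-1}(\uk_\p)$ (the sign being $(-1)^{2(t-s)+1} = -1$). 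I then set $M_1 \colonequals \M_\p$ and, for $i \geq 2$,
\[
M_i \colonequals \{ k \in \M_\p \mid c_{d-1}(k) = c_d(k) = \cdots = c_{d+i-3}(k) = 0 \},
\]
which is manifestly a polynomial zero locus, with $M_2 = \{ k \in \M_\p \mid \det A^d_{2s-1,2t-1}(f(k)) = 0 \}$ from the identity for $c_{d-1}$; this proves (2). For $k \in M_i \setminus M_{i+1}$ the $x$-adic order of $h_{2s-2,2t}(k)$ is exactly $d+i-2$, giving $N_{st}(f(k)) = d+i-2$ by \eqref{Nstf}, which proves (1).

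For (3), when $s = t$ one has $d = p_{2s-1}$, and the explicit formula \eqref{hst1} specializes to $h_{2s-2,2s} = -\sum_{j \geq p_{2s-1}} j\,\uk_{2s-1,j}\, x^{j-1}$, whence $c_{j-1} = -j\,\uk_{2s-1,j}$ for $j \geq p_{2s-1}$. Substituting into the definition of $M_i$ recovers the stated description, and strict containment $M_i \supsetneq M_{i+1}$ is immediate because $\uk_{2s-1, p_{2s-1}+i-1}$ is an independent coordinate of $\M_\p$.

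The main obstacle is the strict containment $M_i \supsetneq M_{i+1}$ in the general case $s<t$, where the polynomials $c_j(\uk_\p)$ are built recursively via \eqref{hst} rather than being single coordinates. I plan to address this by a perturbation argument, showing for each $i$ that $c_{d+i-2}$ is not in the ideal $(c_{d-1}, \ldots, c_{d+i-3}) \subseteq \C\lal \uk_\p\ral$; the idea is to exploit Proposition~\ref{064}(2) applied to the odd-index pair $(2s-1, 2t-2)$, which gives $h_{2s-2,2t-1} = \alpha(\uk_\p)\,x + O(x^2)$ for a nonzero $\alpha \in \C\lal \uk_\p\ral$, and then use the recursion \eqref{hst} (via the term $\ell\,\uk_{2t-1,\ell}\, h_{2s-2,2t-1}^{\ell-1}$, or symmetrically $\ell\,\uk_{2s-1,\ell}\,h_{2s,2t}^{\ell-1}$) to locate an independent coordinate in $\uk_\p$ whose linear term in $c_{d+i-2}$ survives restriction to $M_i$.
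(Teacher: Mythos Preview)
Your reduction to Proposition~\ref{064} via \eqref{Nstf}, and your handling of the infinite case, parts (2), (3), and the value of $N_{st}$ on each stratum, all match the paper. The gap is exactly where you flag it: the strict containment $M_i \supsetneq M_{i+1}$ for $s<t$ and $i \geq 2$. Your endpoint perturbation does not close it in general. The coordinate $\uk_{2t-1,\,d+i-1}$ (resp.\ $\uk_{2s-1,\,d+i-1}$) is a genuine free variable on $\M_\p$ only when $d+i-1 \geq p_{2t-1}$ (resp.\ $\geq p_{2s-1}$); otherwise it vanishes identically on $\M_\p$. Since $d = \min(p_{2s-1},p_{2s+1},\ldots,p_{2t-1})$ may be attained solely at an \emph{interior} odd index, both endpoint variables can be unavailable for a nontrivial range of $i$: e.g.\ $n=3$, $s=1$, $t=3$, $\p=(5,2,3,2,5)$ gives $d=3$, and for $i=2$ neither $\uk_{1,4}$ nor $\uk_{5,4}$ is a coordinate of $\M_\p$. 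Reaching the interior variable through the nested recursion \eqref{hst} is possible in principle but substantially more intricate than what you sketch.

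The paper circumvents this algebraic analysis. It first allows collapses, removing repetitions from your $M_i$ to obtain a strictly decreasing chain with stratum values $d_j \colonequals N_{st}(M_j \setminus M_{j+1})$ strictly increasing from $d_1 = d-1$ (the latter by \ref{41}). Then for each $i \geq 2$ it picks $\p' \geq \p$ with $d_{2s-1,2t-1}(\p') = d+i-1$; since $\M_{\p'} \subseteq \M_\p$, the already-established first-stratum result applied to $\p'$ produces a nonempty $U_i \subseteq \M_\p$ on which $N_{st} = d+i-2$. As every integer value from $d-1$ upward is thus achieved by some stratum, and the $d_j$ form a strictly increasing sequence of such values starting at $d-1$, necessarily $d_j = d+j-2$ for all $j$; in particular no collapse occurs and $M_i \supsetneq M_{i+1}$ throughout.
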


\begin{proof}
With notation as in \ref{notation:hst}, it follows from \eqref{Nstf} that
\begin{equation}\label{6i}
     N_{st}(f(\uk_{\p}))=\operatorname{dim}_{\mathbb{C}} \frac{\mathbb{C}\lal x\ral}{(h_{2s-2,2t})}.
\end{equation}
Recall that we set $d \colonequals d_{2s-1,2t-1}(\mathbf{p})$. By \ref{064},
\begin{equation}\label{6h}
 h_{2s-2,2t}=\left\{
\begin{array}{cl}
 0 &\text{if } d=\infty \\
 \sum_{i=r}^{\infty} c_i x^{i} &\text{if } d<\infty
\end{array}\right.
\end{equation}
where each $c_i \in \C \lal \upkappa_\mathbf{p} \ral$, $r= d-1$ and $c_r=-\det A_{2s-1,2t-1}^{d}(\upkappa_\mathbf{p})$.

If $d=\infty$, then $ h_{2s-2,2t}=0$, and hence $ N_{st}(f(\uk_{\p}))=\infty$ by \eqref{6i}.

\noindent
(1),\,(2)  Assume now that $d<\infty$. For each $i \geq r$, write $c_i(k)$ for the evaluation of $c_i$ at $k\in \M_{\mathbf p}$. Define $Z_1\colonequals \M_{\mathbf p}$ and, for $i\ge 2$, set
\[
Z_i\colonequals \Big\{k\in \M_{\mathbf p}\ \Big|\ c_r(k)=c_{r+1}(k)=\cdots=c_{r+i-2}(k)=0\Big\}.
\]
This yields a sequence $Z_1\supseteq Z_2\supseteq Z_3\supseteq \cdots$.
Let $M_1\supsetneq M_2\supsetneq \cdots \supsetneq M_m$ be the strictly decreasing sequence obtained by removing repetitions from $(Z_i)_{i\ge 1}$, where $1 \leq m \leq \infty$. Note that $m= \infty$ precisely when the resulting strictly decreasing sequence is infinite.

By construction, each $M_i$ is the common zero locus of finitely many power series in the coordinates $\upkappa_{\mathbf p}$.
Moreover, by \eqref{6h} and \eqref{6i}, the value of $N_{st}(f(k))$ is constant on each stratum $M_{i} \setminus  M_{i+1}$. We therefore set $d_i \colonequals N_{st}(M_{i} \setminus  M_{i+1})$, which obviously satisfies $d_1 < d_2 <  \cdots < d_{m}$.

By \ref{41}, the power series $\det A_{2s-1,2t-1}^{d}(\upkappa_\mathbf{p})$ is non-zero in $\C\lal \upkappa_\p \ral$. 
Consequently,
\[
Z_2=\{k\in \M_{\mathbf p}\mid c_r(k)=0\}
\subsetneq Z_1,
\] 
so that $M_2=Z_2\subsetneq Z_1=M_1$ and $m\geq 2$. Furthermore, $d_1=N_{st}(M_1\setminus M_2)=r=d-1$.

We next show that $d_i=N_{st}(M_i\setminus M_{i+1}) =d+i-2$ for all $i \geq 2$. 
Fix an integer $i$ with $i \geq 2$. By \eqref{dp}, there exists a tuple  $\mathbf{p'}$ with $\mathbf{p'} \geq \mathbf{p}$ (see \ref{TypeAp}\eqref{TypeAp 5}) such that 
\[
d_{2s-1,2t-1}(\mathbf{p'})=d_{2s-1,2t-1}(\mathbf{p})+i-1=d+i-1.
\]
Since $\mathbf{p'} \geq \mathbf{p}$, we have $\mathsf{MA}_{\mathbf{p'}} \subseteq \mathsf{MA}_{\mathbf{p}}$ and $\M_{\mathbf{p'}} \subseteq \M_{\mathbf{p}}$ by \ref{rmk: MAp}\eqref{rmk: MAp 5}.
Repeating the same argument above, this yields a filtration 
\[
\M_{\mathbf{p'}} = M'_1 \supsetneq M'_2 \supsetneq \cdots \supsetneq M'_{m'},
\quad m'\ge 2,
\]
such that
\[
N_{st}(M'_1\setminus M'_2)
=d_{2s-1,\,2t-1}(\mathbf{p'})-1
=d+i-2.
\]
Set $U_i \colonequals M'_1\setminus M'_2$, so that $U_i \subsetneq M_1'=\M_{\mathbf{p'}} \subseteq \M_{\mathbf{p}}=M_1$.

Since this construction applies to every $i \geq 2$, we obtain an infinite family of subsets $U_i\subsetneq M_1$ satisfying $N_{st}(U_i)=d+i-2$. On the other hand, by construction $d-1=d_1 < d_2 <  \cdots < d_m$ and $d_i = N_{st}(M_{i} \setminus M_{i+1})$, it follows that each $U_i$ is contained in
the stratum $M_i\setminus M_{i+1}$, and hence that $m=\infty$. Therefore, for every $ i \geq 2$,
\[
d_{i}=N_{st}(M_{i}\setminus M_{i+1}) = N_{st}(U_i) =d+i-2.
\]

\noindent
(3) Finally, we consider the case $s=t$. Recall that $h_{2s-2,2s-2}=0$ and $h_{2s-2,2s-1}=x$ (see \ref{notation:hst}). By \eqref{hst}, 
\begin{equation*}
    h_{2s-2,2s}= -\sum_{j=p_{2s-1}}^{\infty}j\upkappa_{2s-1,j}x^{j-1},
\end{equation*}
and hence \eqref{6i} gives
\begin{equation*}
     N_{ss}(f(\uk_{\p}))=\operatorname{dim}_{\mathbb{C}} \frac{\mathbb{C}\lal x\ral}{(h_{2s-2,2s})}=\operatorname{dim}_{\mathbb{C}} \frac{\mathbb{C}\lal x\ral}{(\sum_{j=p_{2s-1}}^{\infty}j\upkappa_{2s-1,j}x^{j-1})}.
\end{equation*}
Therefore the statement follows immediately.
\end{proof}

If we set $\mathbf{p}=(2,2,\dots,2)$ in \ref{58}, then $\M_{\p}$ coincides with $\M$ which is the parameter space of all monomialised Type A potentials $f(\uk)$ (see \ref{TypeAp}, \ref{rmk: MAp}), as follows.
\begin{align*}
&f(\uk)=\sum_{i=1}^{2n-2}\x_i^{\prime}\x_{i+1} + \sum_{i=1}^{2n-1}\sum_{j=2}^{\infty} \upkappa_{ij} \x_i^{j},\\
    & \M= \{(k_{12},k_{13}, \dots, k_{22},k_{23},\dots, k_{2n-1,2},k_{2n-1,3}, \dots)\mid \text{all }k_* \in\mathbb{C}\}.
\end{align*}
Thus, as a special case of \ref{58}, we next give a filtration structure of $\M$ with respect to a fixed curve class.

\begin{cor}\label{stra}
Fix some $s$, $t$ satisfying $1 \leq s \leq t \leq n$, then $\M$ admits a filtration $\M=M_1 \supsetneq M_2 \supsetneq M_3 \supsetneq \cdots $ such that:
\begin{enumerate}
\item For each $i \geq 1$, one has $N_{st}(f(k))=i$ for all $ k \in M_{i} \setminus M_{i+1}$.
\item Each $M_{i}$ is the common zero locus of finitely many power series in the coordinates $\upkappa$. Moreover,
\[
M_{2}= \{ k \in \M \mid \det A_{2s-1,2t-1}^{2}(f(k))=0\}.
\]
\item If $s=t$, then for each $i \geq 2$
\[
M_i=\{ k \in \M \mid k_{2s-1,j} =0 \textnormal{ for }2 \leq j \leq i \}.
\]
\end{enumerate}
\end{cor}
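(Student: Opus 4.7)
The plan is to deduce this corollary as a direct specialization of Theorem \ref{58} (already proved in the excerpt), applied with the constant tuple $\mathbf{p}=(2,2,\hdots,2)$. First I would observe that for this choice of $\mathbf{p}$, the conditions $\upkappa_{i,j_i}=0$ for $2\leq j_i<p_i$ in the definitions \eqref{MAp} and \eqref{Mp} are vacuous, so $\mathsf{MA}_{\mathbf{p}}=\mathsf{MA}$, $\M_{\mathbf{p}}=\M$, and $f(\upkappa_{\mathbf{p}})=f(\upkappa)$.

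Next I would compute $d_{2s-1,2t-1}(\mathbf{p})$ using \eqref{dp}. Since $(2t-1)-(2s-1)=2(t-s)$ is even, the definition gives
\[
d_{2s-1,2t-1}(\mathbf{p})=\min(p_{2s-1},p_{2s+1},\hdots,p_{2t-1})=2,
\]
which is finite, so the first clause of Theorem \ref{58} applies rather than the ``$d_{2s-1,2t-1}(\mathbf{p})=\infty$'' alternative. Thus Theorem \ref{58} produces a filtration $\M=M_1\supsetneq M_2\supsetneq M_3\supsetneq\cdots$ of $\M=\M_{\mathbf{p}}$ by zero loci of polynomial systems in $\upkappa$.

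Finally I would match the three conclusions of \ref{58} with the three conclusions required here. For (1), Theorem \ref{58}(1) gives $N_{st}(f(k))=d_{2s-1,2t-1}(\mathbf{p})+i-2=2+i-2=i$ on $M_i\setminus M_{i+1}$. For (2), Theorem \ref{58}(2) gives $M_2=\{k\in\M\mid\det A^d_{2s-1,2t-1}(f(k))=0\}$ with $d=d_{2s-1,2t-1}(\mathbf{p})=2$, matching the claim verbatim; the filtration is cut out by polynomial systems in $\upkappa$ by the same part of \ref{58}. For (3), if $s=t$ then $p_{2s-1}=2$, and Theorem \ref{58}(3) gives
\[
M_i=\{k\in\M\mid k_{2s-1,j}=0\text{ for }p_{2s-1}\leq j\leq p_{2s-1}+i-2\}=\{k\in\M\mid k_{2s-1,j}=0\text{ for }2\leq j\leq i\},
\]
as required.

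Since all of the work has already been carried out in Theorem \ref{58}, there is no real obstacle; the proof is essentially a routine index translation, and the only thing to check carefully is that the constant tuple $\mathbf{p}=(2,\hdots,2)$ satisfies the finiteness hypothesis $d_{2s-1,2t-1}(\mathbf{p})<\infty$ of \ref{58}, which it does.
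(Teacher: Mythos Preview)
Your proposal is correct and follows exactly the same approach as the paper: set $\mathbf{p}=(2,2,\hdots,2)$ in Theorem \ref{58}, observe $d_{2s-1,2t-1}(\mathbf{p})=2$, and read off the three conclusions. The paper's proof is a one-line appeal to \ref{58}; your version just spells out the index translation more explicitly.
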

\begin{proof}
By setting $\mathbf{p}=(2,2,\dots,2)$ in \ref{58}, then $d_{2s-1,2t-1}(\mathbf{p})=2$, and so the statement follows immediately.
\end{proof}

\begin{remark}[Moduli-theoretic interpretation]\label{rmk:moduli}
The filtration constructed above reflects a familiar phenomenon in moduli theory: objects in general position typically exhibit the most regular behaviour, while more singular or degenerate objects appear along strata of higher codimension.
In the present setting, this structure is particularly transparent, since the parameter space is a vector space and no quotient by isomorphisms is taken. As a result, the filtration provides a simple model for such stratifications, free from the additional technicalities arising from schemes or stacks.
\end{remark}

\subsection{Examples}
In this subsection, we apply \ref{58} and \ref{stra} to illustrate the filtration structures on the parameter spaces of monomialised Type~A potentials on $Q_1$ and $Q_2$.

\begin{example}\label{example:filtration1}
Consider monomialised Type $A$ potentials $f(\uk)=\sum_{j=2}^{\infty} \uk_{1j}\x_1^j$ on $Q_1$, where
\[
\begin{tikzpicture}[bend angle=15, looseness=1]
\node (d) at (2,0) [vertex] {};
 \draw[->]  (d) edge [in=55,out=120,loop,looseness=12] node[above] {$\scriptstyle \x_1$}  (d);
 \node (z) at (1.3,0) {$Q_{1} =$};
\end{tikzpicture}
\]

The corresponding parameter space $\M$ is $\{(k_{12},k_{13}, \dots )\mid \text{all }k_* \in\mathbb{C}\}.$ By \ref{stra}(3), for any $i \geq 1$ and $k \in \M$ one has
\begin{equation*}
    N_{11}(f(k)) = i   \iff   k_{1,i+1} \neq 0 \text{ and } k_{1j}=0 \text{ for all } j \leq i.
\end{equation*}
This can also be interpreted geometrically. For any $k \in \M $, consider the $cA_1$ singularity 
\[
\scrR = \frac{\mathbb{C} \lal u, v, x, y \ral}{uv-y(y+\sum_{j=2}^{\infty} jk_{1j}x^{j-1})},
\]
together with the $\scrR$-module $M = \scrR \oplus (u,y)$.
The potential $f(k)$ is realised by the crepant resolution $\uppi \colon \scrX \rightarrow \Spec \scrR$ associated to $M$, and satisfies $\Jac(f(k)) \cong \Lambda_{\con}(\uppi) \cong \underline{\End}_{\scrR}(M)$ (see \S\ref{sec: GGV}).  
Therefore, by \eqref{Nst},
\begin{equation*}
    N_{11}(f(k)) = N_{11}(\uppi) = \operatorname{dim}_{\mathbb{C}} \frac{\mathbb{C}\lal x,y \ral}{(y,y+\sum_{j=2}^{\infty} jk_{1j}x^{j-1})}=\operatorname{dim}_{\mathbb{C}} \frac{\mathbb{C}\lal x \ral}{(\sum_{j=2}^{\infty} jk_{1j}x^{j-1})}.
\end{equation*}
This recovers the above characterisation directly.
\end{example}

\begin{example}\label{example:filtration2}
Consider monomialised Type $A$ potentials 
\[
f(\uk)=\sum_{j=2}^{\infty} \upkappa_{1j} \x_1^{j}+\x_1^{\prime}\x_{2} + \sum_{j=2}^{\infty} \upkappa_{2j} \x_2^{j}+\x_2^{\prime}\x_{3}+\sum_{j=2}^{\infty} \upkappa_{3j} \x_3^{j} 
\]
on $Q_2$, where
\[
 \begin{array}{cl}
\begin{array}{c}
\begin{tikzpicture}[bend angle=15, looseness=1.2]
\node (a) at (-1,0) [vertex] {};
\node (b) at (0,0) [vertex] {};
\node (z) at (-1.8,0) {$Q_{2} =$};

\node (a1) at (-1,-0.2) {$\scriptstyle 1$};
\node (a2) at (0,-0.2) {$\scriptstyle 2$};

\draw[->,bend left] (a) to node[above] {$\scriptstyle a_{2}$} (b);
\draw[<-,bend right] (a) to node[below] {$\scriptstyle b_{2}$} (b);

\draw[<-]  (a) edge [in=120,out=55,loop,looseness=11] node[above] {$\scriptstyle a_{1}$} (a);
\draw[<-]  (b) edge [in=120,out=55,loop,looseness=11] node[above] {$\scriptstyle a_{3}$} (b);

\end{tikzpicture}
\end{array}
&
\begin{array}{l}
\x_1=\x_1'=a_1\\
\x_3=\x_3'=a_3\\
\x_2=a_2b_2,  \x_2'=b_2a_2.
\end{array}
\end{array}
\]
The parameter space $\M $ is $\{(k_{12},k_{13}, \dots k_{22},k_{23}, \dots k_{32},k_{33}, \dots )\mid \text{all }k_* \in\mathbb{C}\}$. Recall from~\ref{Aij} that for any $k \in \M$
\begin{equation*}
A_{13}^2(f(k)) =
\begin{bmatrix}
    2k_{12}  & 1 & 0\\
    1 & 2k_{22} & 1\\
    0 & 1 & 2k_{32}
\end{bmatrix}.
\end{equation*}
Thus $\det A_{13}^2(f(k)) = 8k_{12}k_{22}k_{32}-2k_{12}-2k_{32}$. Fixing the curve class $\Curve_1+\Curve_2$, \ref{stra}(2) gives
\begin{align*}
  & N_{12}(f(k)) = 1   \iff   \det A_{13}^2(f(k)) \neq 0 \iff 4k_{12}k_{22}k_{32}-k_{12}-k_{32} \neq 0,\\
  & N_{12}(f(k)) > 1   \iff   \det A_{13}^2(f(k)) = 0 \iff 4k_{12}k_{22}k_{32}-k_{12}-k_{32}  = 0.
\end{align*}
Thus the generalised GV invariant $N_{12}$ takes the value $1$ at a general point of $\M$, while it is strictly greater than $1$ along the codimension-one locus defined by the equation $4\uk_{12}\uk_{22}\uk_{32}-\uk_{12}-\uk_{32} = 0$.
\end{example}

We now choose a different parameter $\p$ from the previous example and examine the resulting filtration.
This example shows explicitly that there exists a nonempty subspace of the parameter space on which the generalised GV invariant $N_{12}$ takes the value $2$, and illustrates the construction in the proof of \ref{58}.

\begin{example}\label{example:filtration3}
Set $\p = (3,2,3)$ and consider the subset $f(\uk_{\p})$ of monomialised Type $A$ potentials on $Q_2$, so 
\[
f(\uk_{\p})=\sum_{j=3}^{\infty} \upkappa_{1j} \x_1^{j}+\x_1^{\prime}\x_{2} + \sum_{j=2}^{\infty} \upkappa_{2j} \x_2^{j}+\x_2^{\prime}\x_{3}+\sum_{j=3}^{\infty} \upkappa_{3j} \x_3^{j} 
\]
(see \ref{TypeAp}). The parameter space $\M_{\p}$ is $\{(k_{13},k_{14}, \dots k_{22},k_{23}, \dots k_{33},k_{34}, \dots )\mid \text{all }k_* \in\mathbb{C}\}.$ Recall in \ref{TypeAp} and \ref{Aij} that $d_{13}(\p)=3$ and for any $k \in \M_\p$
\begin{equation*}
A_{13}^3(f(k)) =
\begin{bmatrix}
    3k_{13}  & 0 & 1 & 0\\
    1 &1 & 0 & 0\\
    0 & 1 & 0 &1\\
    0 & 0 & 1 & 3k_{33}
\end{bmatrix}.
\end{equation*}
Thus $\det A_{13}^3(f(k)) = 3k_{33}-3k_{13}$. For fixed curve class $\Curve_1+\Curve_2$ (so, $s=1, t=2$), $d_{2s-1,2t-1}(\mathbf{p})=d_{13}(\p)=3$, and thus by \ref{58} for any $k\in \M_\p$
\begin{align*}
  & N_{12}(f(k)) = d_{13}(\p)-1=2   \iff   \det A_{13}^3(f(k)) \neq 0 \iff k_{33}-k_{13} \neq 0,\\
  & N_{12}(f(k)) > d_{13}(\p)-1=2   \iff   \det A_{13}^3(f(k)) = 0 \iff k_{33}-k_{13} = 0.
\end{align*}
Thus the generalised GV invariant $N_{12}$ takes the value $2$ at a general point of $\M_\p$.

Since $\p = (3,2,3)$, Definition~\eqref{Mp} identifies $\M_\p = \{k \in \M \mid k_{12}=0=k_{32} \}$. Consequently,
\[
U_2 \colonequals  \{k \in \M_\p \mid  k_{33}-k_{13} \neq 0 \} =\{k \in \M \mid k_{12}=0=k_{32} \mbox{ and } k_{33}-k_{13} \neq 0 \},
\]
where $\M$ is the parameter space of all monomialised Type A potentials on $Q_2$ as in \ref{example:filtration2}.
By the above discussion, $N_{12}(U_2)=2$. In particular, $U_2$ is a nonempty subset of $\M$ on which the invariant $N_{12}$ takes the value $2$.
Now consider
\[
M_2 = \{k \in \M \mid 4k_{12}k_{22}k_{32}-k_{12}-k_{32} =0 \}.
\]
By \ref{example:filtration2}, this is precisely the first stratum of the filtration, satisfying $N_{12}(\M\setminus M_2)=1$ and $N_{12}(M_2)\ge 2$. Since $U_2\subseteq\M$ and $N_{12}(U_2)=2$, $U_2$ is necessarily contained in $M_2$. This inclusion can also be verified directly:
\begin{equation*}
    U_2=\{k \in \M \mid k_{12}=0=k_{32}, k_{33}-k_{13} \neq 0 \} \subseteq \{k \in \M \mid 4k_{12}k_{22}k_{32}-k_{12}-k_{32} =0 \} =M_2.
\end{equation*}
\end{example}

\section{Obstructions}\label{section:obs}

\subsection{Obstructions}
Building on the filtration structures developed in \S\ref{sec: Filtrations},
this subsection identifies obstructions and constructions of
generalised GV invariants arising from crepant resolutions of $cA_n$ singularities. The main results are summarised in \ref{thm:obs}.

Recall the definitions of generalised GV invariants for crepant resolutions of $cA_n$ singularities and for monomialised Type~A potentials from \ref{def:Nij} and \ref{def:gvf}, respectively.

\begin{definition}\label{def:GV_tuple}
Let $\uppi$ be a crepant resolution of a $cA_n$ singularity.
The \emph{generalised GV tuple} of $\uppi$ is defined to be
$N(\uppi) \colonequals (N_{st}(\uppi) \mid $ all $ 1\leq s  \leq t \leq n )$.
Similarly, for a monomialised Type~A potential $f$ on $Q_n$, the
\emph{generalised GV tuple} of $f$ is $N(f) \colonequals (N_{st}(f) \mid $ all $ 1\leq s  \leq t \leq n )$.
\end{definition}

\begin{lemma}\label{lemma:Nst}
Let $\uppi$ be a crepant resolution of a $cA_n$ singularity and
$f$ a monomialised Type~A potential on $Q_n$.
If $\Lambda_{\mathrm{con}}(\uppi)\xrightarrow[]{\sim}\Jac(f)$, then $N_{st}(\uppi)=N_{st}(f)$ for all $1 \leq s \leq t \leq n$, and hence $N(\uppi)=N(f)$.
\end{lemma}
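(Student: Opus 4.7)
The plan is to sandwich the given isomorphism between $\Lambda_{\mathrm{con}}(\uppi)$ and $\Jac(f)$ with the canonical geometric realisation of $f$ from Section 4.1, and then invoke \ref{371} to compare generalised GV invariants of two crepant resolutions whose contraction algebras are identified via an idempotent-preserving isomorphism.

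More precisely, starting from the monomialised Type $A$ potential $f$, I would apply the construction immediately preceding \ref{no:ei}: solve the system \eqref{501} (with any chosen normalisation $g_s=y,\ g_{s+1}=x$) to produce $g_0,\ldots,g_{2n}\in\C\lal x,y\ral$, form the $cA_n$ singularity $\scrR=\C\lal u,v,x,y\ral/(uv-g_0g_2\cdots g_{2n})$, the MM module $M=\scrR\oplus(u,g_0)\oplus\cdots\oplus(u,\prod_{i=0}^{n-1}g_{2i})$, and the associated crepant resolution $\uppi'\colon\scrX'\to\Spec\scrR$. By \cite[5.7]{Z} and \ref{36} this comes equipped with a vertex-preserving isomorphism $\Jac(f)\xrightarrow{\sim}\underline{\End}_{\scrR}(M)\xrightarrow{\sim}\Lambda_{\mathrm{con}}(\uppi')$, and by \eqref{Nst} we have $N_{st}(f)=N_{st}(\uppi')$ for all $1\leq s\leq t\leq n$.

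Composing the given isomorphism $\Lambda_{\mathrm{con}}(\uppi)\xrightarrow{\sim}\Jac(f)$ with the one above yields an algebra isomorphism
\[
\Phi\colon\Lambda_{\mathrm{con}}(\uppi)\xrightarrow{\sim}\Lambda_{\mathrm{con}}(\uppi')
\]
which, by the convention of \ref{no:ei} (both arrows are tilded), satisfies $\Phi(e_i)=e_i$ for every $i$. Now \ref{371} asserts that any algebra isomorphism between contraction algebras of crepant partial resolutions of $cA_n$ singularities either preserves all $e_i$ (case (1)) or reverses them via $e_i\mapsto e_{m+1-i}$ (case (2)); since $\Phi$ lies in case (1), the conclusion there gives $N_{st}(\uppi)=N_{st}(\uppi')$ for all $1\leq s\leq t\leq n$.

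Chaining the two equalities yields $N_{st}(\uppi)=N_{st}(\uppi')=N_{st}(f)$, and hence $N(\uppi)=N(f)$. The only nontrivial input is already packaged in \ref{371}; the rest is bookkeeping, so there is no real obstacle beyond checking that the composite isomorphism is indeed idempotent-preserving, which is immediate from the definition of $\xrightarrow{\sim}$ in \ref{no:ei}.
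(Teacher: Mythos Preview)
Your proof is correct and follows essentially the same approach as the paper: construct the auxiliary crepant resolution $\uppi'$ realising $f$, observe $N_{st}(f)=N_{st}(\uppi')$ via \eqref{Nst}, compose to obtain an idempotent-preserving isomorphism $\Lambda_{\mathrm{con}}(\uppi)\xrightarrow{\sim}\Lambda_{\mathrm{con}}(\uppi')$, and then invoke \ref{371}. The only difference is that you make explicit why case~(1) of \ref{371} applies, whereas the paper leaves this implicit in the notation $\xrightarrow{\sim}$.
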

\begin{proof}
By the construction of $N_{st}(f)$ in \ref{def:gvf}, there exists a
crepant resolution $\uppi'$ such that
$\Lambda_{\mathrm{con}}(\uppi')\xrightarrow[]{\sim}\Jac(f)$ and
$N_{st}(\uppi')=N_{st}(f)$ for all $s,t$ (see~\eqref{Nst}).
Since $\Lambda_{\mathrm{con}}(\uppi)\xrightarrow[]{\sim}
\Lambda_{\mathrm{con}}(\uppi')$, it follows from~\ref{371} that
$N_{st}(\uppi)=N_{st}(\uppi')$, and hence $N_{st}(\uppi)=N_{st}(f)$.
\end{proof}

For any fixed pair $1\leq s\leq t\leq n$ and any $N\in\N_{\infty}$,
\ref{stra} shows that there exists a crepant resolution $\uppi$
of a $cA_n$ singularity such that $N_{st}(\uppi)=N$.
However, this statement no longer holds when one considers
generalised GV invariants for multiple curve classes simultaneously.

\begin{notation}\label{notation:gv_tuple2}
Fix a positive integer $k$, let $\mathbf{q}=\{(\upbeta_1,q_1), (\upbeta_2,q_2), \dots, (\upbeta_k,q_k) \}$ where each $\upbeta_i \in  \bigoplus_{j=1}^n \Z \left\langle \Curve_j \right\rangle$ and $q_i \in \N_{\infty}$. Set $\mathbf{q}_{\min}\colonequals \min\{q_i\}$, and define
\[
\mathsf{CA}_{\mathbf{q}} \colonequals \{\textnormal{crepant resolutions } \uppi \textnormal{ of } cA_n \textnormal{ singularities} \mid
(N_{\upbeta_1}(\uppi),\dots,N_{\upbeta_k}(\uppi))=(q_1,\dots,q_k)\}.
\]
\end{notation}

\begin{notation}\label{notation:gv_tuple}
Fix some $s$, $t$ with $1 \leq s \leq t \leq n$, and a tuple $(q_s, \dots, q_t)\in \N_{\infty}^{t-s+1}$. 
\begin{enumerate}
\item As in \ref{notation:gv_tuple2}, consider $\mathbf{q}\colonequals \{(\Curve_s,q_s),(\Curve_{s+1},q_{s+1}), \dots, (\Curve_t,q_t)\}$, and its associated subset of crepant resolutions of $cA_n$ singularities $\mathsf{CA}_{\mathbf{q}}$.

\item Set $\mathbf{p}=(p_1, p_2, \dots, p_{2n-1})$, where $p_{2i-1}\colonequals q_i+1$ for $s \leq i \leq t$, else $p_i\colonequals 2$, and consider monomialised Type $A$ potentials $\mathsf{MA}_\mathbf{p}$ on $Q_n$ defined in \ref{TypeAp}.
\item We define a nonempty subset $\mathsf{MA}_{\mathbf{p}}^{\circ} \subseteq \mathsf{MA}_{\mathbf{p}}$ (defined in \eqref{MAp}) by
\begin{equation*}
 \mathsf{MA}_{\mathbf{p}}^{\circ} \colonequals \{f\in \mathsf{MA}_{\mathbf{p}} \mid  k_{2i-1,p_{2i-1}} \neq 0 \textnormal{ for all } i \text{ satisfying } s \leq i \leq t \text{ and } p_{2i-1} \text{ finite}\},
\end{equation*}  
and an open subspace $\M_{\mathbf{p}}^{\circ}$ of $ \M_{\mathbf{p}}$ (defined in \eqref{Mp}) by 
\begin{equation*}
    \M_{\mathbf{p}}^{\circ} \colonequals \{k\in \M_{\mathbf{p}} \mid k_{2i-1,p_{2i-1}} \neq 0 \textnormal{ for all } i  \text{ satisfying } s \leq i \leq t \text{ and } p_{2i-1} \text{ finite} \}.
\end{equation*}

\end{enumerate}
\end{notation}
We can, and will, consider $\mathsf{MA}_{\mathbf{p}}^{\circ}$ as a family of monomialised Type $A$ potentials over $ \M_{\mathbf{p}}^{\circ}$.

\begin{prop}\label{lemma:gv_obs}
With notation as in \textnormal{\ref{notation:gv_tuple}}, the set of isomorphism classes of contraction algebras associated to $\mathsf{CA}_{\mathbf{q}}$ coincides with the set of isomorphism classes of Jacobi algebras of $\mathsf{MA}_{\mathbf{p}}^{\circ}$.   
\end{prop}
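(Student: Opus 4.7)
The plan is to verify the claimed set equality in both directions by combining the correspondence between $cA_n$ crepant resolutions and monomialized Type $A$ potentials on $Q_n$ (\ref{11}, \ref{514}), the numerical compatibility $N_{ii}(\uppi) = N_{ii}(f)$ whenever $\Lambda_{\mathrm{con}}(\uppi) \cong \Jac(f)$ (\ref{lemma:Nst}), and a coefficient-level reading of \ref{stra}(3) applied to diagonal curve classes.

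The essential observation is the following: for any monomialized Type $A$ potential $f$ on $Q_n$ and any index $i$, specializing \ref{stra}(3) to the curve class $\Curve_i$ (i.e., taking both $s$ and $t$ in that statement equal to $i$) yields the equivalence
\begin{align*}
N_{ii}(f) = q_i \text{ (finite)} & \iff k_{2i-1,j}=0 \text{ for } 2 \leq j \leq q_i, \text{ and } k_{2i-1,q_i+1}\neq 0,\\
N_{ii}(f) = \infty & \iff k_{2i-1,j}=0 \text{ for all } j \geq 2.
\end{align*}
Under the choice $p_{2i-1}=q_i+1$ (and $p_j=2$ otherwise) from \ref{notation:gv_tuple}, these are precisely the conditions that cut out $\mathsf{MA}^{\circ}_{\mathbf{p}}$ inside $\MA$, with the $\circ$-condition on $k_{2i-1,p_{2i-1}}$ read as vacuous when $p_{2i-1} = \infty$.

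With this equivalence in hand, both inclusions are short. For $\uppi \in \mathsf{CA}_{\mathbf{q}}$, apply \ref{514} to obtain a monomialized Type $A$ potential $f$ with $\Jac(f) \cong \Lambda_{\mathrm{con}}(\uppi)$; then \ref{lemma:Nst} gives $N_{ii}(f) = N_{ii}(\uppi) = q_i$ for each $s \leq i \leq t$, so the displayed coefficient conditions hold and $f \in \mathsf{MA}^{\circ}_{\mathbf{p}}$. Conversely, for $f \in \mathsf{MA}^{\circ}_{\mathbf{p}}$, the coefficient conditions force $N_{ii}(f) = q_i$ via the equivalence; \ref{11} then produces $\uppi$ with $\Lambda_{\mathrm{con}}(\uppi) \cong \Jac(f)$, and \ref{lemma:Nst} places $\uppi \in \mathsf{CA}_{\mathbf{q}}$.

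No significant obstacle is anticipated, since all the heavy lifting has been carried out in the preceding sections. The one subtlety is the boundary case $q_i = \infty$, where the filtration in \ref{stra}(3) is indexed by finite integers; the required implication $N_{ii}(f) = \infty$ follows from observing that the intersection $\bigcap_{\ell \geq 1} M_{\ell}$ of that filtration consists exactly of potentials with $k_{2i-1,j} = 0$ for all $j \geq 2$, which again matches the convention defining $\mathsf{MA}^{\circ}_{\mathbf{p}}$ in that case.
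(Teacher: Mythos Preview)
Your proposal is correct and follows essentially the same route as the paper: both directions are obtained by combining \ref{514}/\ref{11} with \ref{lemma:Nst} and the coefficient reading of \ref{stra}(3) for diagonal classes, including the separate handling of the $q_i=\infty$ case. The paper additionally invokes \ref{12} at the end as a framing remark, but this is not logically needed beyond what you already argue.
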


\begin{proof}
Let $\uppi \in \mathsf{CA}_{\mathbf{q}}$. By \ref{514}, there exists a monomialised Type $A$ potential $f$ on $Q_n$ such that $\Jac(f) \xrightarrow[]{\sim}  \Lambda_{\mathrm{con}}(\uppi)$. We claim that $f \in \mathsf{MA}_{\mathbf{p}}^{\circ}$. 
To see this, we first fix $i$ with $s \leq i \leq t$. Since $\Jac(f) \xrightarrow[]{\sim}  \Lambda_{\mathrm{con}}(\uppi)$, \ref{lemma:Nst} gives $N_{ii}(f)=N_{ii}(\uppi)$. As $\uppi \in \mathsf{CA}_{\mathbf{q}}$, we have $N_{ii}(\uppi)=q_i$, and hence $N_{ii}(f)=q_i$. By \ref{stra}, this implies
\begin{enumerate}
\item If $q_i= \infty$, then $k_{2i-1,j}=0$ in $f$ for any $j$.
\item If $q_i \leq \infty$, then $k_{2i-1,q_i+1} \neq 0$ and $k_{2i-1,j}=0$ in $f$ for any $j \leq q_i$.
\end{enumerate}
In either case, since $p_{2i-1}=q_i+1$ by \ref{notation:gv_tuple}, it follows that $f \in  \mathsf{MA}_{\mathbf{p}}^{\circ}$.

Conversely, let $f \in \mathsf{MA}_{\mathbf{p}}^{\circ}$. By \ref{11} there exists a crepant resolution $\uppi$ of a $cA_n$ singularity such that $\Lambda_{\mathrm{con}}(\uppi)\xrightarrow[]{\sim}\Jac(f)$.
We claim that $\uppi \in \mathsf{CA}_{\mathbf{q}}$.
To see this, we first fix $i$ with $s \leq i \leq t$. Since $\Lambda_{\mathrm{con}}(\uppi)\xrightarrow[]{\sim} \Jac(f)$,  \ref{lemma:Nst} gives $N_{ii}(\uppi)=N_{ii}(f)$. Since $f \in \mathsf{MA}_{\mathbf{p}}^{\circ}$, \ref{stra} implies $N_{ii}(f)=p_{2i-1}-1=q_i$, and so $N_{ii}(\uppi)=q_i$. Thus $\uppi \in \mathsf{CA}_{\mathbf{q}}$.

Finally, by \ref{12}, the set of isomorphism classes of contraction algebras associated to crepant resolutions of $cA_n$ singularities coincides with the set of isomorphism classes of Jacobi algebras of monomialised Type $A$ potentials on $Q_n$, and the result follows.
\end{proof}

The following corollary identifies the generalised GV tuples realised by
$\mathsf{CA}_{\mathbf{q}}$ with those arising from $\mathsf{MA}_{\mathbf{p}}^{\circ}$,
which were described explicitly in \ref{58} and \ref{stra}.

\begin{cor}\label{cor:gv_equal}
The set of generalised $\GV$ tuples of $\mathsf{CA}_{\mathbf{q}}$ coincides with the set of generalised $\GV$ tuples of $\mathsf{MA}_{\mathbf{p}}^{\circ}$.     
\end{cor}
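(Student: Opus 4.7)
The plan is to deduce the statement as a direct consequence of \ref{lemma:gv_obs} combined with \ref{lemma:Nst}, since together these two results say that the data of isomorphism class of contraction algebra (respectively, Jacobi algebra, preserving idempotents) refines the data of the generalized GV tuple, and that on both sides there is a complete dictionary between these objects.

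First I would establish the forward inclusion: given $\uppi \in \mathsf{CA}_{\mathbf{q}}$, I want to produce $f \in \mathsf{MA}_{\mathbf{p}}^{\circ}$ with $N(f) = N(\uppi)$. By \ref{lemma:gv_obs}, the set of isomorphism classes of contraction algebras attached to $\mathsf{CA}_{\mathbf{q}}$ equals the set of isomorphism classes of Jacobi algebras attached to $\mathsf{MA}_{\mathbf{p}}^{\circ}$, where (tracing through the proof) the chosen isomorphisms come from \ref{11} and \ref{514} and so are of the idempotent-preserving type $\xrightarrow[]{\sim}$ in \ref{no:ei}. Therefore one can choose $f \in \mathsf{MA}_{\mathbf{p}}^{\circ}$ with $\Lambda_{\mathrm{con}}(\uppi) \xrightarrow[]{\sim} \Jac(f)$, and \ref{lemma:Nst} then gives $N_{st}(\uppi) = N_{st}(f)$ for all $1 \leq s \leq t \leq n$, which is exactly $N(\uppi) = N(f)$.

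The reverse inclusion is entirely symmetric: given $f \in \mathsf{MA}_{\mathbf{p}}^{\circ}$, the same appeal to \ref{lemma:gv_obs} provides some $\uppi \in \mathsf{CA}_{\mathbf{q}}$ with $\Lambda_{\mathrm{con}}(\uppi) \xrightarrow[]{\sim} \Jac(f)$, and again \ref{lemma:Nst} yields $N(\uppi) = N(f)$.

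The only point that needs a little care, and which I would flag explicitly in the written proof rather than treat as a serious obstacle, is that one must use the idempotent-preserving versions of the isomorphisms provided by \ref{11} and \ref{514} in order to apply \ref{lemma:Nst} directly; otherwise one would only get the weaker statement (from \ref{371}) that GV tuples agree up to the reflection $N_{ij} \leftrightarrow N_{n+1-j, n+1-i}$, and would have to argue separately that the reflected tuple is itself realized. Since the proof of \ref{lemma:gv_obs} already selects the $\xrightarrow[]{\sim}$-type isomorphism, no such complication actually arises, and the corollary follows immediately.
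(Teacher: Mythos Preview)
Your proposal is correct and follows exactly the same approach as the paper, which simply states that the result is immediate from \ref{lemma:gv_obs} and \ref{lemma:Nst}. Your additional remark about needing the idempotent-preserving isomorphisms $\xrightarrow[]{\sim}$ (as provided by the proof of \ref{lemma:gv_obs} via \ref{11} and \ref{514}) is a helpful clarification that the paper leaves implicit.
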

\begin{proof}
This follows immediately from \ref{lemma:gv_obs} and \ref{lemma:Nst}.
\end{proof}

Combining \ref{cor:gv_equal} with \ref{58}, we obtain obstructions and constructions for the tuples that can arise as generalised GV tuples of crepant resolutions of $cA_n$ singularities.

\begin{theorem}\label{thm:obs}
Fix integers $s,t$ with $1 \leq s \leq t \leq n$, and a tuple
$(q_s,\dots,q_t)\in \N_{\infty}^{\,t-s+1}$.
With notation as in \textnormal{\ref{notation:gv_tuple}}, the following holds.
\begin{enumerate}
\item For every $\uppi \in \mathsf{CA}_{\mathbf{q}}$, necessarily
$N_{st}(\uppi) \ge \mathbf{q}_{\min}$.
Moreover, there exists $\uppi \in \mathsf{CA}_{\mathbf{q}}$ such that
$N_{st}(\uppi)=\mathbf{q}_{\min}$.
\item Assume that $\mathbf{q}_{\min}$ is finite.
Then the equality $N_{st}(\uppi)=\mathbf{q}_{\min}$ holds for all
$\uppi\in \mathsf{CA}_{\mathbf{q}}$ if and only if
$\#\{\, i \mid q_i=\mathbf{q}_{\min}\,\}=1$.
\end{enumerate}
\end{theorem}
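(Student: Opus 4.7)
The plan is to translate the statement to the potential side and exploit the explicit formulas of \S\ref{sec:GV}. By \ref{cor:gv_equal} together with \ref{lemma:Nst}, passing from $\uppi\in\mathsf{CA}_\mathbf{q}$ to a corresponding $f\in\MA_\p^\circ$ (with $\p$ as in \ref{notation:gv_tuple}) preserves $N_{st}$, so the task reduces to analysing the function $N_{st}$ on the family $\MA_\p^\circ$ over $\M_\p^\circ$. The combinatorial bridge is the identity $d_{2s-1,2t-1}(\p)=\min(p_{2s-1},p_{2s+1},\ldots,p_{2t-1})=\mathbf{q}_{\min}+1$ (interpreting $\infty+1=\infty$), which connects \ref{notation:gv_tuple} to the filtration data of \ref{58}.

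Applying \ref{58} with $d\colonequals \mathbf{q}_{\min}+1$ immediately handles the case $\mathbf{q}_{\min}=\infty$: in that case $N_{st}(f)=\infty$ for every $f$, and both halves of the theorem are trivial. When $\mathbf{q}_{\min}$ is finite, the filtration $\M_\p=M_1\supsetneq M_2\supsetneq\cdots$ gives $N_{st}\geq \mathbf{q}_{\min}$ identically, with equality exactly on $\M_\p\setminus M_2$, where $M_2=\{k\in\M_\p:\det A_{2s-1,2t-1}^d(f(k))=0\}$. The lower bound in (1) is then immediate; the existence half of (1) reduces to producing $k\in\M_\p^\circ\setminus M_2$, and (2) becomes the question of whether $M_2\cap\M_\p^\circ$ is empty.

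Both remaining questions reduce to a direct analysis of $\det A_{2s-1,2t-1}^d(\uk_\p)$ restricted to $\M_\p$. Using \ref{lemma:matrix_2}(3) for $d>2$ and \ref{lemma:matrix_2}(2) for $d=2$, combined with the fact that $\upvarepsilon_{2i-1,d}(\uk_\p)=d\uk_{2i-1,d}$ precisely when $p_{2i-1}\leq d$ (equivalently, $q_i=\mathbf{q}_{\min}$), this polynomial depends only on the variables $\uk_{2i-1,d}$ for $i\in S\colonequals\{i\in\{s,\ldots,t\}:q_i=\mathbf{q}_{\min}\}$: for $d>2$ it is the linear form $(-1)^{t-s}d\sum_{i\in S}(-1)^{(i-s)d}\uk_{2i-1,d}$, and for $d=2$ it equals the analogous linear form plus a correction built from cross products $\uk_{2i-1,2}\uk_{2j-1,2}$ with $i\neq j$ in $S$. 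If $|S|=1$ no cross product is possible and the determinant reduces to $\pm d\,\uk_{2i_0-1,d}$, which is nonvanishing on $\M_\p^\circ$ by definition; this simultaneously establishes existence in (1) and proves the ``only if'' direction of (2). If $|S|\geq 2$ the polynomial is multilinear (degree at most $1$ in each variable), and the coefficient of every single $\uk_{2i-1,d}$ retains the nonzero constant term $\pm d$; hence the polynomial is not divisible by any coordinate, so its zero locus meets the open torus where all relevant $\uk_{2i-1,d}$ are nonzero, producing $k\in\M_\p^\circ\cap M_2$ and hence an $f\in\MA_\p^\circ$ with $N_{st}(f)>\mathbf{q}_{\min}$, yielding the ``if'' direction of (2) and also re-proving existence in (1).

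The main obstacle is the $d=2$ case, where the error term $\upepsilon\in E_{2s-1,2t-1}$ appearing in \ref{lemma:matrix_2}(2) threatens to destroy the clean linear-form description that works for $d>2$. The rescue comes from the precise shape of $E_{ij}$ recorded in \ref{Eij}: it is generated by cross products of distinct $\upvarepsilon_{k,2}$ with the squares explicitly excluded, so upon restriction to $\M_\p$ the correction remains multilinear in the $\uk_{2i-1,2}$ for $i\in S$. This both kills the correction entirely when $|S|=1$ (no cross products survive) and preserves the non-divisibility-by-coordinates argument when $|S|\geq 2$, so the same strategy treats both values of $d$ uniformly.
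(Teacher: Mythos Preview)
Your approach mirrors the paper's: reduce to $\MA_\p^\circ$ via \ref{cor:gv_equal}, identify $d_{2s-1,2t-1}(\p)=\mathbf{q}_{\min}+1$, invoke the filtration of \ref{58}, and analyse $\det A_{2s-1,2t-1}^d(\uk_\p)$ via \ref{lemma:matrix_2}. The paper's UFD contradiction and your ``not divisible by a coordinate $\Rightarrow$ zero locus meets the torus'' are the same argument in contrapositive form.

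A few points to tighten. First, your ``if'' and ``only if'' labels for the two directions of (2) are swapped. Second, and more substantively, in the $|S|\geq 2$ paragraph the phrase ``also re-proving existence in (1)'' is not justified by what precedes it: you have produced $k\in\M_\p^\circ\cap M_2$ (giving $N_{st}>\mathbf{q}_{\min}$), whereas existence in (1) requires $k\in\M_\p^\circ\setminus M_2$. This is easy to supply---the determinant is a nonzero polynomial by \ref{41}, so $\M_\p\setminus M_2$ is nonempty Zariski-open and meets the nonempty open $\M_\p^\circ$---but as written the case $|S|\geq 2$ of existence is missing. Third, the paper is careful to pass to an explicitly finite-dimensional coordinate subspace before invoking UFD or irreducibility arguments; since your determinant and the defining inequalities of $\M_\p^\circ$ involve only finitely many $\uk_{ij}$, this reduction is available to you and should be made explicit.
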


\begin{proof}
By \ref{cor:gv_equal}, it suffices to prove the statement for the generalised GV invariants of $\mathsf{MA}_{\mathbf{p}}^{\circ}$.
Recall from \ref{notation:gv_tuple} that $\mathsf{MA}_{\mathbf{p}}^{\circ}\subseteq \mathsf{MA}_{\mathbf{p}}$, $\M_{\mathbf{p}}^{\circ}\subseteq \M_{\mathbf{p}}$, and that $\mathbf{p}=(p_1,p_2,\dots,p_{2n-1})$ is defined by
$p_{2i-1}=q_i+1$ for $s\leq i\leq t$ and $p_i=2$ otherwise.
Consequently,
\[
d_{2s-1,2t-1}(\mathbf{p})
=\min(p_{2s-1},p_{2s+1},\dots,p_{2t-1})
=\min(q_s+1,q_{s+1}+1,\dots,q_t+1)
=\mathbf{q}_{\min}+1.
\]
For the remainder of the proof, we fix this notation and record several
facts that will be used repeatedly.

\begin{notation}\label{notation:obs}
Assume for the moment that $\mathbf{q}_{\min}$ is finite. We introduce the following notation and collect some immediate consequences.
\begin{enumerate}[label=(\alph*)]
\item Set
\[
\mathbf{I}\colonequals \{\, i \mid q_i \textnormal{ is finite for } s\leq i\leq t \,\}
     =\{\, i \mid p_{2i-1} \textnormal{ is finite for } s\leq i\leq t \,\}.
\]
Since $\mathbf{q}_{\min}$ is finite and $\mathbf{q}_{\min}=\min\{q_i\}$, we have $\mathbf{I}\neq \emptyset$. \label{notation:obs 1}

\item By \ref{notation:gv_tuple},
\[
\M_{\p}\setminus \M_{\p}^{\circ}
=\Bigl\{\, k\in \M_{\p}\ \Big|\ \prod_{i\in \mathbf{I}} k_{2i-1,p_{2i-1}}=0 \Bigr\}. \label{notation:obs 2}
\]

\item By \ref{58}, there exists a filtration
$\M_{\p}=M_1 \supsetneq M_2 \supsetneq M_3 \supsetneq \cdots$
such that $N_{st}(M_1\setminus M_2)=d_{2s-1,2t-1}(\p)-1=\mathbf{q}_{\min}$ and $N_{st}(M_2)>\mathbf{q}_{\min}$.
Moreover,
\[
M_2=\Bigl\{\, k\in \M_{\p}\ \Big|\ \det A^d_{2s-1,2t-1}(f(k))=0 \Bigr\},
\qquad \textnormal{where } d=d_{2s-1,2t-1}(\p). \label{notation:obs 3}
\]  
\end{enumerate} 
\end{notation}

\begin{notation}\label{notation:obs2}
Since $\M_{\p}$ is an infinite-dimensional vector space, it is convenient to isolate the finitely many coordinates that actually occur in the determinant condition in \ref{notation:obs}\ref{notation:obs 3}.
Accordingly, with notation as in \ref{notation:obs}, we introduce the
finite-dimensional linear subspaces
$\mathsf{N}_{\p}$, $\mathsf{N}_{\p}^{\circ}$, and $N_2$ of $\M_{\p}$ that will
be used throughout the proof.
\begin{enumerate}[label=(\alph*)]
\item Write $\kappa_{\p}$ for the finite tuple of variables
$\uk_{2s-1,p_{2s-1}}, \uk_{2s,p_{2s}}, \dots, \uk_{2t-1,p_{2t-1}}$.

\item Let $\mathsf{N}_{\p}\subseteq \M_{\p}$ be the linear subspace spanned by
the coordinate vectors corresponding to $\kappa_{\p}$, and let $V\subseteq \M_{\p}$
be the complementary subspace spanned by the remaining coordinate vectors
(i.e.\ those in $\uk_{\p}$ but not in $\kappa_{\p}$).
Then $\mathsf{N}_{\p}$ is finite-dimensional and
\[
\M_{\p}=\mathsf{N}_{\p}\oplus V.
\]
\label{notation:obs2 2}

\item In analogy with $\M_{\p}^{\circ}\subseteq \M_{\p}$ in \ref{notation:gv_tuple},
define the Zariski open subset $\mathsf{N}_{\p}^{\circ}\subseteq \mathsf{N}_{\p}$ by
\[
\mathsf{N}_{\p}^{\circ}\colonequals
\{k\in \mathsf{N}_{\p}\mid k_{2i-1,p_{2i-1}}\neq 0 \textnormal{ for all } i\in \mathbf{I}\}.
\]
Equivalently,
\[
\mathsf{N}_{\p}\setminus \mathsf{N}_{\p}^{\circ}
=\Bigl\{k\in \mathsf{N}_{\p}\ \Big|\ \prod_{i\in \mathbf{I}} k_{2i-1,p_{2i-1}}=0\Bigr\}.
\]
\label{notation:obs2 3}

\item In parallel with $M_2\subseteq \M_{\p}$ in \ref{notation:obs}\ref{notation:obs 3},
define the closed subspace $N_2\subseteq \mathsf{N}_{\p}$ by
\[
N_2 \colonequals
\Bigl\{k\in \mathsf{N}_{\p}\ \Big|\ \det A^d_{2s-1,2t-1}(f(k))=0\Bigr\},
\quad \textnormal{where } d=d_{2s-1,2t-1}(\mathbf{p}).
\]
\label{notation:obs2 4}

\item By definition \ref{Aij}, the matrix $A^d_{2s-1,2t-1}(\uk_{\p})$ involves
only the variables
\[
\uk_{2s-1,d},\, \uk_{2s,d},\, \dots,\, \uk_{2t-1,d}.
\]
In particular, when $d=d_{2s-1,2t-1}(\p)=\min(p_{2s-1},p_{2s+1},\dots,p_{2t-1})$,
all variables appearing in $A^d_{2s-1,2t-1}(\uk_{\p})$ lie in $\kappa_{\p}$, and hence
\[
A^d_{2s-1,2t-1}(\uk_{\p})=A^d_{2s-1,2t-1}(\kappa_{\p}).
\]
\label{notation:obs2 5}

\item Let $\varphi\colon \M_{\p}\twoheadrightarrow \mathsf{N}_{\p}$ be the
projection with kernel $V$.
Since $\M_{\p}^{\circ}$ and $\mathsf{N}_{\p}^{\circ}$ are cut out by the same
non-vanishing conditions on the coordinates indexed by $\mathbf{I}$, we have
$\varphi(\M_{\p}^{\circ})=\mathsf{N}_{\p}^{\circ}$, and therefore
\[
\M_{\p}^{\circ}=\mathsf{N}_{\p}^{\circ}\oplus V.
\]
Similarly, by \ref{notation:obs2}\ref{notation:obs2 5} the equation
$\det A^d_{2s-1,2t-1}(f(k))=0$ depends only on the coordinates in $\kappa_{\p}$,
so $\varphi(M_2)=N_2$, and hence
\[
M_2=N_2\oplus V.
\]
\label{notation:obs2 6}
\end{enumerate}
\end{notation}

\noindent
(1) If $\mathbf{q}_{\min}=\infty$, then $d_{2s-1,2t-1}(\mathbf{p})=\infty$, and hence $N_{st}(\M_{\p})=\infty$ by \ref{58}.
Since $\M_{\p}^{\circ}$ is nonempty and $\M_{\p}^{\circ}\subseteq \M_{\p}$, we can choose $f\in \mathsf{MA}_{\p}^{\circ}$, and then
$N_{st}(f)=\infty=\mathbf{q}_{\min}$.

Otherwise, assume that $\mathbf{q}_{\min}<\infty$. In this case, \ref{notation:obs} shows that $N_{st}(\M_{\p})\geq \mathbf{q}_{\min}$.
As $\M_{\p}^{\circ}\subseteq \M_{\p}$, it follows that $N_{st}(f)\geq \mathbf{q}_{\min}$ for every $f\in \mathsf{MA}_{\p}^{\circ}$.
This proves the lower bound in (1).

For the existence statement in (1), we claim that there exists $f\in \mathsf{MA}_{\p}^{\circ}$ such that $N_{st}(f)=\mathbf{q}_{\min}$.
By \ref{notation:obs}\ref{notation:obs 3}, we have $N_{st}(\M_{\p}\setminus M_2)=\mathbf{q}_{\min}$ and $N_{st}(M_2)>\mathbf{q}_{\min}$,
and so it suffices to show that
\[
(\M_{\p}\setminus M_2)\cap \M_{\p}^{\circ}\neq \emptyset.
\]
Using the decompositions in \ref{notation:obs2}\ref{notation:obs2 2} and \ref{notation:obs2}\ref{notation:obs2 6},
namely $\M_{\p}=\mathsf{N}_{\p}\oplus V$, $\M_{\p}^{\circ}=\mathsf{N}_{\p}^{\circ}\oplus V$ and $M_2=N_2\oplus V$,
this is equivalent to
\[
(\mathsf{N}_{\p}\setminus N_2)\cap \mathsf{N}_{\p}^{\circ}\neq \emptyset.
\]

By \ref{notation:obs2}\ref{notation:obs2 4}, $N_2$ is the zero locus of a nonzero element in $\C \lal \kappa_{\p} \ral$.
Hence $\mathsf{N}_{\p}\setminus N_2$ is a Zariski open in the finite-dimensional space $\mathsf{N}_{\p}$.
Likewise, $\mathsf{N}_{\p}^{\circ}$ is Zariski open in $\mathsf{N}_{\p}$ by \ref{notation:obs2}\ref{notation:obs2 3}.
Therefore their intersection is nonempty, as required.

\noindent
(2) For the remainder of the proof, assume that $\mathbf{q}_{\min}$ is finite.

($\Leftarrow$) Suppose that $\#\{\, i \mid q_i=\mathbf{q}_{\min}\,\}=1$.
We show that $N_{st}(f)=\mathbf{q}_{\min}$ for all $f\in \mathsf{MA}_{\p}^{\circ}$.
By \ref{notation:obs}, this is equivalent to proving that
\[
\M_{\p}^{\circ}\cap M_2=\emptyset,
\qquad \textnormal{equivalently,}\qquad
M_2\subseteq \M_{\p}\setminus \M_{\p}^{\circ}.
\]

Let $m$ be the unique index such that $q_m=\mathbf{q}_{\min}$, and set
$d\colonequals d_{2s-1,2t-1}(\p)$.
Since $p_{2i-1}=q_i+1$ for $s\leq i\leq t$ (see \ref{notation:gv_tuple}), the element $p_{2m-1}$ is the unique minimum of
$\{p_{2s-1},p_{2s+1},\dots,p_{2t-1}\}$.
Hence $d=p_{2m-1}$, and in particular $p_{2i-1}>d$ for all $i\neq m$ with $s\leq i\leq t$.
Therefore, by \ref{rmk: MAp}\eqref{rmk: MAp 4}, for $s\leq i\leq t$ we have:
\begin{itemize}
\item if $i=m$, then $p_{2i-1}=d$ and $\upvarepsilon_{2i-1,d}(\upkappa_{\p})=d\upkappa_{2i-1,d}$;
\item if $i\neq m$, then $p_{2i-1}>d$ and $\upvarepsilon_{2i-1,d}(\upkappa_{\p})=0$ on $\M_{\p}$.
\end{itemize}

If $d > 2$, then by \ref{lemma:matrix_2}\eqref{lemma:matrix_2 3} we obtain
\begin{align*}
 \det A_{2s-1,2t-1}^{d}(\uk_\p)& = (-1)^{t-s}\bigl(\upvarepsilon_{2s-1,d}(\uk_\p)+(-1)^d\upvarepsilon_{2s+1,d}(\uk_\p) + \dots 
 +(-1)^{(t-s)d}\upvarepsilon_{2t-1,d}(\uk_\p)\bigr)\\
 &= (-1)^{t-s}(-1)^{(m-s)d}\upvarepsilon_{2m-1,d}(\uk_\p)\\
& = (-1)^{t-s+(m-s)d}d\upkappa_{2m-1,d}.
\end{align*}
Hence, by \ref{notation:obs}\ref{notation:obs 3},
$M_2=\{\, k\in \M_{\p} \mid k_{2m-1,d}=0 \,\}$.
Since $q_m=\mathbf{q}_{\min}$ is finite, we have $m\in \mathbf{I}$ by \ref{notation:obs}\ref{notation:obs 1}.
Together with \ref{notation:obs}\ref{notation:obs 2} and $d=p_{2m-1}$, this yields
\[
\M_{\p}\setminus \M_{\p}^{\circ}
=\Bigl\{\, k\in \M_{\p}\ \Big|\ k_{2m-1,d}\prod_{i\in \mathbf{I}\setminus\{m\}} k_{2i-1,p_{2i-1}}=0 \Bigr\}.
\]
Hence $M_2\subseteq \M_{\p}\setminus \M_{\p}^{\circ}$.

Otherwise, $d=2$, and then by \ref{lemma:matrix_2}\eqref{lemma:matrix_2 2} we have 
\begin{align*}
\det A_{2s-1,2t-1}^{2}(\uk_\p)&=(-1)^{t-s}(\upvarepsilon_{2s-1,2}(\uk_\p)+\upvarepsilon_{2s+1,2}(\uk_\p) + \dots +\upvarepsilon_{2t-1,2}(\uk_\p))
 +\upepsilon(\uk_\p)\\
& =(-1)^{t-s}2\upkappa_{2m-1,2}+\upepsilon(\uk_\p),
\end{align*}
where $\upepsilon \in E_{2s-1,2t-1}$ and $E_{2s-1,2t-1}$ is the ideal generated by all the quadratic terms of $\upvarepsilon_{2s-1,2},\upvarepsilon_{2s+1,2}, \dots, \upvarepsilon_{2t-1,2}$ except $\upvarepsilon_{2s-1,2}^2,\upvarepsilon_{2s+1,2}^2, \dots, \upvarepsilon_{2t-1,2}^2$ (see \ref{Eij}).
Together with $\upvarepsilon_{2m-1,2}(\uk_\p)$ is the only nonzero element among $\upvarepsilon_{2s-1,2}(\uk_\p),\upvarepsilon_{2s+1,2}(\uk_\p), \dots, \upvarepsilon_{2t-1,2}(\uk_\p)$, it follows that $E_{2s-1,2t-1}(\uk_\p)=\{ 0\}$, and hence $\upepsilon(\uk_\p)=0$.  Thus $\det A_{2s-1,2t-1}^{2}(\uk_\p)=(-1)^{t-s}2\upkappa_{2m-1,2}$.
So by \ref{notation:obs}\ref{notation:obs 3}, $M_2=\{k \in \M_{\mathbf{p}} \mid k_{2m-1,d}=0 \} $. Similarly, $M_2 \subseteq \M_{\p}\setminus \M_{\p}^{\circ}$.

($\Rightarrow$) Conversely, suppose that $\#\{\, i \mid q_i=\mathbf{q}_{\min}\,\}>1$.
We show that there exists $f\in \mathsf{MA}_{\p}^{\circ}$ such that $N_{st}(f)>\mathbf{q}_{\min}$.
By \ref{notation:obs}\ref{notation:obs 3}, it suffices to prove that
\[
\M_{\p}^{\circ}\cap M_2\neq \emptyset,
\qquad \textnormal{equivalently,}\qquad
M_2\not\subseteq \M_{\p}\setminus \M_{\p}^{\circ}.
\]
Using $\M_{\p}=\mathsf{N}_{\p}\oplus V$, $\M_{\p}^{\circ}=\mathsf{N}_{\p}^{\circ}\oplus V$ and $M_2=N_2\oplus V$ from
\ref{notation:obs2}\ref{notation:obs2 2} and \ref{notation:obs2}\ref{notation:obs2 6},
it is enough to show that $N_2\not\subseteq \mathsf{N}_{\p}\setminus \mathsf{N}_{\p}^{\circ}$.
Set $d \colonequals d_{2s-1,2t-1}(\mathbf{p})$ and 
\[
I \colonequals \{i \mid q_i =\q_{\min} \text{ for }s\leq i \leq t\} =\{i \mid p_{2i-1} =d = \min(p_{2s-1}, \dots, p_{2t-1} ) \text{ for } s\leq i \leq t\}.
\]
By assumption we have $\#\{i  \mid q_i=\mathbf{q}_{\min}\}>1$, then the number of elements $|I| >1$.
By \ref{rmk: MAp}\eqref{rmk: MAp 4}, for $s \leq i \leq t$ we have:
\begin{itemize}
\item If $i\in I$, then $p_{2i-1}=d$, and so $ \upvarepsilon_{2i-1,d}(\upkappa_{\mathbf{p}})=d\uk_{2i-1,d}$.
\item If $i \notin I$, then $p_{2i-1}>d$, and so $\upvarepsilon_{2i-1,d}(\upkappa_{\mathbf{p}})=0$ on $\M_\p$.
\end{itemize}

If $d > 2$, then
\begin{align*}
\det A_{2s-1,2t-1}^{d}(\kappa_\p) &  \stackrel{\scriptstyle \ref{notation:obs2}\ref{notation:obs2 5}}{=} \det A_{2s-1,2t-1}^{d}(\uk_\p) \\
& \stackrel{\scriptstyle \ref{lemma:matrix_2}\eqref{lemma:matrix_2 3}}{=} (-1)^{t-s}\bigl(\upvarepsilon_{2s-1,d}(\uk_\p)+(-1)^d\upvarepsilon_{2s+1,d}(\uk_\p) + \dots 
 +(-1)^{(t-s)d}\upvarepsilon_{2t-1,d}(\uk_\p)\bigr)\\
&= (-1)^{t-s} \bigl(\sum_{i\in I} (-1)^{(i-s)d}\upvarepsilon_{2i-1,d}(\uk_\p) \bigr)  \\
&= (-1)^{t-s-sd}d\sum_{i\in I} (-1)^{id} \upkappa_{2i-1,d}.
\end{align*}

So by \ref{notation:obs2}\ref{notation:obs2 4}, $N_2=\{k\in \n_{\mathbf{p}} \mid \sum_{i\in I} (-1)^{id} k_{2i-1,d}=0 \}$. We next prove that $N_2 \not\subseteq \n_{\p}\setminus \n_{\p}^{\circ}$ by contradiction. 
Recall that $\n_{\p} \setminus \n_{\p}^{\circ} =  \{k \in \n_{\p} \mid \prod_{i \in \mathbf{I}}k_{2i-1,p_{2i-1}}=0\}$ in \ref{notation:obs2}\ref{notation:obs2 3}.
Thus if $N_2 \subseteq \n_{\p}\setminus \n_{\p}^{\circ}$, then 
\[
( \prod_{i \in \mathbf{I} }\upkappa_{2i-1,p_{2i-1}}) \subseteq (\sum_{i\in I} (-1)^{id} \upkappa_{2i-1,d})
\]
in $\C \lal \kappa_\p \ral$, and so there exists $\kappa' \in \C \lal \kappa_\p \ral$ such that 
\begin{equation}\label{6A}
    \prod_{i \in \mathbf{I} }\upkappa_{2i-1,p_{2i-1}} = \kappa'  (\sum_{i\in I} (-1)^{id} \upkappa_{2i-1,d}).
\end{equation}

Since $\C \lal \kappa_\p \ral$ is a power series ring in finitely many variables, it is a unique factorisation domain.
Together with \eqref{6A} and $|I|>1$, this would give two genuinely different factorisations of the same element in $\C \lal \kappa_\p \ral$, a contradiction.

Otherwise, $d=2$. In this case,
\begin{align*}
\det A_{2s-1,2t-1}^{2}(\kappa_\p) &  \stackrel{\scriptstyle \ref{notation:obs2}\ref{notation:obs2 5}}{=} \det A_{2s-1,2t-1}^{2}(\uk_\p) \\
& \stackrel{\scriptstyle \ref{lemma:matrix_2}\eqref{lemma:matrix_2 2}}{=}(-1)^{t-s}\bigl(\upvarepsilon_{2s-1,2}(\uk_\p)+\upvarepsilon_{2s+1,2}(\uk_\p) + \dots +\upvarepsilon_{2t-1,2}(\uk_\p)\bigr)
 +\upepsilon(\uk_\p)\\
& = (-1)^{t-s}2 \sum_{i\in I}\upkappa_{2i-1,2}+\upepsilon(\uk_\p),
\end{align*}
where $\upepsilon \in E_{2s-1,2t-1}$ and $E_{2s-1,2t-1}$ is the ideal generated by certain quadratic terms among $\upvarepsilon_{2s-1,2},\upvarepsilon_{2s+1,2}, \dots, \upvarepsilon_{2t-1,2}$. 
Therefore, by \ref{notation:obs2}\ref{notation:obs2 4},
\[
N_2=\bigl\{k\in \n_{\p}\ \big|\ (-1)^{t-s}2\sum_{i\in I}k_{2i-1,2}+\upepsilon(k)=0\bigr\}.
\]
Arguing as in the case $d>2$, one shows by contradiction that
$N_2\not\subseteq \n_{\p}\setminus \n_{\p}^{\circ}$.
\end{proof}

\begin{example}\label{eg obs}
Let $\uppi$ be a crepant resolution of a $cA_3$ singularity with exceptional curves
$\Curve_1$, $\Curve_2$ and $\Curve_3$. Suppose that
\[
(N_{11}(\uppi), N_{22}(\uppi), N_{33}(\uppi)) = (q_1, q_2, q_3),
\quad \text{with } q_1 < q_2 < q_3.
\]

Using the notation of \ref{thm:obs}, set $s=1$, $t=2$ and
$\mathbf{q}=\{(\Curve_1,q_1),(\Curve_2,q_2)\}$. Since
$N_{11}(\uppi)=q_1$ and $N_{22}(\uppi)=q_2$, we have
$\uppi \in \mathsf{CA}_{\mathbf{q}}$.
As $q_1<q_2$, it follows that $\mathbf{q}_{\min}=q_1<\infty$ and
$\#\{i \mid q_i=\mathbf{q}_{\min}\}=1$.
Hence, by \ref{thm:obs}(2), necessarily
$N_{12}(\uppi)=q_1$.

Similarly, by taking $s=2$, $t=3$ and
$\mathbf{q}=\{(\Curve_2,q_2),(\Curve_3,q_3)\}$, we obtain
$N_{23}(\uppi)=q_2$.
Finally, setting $s=1$, $t=3$ and
$\mathbf{q}=\{(\Curve_1,q_1),(\Curve_2,q_2),(\Curve_3,q_3)\}$,
we conclude that $N_{13}(\uppi)=q_1$.
\end{example}

\subsection{Obstructions from Iterated Flops}
Iterating flops yields further obstructions and constructions for the tuples that can arise as generalised GV invariants of crepant resolutions of $cA_n$ singularities.

\begin{notation}\label{notation:gv_tuple3}
Recall $\mathbf{r}$ and $\uppi^{\mathbf{r}}$ from \ref{notation:cAn}, and $|F_{\mathbf{r}}|$ from \ref{notation:NW}. There is a linear isomorphism
\[
|F_{\mathbf{r}}| \colon A_1(\uppi) \rightarrow  A_1(\uppi^{\mathbf{r}}),
\]
such that $\GV_{\upbeta}(\uppi)= \GV_{|F_{\mathbf{r}}|(\upbeta)}(\uppi^{\mathbf{r}})$ for any $\upbeta \in A_1(\uppi)$. By \ref{37}, $N_{\upbeta}(\uppi)= N_{|F_{\mathbf{r}}|(\upbeta)}(\uppi^{\mathbf{r}})$.
Varying $\mathbf{r}$ over all possible flops gives the following set,
\begin{equation*}
  \mathcal{F} \colonequals \bigcup_{i = 1}^{\infty} \{|F_{\mathbf{r}}| \ \mid  \ \mathbf{r}= (r_1,r_2,\dots , r_i) \text{ where each } 1\leq r_j \leq n\}.
\end{equation*}
Given any $F \in \mathcal{F}$ and $\mathbf{q}=\{(\upbeta_1,q_1), (\upbeta_2,q_2), \dots, (\upbeta_k,q_k) \}$ as in \ref{notation:gv_tuple2}, set 
\begin{equation*}
    F(\mathbf{q}) \colonequals \{(F(\upbeta_1),q_1), (F(\upbeta_2),q_2), \dots, (F(\upbeta_k),q_k) \}.
\end{equation*}    
\end{notation}

The flexibility of the transformations $F\in\mathcal{F}$, together with
\ref{thm:obs}, yields further obstructions and constructions for the
generalised GV tuples of crepant resolutions of $cA_n$ singularities, as follows.

\begin{cor}\label{cor:obs}
For any integers $s$ and $t$ with $1 \leq s \leq t \leq n$, any tuple $(q_s, \dots, q_t)\in \N_{\infty}^{t-s+1}$, and any $F\in\mathcal{F}$, with notation as in \textnormal{\ref{notation:gv_tuple}} and \textnormal{\ref{notation:gv_tuple3}}, the following statements hold.   
\begin{enumerate}
\item For any $\uppi \in \mathsf{CA}_{F(\mathbf{q})}$ necessarily $N_{F(st)}(\uppi) \geq \mathbf{q}_{\min}$. Moreover, there exists $\uppi \in \mathsf{CA}_{F(\mathbf{q})}$ such that $N_{F(st)}(\uppi)=\mathbf{q}_{\min}$.
\item Assume that $\mathbf{q}_{\min}$ is finite. Then the equality $N_{F(st)}(\uppi) = \mathbf{q}_{\min}$ holds for all $\uppi \in \mathsf{CA}_{F(\mathbf{q})}$ if and only if
$\#\{i  \mid q_i=\mathbf{q}_{\min}\}=1$.
\end{enumerate} 
\end{cor}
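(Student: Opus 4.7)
The plan is to reduce Corollary \ref{cor:obs} to Theorem \ref{thm:obs_intro} (equivalently \ref{thm:obs}) by exploiting the bijection on crepant resolutions induced by iterated flops, and the fact that generalized GV invariants transform equivariantly under this bijection. The key observation is that, by Theorem \ref{37} together with \cite[5.4]{NW1} and \cite[5.10]{VG}, each $F = |F_{\mathbf{r}}| \in \mathcal{F}$ is a linear isomorphism $A_1(\uppi) \to A_1(\uppi^{\mathbf{r}})$ that satisfies $N_{\upbeta}(\uppi) = N_{F(\upbeta)}(\uppi^{\mathbf{r}})$ for every curve class $\upbeta$.

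First I would define the map $\Phi \colon \mathsf{CA}_{\mathbf{q}} \to \mathsf{CA}_{F(\mathbf{q})}$ sending a crepant resolution $\uppi$ to its iterated flop $\uppi^{\mathbf{r}}$. If $\uppi \in \mathsf{CA}_{\mathbf{q}}$, so that $(N_{\Curve_i}(\uppi))_{i=s}^{t} = (q_s, \dots, q_t)$, then $N_{F(\Curve_i)}(\uppi^{\mathbf{r}}) = N_{\Curve_i}(\uppi) = q_i$ for all $s \leq i \leq t$, which shows $\uppi^{\mathbf{r}} \in \mathsf{CA}_{F(\mathbf{q})}$. The sequence of inverse flops provides an inverse to $\Phi$, so $\Phi$ is a bijection. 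Moreover, since $F$ is linear, $F(\Curve_s + \Curve_{s+1} + \dots + \Curve_t) = F(\Curve_s) + \dots + F(\Curve_t)$, and the transformation rule gives
\[
N_{F(st)}(\Phi(\uppi)) = N_{F(\Curve_s + \dots + \Curve_t)}(\uppi^{\mathbf{r}}) = N_{\Curve_s + \dots + \Curve_t}(\uppi) = N_{st}(\uppi).
\]

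Now Theorem \ref{thm:obs} applied to $\mathsf{CA}_{\mathbf{q}}$ provides exactly the desired lower bound $N_{st}(\uppi) \geq \mathbf{q}_{\min}$ for every $\uppi \in \mathsf{CA}_{\mathbf{q}}$, the existence of $\uppi \in \mathsf{CA}_{\mathbf{q}}$ attaining this bound, and the sharp characterization (in the finite case) of when equality holds for every such $\uppi$. Transporting these statements through the bijection $\Phi$ and the identity $N_{F(st)}(\Phi(\uppi)) = N_{st}(\uppi)$ above yields both statements of Corollary \ref{cor:obs}, since every element of $\mathsf{CA}_{F(\mathbf{q})}$ is of the form $\Phi(\uppi)$ for a unique $\uppi \in \mathsf{CA}_{\mathbf{q}}$.

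The main obstacle is purely bookkeeping: verifying that $\Phi$ is genuinely a bijection (rather than merely a map), which amounts to checking that one can always perform the inverse sequence of flops $|F_{\mathbf{r}}|^{-1}$, and that the generalized GV invariants really are compatible with the linear isomorphism $F$ when applied to the summed curve class $\Curve_s + \dots + \Curve_t$ rather than just the individual $\Curve_i$. Both follow immediately from the transformation law in \ref{notation:NW} extended from $N_{\upbeta} = \GV_{\upbeta}$ (when finite) via \ref{37}, so no substantive new argument is required beyond this transport of structure.
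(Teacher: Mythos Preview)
Your proof is correct and follows essentially the same approach as the paper: both reduce to Theorem~\ref{thm:obs} by using the correspondence $\uppi \leftrightarrow \uppi^{\mathbf{r}}$ between $\mathsf{CA}_{\mathbf{q}}$ and $\mathsf{CA}_{F(\mathbf{q})}$ together with the transformation rule $N_{\upbeta}(\uppi) = N_{F(\upbeta)}(\uppi^{\mathbf{r}})$, the paper writing the inverse explicitly as $\uppi \mapsto \uppi^{\overline{\mathbf{r}}}$ rather than invoking a bijection. One small caveat: your remark ``since $F$ is linear'' is not accurate (each $|F_i|$ involves componentwise absolute values and is not linear), but this is harmless since your displayed chain of equalities applies the transformation rule directly to the full curve class and never actually uses that decomposition.
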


\begin{proof}
By the definition of $\mathcal{F}$ in \ref{notation:gv_tuple3}, there exists a finite sequence $\mathbf{r}=(r_1,r_2,\dots , r_j)$ such that $F=|F_{\mathbf{r}}|$. Let $\overline{\mathbf{r}}\colonequals (r_j,\dots,r_1)$ denote the reversed sequence.

From \ref{notation:gv_tuple3} we have
$N_{\upbeta}(\uppi)=N_{F(\upbeta)}(\uppi^{\mathbf{r}})$ for all curve classes $\upbeta$.
In particular, if $\uppi\in \mathsf{CA}_{\mathbf{q}}$, then $\uppi^{\mathbf{r}}\in \mathsf{CA}_{F(\mathbf{q})}$.
Likewise, since $N_{\upbeta}(\uppi^{\overline{\mathbf{r}}})=N_{F(\upbeta)}(\uppi)$, if
$\uppi\in \mathsf{CA}_{F(\mathbf{q})}$ then $\uppi^{\overline{\mathbf{r}}}\in \mathsf{CA}_{\mathbf{q}}$.

\smallskip
\noindent
(1) Let $\uppi \in \mathsf{CA}_{F(\mathbf{q})}$. Then $\uppi^{\overline{\mathbf{r}}}\in \mathsf{CA}_{\mathbf{q}}$, so by
\ref{thm:obs}(1) we have $N_{st}(\uppi^{\overline{\mathbf{r}}})\ge \mathbf{q}_{\min}$.
Since $N_{F(st)}(\uppi)=N_{st}(\uppi^{\overline{\mathbf{r}}})$, it follows that
$N_{F(st)}(\uppi)\ge \mathbf{q}_{\min}$.

For the existence statement, \ref{thm:obs}(1) provides
$\uppi_1\in \mathsf{CA}_{\mathbf{q}}$ such that $N_{st}(\uppi_1)=\mathbf{q}_{\min}$.
Then $\uppi_1^{\mathbf{r}}\in \mathsf{CA}_{F(\mathbf{q})}$ and
\[
N_{F(st)}(\uppi_1^{\mathbf{r}})=N_{st}(\uppi_1)=\mathbf{q}_{\min}.
\]

\smallskip
\noindent
(2) Let $\uppi \in \mathsf{CA}_{F(\mathbf{q})}$. Then $\uppi^{\overline{\mathbf{r}}}\in \mathsf{CA}_{\mathbf{q}}$ and
$N_{F(st)}(\uppi)=N_{st}(\uppi^{\overline{\mathbf{r}}})$.
If $\mathbf{q}_{\min}$ is finite and $\#\{\, i \mid q_i=\mathbf{q}_{\min}\,\}=1$, then
\ref{thm:obs}(2) gives $N_{st}(\uppi^{\overline{\mathbf{r}}})=\mathbf{q}_{\min}$, hence
$N_{F(st)}(\uppi)=\mathbf{q}_{\min}$.

Conversely, suppose that $N_{F(st)}(\uppi)=\mathbf{q}_{\min}$ holds for all
$\uppi \in \mathsf{CA}_{F(\mathbf{q})}$.
If $\uppi\in \mathsf{CA}_{\mathbf{q}}$, then $\uppi^{\mathbf{r}}\in \mathsf{CA}_{F(\mathbf{q})}$ and
$N_{st}(\uppi)=N_{F(st)}(\uppi^{\mathbf{r}})=\mathbf{q}_{\min}$.
Therefore $N_{st}(\uppi)=\mathbf{q}_{\min}$ for all $\uppi\in \mathsf{CA}_{\mathbf{q}}$, and
\ref{thm:obs}(2) implies $\#\{\, i \mid q_i=\mathbf{q}_{\min}\,\}=1$.
\end{proof}

\subsection{Examples}

We illustrate the preceding results with concrete examples.
Theorem~\ref{thm:obs} shows that the invariant $N_{st}$ is constrained by the values of the tuple $(N_{ss},\dots,N_{tt})$, while \ref{cor:obs} shows that, after applying iterated flops, the invariant $N_{F(st)}$ is constrained by the tuple $(N_{F(ss)},\dots,N_{F(tt)})$.

\begin{example}\label{example:obs1}
Consider the case $n=2$, with $s=1$ and $t=2$, and apply different elements
$F\in\mathcal{F}$ in \ref{cor:obs}.
The following table shows that the invariant $N_{\upbeta}$ is constrained
by the pair $(N_{\upbeta_1},N_{\upbeta_2})$, where
$(\upbeta_1,\upbeta_2,\upbeta)
\colonequals \bigl(F(11),F(22),F(12)\bigr)$.

\[
\begin{tabular}{ p{0.8cm}p{1cm}p{0.5cm} }
\hline
$F$ & $\upbeta_1,\upbeta_2$ & $\upbeta$ \\
\hline
$\operatorname{id}$ & $11,22$ & $12$ \\
$|F_{(1)}|$ & $11,12$ & $22$ \\
$|F_{(2)}|$ & $12,22$ & $11$ \\
\hline
\end{tabular}
\]

As a concrete consequence, for any crepant resolution $\uppi$ of a $cA_2$ singularity, we obtain the following constraints.
For brevity, we write $N_{\upbeta}$ for $N_{\upbeta}(\uppi)$.
\begin{enumerate}
\item
From the first row, $N_{12}\geq \min(N_{11},N_{22})$.
Moreover, if $N_{11}\neq N_{22}$, then necessarily
$N_{12}=\min(N_{11},N_{22})$.
\item
From the second row, $N_{22}\geq \min(N_{11},N_{12})$.
Moreover, if $N_{11}\neq N_{12}$, then necessarily
$N_{22}=\min(N_{11},N_{12})$.
\item
From the third row, $N_{11}\geq \min(N_{12},N_{22})$.
Moreover, if $N_{12}\neq N_{22}$, then necessarily
$N_{11}=\min(N_{12},N_{22})$.
\end{enumerate}
\end{example}

\begin{example}
Consider $n=3$, with $s=1$ and $t=3$, and apply different elements
$F\in\mathcal{F}$ in \ref{cor:obs}.
The following table shows that the invariant $N_{\upbeta}$ is constrained
by the triple $(N_{\upbeta_1},N_{\upbeta_2},N_{\upbeta_3})$, where
$(\upbeta_1,\upbeta_2,\upbeta_3,\upbeta)
\colonequals \bigl(F(11),F(22),F(33),F(13)\bigr)$.

\[
\begin{tabular}{ p{1cm}p{1.6cm}p{0.5cm} }
\hline
$F$ & $\upbeta_1,\upbeta_2,\upbeta_3$ & $\upbeta$ \\
\hline
$\operatorname{id}$ & $11,22,33$ & $13$ \\
$|F_{(1)}|$ & $11,12,33$ & $23$ \\
$|F_{(2)}|$ & $12,22,23$ & $13$ \\
$|F_{(3)}|$ & $11,23,33$ & $12$ \\
$|F_{(1,2)}|$ & $12,11,23$ & $33$ \\
$|F_{(2,1)}|$ & $22,12,13$ & $23$ \\
$|F_{(2,3)}|$ & $13,23,22$ & $12$ \\
$|F_{(3,2)}|$ & $12,33,23$ & $11$ \\
$|F_{(1,3)}|$ & $11,13,33$ & $22$ \\
\hline
\end{tabular}
\]
\end{example}

Combining \ref{thm:obs}, \ref{cor:obs},
and \ref{example:obs1}, we obtain a complete description of the
generalised GV tuples arising from crepant resolutions of $cA_2$ singularities.

\begin{cor}\label{prop:gv_cA2}
The generalised GV tuples of crepant resolutions of $cA_2$ singularities fall into one of the following two forms:
\[
\begin{tikzpicture}[bend angle=30, looseness=1]
\node (a) at (0,0) {$N_{11}$};
\node (b) at (1.2,0) {$N_{22}$};
\node (c) at (0.6,-0.6) {$N_{12}$};
\node (d) at (2,-0.4) {$=$};
\node (e) at (2.7,0) {$p$};
\node (f) at (3.5,0) {$q$};
\node (g) at (3.1,-0.6) {$\min(p,q)$};
\node (h) at (4.3,-0.4) {or};
\node (e) at (5,0) {$p$};
\node (f) at (5.8,0) {$p$};
\node (g) at (5.4,-0.6) {$r$};
\end{tikzpicture}
\]
where $p$, $q$, $r \in \N_{\infty}$ with $p \neq q$ and $ r \geq p$. Moreover, all possible such $p,q,r$ arise.
\end{cor}
\begin{proof}
Fix $p,\,q \in \N_{\infty}$. By \ref{thm:obs}(1), for any crepant resolution of a $cA_2$ singularity $\uppi$ with $N_{11}(\uppi)=p$ and $N_{22}(\uppi)=q$, we necessarily have $N_{12}(\uppi) \geq \min(p,q)$, and there exists such a $\uppi$ with $N_{12}(\uppi)=  \min(p,q)$.
If moreover $p \neq q$, then  \ref{thm:obs}(2) forces $N_{12}(\uppi)= \min(p,q)$, giving the first form.

It remains to consider the case $p=q$.
By example~\ref{example:obs1}, the invariant $N_{22}$ is constrained by the pair $(N_{11},N_{12})$. Given any $r\ge p$, \ref{cor:obs}(1) yields a  crepant resolution $\uppi$ of a $cA_2$ singularity such that
$N_{11}(\uppi)=p$, $N_{12}(\uppi)=r$, and $N_{22}(\uppi)=\min(p,r)=p$.
This gives the second form and shows that all such triples $(p,p,r)$ arise.
\end{proof}

\end{document}